\documentclass[12pt,reqno]{amsart}

\usepackage{latexsym,amsfonts,amssymb,amsmath,epsfig,amscd}
\usepackage{float}
\usepackage{url}
\usepackage[all]{xy}
\usepackage{pgf,tikz}
\usepackage{mathrsfs}
\usepackage{graphicx} 
\usetikzlibrary{arrows}
\usetikzlibrary[patterns]

\newcommand{\N}{{\mathbb N}}
\newcommand{\Z}{{\mathbb Z}}
\newcommand{\Q}{{\mathbb Q}}
\newcommand{\C}{{\mathbb C}}
\newcommand{\R}{{\mathbb R}}
\renewcommand{\P}{{\mathbb P}}
\renewcommand{\H}{{\mathbb H}}

\newcommand{\BB}{{\mathcal B}}

\newcommand{\EE}{{\mathcal E}}

\newcommand{\LL}{{\mathcal L}}

\newcommand{\NN}{{\mathcal N}}

\newcommand{\OO}{{\mathcal O}}

\newcommand{\www}{\widetilde}

\newcommand{\oooo}{\overline}
\newcommand{\uuuu}{\underline}

\renewcommand{\Re}{{\rm Re}}

\newcommand{\lcm}{{\rm lcm}}
\newcommand{\pr}{{\rm pr}}

\DeclareMathOperator{\Ann}{Ann}
\DeclareMathOperator{\Aut}{Aut}

\DeclareMathOperator{\id}{id}

\DeclareMathOperator{\mmod}{mod}

\DeclareMathOperator{\tr}{tr}

\theoremstyle{plain}
\newtheorem{lemma}{Lemma}[section]
\newtheorem{definition/lemma}[lemma]{Definition/Lemma}
\newtheorem{theorem}[lemma]{Theorem}

\newtheorem{corollary}[lemma]{Corollary}

\theoremstyle{definition}
\newtheorem{definition}[lemma]{Definition}

\newtheorem{remarks}[lemma]{Remarks}
\newtheorem{example}[lemma]{Example}
\newtheorem{examples}[lemma]{Examples}

\newtheorem{notations}[lemma]{Notations}

\begin{document}

\title[Conjugacy classes of regular integer matrices]
{Conjugacy classes of \\regular integer matrices}

\author[C. Hertling and K. Larabi]
{Claus Hertling and Khadija Larabi}
\address{Lehrstuhl f\"ur algebraische Geometrie,
Universit\"at Mannheim, B6, 26, 68159 Mannheim, Germany}
\email{hertling\char64 math.uni-mannheim.de}
\email{khadija.larabi\char64 outlook.com}

\keywords{Regular integer matrix, conjugacy class, 
commutative $\Q$-algebra, full lattice, order, commutative semigroup}

\subjclass[2020]{15B36, 15A21, 20M14, 11R54, 16H20}

\date{February 17, 2026}

\begin{abstract}
This paper is devoted to the theory of $GL_n(\Z)$-conjugacy classes
of regular integer $n\times n$ matrices. Such a matrix is
$GL_n(\Q)$-conjugate to the companion matrix of its characteristic
polynomial. But the set of $GL_n(\Z)$-conjugacy classes of regular
integer matrices with a fixed characteristic polynomial $f$ is
usually nontrivial (finite if $f$ has simple roots, infinite if $f$
has multiple roots). It is in 1:1-correspondence to a subsemigroup of
a certain quotient semigroup of the commutative semigroup of
full lattices in the algebra $\Q[t]/(f)$.
In its first part, the paper gives a survey on old and new results
on full lattices and orders in a finite dimensional commutative 
$\Q$-algebra with unit element and on induced semigroups.
In its longer second part, the paper applies this theory to many examples,
essentially all cases with $n=2$, many cases with $n=3$ and two cases
with arbitrary $n$, the case with $n$ different integer eigenvalues 
and the case of a single $n\times n$ Jordan block. 
\end{abstract}

\maketitle
\tableofcontents


\section{Introduction}\label{s1}
\setcounter{equation}{0}
\setcounter{table}{0}
\setcounter{figure}{0}

Let $k$ be a field and $n\in\Z_{\geq 2}$. 
The theory of $GL_n(k)$-conjugacy classes of matrices in
$M_{n\times n}(k)$ and normal forms in them 
is a corner stone of linear algebra.
In contrast, the theory of $GL_n(\Z)$-conjugacy classes of
matrices in $M_{n\times n}(\Z)$ and normal forms in them
is beyond linear algebra. It touches algebra, algebraic
number theory and representation theory. 
Rather few general facts are known. The most prominent one is
a consequence of the Jordan-Zassenhaus theorem \cite{Za38}.
It says that the set of semisimple matrices in 
$M_{n\times n}(\Z)$ with a fixed characteristic polynomial 
splits into finitely many $GL_n(\Z)$-conjugacy 
classes.

The best studied cases are those where the characteristic
polynomial $f\in\Z[t]$ is irreducible.
Then one is in the domain of algebraic number theory.
The set of $GL_n(\Z)$-conjugacy classes with a fixed
characteristic polynomial $f\in\Z[t]$ is in 
1:1-correspondence with a set of certain equivalence
classes of (fractional) ideals of the order $\Z[t]/(f)$
in the algebraic number field $\Q[t]/(f)$, and it inherits
the structure of a commutative semigroup.
In the case $n=2$ this is very close to the theory of
binary quadratic forms, which was worked out already by
Gau{\ss} \cite{Ga01}. For general $n$,
beyond the results in algebraic number theory books
like \cite{BSh73} and \cite{Ne99}, 
there are results by Taussky (see \cite{Ta78} for references) 
and especially the paper \cite{DTZ62}. 

The Jordan-Zassenhaus theorem is part of a theory of
orders in separable associative algebras \cite{CR62} 
\cite{Re70} \cite{Re03} \cite{Fa65-1}.
But the separability implies in our commutative situation that
the matrices are semisimple.

This paper is devoted to the theory of $GL_n(\Z)$-conjugacy
classes of matrices in $M_{n\times n}(\Z)$
which are not necessarily semisimple.
But we restrict to {\it regular} integer matrices. 
A matrix $A\in M_{n\times n}(\C)$ is
{\it regular} if it is $GL_n(\C)$-conjugate to a matrix 
in $M_{n\times n}(\C)$ in Jordan normal form which has only
one Jordan block for each eigenvalue.
This restriction is motivated by the fact that the
set $S_{1,f}$ of $GL_n(\Z)$-conjugacy classes of 
{\it regular} integer matrices with fixed 
characteristic polynomial $f\in\Z[t]$ (unitary of degree $n$)
is a commutative semigroup, and comes equipped additionally
with a natural quotient map. 
We are intrigued by these structures and study them in this
paper in many examples. 

This paper is accompanied by our recent paper \cite{HL26}.
It builds upon \cite{DTZ62} and several papers of Faddeev
and develops a theory of full lattices and orders in a 
finite dimensional commutative $\Q$-algebra $A$ with unit element
and of induced commutative semigroups. The algebra $A$ is
not necessarily separable. The paper \cite{HL26}
develops the theory, but does not provide examples.
The present paper is devoted to examples and shows
old and new results in action.

One highlight of \cite{HL26} is an extension of the
Jordan-Zassenhaus to the case of not semisimple
matrices: The set $S_{1,f}$ of $GL_n(\Z)$-conjugacy classes
of regular matrices in $M_{n\times n}(\Z)$ with a fixed
not semisimple Jordan normal form (i.e. the characteristic
polynomial $f$ has multiple roots) is infinite,
but a {\it single} additional invariant splits it into
infinitely many finite sets. This invariant is the
{\it order} of a {\it full lattice} which is 
associated to a given $GL_n(\Z)$-conjugacy class
(see Theorem \ref{t5.3}).
A second new point in \cite{HL26} is that each of these
finite sets splits naturally into smaller finite sets
which have all the same size and where one of them
is a subgroup of invertible elements of the semigroup, 
while the others consist of not invertible elements 
(see Theorem \ref{t5.5}).
The subgroups of invertible elements in the semigroup
$S_{1,f}$ play a special role and can be controlled quite
well. Two results from \cite{HL26} allow to compare
these groups with one another (see the Theorems 
\ref{t5.10} and \ref{t5.12}). The second one was known 
before in the case of an algebraic number field \cite{Ne99}.
Another new point in \cite{HL26} is an extension
of the main result in \cite{DTZ62} beyond the
case of algebraic number fields: 
For each full lattice $L$ in $A$ each power
$L^k$ with $k\geq \dim_\Q A-1$ is invertible
(see Theorem \ref{t4.4}). 

This paper splits into a smaller theoretical survey part,
which presents old and new results, especially on
orders and full lattices in an algebra $A$ as above
and on the induced semigroups.
The second longer part presents many examples which 
are studied in detail.

The first part consists of the sections \ref{s2}--\ref{s6}.
Section \ref{s2} recalls basic properties of commutative
semigroups. It follows \cite[ch. 4]{HL26}, but this is a 
condensed version of an account in \cite{DTZ62}.
Section \ref{s3} presents the (elementary) structure of
a finite dimensional commutative $\Q$-algebra $A$ with unit
element and discusses special types, Gorenstein algebras 
and cyclic algebras.
The subject of section \ref{s4} is the set 
$$\LL(A):=\{L\subset A\,|\, L\textup{ is a }\Z\textup{-lattice which
generates }A\textup{ over }\Q\}$$
of {\it full lattices} $L$ in $A$. The multiplication of full lattices
makes $\LL(A)$ a commutative semigroup. 
Section \ref{s4} collects old (mainly from papers of Faddeev
and from \cite{DTZ62}) and new (from \cite{HL26}) results 
on this semigroup.
Section \ref{s5} considers also two quotient semigroups
$\EE(A)$ and $W(\LL(A))$ and collects old and new results on their
structure. Finally, section \ref{s6} recalls the old and elementary,
but crucial observation \cite{LMD33} (see also Theorem \ref{t6.3})
that the set $S_{1,f}$  is in 1:1-correspondence to a 
subsemigroup $S_{2,f}\subset \EE(A)$ of the semigroup $\EE(A)$,
where $A:=A_f:=\Q[t]/(f)$ is a cyclic $n$-dimensional commutative
$\Q$-algebra with unit element (which is not separable if $f$ has
multiple roots).

The second part consists of the section \ref{s7}--\ref{s12}.
The long section \ref{s7} is devoted to the irreducible cases
with $n=2$. Then $S_{1,f}$ is in 1:1-correspondence to the 
$GL_2(\Z)$-equivalence classes of binary quadratic forms,
so then we are in the case of quadratic number fields.
The theory of binary quadratic forms was worked out by Gau{\ss}
\cite{Ga01} and is treated in many books (of which we cite only
\cite{BSh73} \cite{Tr13} \cite{We17} \cite{Ha21}).
Still we consider it useful to see explicitly the objects
of the sections \ref{s2}--\ref{s6} in these cases.
Also, the rank $n=2$ cases are the only ones where (semi-)normal
forms in $GL_n(\Z)$-conjugacy classes can be given relatively
easily. In the case of a real quadratic number field we 
use Conway's topograph \cite{Co97} in order to make different 
semi-normal forms transparent.

Section \ref{s8} takes up an irreducible example with $n=3$
in \cite{DTZ62}. Here a certain cubic number field and orders and
full lattices in it are considered. We can build on results
documented in \cite{LMFDB23}.

Section \ref{s9} considers matrices which have $n$ different
integer eigenvalues. The corresponding algebra consists of 
1-dimensional subalgebras, so it is separable, but easier to
treat than algebraic number fields. For example its class number
is easily seen to be 1. Also semi-normal forms can be given 
without difficulty. But we improved them to normal forms only in the
cases $n=2$ and $n=3$. 

Section \ref{s10} considers the extreme case of $n\times n$ 
matrices
with a single Jordan block (with eigenvalue 0). The algebra is
irreducible with a radical of codimension 1. Semi-normal forms
for $n\times n$ matrices can be given easily, 
but improving them to 
normal forms is difficult and is done only for $n=2$ and $n=3$.
In the case $n=3$ the new invariant, the order of a full lattice
which is associated to a $GL_3(\Z)$-conjugacy class, 
can be read off in an interesting way from the normal form
(see Theorem \ref{t10.7}.

Section \ref{s11} treats $3\times 3$ matrices with a 
$2\times 2$-Jordan block and a $1\times 1$-Jordan block
with different integer eigenvalues. 

The very short section \ref{s12} is offered for completeness
sake. It treats the only 3-dimensional algebra $A$ which
is not cyclic and thus not relevant for $3\times 3$ matrices.
The sections \ref{s10}, \ref{s11} and \ref{s12} 
allow to extend a result of Faddeev \cite{Fa65-2} on 
classes of not invertible full lattices for separable
3-dimensional algebras to all 3-dimensional algebras
(see the Theorems \ref{t5.7} and \ref{t5.8}).
This fits to one purpose of this paper, namely to give a
rather systematic treatment and panorama of the cases
with $n=3$.

At the end of this introduction, we fix some notations.

\begin{notations}\label{t1.1}
(i) $\N=\{1,2,...\}$.
$\P:=\{p\in\N\,|\, p\textup{ is a prime number}\}$.
For $m\in\N$, denote $\Z_m:=\Z/m\Z$. 

(ii) Throughout the whole paper, $A$ is a finite dimensional
commutative $\Q$-algebra with unit element $1_A$. 

(iii) For any commutative ring with unit element $1_R$, the group
of units in $R$ will be called
$R^{unit}=\{a\in R\,|\ \exists\ b\in R\textup{ with }ab=1_R\}$.

(iv) Let $V$ be a $\Q$-vector space, $n\in\N$, 
$\uuuu{v}=(v_1,...,v_n)\in M_{1\times n}(V)$ an $n$-tuple
of elements in $V$ and $M=(m_{ij})\in M_{n\times n}(\Q)$. 
Then also 
$$\uuuu{v}\cdot M:=(\sum_{i=1}^nm_{i1}v_i,...,
\sum_{i=1}^n m_{in})\in M_{1\times n}(V)$$
is an $n$-tuple of elements of $V$. One element
is obtained by multiplying $\uuuu{v}$ with one
column of $M$. This notation will be convenient often.

(v) Define two maps
\begin{eqnarray*}
&&\nu_\Q:\Q-\{0\}\to\N\textup{ and }\\
&&\delta_\Q:\Q-\{0\}\to\N\textup{ by }\\
&&|q|=|\frac{\nu_\Q(q)}{\delta_\Q(q)}|\textup{ and }
\gcd(\nu_\Q(q),\delta_\Q(q))=1\textup{ for }q\in\Q-\{0\}.
\end{eqnarray*}
$\nu_\Q(q)$ and $\delta_\Q(q)$ are the numerator and the 
denominator of $|q|$ if one writes $|q|$ as a reduced fraction.
(v) and (vi) will be used in section \ref{s10}.

(vi) For $(q_1,q_2)\in\Q^2-\{(0,0)\}$ define
${\gcd}_\Q(q_1,q_2)\in\Q_{>0}$ as the unique positive rational
number with 
$$\Z \cdot q_1+\Z \cdot q_2=\Z\cdot{\gcd}_\Q(q_1,q_2).$$
The restricted map ${\gcd}_\Q:\Q_{>0}^2\to\Q_{>0}$ 
defines a commutative semigroup structure on $\Q_{>0}$.
This semigroup is isomorphic to the semigroup 
$\bigl(\{\Z\textup{-lattices}\neq 0\textup{ in }\Q\}, +\bigr).$
\end{notations}

\section{Commutative semigroups}\label{s2}
\setcounter{equation}{0}
\setcounter{table}{0}
\setcounter{figure}{0}

This section is a review of basic properties of
commutative semigroups. We cite the notions and results in
\cite[Ch. 4]{HL26}. They are condensed versions of the 
notions and results in \cite[ch. 1.2]{DTZ62}.

\begin{definition}\label{t2.1}
(a) A commutative semigroup is a nonempty set $S$ with a 
multiplication map $\cdot:S\times S\to S$ which is commutative
($ab=ba$) and associative ($a(bc)=(ab)c$). 
The semigroup is called $S$, so the multiplication map 
is suppressed.

(b) Let $S$ be a commutative semigroup. An element $a\in S$
is called {\it invertible} if an element $b\in S$ and an element
$c\in S$ with $ab=c$ and $ac=a$ exist.

(c) Let $S$ be a commutative semigroup. An element $c\in S$ is
called {\it idempotent} if $cc=c$. 
\end{definition}

The following theorem says besides others that each idempotent
is a unit element in a maximal subgroup of $S$,
that the maximal subgroups are disjoint and that their union
is a subsemigroup of $S$ and is the set of all invertible
elements in $S$.

\begin{theorem}\label{t2.2}
\cite[1.2.3, 1.2.10]{DTZ62}\cite[4.2]{HL26}

Let $S$ be a commutative semigroup.

(a) Let $a\in S$ be invertible. Then there is a unique
element $c\in S$ with the properties $(ac=a,\exists\ b\in S
\textup{ with }ab=c)$. It is called $e_a$. It is idempotent.
There is a unique element $b\in S$ with the properties
$(ab=e_a,be_a=b)$. It is called $a^{-1}$. It is invertible, and 
$e_{a^{-1}}=e_a$. 

(b) An idempotent $c\in S$ is invertible with 
$e_c=c$ and $c^{-1}=c$.

(c) Let $c\in S$ be idempotent. The set 
\begin{eqnarray}\label{2.1}
G(c):=\{a\in S\,|\,a\textup{ invertible with }e_a=c\}
\end{eqnarray}
is a group. It is a maximal subgroup of $S$. 
Any maximal subgroup of $S$ is equal to $G(\www c)$
for some idempotent $\www c\in S$. 

(d) If $a,b\in S$ are invertible, then $ab$ is invertible
with $e_{ab}=e_ae_b$. 

(e) Suppose that $e_1,e_2\in S$ are both idempotent.
Then $e_1e_2$ is idempotent. The sets $G(e_1)e_2$, 
$G(e_2)e_1$ and $G(e_1)G(e_2)$ are subgroups of $G(e_1e_2)$.
The maps 
\begin{eqnarray}\label{2.2}
G(e_1)\to G(e_1)e_2,\ a\mapsto ae_2,\quad\textup{and}\quad
G(e_2)\to G(e_2)e_1,\ b\mapsto be_1,
\end{eqnarray}
are surjective group homomorphisms.

(f) The union $\bigcup_{c\textup{ idempotent}}G(c)\subset S$
of all invertible elements 
is a disjoint union. It is a subsemigroup of $S$. 
It is equal to $S$ if and only if any element of $S$ is invertible. 
\end{theorem}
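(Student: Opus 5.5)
The plan is to work directly from Definition \ref{t2.1}(b), using only commutativity and associativity. Throughout, $a$ invertible means there are $b,c\in S$ with $ab=c$ and $ac=a$.

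\textbf{Part (a).} First show that $c$ is idempotent. We have $c\cdot c=c\cdot(ab)=(ca)b=ab=c$.

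For uniqueness, suppose $c,c'$ both satisfy the properties, with $ab=c$ and $ab'=c'$. Then
$$cc'=c(ab')=(ca)b'=ab'=c',$$
and symmetrically $c'c=c$. So $c=c'$, and we may write $e_a:=c$.

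For the inverse, put $a^{-1}:=be_a$. Then $a(be_a)=(ab)e_a=e_ae_a=e_a$, and $(be_a)e_a=be_a$, so $a^{-1}$ has both required properties. For uniqueness, let $b'$ also satisfy $ab'=e_a$ and $b'e_a=b'$. Then
$$b'=b'e_a=b'(ab^{\ast})=(ab')b^{\ast}=e_ab^{\ast}=b^{\ast},$$
where $b^{\ast}:=be_a$, using $ab^{\ast}=e_a$ and $b^{\ast}e_a=b^{\ast}$.

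Finally, $a^{-1}$ is invertible with $e_{a^{-1}}=e_a$: the element $a$ satisfies $a^{-1}a=e_a$, and $a^{-1}e_a=a^{-1}$. Uniqueness of $e_{a^{-1}}$ then gives $e_{a^{-1}}=e_a$.

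\textbf{Part (b).} For an idempotent $c$, take $b=c$: then $cc=c$ and $cc=c$, so $c$ is invertible with $e_c=c$. The uniqueness in (a) gives $c^{-1}=c$.

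\textbf{Part (d).} Let $a,b$ be invertible. Then $(ab)(a^{-1}b^{-1})=e_ae_b$ and $(ab)(e_ae_b)=(ae_a)(be_b)=ab$. This shows $ab$ is invertible, and uniqueness gives $e_{ab}=e_ae_b$.

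\textbf{Part (c).} The set $G(c)$ is closed under multiplication by (d), since $cc=c$. It has identity $c$, because $ac=ae_a=a$ for $a\in G(c)$. It is closed under inverses by (a). Hence $G(c)$ is a group.

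For maximality and the description of all maximal subgroups, let $H\subset S$ be any subgroup with identity $\tilde c$. Then $\tilde c$ is idempotent. Each $h\in H$ satisfies $hh'=\tilde c$ and $h\tilde c=h$, where $h'$ is its inverse in $H$. So $h$ is invertible with $e_h=\tilde c$, and therefore $H\subset G(\tilde c)$. This gives maximality of each $G(c)$, and shows that every maximal subgroup equals some $G(\tilde c)$.

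\textbf{Part (e).} We have $(e_1e_2)^2=e_1e_2$ by commutativity, so $e_1e_2$ is idempotent. By (d), $G(e_1)G(e_2)\subset G(e_1e_2)$, and it is closed under products and inverses. Note that $e_2\in G(e_2)$, so $G(e_1)e_2$ is contained in $G(e_1e_2)$ as well.

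The map $a\mapsto ae_2$ is a homomorphism, since $(aa')e_2=(ae_2)(a'e_2)$ because $e_2^2=e_2$. Its image is therefore a subgroup, and the map is surjective by definition of that image.

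\textbf{Part (f).} The union is disjoint because $e_a$ is unique. By (d) it is a subsemigroup. By definition it consists exactly of the invertible elements, so it equals $S$ if and only if every element of $S$ is invertible.

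The only delicate point is the uniqueness arguments in (a). Each one comes down to choosing the right element to multiply by and regrouping with associativity.
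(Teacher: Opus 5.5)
Your argument is correct. The paper gives no proof of Theorem~\ref{t2.2}: it quotes the result from \cite[1.2.3, 1.2.10]{DTZ62} and \cite[4.2]{HL26}, so there is no in-paper argument to compare with. Your direct verification from Definition~\ref{t2.1} is the natural elementary route: uniqueness of $e_a$ via $cc'=c'$ and $c'c=c$, then $a^{-1}:=be_a$, then proving (d) before (c) so that closure of $G(c)$ is available.

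Two steps are compressed and should be written out.

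\textbf{Maximality in (c).} Knowing $H\subset G(\tilde c)$ for a subgroup $H$ with identity $\tilde c$ does not by itself give maximality of $G(c)$. If $H\supseteq G(c)$, you must first show $\tilde c=c$. One way: $c\in H$ is idempotent, and a group has only one idempotent. Another: $G(c)\cap G(\tilde c)=\emptyset$ for $c\neq\tilde c$, by uniqueness of $e_a$.

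\textbf{Inverse-closure in (e).} Closure of $G(e_1)G(e_2)$ under inverses uses $(ab)^{-1}=a^{-1}b^{-1}$. This follows from the uniqueness part of (a), since $(ab)(a^{-1}b^{-1})=e_ae_b=e_{ab}$ and $(a^{-1}b^{-1})e_{ab}=(a^{-1}e_a)(b^{-1}e_b)=a^{-1}b^{-1}$.

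A remark on the statement rather than your proof: in (f) the union is empty if $S$ has no idempotent, for example $(\N,+)$. Since Definition~\ref{t2.1}(a) requires semigroups to be nonempty, ``subsemigroup'' must then be read as allowing the empty set. In the paper's applications, such as $\LL(A)$, which contains orders, this never arises.
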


\begin{definition}\label{t2.3}
(a) Let $S$ be a commutative semigroup. 
A unit element in it is an element $e$ with $ea=a$ for all $a\in S$. 
Obviously, it is unique if it exists.
A commutative semigroup with a unit element is called a {\it monoid}.

(b) Let $S$ be a commutative semigroup. Two elements 
$a_1,a_2\in S$ are {\it $w$-equivalent} (notation: 
$a_1\sim_w a_2$),
if the following holds:
\begin{eqnarray}\label{2.3} a_1=a_2\quad\textup{or}\quad
\exists\ x_1,x_2\in S\textup{ with }a_1x_1=a_2,a_2x_2=a_1.
\end{eqnarray}
\end{definition}

The $w$-equivalence on a semigroup $S$ gives rise to a 
natural quotient semigroup $S/\sim_w=:W(S)$, as Theorem
\ref{t2.4} shows.

\begin{theorem}\label{t2.4}
\cite[1.2.12, 1.2.16, 1.2.17]{DTZ62}\cite[4.4]{HL26}

Let $S$ be a commutative semigroup.

(a) $\sim_w$ is an equivalence relation and is compatible
with the multiplication in $S$, i.e.
\begin{eqnarray}\label{2.4}
a_1\sim_w a_2,b_1\sim_w b_2\Rightarrow a_1b_1\sim_w a_2b_2.
\end{eqnarray}
The equivalence class of $a$ is called $[a]_w$.
Therefore the quotient $S/\sim_w$ is a commutative semigroup. 
This semigroup is called $W(S)$. 

(b) If $a$ is invertible then $[a]_w=G(e_a)$. 

(c) Let $a\in S$. The following three properties are equivalent:
\begin{list}{}{}
\item[(i)] $a$ is invertible.
\item[(ii)] $[a]_w$ is invertible (in $W(S)$).
\item[(iii)] $[a]_w$ is idempotent (in $W(S)$).
\end{list}
The only subgroups of $W(S)$ are the sets $\{[c]_w\}$ with
$c\in S$ idempotent. 

(d) The subsemigroup $\bigcup_{c\textup{ idempotent}}G(c)$
(of invertible elements) of $S$ was considered in 
Theorem \ref{t2.2} (e). 
The subsemigroup $W(\bigcup_{c\textup{ idempotent}}G(c))$
of $W(S)$ is isomorphic to the subsemigroup
$\{c\in S\,|\, c\textup{ is idempotent}\}$ of $S$. 
\end{theorem}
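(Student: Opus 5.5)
We check (a)–(d) of Theorem \ref{t2.4} directly from Definition \ref{t2.3}, using Theorem \ref{t2.2} throughout.

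For (a), reflexivity and symmetry are built into \eqref{2.3}. For transitivity, let $a_1\sim_w a_2\sim_w a_3$ with all three distinct, $a_1x_1=a_2$, $a_2x_2=a_1$, $a_2y_2=a_3$, $a_3y_3=a_2$. Then $a_1(x_1y_2)=a_3$ and $a_3(y_3x_2)=a_1$. The degenerate cases, where some of the elements coincide, are immediate. For compatibility \eqref{2.4}, first treat $b_1=b_2=b$: from $a_1x_1=a_2$ we get $(a_1b)x_1=a_2b$, and similarly in the other direction. The general case follows by chaining $a_1b_1\sim_w a_2b_1\sim_w a_2b_2$ and using transitivity.

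For (b), let $a$ be invertible. If $a'=a$, then $a'\in G(e_a)$ trivially. If $ax_1=a'$ and $a'x_2=a$, I would show that $a'$ is invertible with $e_{a'}=e_a$. Since $e_a=ab$ with $b=a^{-1}$, we have $a'e_a=ax_1e_a=(ae_a)x_1=ax_1=a'$. Also $a'(x_2b)=ab=e_a$. By the uniqueness in Theorem \ref{t2.2}(a), this gives that $a'$ is invertible with $e_{a'}=e_a$. Conversely, let $a'\in G(e_a)$. Then $a'=a'e_a=a(a^{-1}a')$, and symmetrically $a=a'(a'^{-1}a)$. So $a'\sim_w a$.

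For (c), the implication (i)$\Rightarrow$(iii) holds because $a\sim_w a^2$, via $a\cdot a=a^2$ and $a^2a^{-1}=ae_a=a$. The implication (iii)$\Rightarrow$(i) is the main step. If $a^2\sim_w a$ and $a^2\neq a$, there is $x$ with $a^2x=a$. I would set $c:=ax$. Then $ac=a$ and $c^2=a(ax)x=ax=c$. Hence $c$ is idempotent with $ac=a$, and with $b:=x$ we have $ab=c$, so $a$ is invertible. If $a^2=a$, then $a$ is idempotent and Theorem \ref{t2.2}(b) applies. The equivalence (ii)$\Leftrightarrow$(iii) inside $W(S)$ uses the same two computations applied in the semigroup $W(S)$: invertible implies $[a]_w\sim_w[a]_w^2$ in $W(S)$. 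Then I would observe that in $W(W(S))$, classes are trivial, since by (b) each invertible element's class in $W(S)$ is a single point. For the subgroup statement, a subgroup of $W(S)$ has an idempotent identity $[c]_w$, and by (iii)$\Rightarrow$(i) we may take $c$ idempotent in $S$. Every element of the subgroup is then invertible with $e=[c]_w$, so it lies in $G([c]_w)$. Its class in $W(W(S))$ is a point, and the w-equivalence of each element to $[c]_w$ forces it to equal $[c]_w$.

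For (d), by (b) the classes of invertible elements are exactly the groups $G(c)$, which are disjoint. The map $c\mapsto G(c)=[c]_w$ is therefore a bijection from the idempotents onto $W(\bigcup G(c))$. It is multiplicative by Theorem \ref{t2.2}(d),(e), since $e_{ab}=e_ae_b$. The main obstacle is the careful handling of the equivalence (ii)$\Leftrightarrow$(iii) and of the subgroup claim, where one must avoid circularity.
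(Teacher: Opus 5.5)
Parts (a), (b) and (d) are fine as written. So are the implications (i)$\Rightarrow$(iii)$\Rightarrow$(i) in (c), and (iii)$\Rightarrow$(ii), since an idempotent of $W(S)$ is invertible by Theorem \ref{t2.2}(b).

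The gap is in (ii)$\Rightarrow$(iii) and in the subgroup statement. Both rest on your claim that $\sim_w$ is trivial on $W(S)$, ``since by (b) each invertible element's class in $W(S)$ is a single point.'' Part (b), applied to the semigroup $W(S)$, only says that the $w$-class of an invertible $\alpha\in W(S)$ is the group $G(e_\alpha)\subset W(S)$. That this group is a single point is exactly the subgroup statement of (c), and you then derive that statement from the same triviality. So the argument is circular as written. You flag this danger at the end but do not resolve it.

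The missing ingredient is a one-line lifting argument. Suppose $[a]_w[x_1]_w=[a']_w$ and $[a']_w[x_2]_w=[a]_w$ in $W(S)$. The first relation means $ax_1\sim_w a'$, so $a'=ax_1$ or $a'=ax_1y$ for some $y\in S$; either way $a'\in aS$. Symmetrically $a\in a'S$, so $a\sim_w a'$ and $[a]_w=[a']_w$. Hence $\sim_w$ on $W(S)$ is equality, and the triviality you need is now proved. With it, your (ii)$\Rightarrow$(iii) goes through: $[a]_w\sim_w[a]_w^2$ in $W(S)$ forces $[a]_w=[a]_w^2$.

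Your subgroup argument also goes through. In a subgroup $H$ with identity $\epsilon$, each $h\in H$ satisfies $h\epsilon=h$ and $hh^{-1}=\epsilon$, so $h\sim_w\epsilon$ and therefore $h=\epsilon$. Then $\epsilon=[c]_w$ with $c$ invertible by (iii)$\Rightarrow$(i). By (b), $[c]_w=G(e_c)=[e_c]_w$, so $c$ may be replaced by the idempotent $e_c$; you skipped this last step.

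Alternatively, you can avoid $W(W(S))$ altogether. If $[a]_w$ is invertible in $W(S)$ with inverse $[b]_w$, then $[a^2b]_w=[a]_w^2[a]_w^{-1}=[a]_w$. This gives $a\in a^2S$, and your own computation for (iii)$\Rightarrow$(i) then shows that $a$ is invertible.
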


\section{Finite dimensional $\Q$-algebras and 
multiplicative metrics}\label{s3}
\setcounter{equation}{0}
\setcounter{table}{0}
\setcounter{figure}{0}

Recall that throughout the paper $A$ is a finite dimensional
commutative $\Q$-algebra with unit element $1_A$. 
The structure theory of such an algebra $A$ is not difficult.
It is partly covered by the Wedderburn-Malcev theorem
\cite[(72.19) Theorem]{CR62}. Theorem \ref{t3.1} gives
more details.

\begin{theorem}\label{t3.1} (E.g. \cite[3.2]{HL26})
Let $A$ be a finite dimensional commutative $\Q$-algebra
with unit element $1_A$. Then $A$ has a unique vector space 
decomposition
\begin{eqnarray*}
A=\bigoplus_{j=1}^k A^{(j)} \quad\textup{for some }
k\in\N
\end{eqnarray*}
with the following properties.
\begin{list}{}{}
\item[(i)]
$A^{(1)},...,A^{(k)}$ are $\Q$-algebras with 
$A^{(i)}\cdot A^{(j)}=\{0\}$ for $i\neq j$.
\item[(ii)]
$1_A=\sum_{j=1}^k 1_{A^{(j)}}$ splits into unit elements 
$1_{A^{(j)}}$ of the algebras $A^{(j)}$. 
\item[(iii)]
$A^{(j)}$ splits uniquely into
$$A^{(j)}=F^{(j)}\oplus N^{(j)}$$
with $F^{(j)}$ a $\Q$-subalgebra and an algebraic number field,
and $N^{(j)}$ the unique maximal ideal of $A^{(j)}$.
$A^{(j)}$ is an $F^{(j)}$-algebra. 
$N^{(j)}$ consists of nilpotent elements and is the radical
of $A^{(j)}$.
\end{list}
\end{theorem}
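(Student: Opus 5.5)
The plan is to prove existence and uniqueness of the decomposition in three steps: split $A$ into local algebras via idempotents, then show each local piece has a nilpotent maximal ideal, then lift its residue field back into it as a subfield.

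\emph{Step 1: splitting into local algebras.} $A$ is Artinian because it is finite dimensional over $\Q$. So it has only finitely many maximal ideals $\mathfrak m_1,\dots,\mathfrak m_k$, and its Jacobson radical $N:=\bigcap_j\mathfrak m_j$ is nilpotent. Since $N$ is nilpotent it consists of nilpotent elements; conversely every nilpotent element lies in every maximal ideal. Hence $N$ is exactly the set of nilpotent elements of $A$.
\begin{itemize}
\item By the Chinese remainder theorem, $A/N\cong\prod_j A/\mathfrak m_j$, a product of fields. These fields are finite extensions of $\Q$, i.e.\ algebraic number fields.
\item The idempotents of $A/N$ lift uniquely to idempotents of $A$. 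This is the standard lifting argument modulo a nil ideal; uniqueness holds because two commuting idempotents that agree modulo a nil ideal are equal.
\item This produces orthogonal idempotents $1_{A^{(j)}}$ with $\sum_j 1_{A^{(j)}}=1_A$. Put $A^{(j)}:=1_{A^{(j)}}A$. Properties (i) and (ii) follow at once.
\item Each $A^{(j)}$ has exactly one maximal ideal, namely the image of $\mathfrak m_j$; call it $N^{(j)}$. It equals $N\cap A^{(j)}$, the nilradical of $A^{(j)}$.
\end{itemize}
Uniqueness of the decomposition: any decomposition with properties (i), (ii) and local factors is determined by a complete set of primitive orthogonal idempotents. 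In a commutative ring such a set is unique, since it consists of the minimal nonzero idempotents.

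\emph{Step 2: lifting the residue field.} Fix $j$ and let $K:=A^{(j)}/N^{(j)}$, a number field. By the primitive element theorem, $K=\Q(\bar\alpha)$. Let $g\in\Q[t]$ be the minimal polynomial of $\bar\alpha$; it is separable because we are in characteristic $0$. We want a lift $\alpha\in A^{(j)}$ with $g(\alpha)=0$, and we get it by Newton iteration (Hensel's lemma for the nilpotent ideal $N^{(j)}$):
\begin{itemize}
\item Start with any lift $\alpha_0$ of $\bar\alpha$.
\item Since $g$ is separable, $g'(\bar\alpha)\neq0$, so $g'(\alpha_0)$ is a unit of the local ring $A^{(j)}$.
\item Set $\alpha_{i+1}=\alpha_i-g(\alpha_i)g'(\alpha_i)^{-1}$. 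Then $g(\alpha_i)\in (N^{(j)})^{2^i}$, which becomes $0$ after finitely many steps.
\end{itemize}
Put $F^{(j)}:=\Q[\alpha]$. The map $F^{(j)}\to K$ is surjective, and $F^{(j)}$ is a quotient of the field $\Q[t]/(g)\cong K$, so it is a field isomorphic to $K$. Therefore $A^{(j)}=F^{(j)}\oplus N^{(j)}$, and $A^{(j)}$ is an $F^{(j)}$-algebra.

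\emph{Step 3: uniqueness of $F^{(j)}$ (the main obstacle).} Suppose $F'$ is another subfield complementary to $N^{(j)}$. Each element $\bar\beta\in K$ has a separable minimal polynomial $h$, and the derivative argument of Step 2 shows that $h$ has at most one root in $A^{(j)}$ lifting $\bar\beta$. Indeed, if $h(x)=h(y)=0$ with $x\equiv y$, then
$$0=h(x)-h(y)=(x-y)\bigl(h'(y)+n\bigr)\quad\text{with } n\in N^{(j)}.$$
The factor $h'(y)+n$ is a unit, so $x=y$. Both $F^{(j)}$ and $F'$ contain a lift of $\bar\beta$ that is a root of $h$, because each is a field isomorphic to $K$ mapping onto $K$. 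Hence these lifts coincide, and $F'=F^{(j)}$.

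The point that needs care is separability. It is automatic over $\Q$, and it is what makes both the Newton lifting in Step 2 and the uniqueness in Step 3 work.
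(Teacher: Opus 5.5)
Your proof is correct. The idempotent lifting with uniqueness, the Newton iteration, and the uniqueness of a lift of a root of a separable polynomial all work as you state. In the Newton step, $g'(\alpha_i)$ stays a unit because $\alpha_i\equiv\alpha_0$ modulo $N^{(j)}$. In Step 3, what you actually use is that the projection $F'\to K$ is a ring isomorphism, which holds because $F'$ is a unital subalgebra complementary to $N^{(j)}$.

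The paper gives no proof of Theorem \ref{t3.1}. It cites \cite{HL26}, and it points to the Wedderburn--Malcev theorem and, in Remark \ref{t3.2} (i), to the Jordan--Chevalley decomposition over a perfect field. That route is global rather than local:
\begin{itemize}
\item every $a\in A$ splits as $a=a_s+a_n$ with $a_s,a_n\in\Q[a]$, $\mu_{a_s}$ semisimple and $a_n$ nilpotent;
\item so $F=\bigoplus_jF^{(j)}$ is characterized intrinsically as the set of elements whose multiplication operator is semisimple, and $\pr_F(a)=a_s$ is a polynomial in $a$;
\item uniqueness of $F$ is then immediate, every idempotent of $A$ automatically lies in $F$, and the summands $A^{(j)}$ are cut out by the primitive idempotents of the separable algebra $F$.
\end{itemize}

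You proceed in the opposite order. You first obtain the local summands from the Artinian structure and idempotent lifting. You then take a Hensel lift of a primitive element in each local piece, and get uniqueness of $F^{(j)}$ from uniqueness of lifts of separable roots.

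Your version is more elementary and self-contained, and it makes the local structure explicit. The Jordan--Chevalley version buys an elementwise description of $F$ and of $\pr_F$. Both rest on the same input, separability of the residue fields, which is also what Jordan--Chevalley over a perfect field needs. So your argument also works verbatim over any perfect field, as Remark \ref{t3.2} (i) notes for the theorem.
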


\begin{remarks}\label{t3.2}
(i) In fact, the Jordan-Chevalley decomposition holds over 
any perfect field \cite[VII \S 5 9. Theorem 1]{Bo90}. 
Therefore we can replace in Theorem \ref{t3.1} 
the field $\Q$ by any perfect field.
But in this paper we need only $\Q$.

(ii) Consider the sums 
\begin{eqnarray}\label{3.1}
F:=\bigoplus_{j=1}^k F^{(k)}\quad\textup{ and }\quad 
R=\bigoplus_{j=1}^k N^{(j)}.
\end{eqnarray}
$F$ is a separable subalgebra of $A$.
$R$ is the radical, namely the intersection of the maximal 
ideals and the set of all nilpotent elements of $A$. 
The algebra $A$ decomposes naturally 
into the direct sum $A=F\oplus R$. The induced projection
$\pr_F:A\to F$ 
respects addition, multiplication and division. 
We will come back to it in Lemma \ref{t5.9} and
Theorem \ref{t5.10}. 

(iii) $A$ is separable if and only if $A=F$, so if and only
if $R=\{0\}$.
\end{remarks}

\begin{definition}\label{t3.3}
Let $A$ be as in Theorem \ref{t3.1}.

(a) $A$ is called {\it cyclic} if it contains an element $a$
with $A=\Q[a]$.

(b) A symmetric $\Q$-bilinear form $\Phi:A\times A\to\Q$ is 
{\it multiplication invariant} if
$$\Phi(ab,c)=\Phi(a,bc)\quad\textup{for }a,b,c\in A.$$
A symmetric $\Q$-bilinear form which is multiplication invariant
and nondegenerate is called {\it multiplicative metric}.
If $A$ has a multiplicative metric then $A$ is a 
{\it Gorenstein ring}.
\end{definition}

\begin{remarks}\label{t3.4}
Let $A$ be as in Theorem \ref{t3.1}.

(i) It is well known and easy to see 
that $A$ is a Gorenstein ring if and only if
for each $j\in\{1,...,k\}$ the socle 
$\Ann_{A^{(j)}}(N^{(j)})\subset A^{(j)}$ of $A^{(j)}$ is a 
1-dimensional vector space over $F^{(j)}$. 
In that case any $\Q$-linear form $l:A\to\Q$ with
$$\Ann_{A^{(j)}}(N^{(j)})\not\subset \ker(l)\quad\textup{for }
j\in\{1,...,k\}$$
gives rise to a multiplicative metric via
\begin{eqnarray}\label{3.2}
\phi(a,b):=l(ab),
\end{eqnarray}
and any multiplicative metric is constructed in this way.

(ii) Any cyclic algebra is Gorenstein. If $A=\Q[a]$, then
the linear form $l:A\to\Q$ with 
\begin{eqnarray*}
l(a^m)&=&0\textup{ for }m\in\{0,1,...,\dim A-2\},\\
l(a^{\dim A-1})&=&1
\end{eqnarray*}
gives rise to a multiplicative metric by \eqref{3.2}.

(iii) Any algebraic number field is a cyclic algebra.
Therefore $A$ is cyclic if $A=F$, so if $A$ is separable.

(iv) The algebra $A=\Q[x,y]/(x^2,y^2)$ is Gorenstein, but not
cyclic.

(v) The algebra $A=\Q[x,y]/(x^2,xy,y^2)$ is not Gorenstein
(so also not cyclic), as it is irreducible with 
2-dimensional socle. See section \ref{s12} for it. 

(vi) Let $A=\Q[a]$ be cyclic. Denote by 
$\mu_a:A\to A$, $b\mapsto ab$, the multiplication by $a$
in $A$. It is a regular endomorphism of $A$ with a 
characteristic polynomial $f\in\Q[t]$. 
Then $A\cong \Q[t]/(f)$. Write 
\begin{eqnarray*}
f&=& \prod_{j=1}^kf_j^{n_j+1}
\end{eqnarray*}
with $n_j\in\Z_{\geq 0}$,
$f_j\in \Q[t]$ unitary and irreducible
and $f_i\neq f_j$ for $i\neq j$. 
Then $A=\bigoplus_{j=1}^kA^{(j)}$ as in Theorem \ref{t3.1}
with unit element $1_{A^{(j)}}$ in $A^{(j)}$ and  
\begin{eqnarray*}
A^{(j)}=\Q[a 1_{A^{(j)}}] &\cong& 
\Q[t]/(f_j^{n_j+1}),\\
F^{(j)}&\cong& \Q[t]/(f_j),
\end{eqnarray*}
Then $\varphi_j:=f_j(a 1_{A^{(j)}})$ is in $N^{(j)}$
and gives rise to the $F^{(j)}$-basis $1_{A^{(j)}},
\varphi_j,\varphi_j^2,...,\varphi_j^{n_j}$ of $A^{(j)}$ with
$$(N^{(j)})^l=\bigoplus_{i=l}^{n_j}F^{(j)}\varphi_j^i$$
for $l\in\{1,...,n_j\}$. So each quotient
$(N^{(j)})^l/(N^{(j)})^{l+1}$ is a 1-dimensional
$F^{(j)}$-vector space. This holds for each $j$ only if
$A$ is cyclic. 

(vii) The cyclic algebra $A\cong\Q[t]/(f)$ in part (vi) 
with $f\in\Z[t]$ unitary will be relevant for the 
conjugacy classes of regular 
integer matrices with characteristic polynomial $f$. 
See section \ref{s6}. 
\end{remarks}

\section{Orders, full lattices and their semigroups}\label{s4}
\setcounter{equation}{0}
\setcounter{table}{0}
\setcounter{figure}{0}

Throughout the paper, $A$ is an algebra as in Theorem \ref{t3.1},
so a finite dimensional commutative $\Q$-algebra with 
unit element $1_A$. 
In this section full lattices and orders in $A$ and the 
semigroup $\LL(A)$ of full lattices in $A$ 
will be defined and discussed.

\begin{definition}\label{t4.1}
(a) Let $V$ be a finite dimensional $\Q$-vector space.
A full lattice in $V$ is a finitely generated $\Z$-module
in $V$ which generates $V$ over $\Q$. The set of full lattices
in $V$ is called $\LL(V)$.

(b) An order in $A$ is a full lattice $\Lambda$ in $A$ with
\begin{eqnarray}\label{4.1}
1_A\in\Lambda\quad\textup{ and }\quad \Lambda\cdot\Lambda
\subset\Lambda,
\end{eqnarray}
so it is a full lattice in $A$ and a subring of $A$.
Automatically $\Lambda\cdot\Lambda=\Lambda$.
\end{definition}

The following basic facts are elementary. Proofs are offered
for example in \cite{HL26}.

\begin{lemma}\label{t4.2}
(E.g. \cite[5.2, 5.3, 5.5, 8.1]{HL26})

(a) Let $L,L_1$ and $L_2$ be full lattices in $A$. Then
\begin{eqnarray*}
L_1\cdot L_2&:=& \{\sum_{i\in I}a_ib_i\,|\, I\textup{ a finite 
index set}, a_i\in L_1,b_i\in L_2\}\textup{ and}\\
L_1:L_2 &:=&\{a\in A\,|\, a\cdot L_2\subset L_1\}
\end{eqnarray*}
are full lattices in $A$, and 
\begin{eqnarray*}
\OO(L):= L:L
\end{eqnarray*}
is an order in $A$. It is called {\sf the order of }$L$.

(b) With this multiplication of full lattices, the set $\LL(A)$
becomes a commutative semigroup.  It also comes equipped 
with the division map with $(L_1,L_2)\mapsto L_1:L_2$.

(c) The product $\Lambda_1\Lambda_2$ of two orders $\Lambda_1$
and $\Lambda_2$ is the smallest order which contains 
$\Lambda_1$ and $\Lambda_2$. Therefore the set
$\{\textup{orders in }A\}$ is a subsemigroup of $\LL(A)$. 
But the quotient $\Lambda_1:\Lambda_2$ is not an order if 
$\Lambda_2\not\subset \Lambda_1$ because then 
$1_A\notin \Lambda_1:\Lambda_2$. 

(d) Let $L_1,L_2\in\LL(A)$. Then
\begin{eqnarray}\label{4.2}
\OO(L_1L_2)\supset\OO(L_1)\OO(L_2),\quad 
\OO(L_1:L_2)\supset\OO(L_1)\OO(L_2).
\end{eqnarray}
\end{lemma}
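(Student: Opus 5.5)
The plan is to prove the four parts in order. The workhorse is the fact that a subgroup of $A$ is a full lattice if and only if it is contained in some full lattice and contains some full lattice. This holds because finitely generated torsion-free $\Z$-modules are free, subgroups of finitely generated free $\Z$-modules are finitely generated, and containing a full lattice forces the subgroup to span $A$ over $\Q$. We also use that for any two full lattices $L,M$ there is an $m\in\N$ with $mL\subset M$: each of the finitely many generators of $L$ is a $\Q$-combination of a $\Q$-basis of $A$ lying in $M$, so clearing denominators works.

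For (a), fix $\Z$-bases $a_1,\dots,a_n$ of $L_1$ and $b_1,\dots,b_n$ of $L_2$. Then $L_1L_2$ is generated by the finitely many products $a_ib_j$. Choosing $m$ with $m1_A\in L_2$, we get $L_1L_2\supset mL_1$, which is a full lattice, so $L_1L_2$ spans $A$ and is a full lattice.

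For $L_1:L_2$, we bound it from both sides.
\begin{itemize}
\item Upper bound: since $1_A\in\Q L_2$, there is $m$ with $m1_A\in L_2$. Then $x\in L_1:L_2$ implies $mx\in L_1$, so $L_1:L_2\subset \frac1m L_1$, a full lattice.
\item Lower bound: the finitely many elements $a_ib_j$ all lie in the full lattice $L_1L_2$, so there is $m'$ with $m'L_1L_2\subset L_1$. Hence $m'L_1\subset L_1:L_2$.
\end{itemize}
By the workhorse fact, $L_1:L_2$ is a full lattice. For $\OO(L)=L:L$, it contains $1_A$ and is closed under multiplication because $ab L\subset aL\subset L$. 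So it is an order.

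The remark that $\Lambda\Lambda=\Lambda$ for an order is immediate from $1_A\in\Lambda$: it gives $\Lambda\subset\Lambda\Lambda$, and the reverse inclusion is the defining condition.

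Part (b) is then formal. Commutativity and associativity of the product of lattices follow from the corresponding properties in $A$, by comparing generating sets $a_ib_jc_k$.

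For (c), we check the three claims separately.
\begin{itemize}
\item $\Lambda_1\Lambda_2$ contains $1_A=1_A\cdot1_A$.
\item It is multiplicatively closed, since $(\Lambda_1\Lambda_2)(\Lambda_1\Lambda_2)=(\Lambda_1\Lambda_1)(\Lambda_2\Lambda_2)=\Lambda_1\Lambda_2$ by commutativity and associativity.
\item It contains $\Lambda_1=\Lambda_1\cdot1_A$ and $\Lambda_2$, and any order containing both contains all sums of products $ab$ with $a\in\Lambda_1$, $b\in\Lambda_2$. So it is the smallest such order.
\end{itemize}
Closedness of the set of orders under multiplication follows. If $1_A\in\Lambda_1:\Lambda_2$, then $\Lambda_2=1_A\Lambda_2\subset\Lambda_1$. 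So $\Lambda_2\not\subset\Lambda_1$ forces $1_A\notin\Lambda_1:\Lambda_2$, and $\Lambda_1:\Lambda_2$ is not an order.

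For (d), by (c) it suffices to show that $\OO(L_1)$ and $\OO(L_2)$ are each contained in the orders $\OO(L_1L_2)$ and $\OO(L_1:L_2)$. The product case is immediate: for $a\in\OO(L_1)$ we have $aL_1L_2\subset L_1L_2$, and symmetrically for $\OO(L_2)$. The quotient case needs two checks:
\begin{itemize}
\item If $a\in\OO(L_1)$ and $x\in L_1:L_2$, then $axL_2\subset aL_1\subset L_1$, so $ax\in L_1:L_2$.
\item If $a\in\OO(L_2)$, then $axL_2=x(aL_2)\subset xL_2\subset L_1$, so again $ax\in L_1:L_2$.
\end{itemize}

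The only step needing real care is the quotient in (a): showing $L_1:L_2$ is finitely generated and full, which is handled by the two-sided sandwich between $m'L_1$ and $\frac1m L_1$. Everything else is bookkeeping.
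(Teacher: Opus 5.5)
Your proof is correct and complete. The sandwich $m'L_1\subset L_1:L_2\subset \frac{1}{m}L_1$ handles the only non-formal point in (a), (b) and (c) are correct formal manipulations, and (d) is correctly reduced to checking $\OO(L_1),\OO(L_2)\subset\OO(L_1L_2),\OO(L_1:L_2)$ via the minimality statement in (c). The paper itself gives no proof of this lemma (it calls these facts elementary and refers to \cite{HL26}), so there is no different route to compare against; your argument is the natural elementary verification.
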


As $\LL(A)$ is a commutative semigroup, we have by
Definition \ref{t2.1} the notions of idempotents
and of invertible elements in $\LL(A)$. 
Theorem \ref{t4.3} discusses them.
It is cited from \cite{HL26}. But the result there is a
condensed version of results in \cite{DTZ62} with one piece
from \cite{Fa65-1} (the implication (iv)$\Rightarrow$(i)
in Theorem \ref{t4.3} (b)).

\begin{theorem}\label{t4.3}\cite[1.3.3, 1.3.6]{DTZ62}
\cite[26.4]{Fa65-1}\cite[5.6]{HL26}

(a) $\Lambda\in\LL(A)$ is an idempotent in the semigroup
$\LL(A)$ if and only if it is an order.

(b) Let $L\in\LL(A)$. The following four properties 
are equivalent:
\begin{list}{}{}
\item[(i)] 
$L$ is invertible in the semigroup $\LL(A)$.
\item[(ii)] 
$L_2\in\LL(A)$ with $L\cdot L_2=\OO(L)$ exists.
\item[(iii)] 
$L\cdot (\OO(L):L)=\OO(L)$. 
\item[(iv)]
$\OO(\OO(L):L)=\OO(L)$. 
\end{list}
If this holds then $e_L=\OO(L)$, $L^{-1}=\OO(L):L$, and
$L^{-1}$ is invertible with $\OO(L^{-1})=\OO(L)$.
\end{theorem}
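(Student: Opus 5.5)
Part (a) comes first. If $\Lambda$ is an order, then $\Lambda\cdot\Lambda=\Lambda$ by Definition \ref{t4.1}, so $\Lambda$ is idempotent. Conversely, suppose $L\cdot L=L$. Then $L\subset L:L=\OO(L)$. I will show $1_A\in L$. Multiplication by $L$ maps the finitely generated $\Z$-module $L$ onto itself, so a Cayley--Hamilton/determinant trick (Nakayama-type argument) applies. Choose a $\Z$-basis $v_1,\dots,v_n$ of $L$. Since $L=L\cdot L$, each $v_i=\sum_j a_{ij}v_j$ with $a_{ij}\in L$. Then $\det(\id-(a_{ij}))$ annihilates every $v_j$, hence annihilates all of $A$, because the $v_j$ span $A$ over $\Q$. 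So this determinant is $0$ in $A$. Expanding it gives $1_A=\ell$ for some $\ell$ in the (nonunital) subring generated by $L$, which is $L$ itself since $LL\subset L$. Hence $1_A\in L$, and with $LL\subset L$ we get that $L$ is an order.

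For (b), the plan is (i)$\Rightarrow$(ii)$\Rightarrow$(iii)$\Rightarrow$(i), with (iv) attached separately. For (i)$\Rightarrow$(ii), suppose $L$ is invertible. Definition \ref{t2.1} gives $L_2, C$ with $LL_2=C$ and $LC=L$. By Theorem \ref{t2.2}, $C=e_L$ is idempotent, hence an order by (a). From $LC=L$ we get $C\subset\OO(L)$. Conversely $\OO(L)=\OO(L)C=\OO(L)LL_2\subset LL_2=C$, so $C=\OO(L)$.

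For (ii)$\Rightarrow$(iii), suppose $LL_2=\OO(L)$. Then $L_2\subset \OO(L):L$, hence $\OO(L)\subset L(\OO(L):L)\subset \OO(L)$. For (iii)$\Rightarrow$(i), take $b=\OO(L):L$ and $c=\OO(L)$ in Definition \ref{t2.1}; these work because $L\OO(L)=L$, using $1_A\in\OO(L)$. The formulas $e_L=\OO(L)$ and $L^{-1}=\OO(L):L$ follow from the uniqueness in Theorem \ref{t2.2}(a), since $(\OO(L):L)\OO(L)=\OO(L):L$. Then $L^{-1}$ is invertible with $e_{L^{-1}}=e_L$, so $\OO(L^{-1})=\OO(L)$ by the same identification applied to $L^{-1}$. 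This gives (iii)$\Rightarrow$(iv).

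The main obstacle is (iv)$\Rightarrow$(i), which is Faddeev's contribution. Set $\Lambda=\OO(L)$ and $M=\Lambda:L$, and assume $\OO(M)=\Lambda$. The natural approach is duality. I would use the trace form, or a multiplicative metric from Remarks \ref{t3.4}, to define the dual lattice $L^\#$ and the identity $\Lambda:L=(L\cdot\Lambda^\#)^\#$. I would then argue that the product $I=LM\subset\Lambda$ satisfies $\Lambda:I=\OO(M)\cap\ldots$. More concretely, $\Lambda:(LM)=(\Lambda:L):M=M:M=\OO(M)=\Lambda$. The claim is that an ideal $I\subset\Lambda$ with $\Lambda:I=\Lambda$ must equal $\Lambda$. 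This fails in general for non-Gorenstein $\Lambda$, so the argument must use more, presumably localization at primes $p$ and the structure of $\Lambda_p$-modules. I would try to reduce to the local case, where I would show $L_p$ is principal. If this becomes too technical, I would instead cite \cite[26.4]{Fa65-1} for this step.
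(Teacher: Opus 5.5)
Your arguments for (a), for (i)$\Rightarrow$(ii)$\Rightarrow$(iii)$\Rightarrow$(i), for the formulas $e_L=\OO(L)$ and $L^{-1}=\OO(L):L$, and for (i)$\Rightarrow$(iv) are correct. The gap is (iv)$\Rightarrow$(i). This is the only nonformal implication in (b), and you do not prove it.

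Your reduction is the right one. Put $\Lambda=\OO(L)$, $M=\Lambda:L$ and $I=LM$. Then $I$ is an ideal of $\Lambda$ contained in $\Lambda$, and $\Lambda:I=(\Lambda:L):M=M:M=\OO(M)$. So (iv) gives $\Lambda:I=\Lambda$, and it remains to conclude $I=\Lambda$, which is (iii).

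You then assert that this conclusion fails for non-Gorenstein $\Lambda$, and fall back on an unspecified localization argument or on citing \cite{Fa65-1}. That assertion is false. The duality tools you mention also cannot handle the general case: $A$ need not have a multiplicative metric (Remark \ref{t3.4} (v)), and the trace form is degenerate as soon as $A$ is not separable. Gorensteinness governs a different question, namely whether \emph{every} exact $\Lambda$-ideal is invertible (Theorem \ref{t4.14}), or whether $\Lambda:(\Lambda:I)=I$. It plays no role here.

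The missing lemma holds for every order: if $I\subsetneqq\Lambda$ is an ideal of $\Lambda$ which is a full lattice, then $\Lambda:I\supsetneqq\Lambda$. Here is the argument.
\begin{itemize}
\item Since $\Lambda/I$ is a nonzero finite ring, $I\subset\mathfrak m$ for some maximal ideal $\mathfrak m$ of $\Lambda$. Its residue field is finite, of some characteristic $p$, so $p\Lambda\subset\mathfrak m$.
\item In the finite ring $\Lambda/p\Lambda$, the maximal ideal $\mathfrak m/p\Lambda$ is an associated prime, because every prime of an Artinian ring is. Hence there is $y\in\Lambda\setminus p\Lambda$ with $\mathfrak m y\subset p\Lambda$.
\item Then $y/p\in(\Lambda:\mathfrak m)\setminus\Lambda\subset(\Lambda:I)\setminus\Lambda$.
\end{itemize}
Apply this to $I=L(\Lambda:L)$. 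Under (iv) it gives $L(\OO(L):L)=\OO(L)$, hence (iii) and then (i), with no Gorenstein hypothesis and no duality. With this lemma inserted, your proof of the whole statement is complete.
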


By Theorem \ref{t2.2} (f), the subset
$$\bigcup_{\Lambda\textup{ order}}G(\Lambda)\subset\LL(A)$$
of invertible elements in $\LL(A)$ is a subsemigroup of
$\LL(A)$ (with $\OO(L_1L_2)=\OO(L_1)\OO(L_2)$ by Theorem 
\ref{t2.2} (d)).
It is much easier to control than the multiplication of
not invertible elements. 

The only strong result for not invertible elements, 
which we know, is Theorem \ref{t4.4}. It was proved for $A$ an
algebraic number field in \cite{DTZ62}, and this is the 
central result of \cite{DTZ62}. A different proof for
$A$ separable was given in \cite{Si70}.
The proof in \cite{DTZ62} was adapted to the case of an
arbitrary $A$ in \cite{HL26}.

\begin{theorem}\label{t4.4} 
\cite[1.5.5]{DTZ62} \cite[Theorem 2]{Si70} 
\cite[Theorem 9.1]{HL26}
Suppose that $A$ has dimension $n\in\Z_{\geq 2}$.
For each full lattice $L\in\LL(A)$ each power $L^k$ with 
$k\geq n-1$ is invertible.
\end{theorem}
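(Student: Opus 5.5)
Since $L^{k+1}=L^k\cdot L$ and products of invertible elements are invertible (Theorem \ref{t2.2} (d)), it is not enough to treat $k=n-1$ alone; we need $L^k$ invertible for each $k\geq n-1$. I would prove the stronger statement that $M:=L^k$ satisfies criterion (ii) of Theorem \ref{t4.3} (b), i.e.\ that some $L_2\in\LL(A)$ has $M\cdot L_2=\OO(M)$. The argument localizes. A full lattice $M$ is invertible if and only if $M_{(p)}:=\Z_{(p)}M$ is invertible over $\OO(M)_{(p)}$ for every prime $p$, and for almost all $p$ both are the maximal-like local objects where this is automatic. So I reduce to a semilocal ring $\Lambda=\OO(L)_{(p)}$, with $L_{(p)}$ a finitely generated $\Lambda$-module spanning $A$.

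The key local fact I aim for is a ``principal power'' statement. After a harmless base change $\Z_{(p)}\to$ a local ring with infinite residue field (adjoining an unramified extension, which does not affect invertibility), there is $x\in L$ with $xL^{k}=L^{k+1}$ for $k\geq n-1$. This is the Dade–Taussky–Zassenhaus mechanism: $x$ is a ``reduction'' of $L$. I would choose $x$ so that multiplication by $x$ is invertible in $A$ and generic. Then I would bound the reduction number by $n-1$. The bound comes from the fact that the chain $L^{k}x^{-k}$ is an increasing chain of $\Lambda$-submodules of $A$, each contained in the integral closure–type bound. Along this chain the number of generators is at most $n$, so the chain stabilizes after at most $n-1$ steps. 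This follows the proof in \cite{DTZ62} via a Hilbert-function/minimal-number-of-generators count: over the local ring, $L^k/\mathfrak m L^k$ has dimension $\le n$, and a standard Eakin–Sathaye type argument gives $L^{k+1}=xL^{k}$ once $k\geq n-1$.

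Given $L^{k+1}=xL^k$, set $\Lambda':=x^{-k}L^k$. Then $\Lambda'\cdot\Lambda'=x^{-2k}L^{2k}=x^{-k}L^k=\Lambda'$, and $1\in\Lambda'$ because $x^k\in L^k$. So $\Lambda'$ is an order, and $L^k=x^k\Lambda'$ is principal over its order, hence invertible with inverse $x^{-k}\Lambda'$. Since invertibility holds at every localization and these local inverses glue to the global $\OO(M):M$, criterion (iii) of Theorem \ref{t4.3} (b) holds globally.

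The main obstacle is the reduction-number bound $k\geq n-1$ in the non-separable case. There $A$ has nilpotents, so $x$ must be chosen to be a unit of $A$. I would first check that units are Zariski-dense among elements of $L\otimes\overline{k}$ avoiding finitely many proper subspaces, which is where the infinite residue field is used. I would then redo the generator count of \cite{DTZ62} with ``rank $n$ free $\Z_{(p)}$-module'' in place of the number field, noting that it never used integrality or separability.
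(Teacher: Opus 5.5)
Your architecture is sound: localize, find $x$ with $L^{k+1}=xL^k$ for $k\ge n-1$, and read off $L^k=x^k\Lambda'$ with $\Lambda'=x^{-k}L^k$ an order. Your last paragraph is correct. Note that $x\in A^{unit}$ is automatic from $xL^{n-1}=L^n$ and fullness, so the Zariski-density discussion is unnecessary.

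\textbf{The bound $n-1$ is not justified as written.} Two of your supporting claims fail.

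The sentence ``the number of generators is at most $n$, so the chain stabilizes after at most $n-1$ steps'' is a non sequitur. For example, $p^m\Z_{(p)}\subsetneq p^{m-1}\Z_{(p)}\subsetneq\cdots\subsetneq\Z_{(p)}$ is a bounded chain of cyclic modules of length $m$.

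There is also no ``integral closure--type bound'' when $A$ is not reduced. The whole radical is integral over $\Z$; in $\Q[a]/(a^2)$ the integral closure of $\Z_{(p)}$ is $\Z_{(p)}+\Q a$. So finiteness of $\bigcup_k x^{-k}L^k$ is a consequence of $x$ being a reduction, not an input.

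The bound must come entirely from the Eakin--Sathaye theorem, used as a black box: in a local ring with infinite residue field, $\mu(I^m)\le m$ implies $I^m=hI^{m-1}$ for some $h\in I$.

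\textbf{How to apply it correctly.} The theorem is local, whereas $\OO(L)_{(p)}$ is only semilocal.
\begin{itemize}
\item Since $\OO(L^k)\supset\OO(L)$ by \eqref{4.2}, the identity $L^k(\OO(L^k):L^k)=\OO(L^k)$ can be checked at each maximal ideal $\mathfrak m$ of $\OO(L)$.
\item Pass to $R=\OO(L)_{\mathfrak m}(t)$. This is faithfully flat, has infinite residue field, and is compatible with products, colons and $\OO(\cdot)$. A finite unramified extension would not do, since its residue field is still finite.
\item Choose $d\in\N$ with $dL\subset\OO(L)$ and apply the theorem to $I=dLR$, using $\mu_R(I^n)\le\dim_{\mathbb F_p}L^n/pL^n=n$.
\end{itemize}
This gives $L^n=xL^{n-1}$ locally, hence $L^{k+1}=xL^k$ for all $k\ge n-1$, and your conclusion follows.

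\textbf{Comparison with the paper.} With that repair, your proof is a genuinely different route from the one the paper relies on. The paper does not reprove Theorem \ref{t4.4}; it cites \cite{DTZ62}, \cite{Si70} and \cite{HL26}. It describes the general proof in \cite{HL26} as an adaptation of \cite{DTZ62} resting on two ingredients:
\begin{itemize}
\item Theorem \ref{t4.5}: an order $\Lambda\supset\OO(L)$ with $\Lambda L\in G(\Lambda)$ exists;
\item Dedekind's observation, Lemma \ref{t4.6}: for $1\in N\subset\Lambda$ the powers of $N$ climb strictly to an order.
\end{itemize}
The paper proves only special cases directly. Theorem \ref{t10.2} handles $\Q[a]/(a^n)$ by an elementary argument with triangular bases adapted to the flag $a^i\Q[a]$, see \eqref{10.6}. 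Theorem \ref{t11.2} (c) handles one 3-dimensional case.

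Your approach buys uniformity and a clean description: $\OO(L^k)$ is locally the blow-up ring $x^{-k}L^k$, and $L^k$ is locally principal over it. It even gives a sharper criterion: $L^{m-1}$ is invertible as soon as $\mu(L^m_{\mathfrak m})\le m$ at every $\mathfrak m$. The cost is importing a nontrivial theorem from commutative algebra plus a base change.

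The lattice route stays inside the semigroup framework of Sections \ref{s2}--\ref{s5}. It also yields by-products: Theorem \ref{t4.5} itself, and the non-invertibility of the intermediate powers in Lemma \ref{t4.6}.
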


A result which is used in the proof of Theorem \ref{t4.4}
in \cite{HL26}, but which is also of independent interest,
is the following.

\begin{theorem}\label{t4.5} \cite[Theorem 9.2]{HL26}
Let $L\in\LL(A)$ be a full lattice. An order 
$\Lambda\supset\OO(L)$ with $\Lambda L\in G(\Lambda)$
(so $\OO(\Lambda L)=\Lambda$ and $\Lambda L$ invertible) exists.
\end{theorem}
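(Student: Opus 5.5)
The quickest route derives Theorem \ref{t4.5} from Theorem \ref{t4.4}, applied to a suitable power of $L$. Let $n=\dim_\Q A$. If $n=1$, then $A$ is a number field of degree $1$, so $A=\Q$, every full lattice has the form $q\Z$ with order $\Z$ and is invertible, and $\Lambda=\OO(L)=\Z$ works. So assume $n\geq 2$, and set
$$m:=n-1\geq 1,\qquad \Lambda:=\OO(L^m).$$

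First, $\Lambda\supset\OO(L)$. By \eqref{4.2} and induction, $\OO(L^m)\supset\OO(L)^m=\OO(L)$, because $\OO(L)$ is an order and hence idempotent.

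Next put $M:=\Lambda L$. Since $\Lambda$ is idempotent and $\Lambda L^m=L^m$, we get
$$M^m=\Lambda^mL^m=\Lambda L^m=L^m.$$
By Theorem \ref{t4.4}, $L^m$ is invertible. By Theorem \ref{t4.3}, its inverse $N:=(L^m)^{-1}$ satisfies $L^mN=\OO(L^m)=\Lambda$.

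Define $N':=M^{m-1}N$, where $M^0$ is read as the identity, so $N'=N$ when $m=1$. Then
$$MN'=M^mN=L^mN=\Lambda.$$

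Now I identify $\OO(M)$. If $aM\subset M$, then $a\Lambda=aMN'\subset MN'=\Lambda$, so $a\in\Lambda$ because $1_A\in\Lambda$. Conversely, $\Lambda M=\Lambda\Lambda L=M$, so $\Lambda\subset\OO(M)$. Hence $\OO(M)=\Lambda$, and $MN'=\OO(M)$. By Theorem \ref{t4.3} (b)(ii) this means $M=\Lambda L$ is invertible, so $M\in G(\Lambda)$.

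The main obstacle is logical rather than computational. The paper says Theorem \ref{t4.5} is used in the proof of Theorem \ref{t4.4} in \cite{HL26}, so the argument above is circular as a foundation; it only shows the statement is consistent with Theorem \ref{t4.4}. For an independent proof I would iterate. Start with $\Lambda_0:=\OO(L)$ and $M_i:=\Lambda_iL$, and set
$$\Lambda_{i+1}:=\OO\bigl(\OO(M_i):M_i\bigr).$$
By \eqref{4.2}, $\Lambda_{i+1}\supset\OO(M_i)\supset\Lambda_i$. If $\Lambda_{i+1}=\OO(M_i)$, then $M_i$ is invertible by Theorem \ref{t4.3} (b)(iv), with $\OO(M_i)$ as the desired order. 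Otherwise the chain grows strictly.

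The hard step is termination. In a non-separable $A$ the orders containing a fixed order are not contained in a bounded lattice; for example, in $\Q[\varepsilon]/(\varepsilon^2)$ the orders $\Z+\Z\tfrac{1}{m}\varepsilon$ all contain $\Z[\varepsilon]$. So one needs an a priori bound showing every $\Lambda_i$ lies in a fixed full lattice determined by $L$. My first attempt would be as follows. After scaling $L$ so that $d\,\OO(L)\subset L\subset d^{-1}\OO(L)$ for some $d\in\N$, I would try to show inductively that $\Lambda_i\subset d^{c}\,(L:L^{k})$ for bounded exponents $c$ and $k$, with $k$ depending only on $n$. The dimension count from the filtration by powers of the radical $R$ (Remarks \ref{t3.2}) should bound how many times the chain can grow in the nilpotent directions. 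If this bound holds, the chain stabilizes, because full lattices between two fixed full lattices form a finite set.
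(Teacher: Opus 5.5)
Your deduction from Theorem \ref{t4.4} is valid as an implication. With $m=n-1$ and $\Lambda=\OO(L^m)$ you correctly get $M^m=L^m$ and $M\cdot M^{m-1}(L^m)^{-1}=\Lambda$, hence $\OO(M)=\Lambda$ and $M\in G(\Lambda)$. But it cannot serve as a proof, and you are right to distrust it. The paper says explicitly that Theorem \ref{t4.5} is an ingredient of the proof of Theorem \ref{t4.4} in \cite{HL26}. The other cited proofs of Theorem \ref{t4.4} (\cite{DTZ62}, \cite{Si70}) only cover separable $A$, where Theorem \ref{t4.5} is trivial anyway: $\Lambda=\Lambda_{max}$ works by Theorem \ref{t4.8}, since $\OO(\Lambda_{max}L)=\Lambda_{max}$. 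So in the only case with content, non-separable $A$, your first argument proves nothing. Your fallback stops exactly at the hard point. The bound $\Lambda_i\subset d^c(L:L^k)$ is a guess, and nothing in your sketch excludes an infinite strictly increasing chain of orders; such chains above a fixed order do exist when $R\neq\{0\}$, as you note yourself. As it stands, the proposal has a genuine gap.

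The missing idea is to move the problem to the separable part $F$, where a maximal order exists, and to absorb the radical by a unit of the form $1_A+r$. Put $\Lambda_F:=\Lambda_{max}(F)\subset F\subset A$ and $L_1:=\Lambda_F L$. Then $\OO(L)\subset\OO(L_1)$ and $\Lambda_F\subset\OO(L_1)$. The full lattice $I:=\pr_F L_1=\Lambda_F\cdot\pr_F L$ of $F$ has order $\Lambda_F$, so by Theorem \ref{t4.8} (b) it is invertible in $\LL(F)$, with $II^{-1}=\Lambda_F$. Then $L_2:=I^{-1}L_1$ is a full lattice in $A$ with $\pr_F L_2=I^{-1}I=\Lambda_F\ni 1_A$. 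Hence $L_2$ contains some $u=1_A+r$ with $r\in R$, and this $u$ is a unit. The lattice $L_3:=u^{-1}L_2$ contains $1_A$ and satisfies $\pr_F L_3=\Lambda_F$, so all its elements are integral over $\Z$: if $g(\pr_F x)=0$ with $g$ monic, then $g(x)\in R$ is nilpotent. A ring generated by finitely many integral elements is a finitely generated $\Z$-module, so $\Lambda:=\Z[L_3]=\bigcup_k L_3^k$ is an order. Since $L_1=uIL_3$ and $L_3=u^{-1}I^{-1}L_1$, we have $\OO(L_1)=\OO(L_3)$. Together with $1_A\in L_3$ this gives $\OO(L)\subset\OO(L_3)\subset L_3\subset\Lambda$, $\Lambda_F\subset\Lambda$ and $\Lambda L_3=\Lambda$. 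Therefore $\Lambda L=\Lambda L_1=uI\Lambda L_3=uI\Lambda$. This lattice satisfies $(uI\Lambda)(u^{-1}I^{-1}\Lambda)=\Lambda$ and $\OO(uI\Lambda)=\Lambda$; for the latter, if $aI\Lambda\subset I\Lambda$, multiply by $I^{-1}$. So $\Lambda L\in G(\Lambda)$ by Theorem \ref{t4.3}, and no termination argument is needed.
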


Another result which is used in the proof of Theorem \ref{t4.4}
in \cite{HL26} is the observation in Lemma \ref{t4.6}.  
\cite[1.4]{DTZ62} gives the example \ref{t4.7} and ascribes 
the example and the observation to Dedekind \cite{De72}.

\begin{lemma}\label{t4.6}
Let $\Lambda$ be an order, and let $L$ be a full lattice
which is not an order, but which satisfies
$1_A\in L$ and $L\subset\Lambda$. Then there is a unique
$l\in\Z_{\geq 2}$ with
$$L\subsetneqq L^2\subsetneqq ...\subsetneqq L^l =L^{l+k}
\subset\Lambda \textup{ for }k\geq 1.$$
$L^l$ is an order with $L\cdot L^l=L^l$.
The full lattices $L, L^2,...,L^{l-1}$ are not invertible.
\end{lemma}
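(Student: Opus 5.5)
Since $1_A\in L$, every power contains the previous one: $L^k=L^k\cdot 1_A\subset L^k\cdot L=L^{k+1}$. Also $L\subset\Lambda$ and $\Lambda\Lambda\subset\Lambda$ give $L^k\subset\Lambda$ for all $k$. So we get an ascending chain
$$L\subset L^2\subset L^3\subset\cdots\subset\Lambda$$
of full lattices inside the finitely generated $\Z$-module $\Lambda$.

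Because $\Z$ is noetherian, the chain stabilizes. Moreover, once $L^m=L^{m+1}$ for some $m$, induction gives $L^{m+k}=L^{m+1}\cdot L^{k-1}=L^m L^{k-1}=L^{m+k-1}$, so the chain is constant from $m$ on. Let $l$ be the smallest index with $L^l=L^{l+1}$. Then all the earlier inclusions are strict, and $L^l=L^{l+k}$ for all $k\geq 1$; this $l$ is clearly unique. Since $L$ is not an order, $L^2\neq L$: indeed $L\cdot L=L$ together with $1_A\in L$ would make $L$ an order. Hence $l\geq 2$.

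Now $L^l$ is an order. It contains $1_A$ because $1_A\in L\subset L^l$. It is a full lattice because it contains $L$ and lies in $\Lambda$. Finally $L^l\cdot L^l=L^{2l}=L^l$, and $L\cdot L^l=L^{l+1}=L^l$.

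It remains to show that $L^j$ is not invertible for $1\leq j\leq l-1$. This is the step that needs an idea; I will use Theorem \ref{t4.3} together with the orders of the powers.

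Suppose $M:=L^j$ were invertible. By Theorem \ref{t2.2}(d), every power $M^r$ is invertible with $e_{M^r}=e_M=\OO(M)$. Choose $r$ with $jr\geq l$; then $M^r=L^{jr}=L^l$, which is an order. An order is idempotent, so $e_{M^r}=M^r=L^l$ by Theorem \ref{t2.2}(b). Therefore $\OO(M)=L^l$.

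Since $1_A\in M$, we get $\OO(M)=\OO(M)\cdot 1_A\subset \OO(M)\cdot M\subset M$. Combining this with $M=L^j\subset L^l=\OO(M)$ gives $M=L^l$. This contradicts the strictness $L^j\subsetneqq L^l$ for $j<l$.

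The main obstacle is exactly this last step: identifying $e_{L^j}$ with $L^l$ via the maximal-subgroup structure, and then using $1_A\in L^j$ to force $L^j=\OO(L^j)$.
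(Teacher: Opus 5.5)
Your proof is correct and is essentially the paper's argument: the powers form an ascending chain inside $\Lambda$ that stabilizes at a minimal $l$, and if some $L^j$ with $j<l$ were invertible, then passing to a high power identifies $\OO(L^j)=e_{L^j}$ with the order $L^l$, which forces $L^j=L^l$. The differences are cosmetic: you justify $l\geq 2$ explicitly, and you finish via $\OO(L^j)\subset L^j$ (from $1_A\in L^j$) rather than the paper's $L^j=L^j\cdot\OO(L^j)=L^{j+l}=L^l$.
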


{\bf Proof:}
$1_A\in L$ implies $L^{m+1}=L\cdot L^m\supset L^m$ for each
$m\in\N$. As each power $L^m$ is contained in $\Lambda$,
there is a minimal $l\in\N$ with $L^{l+1}=L^l$. Then
$L^{l+2}=L\cdot L^{l+1}=L\cdot L^l=L^{l+1}=L^l$, and
inductively $L^{l+k}=L^l$ for $k\geq 1$. 
Then $L^l\cdot L^l=L^{2l}=L^l$ and $1_A\in L^l$, 
so $L^l$ is an order.

If for some $j\in\{1,...,l-1\}$ $L^j$ were invertible, then
$\OO(L^j)=\OO(L^jL^j...L^j)=\OO(L^{jm})=\OO(L^l)=L^l$
for $m\in\N$ with $jm\geq l$, so
$L^j=L^j\cdot\OO(L^j)=L^{j+l}=L^l$, a contradiction.
\hfill$\Box$

\begin{example}\label{t4.7} \cite[1.4]{DTZ62}
Let $\dim A=n\geq 3$ and suppose that 
$\Lambda=\Z[a]\subset A$ is an order. Define
$$L:=\langle 1,a,2a^2,...,2a^{n-1}\rangle_\Z\subset\Lambda.$$
Then
\begin{eqnarray*}
L^2&=&  \langle 1_A,a,a^2,2a^3...,2a^{n-1}\rangle_\Z,\\
L^3&=&  \langle 1_A,a,a^2,a^3,2a^4,...,2a^{n-1}\rangle_\Z,\\
&\vdots& \\
L^{n-1}&=&  \langle 1_A,a,a^2,...,a^{n-1}\rangle_\Z=\Lambda,\\
L&\subsetneqq& L^2\subsetneqq L^3\subsetneqq...\subsetneqq
L^{n-1}=L^n=...=\Lambda,
\end{eqnarray*}
and $L$, $L^2$, ..., $L^{n-2}$ are not invertible
by Lemma \ref{t4.6}. 
\end{example}

The following result is well known (e.g. \cite{Ne99}
or \cite{BSh73}) if $A$ is an algebraic number field.
For separable $A$ it follows easily, using the
decomposition $A=\bigoplus_{j=1}^k A^{(j)}$ into 
algebraic number fields $A^{(j)}$. 

\begin{theorem}\label{t4.8}
Let $A$ be separable. 

(a) Then there is a maximal order
$\Lambda_{max}$ in $A$. 

(b) Each full lattice $L$ with
$\OO(L)=\Lambda_{max}$ is invertible.
\end{theorem}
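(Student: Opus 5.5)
We reduce to the case of a single algebraic number field and then use the Dedekind property of its ring of integers.

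\textbf{Reduction to a number field.} Since $A$ is separable, Theorem \ref{t3.1} and Remarks \ref{t3.2} (iii) give $A=\bigoplus_{j=1}^k A^{(j)}$ with each $A^{(j)}=F^{(j)}$ an algebraic number field. Let $\varepsilon_j:=1_{A^{(j)}}$ be the corresponding orthogonal idempotents.

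\textbf{Part (a).} Let $\OO_j$ be the ring of integers of $A^{(j)}$. It is a full lattice in $A^{(j)}$ and a ring containing $\varepsilon_j$, so $\Lambda_{max}:=\bigoplus_j\OO_j$ is an order in $A$. To see that it contains every order, let $\Lambda$ be an order. Each $a\in\Lambda$ has $\Z[a]\subset\Lambda$ finitely generated, so $a$ is integral over $\Z$. Then each component $\varepsilon_j a$ is a root of the same monic integer polynomial, hence $\varepsilon_j a\in\OO_j$. Thus $\Lambda\subset\Lambda_{max}$, and $\Lambda_{max}$ is the unique maximal order.

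\textbf{Part (b).} Let $L$ be a full lattice with $\OO(L)=\Lambda_{max}$. Since $\varepsilon_j\in\Lambda_{max}=\OO(L)$, we have $\varepsilon_jL\subset L$, and therefore $L=\bigoplus_j\varepsilon_jL$. Each $L_j:=\varepsilon_jL$ is a full lattice in $A^{(j)}$ that is an $\OO_j$-module, i.e. a fractional ideal of the Dedekind domain $\OO_j$. Standard algebraic number theory (e.g. \cite{Ne99}) gives a fractional ideal $L_j'$ with $L_jL_j'=\OO_j$. Set $L':=\bigoplus_jL_j'$. Because the components multiply orthogonally,
$$L\cdot L'=\bigoplus_j L_jL_j'=\Lambda_{max}=\OO(L).$$
Hence $L$ is invertible by Theorem \ref{t4.3} (b) (ii)$\Rightarrow$(i).

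\textbf{Main point.} The only nontrivial input is invertibility of fractional ideals of $\OO_j$, i.e. that $\OO_j$ is Dedekind. We quote this from the literature rather than reprove it. The decomposition $L=\bigoplus\varepsilon_jL$ is what makes the reduction to the number field case work.
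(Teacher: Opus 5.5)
Your proof is correct and follows essentially the same route as the paper: decompose the separable $A$ into number fields $A^{(j)}$, take $\Lambda_{max}=\bigoplus_j\OO_j$, and use $L=\bigoplus_j 1_{A^{(j)}}L$ (valid because $1_{A^{(j)}}\in\Lambda_{max}=\OO(L)$) to reduce invertibility to the number field case. The differences are cosmetic. In (a) you argue elementwise via integrality, where the paper notes that $1_{A^{(j)}}\Lambda$ is an order in $A^{(j)}$ and hence lies in its ring of integers. In (b) you write down the inverse $L'=\bigoplus_j L_j'$ explicitly, which is the gluing step the paper leaves implicit.
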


{\bf Proof:} $\Lambda_{max}:=\bigoplus_{j=1}^k
\Lambda_{max}(A^{(j)})$ is an order in $A$.
If $\Lambda$ is an order in $A$,
then $\Lambda\cdot 1_{A^{(j)}}$ is an order in $A^{(j)}$,
so $\Lambda\cdot 1_{A^{(j)}}\subset \Lambda_{max}(A^{(j)})$,
so $\Lambda\subset\Lambda_{max}$. 
Therefore $\Lambda_{max}$ is the maximal order in $A$.

If $L$ is a full lattice with $\OO(L)=\Lambda_{max}$,
then $L\cdot 1_{A^{(j)}}$ is a full lattice in $A^{(j)}$
with $\OO(L\cdot 1_{A^{(j)}})=\Lambda_{max}(A^{(j)})$, so
it is invertible in $A^{(j)}$. We have 
$L=\bigoplus_{j=1}^k L\cdot 1_{A^{(j)}}$ 
because $1_{A^{(j)}}\in\Lambda_{max}$. Therefore $L$ 
is invertible in $A$.
\hfill$\Box$

\bigskip
Borevi\v{c} and Faddeev \cite{BF65} proved the following
generalization of Theorem \ref{t4.8} (b). It shows that each
full lattice whose order is big enough is invertible
if $A$ is separable.

\begin{theorem}\label{t4.9} \cite[Theorem 1]{BF65}
Let $A$ be separable. Let $\Lambda$ be an order such
that an element $\omega\in \Lambda_{max}$ with 
$\Lambda_{max}=\Lambda+\omega\Lambda$ exists.
Then each $L\in\LL(A)$ with $\OO(L)\supset\Lambda$ is
invertible.
\end{theorem}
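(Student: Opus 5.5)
Throughout, $L_{(p)}:=\Z_{(p)}\otimes_\Z L$, and similarly for orders; $\Z_{(p)}$ is $\Z$ localized at the prime $p$.

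\emph{Reductions.} Put $\Lambda':=\OO(L)\supset\Lambda$. Then $\Lambda_{max}=\Lambda'+\omega\Lambda'$ still holds, so it suffices to treat the case $\OO(L)=\Lambda$. Then $\Lambda_{max}L=L+\omega L$.

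I use two standard facts.
\begin{itemize}
\item[(a)] $L$ is invertible in the sense of Theorem~\ref{t4.3} iff, for every prime $p$, $L_{(p)}=\Lambda_{(p)}z$ for some unit $z$ of $A$. The condition $L\cdot(\Lambda:L)=\Lambda$ can be checked prime by prime.
\item[(b)] $\Lambda_{max,(p)}$ is a semilocal Dedekind ring, hence a principal ideal ring. By Theorem~\ref{t4.8} it follows that $\Lambda_{max}L_{(p)}=\Lambda_{max,(p)}x$ for a unit $x\in A$.
\end{itemize}
After replacing $L$ by $x^{-1}L$ we may assume $L_{(p)}+\omega L_{(p)}=\Lambda_{max,(p)}$.

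\emph{Key structural step.} Let $M$ be a $\Lambda$-module with $\Lambda\subset M\subset\Lambda_{max}$ (or its localization). Then $M$ is a ring, so $\OO(M)\supset M$, and $\OO(M)=\Lambda$ forces $M=\Lambda$. Here is why.
\begin{itemize}
\item $\Lambda_{max}/\Lambda$ is the image of $\Lambda\omega$, so $M=\Lambda+J\omega$ with $J:=\{c\in\Lambda\mid c\omega\in M\}$, an ideal of $\Lambda$.
\item $\omega^2\in\Lambda_{max}=\Lambda+\omega\Lambda$, say $\omega^2=a+b\omega$ with $a,b\in\Lambda$.
\item Hence $J\omega\cdot J\omega=J^2(a+b\omega)\subset\Lambda+J\omega$, which gives $M\cdot M\subset M$.
\end{itemize}
This argument survives localization at $p$.

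\emph{Local principality.} It remains to find $z\in L_{(p)}$ that is a unit of $\Lambda_{max,(p)}$. Granting this, $M:=z^{-1}L_{(p)}$ satisfies the hypotheses of the key step:
\begin{itemize}
\item $1\in M$;
\item $M\subset z^{-1}\Lambda_{max,(p)}=\Lambda_{max,(p)}$;
\item $M$ is a $\Lambda_{(p)}$-module with $\OO(M)=\Lambda_{(p)}$.
\end{itemize}
So $M=\Lambda_{(p)}$, i.e. $L_{(p)}=\Lambda_{(p)}z$, and $L$ is invertible by fact (a).

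To find $z$, use $1=x+\omega y$ with $x,y\in L_{(p)}$. The ring $\Lambda_{max,(p)}$ has finitely many maximal ideals $\mathfrak m_1,\dots,\mathfrak m_r$, all above $p$. For each $i$, $L_{(p)}\not\subset\mathfrak m_i$, because otherwise $\Lambda_{max,(p)}=\Lambda_{max}L_{(p)}\subset\mathfrak m_i$.

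\emph{Main obstacle.} We need a single $z\in L_{(p)}$ avoiding all the $\mathfrak m_i$ simultaneously. Prime avoidance over $\Z$-combinations can fail when the residue fields are small. For instance, with $p=2$ and several $\mathfrak m_i$ of residue field $\mathbb F_2$, the images of $L_{(p)}$ in $\prod_i\Lambda_{max}/\mathfrak m_i$ might miss all units.

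I would try the following.
\begin{itemize}
\item Work with $\Lambda_{(p)}$-combinations $z=\lambda x+\mu y$, with $\lambda,\mu\in\Lambda_{(p)}$, rather than integer ones.
\item Use Chinese remaindering in $\Lambda/(\Lambda\cap\mathfrak m_i)$. The key point is that the image of $\Lambda_{(p)}$ in $\Lambda_{max}/\mathfrak m_i$ together with $\omega$ generates the residue field, since $\Lambda_{max}=\Lambda+\omega\Lambda$. So each residue field is at most a degree-$2$-type extension generated by $\bar\omega$ over the image of $\Lambda$.
\item Check field by field that the required nonvanishing can be arranged. If this gets messy, fall back on the known argument of \cite{BF65}, which handles exactly this point.
\end{itemize}
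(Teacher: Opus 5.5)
Your reductions are correct, and so is your key step. Passing to $\OO(L)=\Lambda$ is fine, as are the local criterion (a) and the normalization $L_{(p)}+\omega L_{(p)}=\Lambda_{max,(p)}$. The key step (every $\Lambda_{(p)}$-module between $\Lambda_{(p)}$ and $\Lambda_{max,(p)}$ is a ring, because $\omega^2\in\Lambda+\omega\Lambda$) also holds.

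The gap is exactly the step you flag and leave open: the existence of $z\in L_{(p)}$ which is a unit of $\Lambda_{max,(p)}$. This is not a technicality. After your normalization it is \emph{equivalent} to the theorem: if $L_{(p)}=\Lambda_{(p)}z$, then $\Lambda_{max,(p)}=\Lambda_{max,(p)}z$. So ending with a fallback on \cite{BF65}, the very result to be proved, means the proof is not there. The paper itself gives no proof here and only cites \cite{BF65}.

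Your diagnosis of the obstruction is also off target. Small residue fields are not the problem, and a degree bound on the individual fields $\Lambda_{max}/\mathfrak m_i$ is not the relevant consequence of the hypothesis. The real danger is several maximal ideals of $\Lambda_{max,(p)}$ lying over one maximal ideal $\mathfrak p$ of $\Lambda_{(p)}$, which Chinese remaindering in $\Lambda$ cannot separate. What the hypothesis controls is the whole fiber over $\mathfrak p$.

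The step closes quickly. Put $R:=\Lambda_{(p)}$, $S:=\Lambda_{max,(p)}$ and $\overline{S}:=S/J(S)$, where $J(S)$ is the Jacobson radical. Let $\overline{R}$ and $\overline{L}$ be the images of $R$ and $L_{(p)}$.
\begin{itemize}
\item Since $S$ is integral over $R$, $R\cap J(S)=J(R)$. Hence $\overline{R}\cong\prod_{\mathfrak p}\kappa(\mathfrak p)$ with $\kappa(\mathfrak p):=R/\mathfrak p$, the product running over the maximal ideals of $R$.
\item The idempotents of $\overline{R}$ split $\overline{S}=\prod_{\mathfrak p}\overline{S}_{\mathfrak p}$ and the $\overline{R}$-module $\overline{L}=\prod_{\mathfrak p}\overline{L}_{\mathfrak p}$, with each $\overline{L}_{\mathfrak p}$ a $\kappa(\mathfrak p)$-subspace.
\item From $S=R+\omega R$ and $S=L_{(p)}+\omega L_{(p)}$ you get $\dim_{\kappa(\mathfrak p)}\overline{S}_{\mathfrak p}\leq 2$ and $\overline{L}_{\mathfrak p}+\overline{\omega}\,\overline{L}_{\mathfrak p}=\overline{S}_{\mathfrak p}$.
\item Therefore $\overline{S}_{\mathfrak p}$ is $\kappa(\mathfrak p)$, a quadratic field extension of $\kappa(\mathfrak p)$, or $\kappa(\mathfrak p)\times\kappa(\mathfrak p)$.
\item In the first two cases, any nonzero element of $\overline{L}_{\mathfrak p}$ is a unit.
\item In the split case, $\overline{L}_{\mathfrak p}$ either contains $(1,1)$ or is a line $\kappa(\mathfrak p)\cdot(u,v)$. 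If $u=0$ or $v=0$, then $\overline{L}_{\mathfrak p}+\overline{\omega}\,\overline{L}_{\mathfrak p}$ lies in a coordinate axis, a contradiction. So $(u,v)$ is a unit.
\end{itemize}
Thus $\overline{L}$ contains a unit of $\overline{S}$, and any preimage $z\in L_{(p)}$ is a unit of the semilocal ring $S$. With this paragraph added, your argument becomes a complete proof.
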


Theorem \ref{t4.12} below gives another important result
about invertibility of full lattices. Its proof uses
Lemma \ref{t4.11} and Theorem \ref{t4.14}.

\begin{definition}\label{t4.10}

(a) An order $\Lambda$ in $A$ is {\it cyclic} if
an $a\in \Lambda$ with $\Lambda=\Z[a]$ exists. 

(c) Let $\phi:A\times A\to $ be a multiplicative metric.
For $L\in\LL(A)$ denote by $L^\phi$ the full lattice
\begin{eqnarray}\label{4.3}
L^\phi:=\{b\in A\,|\, 
\phi(a,b)\in\Z\ \forall\ a\in L\}.
\end{eqnarray}
\end{definition}

The following properties are elementary.

\begin{lemma}\label{t4.11} 
(a) (i) If $\Lambda$ is a cyclic order $\Lambda=\Z[a]$ in $A$ 
then also $A$ is cyclic, namely $A=\Q[a]$.
(ii) Vice versa, any cyclic algebra $A$ contains cyclic
orders.

(b) If $\Lambda=\Z[a]$ is a cyclic order in $A$, then
the multiplicative metric of $A=\Q[a]$ which is induced
via \eqref{3.2} by the linear form $l:A\to\Q$ in
Remark \ref{t3.4} (ii) satisfies $\Lambda^\phi=\Lambda$. 
\end{lemma}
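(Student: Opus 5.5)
For part (a)(i), I will argue that a cyclic order $\Lambda=\Z[a]$ is a full lattice, so it spans $A$ over $\Q$. Since $\Q\Lambda=\Q[a]$, this gives $A=\Q[a]$.

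For (a)(ii), let $A=\Q[b]$ and let $f\in\Q[t]$ be the minimal polynomial of $b$. I choose $m\in\N$ such that $mb$ has a minimal polynomial with integer coefficients; clearing denominators of $f$ works, because if $f=t^n+\sum c_it^i$ then $mb$ has minimal polynomial $t^n+\sum m^{n-i}c_it^i$. Then $a:=mb$ is integral over $\Z$, so $\Z[a]=\langle 1,a,\dots,a^{n-1}\rangle_\Z$ is closed under multiplication. It contains $1_A$ and spans $\Q[a]=\Q[b]=A$, so it is a cyclic order.

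For (b), let $\Lambda=\Z[a]$ with $n=\dim A$. Let $g=t^n+\sum_{i<n}g_it^i\in\Z[t]$ be the characteristic polynomial of $\mu_a$; it lies in $\Z[t]$ because $\Z[a]$ is a full lattice stable under $a$. Set $\phi(x,y)=l(xy)$ with $l(a^m)=0$ for $m\le n-2$ and $l(a^{n-1})=1$. The Gram matrix $(l(a^{i+j}))_{0\le i,j\le n-1}$ is a Hankel matrix. Its entries on the antidiagonal $i+j=n-1$ equal $1$, and those above it, with $i+j<n-1$, equal $0$. Using $a^n=-\sum g_ia^i$ repeatedly, $l(a^k)\in\Z$ for all $k\ge0$, so every entry is an integer.

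The matrix is therefore integral and anti-triangular with $\pm1$ antidiagonal, so its determinant is $\pm1$. This shows $\phi$ is nondegenerate, hence a multiplicative metric (multiplication invariance is automatic from the form $l(xy)$). It also shows the Gram matrix lies in $GL_n(\Z)$. The dual lattice $\Lambda^\phi$ consists of $y=\sum y_ja^j$ with $\phi(a^i,y)\in\Z$ for all $i$, that is, $G\cdot(y_j)\in\Z^n$, where $G$ is the Gram matrix. Since $G^{-1}$ is integral, this holds exactly when $(y_j)\in\Z^n$, so $\Lambda^\phi=\Lambda$.

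The only real point is the integrality of $l(a^k)$ for $k\ge n$, which follows from the monic integer relation; everything else is routine.
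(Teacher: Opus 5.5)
Your proof is correct. The paper gives no argument for this lemma and simply calls these properties elementary. Your verification fills the gap in the natural way: clearing denominators for (a)(ii), and for (b) the integral Hankel Gram matrix $(l(a^{i+j}))_{0\le i,j\le n-1}$, which vanishes for $i+j<n-1$ and equals $1$ for $i+j=n-1$, hence lies in $GL_n(\Z)$.

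One step is used only tacitly. Cayley--Hamilton applied to $1_A$ gives $g(a)=0$, hence $\Lambda=\langle 1_A,a,\dots,a^{n-1}\rangle_\Z$ with these elements a $\Q$-basis of $A$. This is what justifies testing $\Lambda^\phi$ only against $a^0,\dots,a^{n-1}$ and reading off $\Lambda^\phi=\Lambda$ from $(y_j)\in\Z^n$.
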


\begin{theorem}\label{t4.12}
Let $\Lambda$ be a cyclic order in $A$. Then each $L\in\LL(A)$
with $\OO(L)=\Lambda$ is invertible.
\end{theorem}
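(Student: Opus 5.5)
We want to show: if $\OO(L)=\Lambda=\Z[a]$, then $L\cdot(\Lambda:L)=\Lambda$. By Theorem \ref{t4.3} (b), condition (iii), this suffices. The plan uses the multiplicative metric $\phi$ from Lemma \ref{t4.11} (b), for which $\Lambda^\phi=\Lambda$, and relates the quotient $\Lambda:L$ to the dual lattice $L^\phi$. The text mentions Theorem \ref{t4.14}, which is not yet stated; I would avoid it and argue directly with duality.

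\emph{Step 1: duality identities.} For any full lattices $M,N$ we have
$$(MN)^\phi=M^\phi:N .$$
Indeed, $b\in (MN)^\phi$ iff $\phi(mn,b)\in\Z$ for all $m\in M$, $n\in N$. By multiplication invariance, $\phi(mn,b)=\phi(m,nb)$, so this holds iff $nb\in M^\phi$ for all $n\in N$. Also $L^{\phi\phi}=L$: choose a $\Z$-basis of $L$; nondegeneracy of $\phi$ gives a dual $\Q$-basis, and $L^\phi$ is its $\Z$-span.

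Taking $M=\Lambda$ and $N=L$ gives
$$\Lambda:L=\Lambda^\phi:L=(\Lambda L)^\phi=L^\phi .$$
Taking $M=L$ and $N=L^\phi$ gives $(L L^\phi)^\phi = L^\phi:L^\phi=\OO(L^\phi)$. Moreover $\OO(L^\phi)=\OO(L)$: by the same identity, $x L^\phi\subset L^\phi$ iff $xL\subset L$, since $b\mapsto xb$ is self-adjoint for $\phi$.

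\emph{Step 2: conclude.} Step 1 yields
$$L\cdot(\Lambda:L)=LL^\phi=\bigl(\OO(L^\phi)\bigr)^\phi=\Lambda^\phi=\Lambda ,$$
using $L^{\phi\phi}=L$ for the lattice $LL^\phi$ and then Lemma \ref{t4.11} (b). This is exactly condition (iii) of Theorem \ref{t4.3} (b), so $L$ is invertible.

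\emph{Main obstacle.} The key nontrivial input is $\Lambda^\phi=\Lambda$ for the cyclic order, which is given by Lemma \ref{t4.11} (b). The rest consists of the formal adjunction identities above. The step to check carefully is $(MN)^\phi=M^\phi:N$ together with reflexivity $L^{\phi\phi}=L$. Both follow from nondegeneracy of $\phi$ and the fact that $\phi(L,L^\phi)\subset\Z$ is a perfect pairing of lattices.
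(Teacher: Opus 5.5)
Your proof is correct and is essentially the paper's proof. The paper combines $\Lambda^\phi=\Lambda$ (Lemma \ref{t4.11} (b)) with Theorem \ref{t4.14} (ii)$\Rightarrow$(i), whose proof is exactly your computation $LL^\phi=\OO(L)^\phi=\Lambda^\phi$ via \eqref{4.7}--\eqref{4.9}; you re-derive those identities by hand and verify condition (iii) of Theorem \ref{t4.3} (b) with the explicit inverse $\Lambda:L=L^\phi$, rather than condition (ii).
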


{\bf Proof:} In the case of separable $A$, Theorem \ref{t4.12}
is due to Faddeev \cite[26.7]{Fa65-1}.
The general case follows from Lemma \ref{t4.11} (b)
and Theorem \ref{t4.14} (ii)$\Rightarrow$(i).
\hfill$\Box$

\bigskip
If $A$ is cyclic, many orders in $A$ are cyclic, but
many others are not cyclic. Especially, above a cyclic
order $\Lambda$ often there are other orders which are not 
cyclic. Then all $L$ with $\OO(L)=\Lambda$ are invertible, but 
in general not all $L$ with $\OO(L)\supset\Lambda$
are invertible.

The proof of Theorem \ref{t4.14} needs the following basic
properties of the map $L\mapsto L^\phi$ if $\phi:A\times A\to\Q$
is a multiplicative metric. They are formulated in much
more generality in \cite{Fa65-1}. 
\eqref{4.4}, \eqref{4.5} and \eqref{4.6} are elementary.
Property \eqref{4.7} can also be derived from
\cite[Lemma 2.3 (b)]{HL26}. \eqref{4.7} and \eqref{4.4}
imply \eqref{4.8} and \eqref{4.9}.

\begin{lemma}\label{t4.13} \cite[$8^o$]{Fa65-1}
Let $\phi:A\times A\to\Q$ be a multiplicative metric. 
Fix $L_1,L_2\in\LL(A)$. Then
\begin{eqnarray}\label{4.4}
L_1^{\phi\phi}&=&L_1,\\
(L_1+L_2)^\phi&=& L_1^\phi\cap L_2^\phi,\label{4.5}\\
(L_1\cap L_2)^\phi&=& L_1^\phi+L_2^\phi,\label{4.6}\\
(L_1L_2)^\phi&=& L_1^\phi:L_2 (=L_2^\phi:L_1),\label{4.7}\\
\OO(L_1^\phi)&=&\OO(L_1),\label{4.8}\\
L_1L_1^\phi&=& \OO(L_1)^\phi.\label{4.9}
\end{eqnarray}
\end{lemma}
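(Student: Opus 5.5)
We prove the six identities in the order \eqref{4.4}, \eqref{4.5}, \eqref{4.6}, \eqref{4.7}, \eqref{4.8}, \eqref{4.9}. The first three are statements about the nondegenerate symmetric form $\phi$ on the $\Q$-vector space $A$ and do not use the multiplication. The last three follow formally from \eqref{4.7} and \eqref{4.4}.

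\emph{Proof of \eqref{4.4}--\eqref{4.6}.} Choose a $\Z$-basis $a_1,\dots,a_n$ of $L_1$; it is a $\Q$-basis of $A$. Since $\phi$ is nondegenerate, there is a dual basis $b_1,\dots,b_n$ with $\phi(a_i,b_j)=\delta_{ij}$. Then $L_1^\phi=\bigoplus_j\Z b_j$, so $L_1^\phi$ is a full lattice, and applying the same argument again gives $L_1^{\phi\phi}=\bigoplus_i \Z a_i=L_1$, which is \eqref{4.4}. Identity \eqref{4.5} is immediate from the definition, since $b$ pairs integrally with $L_1+L_2$ iff it pairs integrally with both $L_1$ and $L_2$. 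For \eqref{4.6}, apply \eqref{4.5} to $L_1^\phi$ and $L_2^\phi$ to get $(L_1^\phi+L_2^\phi)^\phi=L_1\cap L_2$ by \eqref{4.4}. Dualizing once more and using \eqref{4.4} yields \eqref{4.6}. Here one needs that $L_1\cap L_2$ is again a full lattice, which holds because it contains $mL_1$ for a suitable $m\in\N$.

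\emph{Proof of \eqref{4.7}.} This is the key step, and the only one using multiplication invariance. Using $\phi(ab,c)=\phi(a,bc)$, we have $c\in(L_1L_2)^\phi$ iff $\phi(ab,c)\in\Z$ for all $a\in L_1$, $b\in L_2$. This holds iff $\phi(a,bc)\in\Z$ for all such $a,b$, i.e. iff $bc\in L_1^\phi$ for all $b\in L_2$, i.e. iff $c\in L_1^\phi:L_2$. Note that $(L_1L_2)^\phi$ is built from the $\Z$-span of products, and integrality on generators suffices. By symmetry of $L_1L_2$ we also get $(L_1L_2)^\phi=L_2^\phi:L_1$.

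\emph{Proof of \eqref{4.8} and \eqref{4.9}.} For \eqref{4.8}, note that $a\in\OO(L_1)$ iff $aL_1\subset L_1$. Dualizing is inclusion-reversing and bijective by \eqref{4.4}, so this is equivalent to $L_1^\phi\subset(aL_1)^\phi$. Applying \eqref{4.7} with the lattice $\Z a+\dots$ would require $a$ invertible, so it is cleaner to argue directly: $(aL_1)^\phi=\{c\mid ac\in L_1^\phi\}$ by invariance. Hence $L_1^\phi\subset(aL_1)^\phi$ iff $aL_1^\phi\subset L_1^\phi$, i.e. iff $a\in\OO(L_1^\phi)$. For \eqref{4.9}, \eqref{4.7} gives $(L_1L_1^\phi)^\phi=L_1^{\phi}:L_1^\phi=\OO(L_1^\phi)=\OO(L_1)$, using \eqref{4.8}. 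Dualizing and applying \eqref{4.4} gives $L_1L_1^\phi=\OO(L_1)^\phi$.

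The main care point is the equivalence in \eqref{4.8} for non-invertible $a$. This is handled by the direct invariance computation rather than through \eqref{4.7}. Everything else is routine linear duality.
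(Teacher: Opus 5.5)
Your proof is correct. For \eqref{4.4}--\eqref{4.7} and \eqref{4.9} it follows the paper's outline: the paper calls \eqref{4.4}--\eqref{4.6} elementary, obtains \eqref{4.7} by citation where you give the direct invariance computation, and derives the remaining identities from \eqref{4.7} and \eqref{4.4}. The one genuine difference is \eqref{4.8}.

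You worried that getting \eqref{4.8} from \eqref{4.7} would need an invertible $a$, but that worry is unfounded. One applies \eqref{4.7} not to $\Z a$ but to the full lattice $L_2:=L_1^\phi$, using both of its forms:
\[
\OO(L_1^\phi)=L_1^\phi:L_1^\phi=(L_1L_1^\phi)^\phi=L_1^{\phi\phi}:L_1=L_1:L_1=\OO(L_1).
\]
This gives \eqref{4.8}. Dualizing $(L_1L_1^\phi)^\phi=\OO(L_1)$ with \eqref{4.4} then gives \eqref{4.9} in the same stroke. This is what the paper means by ``\eqref{4.7} and \eqref{4.4} imply \eqref{4.8} and \eqref{4.9}''.

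Your element-wise route is also valid and avoids \eqref{4.7} entirely, but one justification is off. You invoke ``dualizing is bijective by \eqref{4.4}'' for $aL_1$. That set is not a full lattice when $a$ is a zero divisor (e.g.\ $a=0\in\OO(L_1)$), so \eqref{4.4} does not apply to it. What the implication $L_1^\phi\subset(aL_1)^\phi\Rightarrow aL_1\subset L_1$ actually needs is only the trivial inclusion $S\subset S^{\phi\phi}$ for an arbitrary subset $S\subset A$, which holds by symmetry of $\phi$, together with \eqref{4.4} for $L_1$. Even simpler: your invariance computation shows $\OO(L_1)\subset\OO(L_1^\phi)$, and applying the same inclusion to $L_1^\phi$ and using \eqref{4.4} gives the reverse.
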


Theorem \ref{t4.14} for $A$ separable was proved by Faddeev
((i)$\iff$(ii): \cite[24.1]{Fa65-1}, (i)$\iff$(iii): 
\cite[26.6]{Fa65-1}).

\begin{theorem}\label{t4.14} 
Let $A$ have a multiplicative metric $\phi:A\times A\to A$. 
Let $\Lambda\in\LL(A)$ be an order. 
The conditions (i), (ii) and (iii) are equivalent.
\begin{list}{}{}
\item[(i)]
Each $L\in\LL(A)$ with $\OO(L)=\Lambda$ is invertible.
\item[(ii)]
$\Lambda^\phi$ is invertible.
\item[(iii)]
$\OO((\Lambda^\phi)^2)=\Lambda.$
\end{list}
\end{theorem}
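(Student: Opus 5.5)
We prove (i)$\Rightarrow$(ii), (ii)$\Rightarrow$(iii) and (iii)$\Rightarrow$(i), using throughout Lemma \ref{t4.13} and the characterizations of invertibility in Theorem \ref{t4.3} (b).

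\emph{Step 1: (i)$\Rightarrow$(ii).} By \eqref{4.8}, $\OO(\Lambda^\phi)=\OO(\Lambda)=\Lambda$. Hence (i), applied to $L:=\Lambda^\phi$, shows that $\Lambda^\phi$ is invertible.

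\emph{Step 2: (ii)$\Rightarrow$(iii).} If $\Lambda^\phi$ is invertible, then by Theorem \ref{t2.2} (d) the product $(\Lambda^\phi)^2$ is invertible with
$$e_{(\Lambda^\phi)^2}=e_{\Lambda^\phi}e_{\Lambda^\phi}=\OO(\Lambda^\phi)\OO(\Lambda^\phi)=\Lambda.$$
By Theorem \ref{t4.3} we have $e_L=\OO(L)$ for invertible $L$, so $\OO((\Lambda^\phi)^2)=\Lambda$.

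\emph{Step 3: (iii)$\Rightarrow$(i).} This is the main step. Let $L\in\LL(A)$ with $\OO(L)=\Lambda$. By \eqref{4.9}, $LL^\phi=\Lambda^\phi$. Put $M:=\Lambda^\phi$ and $K:=\Lambda:M$. We try to show that $M$ itself is invertible and then deduce the invertibility of $L$.

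First, $M$ is invertible. By \eqref{4.7},
$$K=\Lambda:M=(\Lambda^\phi\Lambda^\phi)^\phi\cdot{}$$
reinterpreted: \eqref{4.7} with $L_1=M^\phi=\Lambda$ and $L_2=M$ gives $(\Lambda\cdot M)^\phi=\Lambda^\phi:M$. Instead we compute $\OO(\Lambda:M)$ directly. By \eqref{4.7} with $L_1=L_2=M$,
$$(M^2)^\phi=M^\phi:M=\Lambda:M=K,$$
since $M^\phi=\Lambda^{\phi\phi}=\Lambda$ by \eqref{4.4}. Then \eqref{4.8} gives
$$\OO(K)=\OO((M^2)^\phi)=\OO(M^2)=\Lambda$$
by (iii). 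Since $\OO(M)=\Lambda$ and $\OO(\OO(M):M)=\OO(K)=\Lambda$, criterion (iv) of Theorem \ref{t4.3} (b) shows that $M$ is invertible with inverse $K$.

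Second, $L$ is invertible. From $LL^\phi=M$ we get $L\cdot(L^\phi K)=MK=\Lambda=\OO(L)$. So criterion (ii) of Theorem \ref{t4.3} (b) holds with $L_2:=L^\phi K$, and $L$ is invertible.

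The only delicate point is the identification $\Lambda:\Lambda^\phi=((\Lambda^\phi)^2)^\phi$, which comes from \eqref{4.7} together with \eqref{4.4}. Once this is in place, everything else follows formally from Lemma \ref{t4.13} and Theorem \ref{t4.3}.
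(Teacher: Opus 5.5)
Your proof is correct and is essentially the paper's argument, regrouped as (i)$\Rightarrow$(ii)$\Rightarrow$(iii)$\Rightarrow$(i): the identity $\Lambda:\Lambda^\phi=((\Lambda^\phi)^2)^\phi$ combined with \eqref{4.8} and Theorem \ref{t4.3} (b)(iv), and the inverse $L^\phi(\Lambda^\phi)^{-1}$ for $L$, are exactly what the paper uses, and the only real difference is that you get (ii)$\Rightarrow$(iii) from $e_{ab}=e_ae_b$ (Theorem \ref{t2.2} (d)) rather than reading that identity backwards. Do delete the garbled false start at the beginning of Step 3 (the fragment ending in ``reinterpreted:'' and the unused identity $(\Lambda M)^\phi=\Lambda^\phi:M$), which only obscures the argument.
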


{\bf Proof:} (i)$\Rightarrow$(ii):
$\OO(\Lambda^\phi)\stackrel{\textup{\eqref{4.8}}}{=}\OO(\Lambda)
=\Lambda$, so by (i) $\Lambda^\phi$ is invertible.

(ii)$\Rightarrow$(i): Consider $L\in\LL(A)$ with 
$\OO(L)=\Lambda$. 
$$L(L^\phi(\Lambda^\phi)^{-1})
\stackrel{\textup{\eqref{4.9}}}{=}
\Lambda^\phi(\Lambda^\phi)^{-1}
=\OO(\Lambda^\phi)\stackrel{\textup{\eqref{4.8}}}{=}
\OO(\Lambda)=\Lambda=\OO(L),$$
so $L$ is invertible. 

(ii)$\iff$(iii): Consider $L_1:=\Lambda^\phi$.
\begin{eqnarray*}
\OO(L_1)=\OO(\Lambda^\phi)\stackrel{\textup{\eqref{4.8}}}{=}
\OO(\Lambda)=\Lambda,\
\textup{and thus}\quad \OO(L_1)^\phi=\Lambda^\phi,
\end{eqnarray*}
so
\begin{eqnarray*}\OO(\Lambda^\phi\Lambda^\phi)
=\OO(\OO(L_1)^\phi L_1)
\stackrel{\textup{\eqref{4.7}}}{=}
\OO((\OO(L_1):L_1)^\phi)
\stackrel{\textup{\eqref{4.8}}}{=}\OO(\OO(L_1):L_1).
\end{eqnarray*}
Theorem \ref{t4.3} (b) (i)$\iff$(iv) gives the equivalence
(ii)$\iff$(iii).
\hfill$\Box$ 

\bigskip
The case $\dim A=2$ is special and famous.
In that case each full lattice is invertible. We can offer now
four different proofs of this fact, three by results
which are cited in this section, the fourth one consists
of direct proofs for the different cases.

\begin{corollary}\label{t4.15}
Let $\dim A=2$. Then each full lattice $L$ is invertible.
\end{corollary}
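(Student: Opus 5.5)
The quickest route is Theorem \ref{t4.4}. With $n=\dim A=2$ we have $n-1=1$, so every power $L^k$ with $k\geq 1$ is invertible, and in particular $L=L^1$ is. This already proves the corollary from a cited result. Since the text promises several proofs, I would add at least one that does not rest on the deep Theorem \ref{t4.4}, and I would base it on Theorem \ref{t4.12}.

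\textbf{Step 1: $A$ is cyclic.} By Theorem \ref{t3.1}, $A$ is one of three things: a quadratic field, $\Q\oplus\Q$, or $\Q\oplus\Q\varepsilon$ with $\varepsilon^2=0$. In each case any $a\in A\setminus\Q 1_A$ satisfies $A=\Q[a]$, because $1_A,a$ are linearly independent and $\dim A=2$.

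\textbf{Step 2: every order in $A$ is cyclic.} Let $\Lambda$ be an order. Since $\Lambda\cap\Q 1_A$ is a subring of $\Q$ that is finitely generated as a $\Z$-module, it equals $\Z 1_A$. So $\Z 1_A$ is a saturated rank-one sublattice of $\Lambda$. It therefore extends to a $\Z$-basis $1_A,a$ of $\Lambda$. Then $\Z[a]\subset\Lambda$ because $\Lambda$ is a ring, and $\Lambda=\Z 1_A+\Z a\subset\Z[a]$. Hence $\Lambda=\Z[a]$.

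\textbf{Step 3: conclude.} Let $L\in\LL(A)$ be arbitrary. By Lemma \ref{t4.2} (a), $\OO(L)$ is an order, and by Step 2 it is cyclic. Theorem \ref{t4.12} then shows that $L$ is invertible.

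The only step needing care is Step 2, specifically the claim that $\Lambda\cap\Q 1_A=\Z 1_A$. This holds because a finitely generated subring of $\Q$ containing $1$ is $\Z$: if it contained $1/m$ with $m>1$, it would contain every power $1/m^k$, and then it would not be finitely generated.

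A third argument uses the same observation with Theorem \ref{t4.14}. Take $\Lambda=\Z[a]$ and let $\phi$ be the metric of Lemma \ref{t4.11} (b). Then $\Lambda^\phi=\Lambda$, which is invertible, so condition (ii) of Theorem \ref{t4.14} holds, and condition (i) gives the claim.

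A fourth, direct argument would go case by case. For each of the three algebra types one computes $L\cdot(\OO(L):L)$ explicitly in a normalized basis $L=\Z\cdot 1+\Z\omega$ and checks that it equals $\OO(L)$. This is the criterion of Theorem \ref{t4.3} (b)(iii).
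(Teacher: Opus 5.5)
Your proposal is correct and matches the paper's own argument: the paper also proves the corollary by applying Theorem \ref{t4.4} with $n-1=1$, and by the route ``every order in $A$ is cyclic, then apply Theorem \ref{t4.12}''. Your third argument is just the paper's proof of Theorem \ref{t4.12} via Lemma \ref{t4.11} (b) and Theorem \ref{t4.14}, and your fourth is its case-by-case proof (4). Your Step 2 also supplies a justification the paper only asserts, namely that $\Lambda\cap\Q 1_A=\Z 1_A$ forces every order in a $2$-dimensional $A$ to be of the form $\Z[a]$.
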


{\bf Four proofs:} (1) Apply Theorem \ref{t4.4}.

(2) There is an $\omega\in \Lambda_{max}$ with 
$\Lambda_{max}=\Z 1_A+\Z\omega$. Therefore 
$\Lambda_{max}=\Lambda+\Lambda\omega$ for each order $\Lambda$
in $A$. Apply Theorem \ref{t4.9}.

(3) Each order in $A$ is cyclic. Apply Theorem \ref{t4.12}.

(4) For $A$ an algebraic number field, the corollary
is also proved in \cite[1.4.1]{DTZ62}.
For $A=\Q e_1\oplus\Q e_2$ it is part of Theorem \ref{t9.1} (d).
For $A\cong \Q[x]/(x^2)$ it is part of Theorem \ref{t10.3} (b).
\hfill$\Box$

\section{Classes of full lattices, two quotient
semigroups}\label{s5}
\setcounter{equation}{0}
\setcounter{table}{0}
\setcounter{figure}{0}

Throughout the paper, $A$ is an algebra as in Theorem \ref{t3.1},
so a finite dimensional commutative $\Q$-algebra with unit
element $1_A$. 

In this section the quotient semigroup $W(\LL(A))$ from
Theorem \ref{t2.4} and another quotient semigroup $\EE(A)$
are considered. Results from \cite{HL26} are cited.
Most of them are extensions of results from Faddeev,
\cite{DTZ62} and \cite{Ne99}.

Recall that (by Notation \ref{t1.1} (iii)) $A^{unit}$ 
is the group of units in $A$. 
The following $\varepsilon$-equivalence of full lattices in $A$ 
is needed for $GL_n(\Z)$-conjugacy classes 
of regular integer matrices in section \ref{s6}.

\begin{definition}\label{t5.1}
(a) An equivalence relation $\sim_\varepsilon$ on $\LL(A)$
is defined as follows,
\begin{eqnarray*}
L_1\sim_\varepsilon L_2 &:\iff& a\in A^{unit}
\textup{ with }a\cdot L_1=L_2\textup{ exists.}
\end{eqnarray*}
The equivalence class of a full lattice $L$ with respect to
$\sim_\varepsilon$ is called $\varepsilon$-class of $L$ and
is denoted $[L]_\varepsilon$. The set of $\varepsilon$-classes
of full lattices is denoted 
$\EE(A) \ (=\LL(A)/\sim_\varepsilon)$. 

(b) A full lattice $L$ in $A$ with $\OO(L)\supset\Lambda$
for some order $\Lambda$ is called a {\it $\Lambda$-ideal}.
It is called an {\it exact $\Lambda$-ideal} if $\OO(L)=\Lambda$.
\end{definition}

It is easy to see that $\varepsilon$-equivalence is
compatible with multiplication and division in
$\LL(A)$. This and the other statements in Lemma \ref{t5.2}
are discussed in \cite{DTZ62} and \cite{HL26}.

\begin{lemma}\label{t5.2} 
\cite[1.3.7]{DTZ62}\cite[5.5, 5.6]{HL26}

(a) Multiplication and division map on $\LL(A)$ induce
multiplication and division map on the quotient $\EE(A)$.
So $\EE(A)$ is a commutative semigroup with a division map.

(b) $[L]_\varepsilon$ is idempotent if and only if the class
$[L]_\varepsilon$ contains an order.

(c) $[L]_\varepsilon$ is invertible if and only if $L$ is
invertible. Then $e_{[L]_\varepsilon}=[\OO(L)]_\varepsilon$
and $[L]_\varepsilon^{-1}=[L^{-1}]_\varepsilon$. 
\end{lemma}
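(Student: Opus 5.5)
The plan is to first prove the compatibility of $\sim_\varepsilon$ with multiplication and division on $\LL(A)$, and then derive (b) and (c) from Theorem \ref{t4.3} and Theorem \ref{t2.2}.

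For (a), let $L_1'=aL_1$ and $L_2'=bL_2$ with $a,b\in A^{unit}$. By bilinearity of the product of lattices, $L_1'L_2'=ab\,L_1L_2$, and $ab\in A^{unit}$. For division, $x\in L_1':L_2'$ holds iff $xbL_2\subset aL_1$, iff $a^{-1}bx\in L_1:L_2$. Hence $L_1':L_2'=ab^{-1}(L_1:L_2)$, again $\varepsilon$-equivalent to $L_1:L_2$. So both operations descend to $\EE(A)$. Commutativity and associativity are inherited, so $\EE(A)$ is a commutative semigroup with a division map. We also record that $\OO(aL)=\OO(L)$ for $a\in A^{unit}$, since $x\,aL\subset aL$ iff $xL\subset L$.

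For (b), one direction is immediate: if $\Lambda\in[L]_\varepsilon$ is an order, then $\Lambda\Lambda=\Lambda$, so $[L]_\varepsilon$ is idempotent. Conversely, suppose $L^2=aL$ with $a\in A^{unit}$. Put $M:=a^{-1}L\in[L]_\varepsilon$; then $M^2=a^{-2}L^2=a^{-1}L=M$. By Theorem \ref{t4.3} (a), $M$ is an order. This step is the one needing a little care, but it reduces directly to the idempotent characterization in $\LL(A)$.

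For (c), first let $L$ be invertible, with $L\cdot L^{-1}=\OO(L)$. Passing to classes gives $[L]_\varepsilon[L^{-1}]_\varepsilon=[\OO(L)]_\varepsilon$ and $[L]_\varepsilon[\OO(L)]_\varepsilon=[L]_\varepsilon$. So $[L]_\varepsilon$ is invertible in the sense of Definition \ref{t2.1} (b). The uniqueness statements in Theorem \ref{t2.2} (a) then identify $e_{[L]_\varepsilon}=[\OO(L)]_\varepsilon$ and $[L]_\varepsilon^{-1}=[L^{-1}]_\varepsilon$; the latter uses $L^{-1}\OO(L)=L^{-1}$.

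Conversely, let $[L]_\varepsilon$ be invertible. Its idempotent $e_{[L]_\varepsilon}$ contains an order $\Lambda$ by (b). Then $[L][\Lambda]=[L]$ gives $L\Lambda=cL$ for some unit $c$. Moreover, some class $[L_2]$ satisfies $[L][L_2]=[\Lambda]$, so $LL_2=d\Lambda$ for some unit $d$. Replacing $L_2$ by $d^{-1}L_2$, we may assume $LL_2=\Lambda$.

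It remains to show $\OO(L)=\Lambda$, so that Theorem \ref{t4.3} (b)(ii) applies.
\begin{itemize}
\item[$\supset$:] Multiplying $LL_2=\Lambda$ by $L$ gives $L\Lambda=L^2L_2$. Rescaling $L$ by a unit does not change $\OO(L)$, so we may work with $L$ such that $L\Lambda=cL$. Then $\Lambda L\cdot L_2=\Lambda$ and $\Lambda\cdot\Lambda L=\Lambda L$, so $\Lambda L$ is invertible with $\OO(\Lambda L)=\Lambda$. Since $\Lambda L=cL$, we get $\OO(L)=\OO(cL)=\Lambda$.
\item[$\subset$:] If $xL\subset L$, then $x\Lambda=xLL_2\subset LL_2=\Lambda$, so $x\in\Lambda$, because $1_A\in\Lambda$.
\end{itemize}
Hence $\OO(L)=\Lambda$ and $LL_2=\OO(L)$, so $L$ is invertible by Theorem \ref{t4.3} (b)(ii).
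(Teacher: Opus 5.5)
Your proof is correct and complete. The paper gives no argument of its own here beyond noting that compatibility of $\sim_\varepsilon$ with multiplication and division is easy and citing \cite[1.3.7]{DTZ62} and \cite[5.5, 5.6]{HL26}, and your reduction of (b) and (c) to Theorem \ref{t4.3} and Theorem \ref{t2.2} by rescaling with units is the natural way to supply it. (In the converse of (c), $\Lambda\subset\OO(L)$ follows at once from $\Lambda\subset\OO(\Lambda L)=\OO(cL)=\OO(L)$, so you can drop the detour through invertibility of $\Lambda L$ and the unused identity $L\Lambda=L^2L_2$.)
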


Theorem \ref{t5.3} is an important finiteness result.
The special case where the radical $R$ of $A$ satisfies $R^2=0$ 
was proved in \cite{Fa67}, the general case is proved in 
\cite[Theorem 6.5]{HL26}.
Corollary \ref{t5.4} gives an implication for $A$ separable
which also follows from the Jordan-Zassenhaus theorem
\cite{Za38} (see also \cite[(79.1)]{CR62} or 
\cite[(26.4)]{Re03}). 
In this sense, Theorem \ref{t5.3} generalizes the
Jordan-Zassenhaus theorem.

\begin{theorem}\label{t5.3} \cite{Fa67} \cite[6.5]{HL26}
For any order $\Lambda$ in $A$, the set
$$\{[L]_\varepsilon\,|\, L\in\LL(A),\OO(L)= \Lambda\}$$
of $\varepsilon$-classes of exact $\Lambda$-ideals is finite.
\end{theorem}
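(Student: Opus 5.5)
Fix an order $\Lambda$ in $A$ and let $L$ be an exact $\Lambda$-ideal, i.e.\ $\OO(L)=\Lambda$. The plan has two stages. First, replace $L$ within its $\varepsilon$-class by a lattice sandwiched between two fixed lattices that depend only on $\Lambda$. Since a fixed full lattice has only finitely many sublattices of bounded index, this leaves only finitely many candidates.

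\emph{Separable case.} Here I would use $\Lambda_{max}$ from Theorem \ref{t4.8}. The lattice $\Lambda_{max}L$ has order $\Lambda_{max}$, so by Theorem \ref{t4.8} (b) it is invertible in $G(\Lambda_{max})$. The classical finiteness of the class group of $\Lambda_{max}$ (componentwise, the finiteness of class numbers of number fields) gives finitely many representatives $M_1,\dots,M_h$ of $G(\Lambda_{max})/\sim_\varepsilon$. I then rescale $L$ by a unit so that $\Lambda_{max}L=M_i$ for some $i$. Next I fix $m\in\N$ with $m\Lambda_{max}\subset\Lambda$ (the conductor contains such an $m$). Then
\[
mM_i=m\Lambda_{max}L\subset\Lambda L=L\subset \Lambda_{max}L=M_i .
\]
So $L$ is one of finitely many lattices between $mM_i$ and $M_i$.

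\emph{General case.} Write $A=F\oplus R$ as in Remarks \ref{t3.2}. The natural substitute for $\Lambda_{max}$ is an order $\Lambda'\supset\Lambda$ that is "as large as possible". In the nonseparable case no maximal order exists, since $R$ allows unbounded denominators, so I would instead use Theorem \ref{t4.5}. It provides an order $\Lambda'\supset\Lambda$ with $\Lambda'L\in G(\Lambda')$. Two difficulties appear.
\begin{itemize}
\item[(1)] $\Lambda'$ depends on $L$. So I first need to show that, up to conjugation by units, only finitely many orders $\Lambda'$ arise, or that the index $[\Lambda':\Lambda]$ is bounded in terms of $\Lambda$ alone.
\item[(2)] For a fixed order $\Lambda'$, I need finiteness of $G(\Lambda')/\sim_\varepsilon$, i.e.\ the Picard group of $\Lambda'$ is finite.
\end{itemize}
For (2), I would compare with the projection $\pr_F$. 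The map $L\mapsto \pr_F(L)$ sends $G(\Lambda')$ to $G(\pr_F(\Lambda'))$ in the separable algebra $F$, where finiteness is known. The kernel consists of invertible lattices $M$ with $\pr_F(M)=\pr_F(\Lambda')$. Multiplying by a unit $1+r$ with $r\in R$, I would try to move $M$ to a lattice squeezed between $\Lambda'$ and $N^{-1}\Lambda'$ for a bounded $N$, filtering by powers $R^j$ and using that each graded piece $R^j/R^{j+1}$ is a module over the separable algebra $F$.

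I expect step (1) to be the main obstacle. This is where nonseparability really enters: one must show that exactness $\OO(L)=\Lambda$ forces a bound on denominators of $L$ in the radical directions, after suitable normalization by a unit. My first attempt would be an induction on the nilpotency index of $R$, working with the quotients $A/R^j$. In each step I would twist by units $1+r$ with $r\in R^j$ to clear denominators, and use $\OO(L)=\Lambda$ to show that the remaining denominators divide a fixed integer, namely the exponent of $\Lambda_{F}'/\pr_F(\Lambda)$ times the index of $\Lambda\cap R^j$ in its saturation. Once $L$ is confined, up to $\varepsilon$-equivalence, to lie between $cM$ and $M$ for fixed $M$ and $c$, finiteness follows as in the separable case.
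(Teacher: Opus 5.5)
\textbf{Verdict.} Your separable case is correct. It is the standard Jordan--Zassenhaus argument and actually proves Corollary \ref{t5.4} directly. The non-separable case, however, is the whole new content of Theorem \ref{t5.3}, and there the proposal is a program, not a proof.

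\textbf{Step (2) is not the problem.} It follows from Theorem \ref{t5.10}: $G([\Lambda']_\varepsilon)\cong G([\pr_F\Lambda']_\varepsilon)$, and the latter group is finite by your separable argument.

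\textbf{The gap is step (1).} Theorem \ref{t4.5} only gives, for each $L$, some order $\Lambda'\supset\Lambda$ depending on $L$. Nothing you establish bounds $[\Lambda':\Lambda]$ uniformly over all exact $\Lambda$-ideals. This bound is essentially equivalent to the theorem itself, in both directions:
\begin{itemize}
\item If Theorem \ref{t5.3} holds, take $\Lambda'=\OO(L^{n-1})$. This choice works by Theorem \ref{t4.4}, since $\OO(L^n)=\OO(L^{n-1})$ and $\Lambda'L=(L^{n-1})^{-1}L^n\in G(\Lambda')$. It depends only on $[L]_\varepsilon$, so only finitely many $\Lambda'$ occur.
\item Conversely, (1) and (2) give the theorem by your sandwich argument.
\end{itemize}
So the reduction only relocates the difficulty. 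Step (1) is exactly where exactness $\OO(L)=\Lambda$ must be used, and there you offer only a plan. (Also, ``up to conjugation by units'' is vacuous in a commutative algebra.)

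\textbf{The plan for (1) fails as stated.} Exactness is not inherited by the quotients $A/R^j$. Take $A=\Q[a]/(a^3)$ and $L=\langle 1,a,2a^2\rangle_\Z$.
\begin{itemize}
\item Then $\OO(L)=\langle 1,2a,2a^2\rangle_\Z$.
\item The image $\langle 1,\bar a\rangle_\Z$ of $L$ in $A/R^2$ is itself an order.
\item That order is strictly larger than the image $\langle 1,2\bar a\rangle_\Z$ of $\OO(L)$.
\end{itemize}
So an induction hypothesis about exact ideals in $A/R^j$ does not apply. For non-exact ideals finiteness is false: already in $\Q[a]/(a^2)$ the orders $\Z 1_A+\Z\frac1m a$ give infinitely many classes (Theorem \ref{t10.3}). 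You would have to carry along a bound on $[\OO(\bar L):\bar\Lambda]$, which is again the missing statement. Your proposed ``fixed integer'' is also not well defined. $\Lambda_F'$ is never defined, and $\Lambda\cap R^j$ is already a full lattice in $R^j$, so its ``saturation'' has no meaning.

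\textbf{A direct route avoids $\Lambda'$.}
\begin{enumerate}
\item Use your separable case to move $\pr_F L$ into a finite set of lattices containing $1_F$. Units of $F$ are units of $A$.
\item Achieve $1\in L$ by multiplying with a unit $(1+r_0)^{-1}$, $r_0\in R$.
\item Put $K:=L\cap R$. It is a $\Lambda$-module, and $cL\subset\Lambda+K$ for a fixed $c$.
\item Exactness says $\Lambda\cap R=\{r\in R\,|\,rL\subset K\}$.
\end{enumerate}
If $R^2=0$ this gives $cK\subset\Lambda\cap R$ immediately, which is Faddeev's case, and finiteness follows. In general, for $r\in K$ you only get $c\,rL\subset K+rK$, and $rK\subset R^2$ need not lie in $L$. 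Controlling these products along the filtration $R\supset R^2\supset\cdots$ is the real content of the theorem, and your proposal leaves exactly that open.
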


\begin{corollary}\label{t5.4}
Let $A$ be separable.
For any order $\Lambda$ in $A$, the set
$$\{[L]_\varepsilon\,|\, L\in\LL(A),\OO(L)\supset \Lambda\}$$
of $\varepsilon$-classes of $\Lambda$-ideals is finite.
\end{corollary}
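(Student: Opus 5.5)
Since $A$ is separable, Theorem \ref{t4.8} (a) gives a maximal order $\Lambda_{max}$ in $A$. I would reduce the statement to finitely many applications of Theorem \ref{t5.3}, one for each order containing $\Lambda$.

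The first step is to observe that for each $L\in\LL(A)$ with $\OO(L)\supset\Lambda$, the order $\OO(L)$ lies in the set
$$\{\Lambda'\,|\,\Lambda'\textup{ an order with }\Lambda\subset\Lambda'\subset\Lambda_{max}\}.$$
The inclusion $\OO(L)\subset\Lambda_{max}$ holds because $\Lambda_{max}$ contains every order of $A$, as shown in the proof of Theorem \ref{t4.8}.

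The second step is to show that this set of intermediate orders is finite. Both $\Lambda$ and $\Lambda_{max}$ are full lattices in $A$ with $\Lambda\subset\Lambda_{max}$, so the index $m:=[\Lambda_{max}:\Lambda]$ is finite. Every intermediate order $\Lambda'$ then satisfies
$$m\Lambda_{max}\subset\Lambda\subset\Lambda'\subset\Lambda_{max}.$$
Hence $\Lambda'$ corresponds to a subgroup of the finite group $\Lambda_{max}/m\Lambda_{max}$, which has only finitely many subgroups. So there are only finitely many $\Lambda'$.

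The third step is to apply Theorem \ref{t5.3} to each of these finitely many orders $\Lambda'$. It gives finitely many $\varepsilon$-classes of exact $\Lambda'$-ideals for each $\Lambda'$. The set in the corollary is the union of these finitely many finite sets, so it is finite.

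The only point needing care is that a maximal order exists, i.e. that all orders are bounded above by one order. This is exactly where separability enters, through Theorem \ref{t4.8}. Without separability there is no maximal order and infinitely many orders can contain $\Lambda$, which matches the infiniteness mentioned in the introduction. Everything else is elementary, so I expect no real obstacle.
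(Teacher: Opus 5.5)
Your proposal is correct and follows the paper's proof: the existence of $\Lambda_{max}$ gives finitely many orders containing $\Lambda$, and Theorem \ref{t5.3} is then applied to each of them. Your index argument, $m\Lambda_{max}\subset\Lambda'\subset\Lambda_{max}$, supplies the finiteness detail that the paper leaves implicit.
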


{\bf Proof:} Because the maximal order $\Lambda_{max}$ exists,
there are only finitely many orders $\www{\Lambda}$ with
$\www{\Lambda}\supset\Lambda$. One applies Theorem \ref{t5.3}.
\hfill$\Box$

\bigskip
The $w$-equivalence of full lattices can be rephrased in
different useful ways. It helps to structure the set
$\{L\,|\, L\in\LL(A),\OO(L)=\Lambda\}$ and the finite set 
$\{[L]_\varepsilon\,|\, L\in\LL(A),\OO(L)= \Lambda\}$
for a given order $\Lambda$.
The parts (a)--(c) of the following theorem are cited
from \cite{HL26}, but they are condensed versions of 
results in \cite{DTZ62}. Though the crucial part (d)
is not formulated in \cite{DTZ62}. It is proved in 
\cite[5.8]{HL26}.

\begin{theorem}\label{t5.5}
\cite[1.3.9, 1.3.11]{DTZ62}\cite[5.7, 5.8]{HL26}

(a) Let $L_1,L_2\in\LL(A)$. The following four properties
are equivalent.
\begin{list}{}{}
\item[(i)]
$L_1\sim_w L_2$.
\item[(ii)]
$1\in (L_1:L_2)(L_2:L_1)$.
\item[(iii)]
$\OO(L_1)=\OO(L_2)$ and $L_3\in G(\OO(L_1))$ with
$L_1L_3=L_2$ exists.
\item[(iv)]
$[L_1]_\varepsilon\sim_w[L_2]_\varepsilon$.
\end{list}

(b) Let $L_1,L_2\in\LL(A)$ with $L_1\sim_w L_2$. Then
\begin{eqnarray*}
L_1:L_2\in G(\OO(L_1)),\quad L_2:L_1\in G(\OO(L_1)),
\quad (L_1:L_2)^{-1}=L_2:L_1,\\
L_2=(L_2:L_1)L_1,\quad L_1=(L_1:L_2)L_2,\\
\OO(L_1)=\OO(L_2)=\OO(L_1:L_2)=\OO(L_2:L_1)
=(L_1:L_2)(L_2:L_1).
\end{eqnarray*}
The only full lattice $L_3\in G(\OO(L_1))$ with $L_1L_3=L_2$
is $L_3=L_2:L_1$. 

(c) $W(\LL(A))=W(\EE(A))$, and this semigroup
inherits a division map from the division map on $\LL(A)$. 

(d) Let $\Lambda$ be an order and $L_1\in \LL(A)$
with $\OO(L_1)=\Lambda$. Then 
\begin{eqnarray}
G(\Lambda)=[\Lambda]_w\quad\textup{and}\quad 
G([\Lambda]_\varepsilon)=[[\Lambda]_\varepsilon]_w.\label{5.1}
\end{eqnarray}
The maps 
\begin{eqnarray}
G(\Lambda)\to [L_1]_w,&& L_3\mapsto L_3L_1,\label{5.2}\\
G([\Lambda]_\varepsilon)\to [[L_1]_\varepsilon]_w,
&& [L_3]_\varepsilon\mapsto [L_3]_\varepsilon[L_1]_\varepsilon,
\label{5.3}
\end{eqnarray}
are well defined and bijections. 
\end{theorem}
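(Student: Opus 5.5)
We treat the four parts of Theorem \ref{t5.5} in order. Throughout we use the characterisations of invertibility in Theorem \ref{t4.3} and the abstract semigroup facts of Theorems \ref{t2.2} and \ref{t2.4}.

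For (a), (i)$\Rightarrow$(ii): if $L_1x_1=L_2$ and $L_2x_2=L_1$ with $x_i\in\LL(A)$, then $x_1\subset L_2:L_1$ and $x_2\subset L_1:L_2$. Hence $L_1\subset (L_1:L_2)(L_2:L_1)L_1\subset L_1$. So $P:=(L_1:L_2)(L_2:L_1)$ satisfies $P\subset\OO(L_1)$ and $PL_1=L_1$. To get $1\in P$ I would use a determinant (Cayley--Hamilton) argument on the $\Z$-basis of $L_1$; if that fails, I would localise. A cleaner route is the abstract one: $PL_1=L_1$ shows that $P$ acts as an identity on $L_1$. Combined with the symmetric relation for $L_2$, this identifies $\OO(L_1)=\OO(L_2)$ and $P=\OO(L_1)$, which is idempotent. (ii)$\Rightarrow$(iii): if $1\in P$, then $P\supset\Z1_A$, and $PL_1\subset L_1$ forces $PL_1=L_1$. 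Also $(L_1:L_2)L_2\subset L_1$ and $(L_2:L_1)L_1\subset L_2$ give $L_2=(L_2:L_1)L_1$ and $L_1=(L_1:L_2)L_2$, since each inclusion is squeezed by $P$. Then $\OO(L_1)=\OO(L_2)$ follows from Lemma \ref{t4.2}(d) applied both ways. Moreover $L_3:=L_2:L_1$ satisfies $L_3(L_1:L_2)=P\ni 1$ with $P\subset\OO(L_1)$. Lemma \ref{t4.2}(d) gives $\OO(L_3)\supset\OO(L_1)$, and then $\OO(L_1)\subset P\subset\OO(L_1)$ shows $P=\OO(L_1)$. By Theorem \ref{t4.3}(b)(ii), $L_3$ is invertible with $\OO(L_3)=\OO(L_1)$. (iii)$\Rightarrow$(i) is immediate with $x_1=L_3$ and $x_2=L_3^{-1}$. (i)$\iff$(iv): $\varepsilon$-classes are compatible with products by Lemma \ref{t5.2}. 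For the nontrivial direction, replace $x_i$ by $a_ix_i$ with units $a_i$. This yields $L_1x_1=aL_2$ for a unit $a$, and $aL_2\sim_w L_2$ holds directly with $x=\Z a$ and $x=\Z a^{-1}$.

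Part (b) is largely what the chain above produces: $P=\OO(L_1)=(L_1:L_2)(L_2:L_1)$, both quotients lie in $G(\OO(L_1))$ and are mutually inverse, and $L_2=(L_2:L_1)L_1$. For uniqueness, if $L_1L_3=L_2$ with $L_3\in G(\OO(L_1))$, then $L_3\subset L_2:L_1$. Multiplying by $L_3^{-1}$ gives $L_3^{-1}(L_2:L_1)\subset (L_1:L_1)=\OO(L_1)$, hence $L_2:L_1\subset L_3$. Part (c) follows from (a)(iv), since the surjection $\LL(A)\to\EE(A)$ identifies exactly the $w$-classes.

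For (d), the statement $G(\Lambda)=[\Lambda]_w$ is Theorem \ref{t2.4}(b), and the same holds in $\EE(A)$. For the map \eqref{5.2}, well-definedness follows from (a)(iii). Surjectivity and injectivity are exactly the existence and uniqueness statements in (b): $L_2=(L_2:L_1)L_1$ with $L_2:L_1\in G(\Lambda)$, and uniqueness of $L_3$. For \eqref{5.3}, surjectivity lifts via (a)(iv). Injectivity is the hardest step. If $L_3L_1=aL_3'L_1$ with $a$ a unit, then (b)'s uniqueness applied to the pair $(L_1,L_3L_1)$ gives $L_3=aL_3'$, which is what is needed.

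The main obstacle I expect is (i)$\Rightarrow$(ii) in (a), namely producing $1\in P$ from $PL_1=L_1$. My plan is the Cayley--Hamilton/Nakayama argument: the relation $PL_1=L_1$ gives an element $p\in P$ acting on $L_1$ with $(1-p)L_1$ suitably small. If that gives only $1-p\in\OO(L_1)$-annihilators, I would instead invoke the invertibility of $P$ through the idempotent structure of Theorem \ref{t2.2}.
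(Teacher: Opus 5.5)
Apart from one step, your proposal is formal and, up to the small repairs listed at the end, correct. This covers (ii)$\Rightarrow$(iii)$\Rightarrow$(i), part (b) including your uniqueness argument for $L_3$, part (c) as far as $W(\LL(A))=W(\EE(A))$, and the two bijections in (d). The genuine gap is the one non-formal step, (a)(i)$\Rightarrow$(ii). You correctly obtain $PL_1=L_1$ with $P=(L_1:L_2)(L_2:L_1)\subset\OO(L_1)$, but you never derive $1_A\in P$. You name three possible routes and complete none of them:
the ``abstract route'' just asserts $P=\OO(L_1)$, which is the claim to be proved;
the fallback via Theorem \ref{t2.2} cannot work, because that theorem only describes elements already known to be invertible. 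Nothing in the abstract semigroup structure forces an element $P$ with $PL_1=L_1$ to be idempotent or invertible when $L_1$ itself is not invertible. What is needed is a property specific to lattices, and your remark ``if that gives only $1-p\in\OO(L_1)$-annihilators'' shows which one is missing.

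Your determinant idea does close the gap once two observations are added. First, $P$ is an ideal of $\OO(L_1)$, not merely a subset. Indeed $L_1:L_2$ and $L_2:L_1$ are $\OO(L_1)$-modules (e.g.\ $\OO(L_1)(L_2:L_1)L_1=(L_2:L_1)L_1\subset L_2$), so $P^2\subset\OO(L_1)P=P$. This really is needed. In $A=\Q e_1+\Q e_2$ with $e_ie_j=\delta_{ij}e_i$, the full lattice $M=\Z(2e_1+e_2)+\Z(e_1+2e_2)$ lies in $\OO(\Lambda_{max})=\Lambda_{max}$ and satisfies $M\Lambda_{max}=\Lambda_{max}$, yet $1_A\notin M$. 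Second, a full lattice is faithful: if $d\in A$ and $dL_1=0$, then $dA=\Q\, dL_1=0$, so $d=d\cdot 1_A=0$. Now choose a $\Z$-basis $(b_1,\dots,b_n)$ of $L_1$ and write $b_i=\sum_j p_{ij}b_j$ with $p_{ij}\in P$, which is possible since $PL_1=L_1$. Multiplying $(E_n-(p_{ij}))\cdot(b_1,\dots,b_n)^t=0$ by the adjugate matrix gives $d\,b_j=0$ for all $j$, where $d=\det(E_n-(p_{ij}))$. By the first observation $d\in 1_A+P$, and by the second $d=0$, so $1_A\in P$. The trivial alternative $L_1=L_2$ in the definition of $\sim_w$ gives $P=\OO(L_1)$ directly and should also be mentioned.

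Four smaller repairs are needed.

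In (iv)$\Rightarrow$(i), $\Z a$ is not a full lattice when $\dim A\geq 2$. Instead use $a\OO(L_2)$ and $a^{-1}\OO(L_2)$. Alternatively, if $L_1x_1=aL_2$ and $L_2x_2=bL_1$ with units $a,b$, take $L_1(a^{-1}x_1)=L_2$ and $L_2(b^{-1}x_2)=L_1$.

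In (ii)$\Rightarrow$(iii), applying Theorem \ref{t4.3}(b)(ii) to $L_3$ needs $L_3(L_1:L_2)=\OO(L_3)$, so you also need $\OO(L_3)\subset\OO(L_1)$. This follows from $\OO(L_3)\subset\OO(L_3(L_1:L_2))=\OO(\OO(L_1))=\OO(L_1)$ by Lemma \ref{t4.2}(d).

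In (c) the inherited division map is not addressed; you must show that $\sim_w$ is compatible with $:$. For example, check $(L_1M):L_2=M(L_1:L_2)$ and $L_2:(L_1M)=M^{-1}(L_2:L_1)$ for $M\in G(\OO(L_1))$. Then use that $\OO(L_1:L_2)$ and $\OO(L_2:L_1)$ contain $\OO(L_1)$, so multiplication by $M^{\pm 1}$ does not change their $w$-classes.

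In (d) you tacitly use that every class in $G([\Lambda]_\varepsilon)$ has a representative in $G(\Lambda)$. This follows from Lemma \ref{t5.2}(c) together with $\OO(uL)=\OO(L)$ for $u\in A^{unit}$.
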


\begin{remarks}\label{t5.6}
(i) Let $\Lambda$ be an order. By \eqref{5.3} the finite set
$\{[L]_\varepsilon\,|\, L\in\LL(A),\OO(L)=\Lambda\}$ splits
into finitely many $w$-classes. 
One of them is the (thus finite) group 
$G([\Lambda]_\varepsilon)$. 
These $w$-classes have all the same size,
because \eqref{5.3} gives bijections from the finite group
$G([\Lambda]_\varepsilon)$ to any other $w$-class in
the finite set 
$\{[L]_\varepsilon\,|\, L\in\LL(A),\OO(L)=\Lambda\}$.
Their number is denoted by 
\begin{eqnarray}\nonumber
\tau(\Lambda)&:=& |\{[[L]_\varepsilon]_w
\,|\, L\in\LL(A),\OO(L)=\Lambda\}|\\
&=& |\{[L]_w\,|\, L\in\LL(A),\OO(L)=\Lambda\}|\in\N.\label{5.4}
\end{eqnarray}
So $\tau(\Lambda)$ is also the number of $w$-classes in 
$\{L\,|\, L\in\LL(A),\OO(L)=\Lambda\}$. One of them is 
$G(\Lambda)=[\Lambda]_w$. \eqref{5.2} gives bijections
from it to any other $w$-class in 
$\{L\,|\, L\in\LL(A),\OO(L)=\Lambda\}$.

(ii) Consider the inclusions
\begin{eqnarray}\label{5.5}
\bigcup_{\Lambda\textup{ order}}G(\Lambda)&\subset& \LL(A),\\
\bigcup_{\Lambda\textup{ order}}G([\Lambda]_\varepsilon)
&\subset& \EE(A),\label{5.6} \\
\{[\Lambda]_w\,|\, \Lambda\textup{ an order}\}
&\subset& W(\LL(A)). \label{5.7}
\end{eqnarray}
On the left there are the sets of invertible elements of the 
semigroups on the right. The sets on the left are subsemigroups
of the semigroups on the right, but in general the division maps
do not restrict to the sets on the left (of course they do 
if the sets on the left and on the right coincide). 

(iii) The Theorems \ref{t5.10} and \ref{t5.12} below give 
some control on the subsemigroup 
$\bigcup_{\Lambda\textup{ order}}
G([\Lambda]_\varepsilon)\subset \EE(A)$ 
on the left of \eqref{5.6}. 

(iv) But little is known about the structure of the
whole semigroups $\EE(A)$ and $W(\LL(A))$ if they contain
not invertible elements. 
The strongest result which we know is Theorem \ref{t4.4}. 
Already the finite number $\tau(\Lambda)\in\N$
for an order $\Lambda$ is difficult to determine in general. 

Theorem \ref{t5.7} of Faddeev \cite[Corollary 4.1]{Fa65-2}
gives a formula for $\tau(\Lambda)$ if $\dim A=3$ and $A$
is separable. It is extended to not separable $A$
of dimension 3 in Theorem \ref{t10.4} (f), 
Theorem \ref{t11.2} (b) and Lemma \ref{t12.1}, see
Theorem \ref{t5.8}.
\end{remarks}

\begin{theorem}\label{t5.7}
\cite[Corollary 4.1]{Fa65-2}
Let $A$ be separable of dimension 3.
Let $\Lambda$ be an order in $A$ with a $\Z$-basis
$(1_A,\www{\omega}_1,\www{\omega}_2)$ with
$\www{\omega}_1\www{\omega}_2
=k_1\www{\omega}_1+k_2\www{\omega}_2+k_3 1_A$
with $k_1,k_2,k_3\in\Z$. 
Then the $\Z$-basis $(1_A,\omega_1,\omega_2)
=(1_A,\www{\omega}_1-k_2 1_A,\www{\omega}_2-k_1 1_A)$ satisfies
$\omega_1\omega_2\in\Z 1_A$.
There are $a,b,c,d\in\Z$, which are not all 0, with
\begin{eqnarray*}
\omega_1^2&=& b\omega_1+a\omega_2-ac 1_A,\\
\omega_1\omega_2&=& ad 1_A,\\
\omega_2^2&=& d\omega_1+c\omega_2-bd 1_A.
\end{eqnarray*}
Define
\begin{eqnarray*}
\mu&:=& \gcd(a,b,c,d)\in\N,\\
t&:=& |\{p\in\P\,|\, p\textup{ divides }\mu\}|\in\N.
\end{eqnarray*}
The number $\tau(\Lambda)$ of $w$-classes of full lattices
$L$ with $\OO(L)=\Lambda$ is $\tau(\Lambda)=2^t$.
\end{theorem}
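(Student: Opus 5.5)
The opening normalization is a direct computation. Put $\omega_1=\www\omega_1-k_2 1_A$ and $\omega_2=\www\omega_2-k_1 1_A$. Expanding gives $\omega_1\omega_2=(k_3+k_1k_2)1_A\in\Z 1_A$. Write $\omega_1^2$ and $\omega_2^2$ in the basis $(1_A,\omega_1,\omega_2)$. Then impose associativity, $(\omega_1^2)\omega_2=\omega_1(\omega_1\omega_2)$ and $\omega_1(\omega_2^2)=(\omega_1\omega_2)\omega_2$, and compare coefficients. This forces the displayed shape with parameters $a,b,c,d$, and $\omega_1\omega_2=ad\,1_A$. They are not all $0$ because $A$ is separable: if $a=b=c=d=0$, then $\omega_1,\omega_2$ would be nilpotent.

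For the count $\tau(\Lambda)=2^t$ I would localize. Put $\Lambda_p=\Lambda\otimes\Z_{(p)}$ and $L_p=L\otimes\Z_{(p)}$. Three standard facts are needed.
\begin{itemize}
\item[(1)] $\OO(L)=\Lambda$ if and only if $\OO(L_p)=\Lambda_p$ for all $p$.
\item[(2)] $L\in G(\Lambda)$ if and only if $L_p=x_p\Lambda_p$ with $x_p\in A^{unit}$ for all $p$; this uses that $\Lambda_p$ is semilocal, so invertible ideals there are principal.
\item[(3)] $L_p=\Lambda_p$ for almost all $p$.
\end{itemize}
By Theorem \ref{t5.5} (a)(iii), $L_1\sim_w L_2$ exactly when they have the same order and differ by an element of $G(\Lambda)$. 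Hence the map $[L]_w\mapsto([L_p])_p$ is a bijection onto the product of the local sets. So $\tau(\Lambda)=\prod_p\tau_p$, where $\tau_p$ is the number of exact $\Lambda_p$-ideals up to multiplication by $A^{unit}$. Gluing local data into a global lattice is routine.

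Next I compute $\tau_p$ using the multiplicative metric $\phi$ and Theorem \ref{t4.14}: $\tau_p=1$ if and only if $\Lambda_p^\phi$ is invertible, i.e. $\Lambda_p$ is Gorenstein.

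If $p\nmid\mu$, I show $\Lambda_p$ is Gorenstein. Check $\OO((\Lambda_p^\phi)^2)=\Lambda_p$ by writing down the dual basis of $\Lambda$ for the trace-like form $\phi(x,y)=l(xy)$. Here $l$ is the linear form with $l(1_A)=0=l(\omega_2)$ and $l(\omega_1)=1$, or a symmetric variant. In this basis the structure constants appear directly, and one of $a,b,c,d$ being a $p$-unit gives the needed element. Equivalently, one can show that $\Lambda_p$ is locally of the form $\Z_{(p)}[x]$ and apply Theorem \ref{t4.12}.

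If $p\mid\mu$, let $p^e\,\|\,\mu$. The structure equations show that $\Lambda''=\langle 1_A,\omega_1/p^e,\omega_2/p^e\rangle_\Z$ is again closed under multiplication: the new structure constants are $a/p^e,\dots$, and the constant terms get divided by $p^{2e}$. Then $\Lambda_p=\Z_{(p)}1_A+p^e\Lambda''_p$, and $\Lambda''_p$ is Gorenstein by the previous case, since its parameters have gcd prime to $p$. The claim is that the exact $\Lambda_p$-ideals up to $A^{unit}$ are exactly the classes of $\Lambda_p$ and $\Lambda_p^\phi$. These are distinct because $\Lambda_p$ is not Gorenstein: indeed $\Lambda_p/p\Lambda''_p$-type considerations show $\Lambda_p^\phi$ needs two generators. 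This gives $\tau_p=2$ and hence $\tau(\Lambda)=2^t$.

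The main obstacle is this local classification for $p\mid\mu$. I would proceed as follows. Given an exact ideal $M$, set $N=\Lambda''_pM$. Since $\Lambda''_p$ is Gorenstein, $N$ is invertible over its own order, which contains $\Lambda''_p$. Rescaling, I may assume $N=\Lambda'''_p$ for an order $\Lambda'''\supseteq\Lambda''$. Being a $\Lambda_p$-module amounts to $p^eN\subset M\subset N$ together with stability under $\Z_{(p)}+p^e\Lambda''_p$. Exactness, $\OO(M)=\Lambda_p$, then restricts $M/p^eN$ enough to leave two orbits under $(\Lambda''_p)^{unit}$: the one through $1_A$, giving $\Lambda_p$, and its dual. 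I expect this to need an induction on $e$ (or a reduction to $e=1$), and it is where most of the work lies.
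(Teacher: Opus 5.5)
The paper gives no proof of Theorem \ref{t5.7}. It quotes it from Faddeev \cite[Corollary 4.1]{Fa65-2} and proves only the non-separable analogues, by explicit normal forms (Theorem \ref{t10.4} (f), Theorem \ref{t11.2} (b)). So your proposal has to stand on its own. Your normalization is correct: associativity forces the constant terms $-ac$, $-bd$ and $\omega_1\omega_2=ad1_A$. The reduction $\tau(\Lambda)=\prod_p\tau_p$ by localization is also sound. But beyond that reduction nothing is proved: both local computations, which are the whole content of Faddeev's theorem, are either only announced or rest on faulty steps.

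\textbf{The main gap: the case $p\mid\mu$.} The decisive claim, that exactly one non-invertible class remains, is stated but not proved (you say yourself this is ``where most of the work lies''). The step before it is also wrongly justified. From ``$\Lambda''_p$ is Gorenstein'' you cannot conclude that $N=\Lambda''_pM$ is invertible over its own order, because the Gorenstein property does not pass to over-orders. In Section \ref{s9}, $\Lambda_{8,2,-2}$ is cyclic, yet $L_1$ with $\OO(L_1)=\Lambda_{220}\supset\Lambda_{8,2,-2}$ is not invertible. What does hold follows from exactness of $M$: $p^e\OO(N)M\subseteq p^eN\subseteq M$ gives $p^e\OO(N)\subseteq\Lambda_p=\Z_{(p)}+p^e\Lambda''_p$, and integrality then forces $\OO(N)=\Lambda''_p$. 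Hence $N$ is principal and you may take $N=\Lambda''_p$. The count itself still has to be done. One way is induction on $e$:
\begin{itemize}
\item[(i)] Put $\mathfrak m=p\Z_{(p)}+p^e\Lambda''_p$ and $\Lambda_1:=\mathfrak m:\mathfrak m=\Z_{(p)}+p^{e-1}\Lambda''_p$.
\item[(ii)] A non-principal exact $M$ needs exactly two generators, since $\mathfrak mM=pM$ would put $\Z_{(p)}+p^{e-1}\Lambda''_p$ into $\OO(M)$.
\item[(iii)] The same integrality argument shows that $\Lambda_1M=p^{-1}\mathfrak mM$ is an exact $\Lambda_1$-ideal, with $[\Lambda_1M:M]=p$.
\item[(iv)] For $e=1$, a nondegenerate form $\lambda(\bar x\bar y)$ on $\Lambda''/p\Lambda''$ shows that the non-principal exact $M$ with $\Lambda''_pM=\Lambda''_p$ are exactly $u\cdot\{x:\lambda(\bar x)=0\}$ with $u$ a unit.
\item[(v)] For $e\geq 2$, $\Lambda_1M$ cannot be invertible. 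After normalizing $\Lambda_1M=\Lambda_1$, the stabilizer of the plane $M/p\Lambda_1$ would contain its nonzero intersection with the nilradical $V$ of $\Lambda_1/p\Lambda_1$.
\item[(vi)] Over the non-invertible $\Lambda_1$-class, the group $1+V$ (of order $p^2$) acts simply transitively on the $p^2$ planes of $\Lambda_1M/p\Lambda_1M$ that give exact $M$.
\end{itemize}

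\textbf{The case $p\nmid\mu$.} This is not carried out either, and both tools you name fail in general. For your form $l(1_A)=l(\omega_2)=0$, $l(\omega_1)=1$, the Gram matrix of $l(xy)$ in the basis $(1_A,\omega_1,\omega_2)$ has determinant $-d$; the symmetric variant gives $-a$. For $\Lambda_{max}=\Z e_1+\Z e_2+\Z e_3$, with $(a,b,c,d)=(0,1,1,0)$ (Table \ref{table4.3}), neither is a multiplicative metric. Your alternative ``$\Lambda_p=\Z_{(p)}[x]$'' fails for the same order at $p=2$, because $\prod_{i<j}(x_i-x_j)$ is always even. A clean argument is this. $\Lambda_p$ is Gorenstein iff the $3$-dimensional $\mathbb{F}_p$-algebra $\Lambda/p\Lambda$ is Gorenstein. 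The only non-Gorenstein such algebra is $\mathbb{F}_p[x,y]/(x,y)^2$ (compare Remark \ref{t3.4} (v)), and $\Lambda/p\Lambda$ has this form iff $p$ divides $a,b,c,d$. This gives $\tau_p=1$ for $p\nmid\mu$ via Theorem \ref{t4.14}. It also supplies the non-Gorensteinness of $\Lambda_p$ for $p\mid\mu$, which your distinctness argument needs.
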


The last part of this theorem generalizes also to the algebras
of dimension 3 which are not separable. This is a consequence
of the results on these algebras in this paper.

\begin{theorem}\label{t5.8}
Let $A$ have dimension 3. Let $\Lambda$ be an order in $A$.
A number $\mu(\Lambda)\in\N$ exists which satisfies
\begin{eqnarray}\label{5.8}
\tau(\Lambda)&=& 2^{t(\Lambda)}\quad\textup{where}\\
t(\Lambda)&=& |\{p\in\P\,|\, p\textup{ divides }\mu(\Lambda)\}|.
\label{5.9}
\end{eqnarray}
\end{theorem}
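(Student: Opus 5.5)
Theorem \ref{t5.8} is proved by a case distinction along the structure theory of Theorem \ref{t3.1}, applied to a $3$-dimensional $A$. The decomposition $A=\bigoplus_j A^{(j)}$ with $A^{(j)}=F^{(j)}\oplus N^{(j)}$ leaves only a few isomorphism types.

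\begin{itemize}
\item[(1)] $A$ is separable, i.e. $R=\{0\}$.
\item[(2)] $A$ is irreducible with $F=\Q 1_A$ and $\dim R=2$. Then either $A\cong\Q[x]/(x^3)$, which is cyclic, or $A\cong\Q[x,y]/(x^2,xy,y^2)$, which is not Gorenstein.
\item[(3)] $A\cong \Q[x]/(x^2)\oplus\Q$, with one $2$-dimensional irreducible summand and one $1$-dimensional summand.
\end{itemize}

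No other case occurs: a $2$-dimensional summand with $R\ne 0$ must have $F^{(j)}=\Q$, and a $3$-dimensional irreducible algebra with radical of dimension $1$ would need a $2$-dimensional field $F$ acting on a $1$-dimensional radical, which is impossible.

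In case (1), Theorem \ref{t5.7} gives $\tau(\Lambda)=2^t$, so we take $\mu(\Lambda):=\gcd(a,b,c,d)$ from the normalized basis there. In each nonseparable case, the plan is to reuse the strategy behind Faddeev's formula.

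\begin{itemize}
\item[(i)] Choose a normalized $\Z$-basis $(1_A,\omega_1,\omega_2)$ of $\Lambda$ adapted to the radical filtration. For $\Q[x]/(x^3)$ one can arrange $\omega_1=\alpha x+\beta x^2$ and $\omega_2=\gamma x^2$. For $\Q[x]/(x^2)\oplus\Q$ one can arrange one basis element in $R$ and one in the idempotent direction.
\item[(ii)] Describe all full lattices $L$ with $\OO(L)=\Lambda$. After $\varepsilon$-equivalence, i.e. multiplication by units of $A$, they can be brought into a finite list of semi-normal forms $\langle 1_A,\ldots\rangle_\Z$ parametrized by finitely many integers.
\item[(iii)] Decide which of these are $w$-equivalent. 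By Theorem \ref{t5.5}(a) this amounts to testing whether $1\in (L_1:L_2)(L_2:L_1)$, or equivalently whether $L_2=L_1L_3$ for some invertible $L_3$.
\end{itemize}

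In these normal forms, $w$-equivalence should reduce to a prime-by-prime condition. The local invariant at each prime $p$ dividing a certain gcd $\mu(\Lambda)$ of the structure constants should take exactly two values, namely whether the $p$-part of $L$ is locally principal or not. By the Chinese remainder theorem the local choices are independent, which yields $2^{t(\Lambda)}$ classes. Concretely, localizing at $p$ should reduce everything to the case where $\Lambda\otimes\Z_{(p)}$ is either Gorenstein locally, and then all exact ideals are locally invertible, or has a $2$-generated dual $\Lambda^\phi$. This matches Theorem \ref{t4.14}: $\Lambda$ is "locally cyclic" exactly at primes not dividing $\mu$. In the case $\Q[x,y]/(x^2,xy,y^2)$ there is no multiplicative metric, so Theorem \ref{t4.14} is unavailable and this analysis must be done by hand. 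I expect it to be simpler, since all products of radical elements vanish and $\mu(\Lambda)$ should be directly visible as a gcd of entries of the $2\times 2$ matrix describing $\Lambda\cap R$ relative to $\Lambda$ modulo $\Z1_A$.

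The main obstacle is step (iii) in the case $\Q[x]/(x^3)$. Here $A^{unit}$ is infinite and unipotent, so the $\varepsilon$-normal forms of step (ii) are genuinely hard; the introduction notes that normal forms are only found with difficulty for $n=3$. I would first prove, by direct computation of $L:L$ and $L(\Lambda:L)$, that $L$ is invertible iff a certain gcd condition holds at every prime. I would then show that the set of $w$-classes is in bijection with $\prod_{p\mid\mu}\{0,1\}$ by exhibiting, for each subset of primes, an explicit lattice, for instance $\langle 1_A,\, \omega_1,\, m\omega_2\rangle$-type modifications with $m$ the product of the chosen primes. Finally I would use Theorem \ref{t5.5}(d) to see that these lattices are pairwise non-$w$-equivalent and exhaust all classes.
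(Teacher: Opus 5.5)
Your case split is correct and is exactly the paper's: separable via Theorem \ref{t5.7}, then $\Q[x]/(x^3)$, $\Q[x]/(x^2)\oplus\Q$ and $\Q[x,y]/(x^2,xy,y^2)$. The gap is that for the three nonseparable algebras, which carry all the content beyond Faddeev, you give a program rather than a proof, and several of its concrete predictions are wrong.

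\emph{The local picture is only asserted.} You claim one local class for $p\nmid\mu$, exactly two for $p\mid\mu$, independently over $p$. Theorem \ref{t4.14} characterizes $\tau(\Lambda)=1$ but never yields ``exactly two''. You also do not supply the identification of $w$-classes with tuples of local classes. ``Locally cyclic'' is in any case the wrong notion: the maximal order $\Z e_1+\Z e_2+\Z e_3$ of $\Q e_1+\Q e_2+\Q e_3$ has $\mu=1$ (Table \ref{table4.3}), yet $\Lambda/2\Lambda\cong\Z_2^3$ is not generated by one element, since all its elements are idempotent.

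\emph{$\Q[x,y]/(x^2,xy,y^2)$ has no nontrivial gcd.} Let $u$ be the first vector of the triangular $\Z$-basis of $L$. Then $u^{-1}L=\Z1_A\oplus K$ with $K\in\LL(R)$ is an order, so every full lattice is invertible and $\tau(\Lambda)=\mu(\Lambda)=1$ (Lemma \ref{t12.1}).

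\emph{For $\Q[x]/(x^2)\oplus\Q$ the basis cannot be split as you propose.} In general no basis vector can be put into $\Q e_1$. In the unique normal form \eqref{11.3}, the mixed term $\alpha_3a$ in $\alpha_1e_1+\alpha_3a$ is an invariant of $\Lambda$. It is exactly what determines $\mu=\gcd(\alpha_1,\alpha_3\alpha_1/\alpha_2)$.

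\emph{For $\Q[x]/(x^3)$ your test lattices fail.} Take $\omega_1=\alpha x+\beta x^2$ and $\omega_2=\gamma x^2$ with $\gamma>0$. Then $\omega_1^2=\Delta(\Lambda)\,\omega_2$ with $\Delta(\Lambda)=\alpha^2/\gamma\in\N$. So for $m\mid\Delta(\Lambda)$, $m>1$, the lattice $\langle 1_A,\omega_1,m\omega_2\rangle_\Z$ is an order strictly inside $\Lambda$, not an exact $\Lambda$-ideal. The correct representatives are $\langle 1_A,d_1^{-1}\omega_1,\omega_2\rangle_\Z$ with $d_1\mid\Delta(\Lambda)$ and $\gcd(d_1,\Delta(\Lambda)/d_1)=1$.

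What is missing is the mechanism that makes the counts tractable; the paper never tests $w$-equivalence pairwise.
\begin{itemize}
\item[(1)] By Theorem \ref{t5.10}, $|G([\Lambda]_\varepsilon)|=|G([\pr_F\Lambda]_\varepsilon)|$. This is $1$ for $\Q[x]/(x^3)$, so $w$-classes and $\varepsilon$-classes coincide (Lemma \ref{t10.1}). For $\Q[x]/(x^2)\oplus\Q$ it is read off from Theorem \ref{t9.1}(d).
\item[(2)] By Theorem \ref{t5.5}(d), each $w$-class of exact $\Lambda$-ideals contains exactly that many $\varepsilon$-classes.
\item[(3)] For $\Q[x]/(x^3)$ the decisive tool is $\Delta(L)=\beta_{22}^2/(\beta_{11}\beta_{33})$, which is constant on $\varepsilon$-classes and on $\Aut(A)$-orbits. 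Moving $L$ by a unit and an automorphism to $\langle 1_A,x,\Delta(L)^{-1}x^2\rangle_\Z$ gives $\Delta(\OO(L))=\nu_\Q(\Delta(L))\,\delta_\Q(\Delta(L))$. Together with Lemma \ref{t10.5}, this shows the representatives above exhaust the exact $\Lambda$-ideals up to $\varepsilon$, one per coprime factorization $\Delta(\Lambda)=n_1d_1$. Hence $\mu(\Lambda)=\Delta(\Lambda)$ (Theorem \ref{t10.4}(f)).
\item[(4)] For $\Q[x]/(x^2)\oplus\Q$ one uses the normal form \eqref{11.6} and the invariants $\mu_1=\gcd(\alpha_1,\alpha_1\delta_1)$ and $\mu_2=\gcd(\alpha_1,\alpha_1\delta_3/\alpha_2)$. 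They are coprime with $\mu_1\mu_2=\mu$ and constant on $w$-classes. Each admissible pair is realized by exactly $|G([\Lambda]_\varepsilon)|$ normal forms (Theorem \ref{t11.2}(b)).
\end{itemize}
Your proposal contains none of these ingredients, nor an actual local computation replacing them, so the nonseparable cases remain unproved.
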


{\bf Proof:} For separable $A$ this is part of Theorem \ref{5.7}.
There are only three 3-dimensional algebras which are not
separable. 
For $A\cong \Q[x]/(x^3)$ see Lemma \ref{t10.1} and 
Theorem \ref{t10.4} (f), where $\mu(\Lambda):=\Delta(\Lambda)$. 
For $A=A^{(1)}\oplus A^{(2)}$ with $A^{(1)}\cong \Q[x]/(x^2)$
and $A^{(2)}\cong\Q$ see Theorem \ref{t11.2} (b)(i).
For $A\cong\Q[x,y]/(x^2,xy,y^2)$ \eqref{5.8} and \eqref{5.9}
hold with $(\mu(\Lambda),t(\Lambda),\tau(\Lambda))=(1,0,1)$ 
by Lemma \ref{t12.1}.
\hfill$\Box$

\bigskip
By Theorem \ref{t3.1} the $\Q$-algebra decomposes canonically
as $A=F\oplus R$, so into the separable subalgebra $F$,
which is a sum of algebraic number fields, and the radical $R$,
which consists of all nilpotent elements. The decomposition
induces the projection
\begin{eqnarray}\label{5.10}
\pr_F:A\to F.
\end{eqnarray}
Lemma \ref{t5.9} and Theorem \ref{t5.10} make statements about
this projection.

\begin{lemma}\label{t5.9} \cite[Lemma 9.1]{HL26}
(a) The image $\pr_FL$ of a full lattice in $A$ is a full
lattice in $F$. The projection $\pr_F:A\to F$ is compatible 
with the multiplication in $\LL(A)$ and $\LL(F)$, 
\begin{eqnarray}
\pr_F(L_1\cdot L_2)=\pr_F L_1\cdot \pr_F L_2\quad
\textup{for }L_1,L_2\in\LL(A).\label{5.11}
\end{eqnarray}
It induces a surjective homomorphism 
$\pr_F:\LL(A)\to\LL(F)$ of semigroups. 
Especially, it maps idempotents to idempotents,
so orders to orders. So it restricts to a surjective
homomorphism
\begin{eqnarray}\label{5.12}
\pr_F:\{\textup{orders in }A\}\to\{\textup{orders in }F\}
\end{eqnarray}
of semigroups. 
Though it does not respect the division maps. In general 
we only have 
\begin{eqnarray}\label{5.13}
\pr_F(L_1: L_2)\subset\pr_F L_1:\pr_FL_2\quad
\textup{for }L_1,L_2\in\LL(A).
\end{eqnarray}

(b) The projection $\pr_F:\LL(A)\to\LL(F)$ respects
$\varepsilon$-equivalence and $w$-equivalence. Therefore it 
induces surjective homomorphisms
\begin{eqnarray}\label{5.14}
\pr_F:\EE(A)&\to& \EE(F),\\
\pr_F:W(\LL(A))&\to& W(\LL(A))\label{5.15}
\end{eqnarray}
of semigroups. 

(c) In general, for $L\in\LL(A)$ we only have an inclusion
$\pr_F\OO(L)\subset\OO(\pr_FL)$. But if $L$ is invertible
then $\pr_FL$ is invertible with 
$\OO(\pr_FL)=\pr_F(\OO(L))$ and $(\pr_FL)^{-1}=\pr_F(L^{-1})$.
\end{lemma}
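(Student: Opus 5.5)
The plan is to verify the parts of Lemma \ref{t5.9} in the order stated, working throughout with the canonical decomposition $A=F\oplus R$ of Theorem \ref{t3.1} and Remark \ref{t3.2} (ii).

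\textbf{Part (a).} First I note that $\pr_F:A\to F$ is a surjective $\Q$-algebra homomorphism with kernel $R$. It is $\Q$-linear, and since $R$ is an ideal we have $(f_1+r_1)(f_2+r_2)\in f_1f_2+R$; moreover $\pr_F(1_A)=1_F$. Consequently, for a full lattice $L$ with $\Z$-generators $a_1,\dots,a_m$, the image $\pr_FL$ is generated by the finitely many elements $\pr_F a_i$, and it spans $\pr_F A=F$ over $\Q$, so it is a full lattice in $F$. Because $\pr_F$ is additive and multiplicative, it maps each sum $\sum a_ib_i$ to $\sum \pr_F(a_i)\pr_F(b_i)$, which gives \eqref{5.11}.

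Surjectivity of $\pr_F:\LL(A)\to\LL(F)$ follows since $F\subset A$: for $M\in\LL(F)$, take $L:=M+L_R$ with $L_R$ any full lattice in $R$. This $L$ is a full lattice in $A$ with $\pr_FL=M$. Because it is a semigroup homomorphism, $\pr_F$ maps idempotents to idempotents, and by Theorem \ref{t4.3} (a) these are exactly the orders.

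For surjectivity on orders, given an order $\Lambda_F\subset F$, I would take $\Lambda:=\Lambda_F+\Z[r_1,\dots,r_s]$, where $r_1,\dots,r_s$ is a $\Q$-basis of $R$. This is a finitely generated ring because the elements $r_i$ are nilpotent, so all monomials of large degree vanish. The ring generated by $\Lambda_F$ and this finitely generated nilpotent part is still finitely generated as a $\Z$-module, it contains $1_A$, and its projection to $F$ is $\Lambda_F$.

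For \eqref{5.13}, take $a\in L_1:L_2$. Then $aL_2\subset L_1$, and applying $\pr_F$ gives $\pr_F(a)\pr_FL_2\subset\pr_FL_1$, which is the claimed inclusion.

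\textbf{Part (b).} If $L_2=aL_1$ with $a\in A^{unit}$, then $\pr_F(a)\in F^{unit}$ and $\pr_FL_2=\pr_F(a)\pr_FL_1$, so $\pr_F$ respects $\varepsilon$-equivalence. For $w$-equivalence I use its definition \eqref{2.3}: relations $L_1X_1=L_2$ and $L_2X_2=L_1$ map under $\pr_F$ to the same kind of relations in $\LL(F)$. Hence $\pr_F$ induces the maps \eqref{5.14} and \eqref{5.15}, and these are surjective homomorphisms because the map on lattices is.

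\textbf{Part (c).} The inclusion $\pr_F\OO(L)\subset\OO(\pr_FL)$ is the special case $L_1=L_2=L$ of \eqref{5.13}. Now let $L$ be invertible, so $L\cdot L^{-1}=\OO(L)$ by Theorem \ref{t4.3}. Projecting gives $\pr_FL\cdot\pr_F(L^{-1})=\pr_F\OO(L)=:\Lambda'$, which is an order. From Theorem \ref{t4.3} (b) (ii) I still need $\Lambda'=\OO(\pr_FL)$. This step is the main obstacle, and I would argue it as follows:
\[
\OO(\pr_FL)\subset \OO(\pr_FL)\,\Lambda'=\OO(\pr_FL)\,\pr_FL\,\pr_F(L^{-1})=\pr_FL\,\pr_F(L^{-1})=\Lambda',
\]
using $1\in\OO(\pr_F L)$ and $\OO(M)M=M$. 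Combined with the reverse inclusion already proved, this gives $\OO(\pr_FL)=\Lambda'=\pr_F\OO(L)$.

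Then (ii) of Theorem \ref{t4.3} (b) holds for $\pr_F L$, so $\pr_FL$ is invertible. Its inverse is $\pr_F(L^{-1})$, since $\pr_F(L^{-1})\Lambda'=\pr_F(L^{-1}\OO(L))=\pr_F(L^{-1})$, and inverses are unique by Theorem \ref{t2.2} (a).
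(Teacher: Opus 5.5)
Your proof is correct. Since the paper only cites \cite[Lemma 9.1]{HL26} for this statement, there is no proof here to compare against, and your route (use that $\pr_F$ is a surjective $\Q$-algebra homomorphism with kernel $R$, then apply Theorem \ref{t4.3}) is the natural one. Three small remarks follow.

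\textbf{The lifted order.} The sum $\Lambda_F+\Z[r_1,\dots,r_s]$ is in general not closed under multiplication. A product $\lambda r_1$ with $\lambda\in\Lambda_F$ need not lie in it, for example when $\Lambda_F$ contains $\frac{1+\sqrt5}{2}$. The order you want is the ring $\Lambda_F[r_1,\dots,r_s]=\sum_m\Lambda_F\cdot m$, where $m$ runs over the finitely many nonzero monomials in the $r_i$. This is what your following sentence actually uses.

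\textbf{Part (c).} The step you call the main obstacle follows from a general fact. A semigroup homomorphism $\phi$ sends an invertible $a$ to an invertible $\phi(a)$ with $e_{\phi(a)}=\phi(e_a)$ and $\phi(a)^{-1}=\phi(a^{-1})$. To see this, apply $\phi$ to $aa^{-1}=e_a$, $ae_a=a$ and $a^{-1}e_a=a^{-1}$, and use the uniqueness statements in Theorem \ref{t2.2} (a). Together with $e_L=\OO(L)$ from Theorem \ref{t4.3}, this gives $\OO(\pr_FL)=\pr_F\OO(L)$ and $(\pr_FL)^{-1}=\pr_F(L^{-1})$ at once. Your direct computation is valid but not needed.

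\textbf{The words ``in general we only have''.} In (a) and (c) these assert that the inclusions can be strict, and you do not justify that. The algebra of section \ref{s11} gives an example. Take $L=\Z a+\Z(e_1+\frac12 a)+\Z e_2$. Then $\pr_FL=\Z e_1+\Z e_2$ is an order, so $e_1\in\OO(\pr_FL)$. However, no element $e_1+ca$ lies in $\OO(L)$, because $(e_1+ca)(e_1+\frac12a)=e_1\notin L$. Hence $\pr_F\OO(L)\subsetneqq\OO(\pr_FL)$. By part (c), this $L$ is also not invertible.
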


\bigskip
The following theorem is surprising. It says especiylly
that the finite group $G([\Lambda]_\varepsilon)$ for an order 
$\Lambda$ in $A$ is isomorphic to the finite group
$G([\Lambda_0]_\varepsilon)$ for the induced order
$\Lambda_0=\pr_F\Lambda$ in the separable part $F$ of $A$. 

The more difficult part of Theorem \ref{t5.10}, namely the
injectivity of the map in \eqref{5.16}, is due to 
Faddeev \cite[Theorem 3]{Fa68}. The proofs of injectivity
and surjectivity use the localization in 
\cite[section 7]{HL26}.

\begin{theorem}\label{t5.10} 
\cite[Theorem 3]{Fa68} \cite[10.2]{HL26}
Let $\Lambda$ be an order in $A$. Write 
$\Lambda_0:=\pr_F\Lambda$ for the induced order in $F$.
Part (c) in Lemma \ref{t5.10} 
gives rise to two group homomorphisms,
\begin{eqnarray}\label{5.16}
G(\Lambda)&\to& G(\Lambda_0),\quad L\mapsto \pr_FL,\\
G([\Lambda]_\varepsilon)&\to& G([\Lambda_0]_\varepsilon),\quad
[L]_\varepsilon\mapsto [\pr_FL]_\varepsilon.\label{5.17}
\end{eqnarray}
The group homomorphism in \eqref{5.16} is surjective.
The group homomorphism in \eqref{5.17} is an isomorphism.
\end{theorem}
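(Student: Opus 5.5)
The plan is to work locally at each prime $p$, writing $X_p:=X\otimes\Z_{(p)}$ for lattices and $A_p:=A\otimes\Q_p$, and to use the localization formalism of \cite[section 7]{HL26}. Two standard facts carry the argument. A full lattice $L$ is determined by its localizations, via $L=\bigcap_p (L_p\cap A)$, and $L_p=\Lambda_p$ for almost all $p$. An $L$ with $\OO(L)=\Lambda$ is invertible if and only if it is locally principal, i.e. $L_p=y_p\Lambda_p$ with $y_p\in A_p^{unit}$ for every $p$. The second fact holds because $\Lambda_p$ is semilocal, so an invertible ideal over it is principal.

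Well-definedness of \eqref{5.16} and \eqref{5.17} is Lemma \ref{t5.9} (c), together with $\pr_F(aL)=\pr_F(a)\pr_F L$ and $\pr_F(A^{unit})\subset F^{unit}$. Since \eqref{5.16} is surjective, so is \eqref{5.17}.

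\emph{Surjectivity of \eqref{5.16}.} Let $L_0\in G(\Lambda_0)$, and write $L_{0,p}=x_p\Lambda_{0,p}$ with $x_p\in F_p^{unit}$, where $x_p=1$ outside a finite set $S$ of primes. Since $F\subset A$ is a subalgebra, each $x_p$ is a unit of $A_p$. Define $L:=\bigcap_p (x_p\Lambda_p\cap A)$. This is a full lattice with $L_p=x_p\Lambda_p$, so it is locally principal with $\OO(L)=\Lambda$, hence $L\in G(\Lambda)$. Because $\pr_F$ commutes with localization and $\pr_F(x_p\Lambda_p)=x_p\Lambda_{0,p}$, we get $(\pr_F L)_p=L_{0,p}$ for all $p$, so $\pr_F L=L_0$.

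\emph{Injectivity of \eqref{5.17}.} This is the main obstacle.
\begin{enumerate}
\item Let $L\in G(\Lambda)$ with $\pr_F L=c\Lambda_0$ for some $c\in F^{unit}$. Replacing $L$ by $c^{-1}L$, we may assume $\pr_F L=\Lambda_0$.
\item Write $L_p=y_p\Lambda_p$. Then $\pr_F(y_p)\Lambda_{0,p}=\Lambda_{0,p}$, so $\pr_F(y_p)\in\Lambda_{0,p}^{unit}$.
\item The surjection $\Lambda_p\to\Lambda_{0,p}$ has kernel contained in the nilpotent radical $R$. Units therefore lift, so we can multiply $y_p$ by a unit of $\Lambda_p$ and assume $y_p\in 1+R_p$. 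Also $y_p=1$ for $p$ outside a finite set $S$.
\item We now need a global $z\in 1+R$ with $z\in\Lambda_p^{unit}$ for $p\notin S$ and $z^{-1}y_p\in\Lambda_p^{unit}$ for $p\in S$. Then $L_p=z\Lambda_p$ for all $p$, so $L=z\Lambda$ and $[L]_\varepsilon=[\Lambda]_\varepsilon$.
\end{enumerate}

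To construct $z$ I would use strong approximation for the unipotent group $1+R$. Enlarge $S$ by the primes $\le n=\dim A$, so that $\exp$ and $\log$ between $R$ and $1+R$ (finite sums, since $R$ is nilpotent) have denominators that are units outside $S$. Write $y_p=\exp(r_p)$ for $p\in S$. By the Chinese remainder theorem, i.e. density of $R$ in $\prod_{p\in S}R_p\times\prod_{p\notin S}(R\cap\Lambda)_p$, choose $r\in R$ with $r\in(R\cap\Lambda)_p$ for $p\notin S$ and $r-r_p\in p^N(R\cap\Lambda)_p$ for $p\in S$, where $N$ is large. Put $z:=\exp(r)$. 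For $p\notin S$, $z$ lies in $1+(R\cap\Lambda)_p\subset\Lambda_p^{unit}$. For $p\in S$, $z^{-1}y_p$ is close to $1$, and for $N$ large it lies in $1+p\Lambda_p\subset\Lambda_p^{unit}$. The delicate points are the bookkeeping of denominators in $\exp$ and $\log$ at the primes of $S$, and the use of the Baker–Campbell–Hausdorff formula, which is trivial here because $A$ is commutative.
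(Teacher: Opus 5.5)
Your proof is correct and follows the route the paper points to: the paper gives no proof of its own here, only the remark that both parts rest on the localization of \cite[section 7]{HL26}, and your local construction $L=\bigcap_p(x_p\Lambda_p\cap A)$ for surjectivity is the natural realization of that. The strong-approximation step for injectivity can be dropped: any $x\in L$ with $\pr_F x=1$ (it exists since $\pr_F L=\Lambda_0$) satisfies $y_p^{-1}x\in\Lambda_p$ with image $\pr_F(y_p)^{-1}\in\Lambda_{0,p}^{unit}$, so $y_p^{-1}x\in\Lambda_p^{unit}$ because units lift modulo the nil kernel, whence $L_p=x\Lambda_p$ for all $p$ and $L=x\Lambda$ with $x\in 1+R\subset A^{unit}$.
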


\begin{remarks}\label{t5.11}
(i) If $R\neq \{0\}$, then there are infinitely many 
orders in $A$ which project to the same order in $F$.
Let $\Lambda$ be one order in $A$. The following is a
construction of an infinite sequence $(\Lambda_m)_{m\in\N}$
of orders $\Lambda_m$ 
with $\pr_F\Lambda_m=\pr_F\Lambda$ and 
$\Lambda\subsetneqq\Lambda_1\subsetneqq ...\subsetneqq
\Lambda_m\subsetneqq\Lambda_{m+1}\subsetneqq ...$:
\begin{eqnarray}\label{5.18}
\Lambda_m:=\Lambda+\sum_{l=1}^{n_{max}}2^{-lm}(\Lambda\cap R)^l.
\end{eqnarray}
It is due to \cite[Proposition 25.1]{Fa65-1}, and it is recalled
in \cite[6.3]{HL26}.

(ii) By Theorem \ref{t5.10} for each two orders $\Lambda$
and $\www{\Lambda}$ with $\pr_F\Lambda=\pr_F\www{\Lambda}$,
the finite groups $G([\Lambda]_\varepsilon)$ and
$G([\www{\Lambda}]_\varepsilon)$ are canonically isomorphic
to the group $G([\pr_F\Lambda]_\varepsilon)$ and thus also
to one another. 
Therefore in order to determine their size and structure, 
one can restrict to the group $G([\pr_F\Lambda]_\varepsilon)$. 
But the numbers $\tau(\Lambda)$, $\tau(\www{\Lambda})$ and
$\tau(\pr_F\Lambda)$ are in general not equal.
\end{remarks}

The following theorem compares the groups 
$G(\Lambda_1)$ and $G(\Lambda_2)$ respectively
$G([\Lambda_1]_\varepsilon)$ and $G([\Lambda_2]_\varepsilon)$
for two orders $\Lambda_1$ and $\Lambda_2$ in $A$ with
$\Lambda_1\supset\Lambda_2$. It turns out that there are
natural surjective group homomorphisms
$G(\Lambda_2)\to G(\Lambda_1)$ and
$G([\Lambda_2]_\varepsilon)\to G([\Lambda_1]_\varepsilon)$.
Theorem \ref{t5.12} embeds them into exact sequences.
In the case when $A$ is an algebraic number field,
this is proved in \cite[Ch. I \S 12]{Ne99}, using localization
by prime ideals. In the general case, it is proved in 
\cite{HL26} using the localization in 
\cite[section 7]{HL26}.

\begin{theorem}\label{t5.12} 
\cite[Ch. I \S 12]{Ne99}\cite[8.2]{HL26}
Let $\Lambda_1$ and $\Lambda_2$ be two orders in $A$ with
$\Lambda_2\subsetneqq\Lambda_1$. The full lattice 
$C:=\Lambda_2:\Lambda_1$ is called {\sf conductor} of the
pair $(\Lambda_1,\Lambda_2)$.

(a) $\OO(C)\supset\Lambda_1$. The conductor $C$ is the biggest
$\Lambda_1$-ideal in $\Lambda_2$.

(b) If $L\in G(\Lambda_2)$ then $\Lambda_1L\in G(\Lambda_1)$.
This leads to group homomorphisms
\begin{eqnarray}
G(\Lambda_2)\to G(\Lambda_1), &&L\mapsto \Lambda_1 L,
\label{5.19}\\
G([\Lambda_2]_\varepsilon)\to G([\Lambda_1]_\varepsilon),&& 
[L]_\varepsilon \mapsto [\Lambda_1 L]_\varepsilon.\label{5.20}
\end{eqnarray}

(c) The following sequence is exact,
\begin{eqnarray}\label{5.21}
1\to (\Lambda_2/C)^{unit} \to (\Lambda_1/C)^{unit}
\to G(\Lambda_2) \to G(\Lambda_1)\to 1.
\end{eqnarray}
Especially, the group homomorphism in \eqref{5.19} is surjective.
Here the image in $\ker(G(\Lambda_2)\to G(\Lambda_1))$ 
of an element $a+C\in (\Lambda_1/C)^{unit}$ with 
$a\in \Lambda_1$ is $C+a\Lambda_2$.

(d) The maps in the following sequence are the natural ones,
the sequence is exact,
\begin{eqnarray}\label{5.22}
1\to \Lambda_2^{unit}\to \Lambda_1^{unit}\to 
\frac{(\Lambda_1/C)^{unit}}{(\Lambda_2/C)^{unit}}
\to G([\Lambda_2]_\varepsilon)\to G([\Lambda_1]_\varepsilon)
\to 1.
\end{eqnarray}
Especially, the group homomorphism in \eqref{5.20} is surjective.

(e) The group $\Lambda_2^{unit}$ has finite index in the group
$\Lambda_1^{unit}$. The groups $(\Lambda_1/C)^{unit}$
and $(\Lambda_2/C)^{unit}$ are finite. 
By Theorem \ref{t5.3} the groups $G([\Lambda_2]_\varepsilon)$
and $G[\Lambda_1]_\varepsilon)$ are finite. 
The size of one of them can be calculated by the size 
of the other one with the following formula, 
\begin{eqnarray}\label{5.23}
\frac{|G([\Lambda_2]_\varepsilon)|}{|G([\Lambda_1]_\varepsilon)|}
&=& \frac{|(\Lambda_1/C)^{unit}|}{|(\Lambda_2/C)^{unit}|}
\cdot \frac{1}{[\Lambda_1^{unit}:\Lambda_2^{unit}]}.
\end{eqnarray}
\end{theorem}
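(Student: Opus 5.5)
We prove the parts in the order (a), (b), (c), (e), (d); the localization arguments of \cite[section 7]{HL26} are avoided as far as possible.

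\textbf{Part (a).} First, $C\subset\Lambda_2$ because $1_A\in\Lambda_1$. Next, $\Lambda_1 C\cdot\Lambda_1=C\Lambda_1\subset\Lambda_2$, so $\Lambda_1 C\subset C$ and hence $\Lambda_1\subset\OO(C)$. Finally, any $\Lambda_1$-ideal $L\subset\Lambda_2$ satisfies $L\Lambda_1=L\subset\Lambda_2$, so $L\subset C$. Thus $C$ is the biggest $\Lambda_1$-ideal in $\Lambda_2$.

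\textbf{Part (b).} If $LL^{-1}=\Lambda_2$, then $(\Lambda_1L)(\Lambda_1L^{-1})=\Lambda_1\Lambda_2=\Lambda_1$. By Lemma \ref{t4.2}(c) we have $\OO(\Lambda_1 L)\supset\Lambda_1$, and $1_A\in\Lambda_1$ gives equality. Theorem \ref{t4.3}(b)(ii) then shows $\Lambda_1L\in G(\Lambda_1)$. Multiplicativity and compatibility with $\varepsilon$-classes are immediate, so \eqref{5.19} and \eqref{5.20} are group homomorphisms.

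\textbf{Part (c).} This carries the main work.

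1. For $a\in\Lambda_1$ with $a+C$ a unit in $\Lambda_1/C$, put $L_a:=C+a\Lambda_2$. Then $\Lambda_1L_a=C+a\Lambda_1$. Choosing $b\in\Lambda_1$ with $ab\in 1_A+C$ gives $\Lambda_1 L_a=\Lambda_1$, and $L_aL_b=C+ab\Lambda_2=\Lambda_2$. Hence $L_a\in G(\Lambda_2)$ lies in the kernel of \eqref{5.19}, and $a\mapsto L_a$ is a homomorphism, since $L_aL_{a'}=C+aa'\Lambda_2$ after using $C\Lambda_1\subset C$.

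2. The kernel of $a\mapsto L_a$ consists of the $a$ with $a\in\Lambda_2+C=\Lambda_2$. Then $a$ is a unit of $\Lambda_2/C$, which gives injectivity of $(\Lambda_2/C)^{unit}\to(\Lambda_1/C)^{unit}$ and exactness at $(\Lambda_1/C)^{unit}$.

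3. Exactness at $G(\Lambda_2)$: if $L\in G(\Lambda_2)$ with $\Lambda_1L=\Lambda_1$, I need $a$ with $L=C+a\Lambda_2$. Here I would use that invertible lattices are locally principal. This is the step where I expect the main obstacle: choosing $a\in L$ whose image generates $L/CL$ simultaneously at the finitely many primes dividing the index $[\Lambda_1:C]$, by a Chinese remainder argument on the finite ring $\Lambda_2/C$. Then $CL=C\Lambda_1L=C$.

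4. Surjectivity of $G(\Lambda_2)\to G(\Lambda_1)$ reduces similarly. Given $M\in G(\Lambda_1)$, after scaling by an integer we may take $M\subset\Lambda_1$. Pick $m\in M$ that is a local generator at the primes dividing $[\Lambda_1:C]$, so that $M=m\Lambda_1+MC'$ with $C'$ coprime to $C$. Then set $L:=\Lambda_2 m+MC$, or a variant of this, and check $\Lambda_1L=M$ and that $L$ is invertible.

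\textbf{Part (e).} For the finiteness statements, $C$ is a full lattice, so the quotients $\Lambda_i/C$ are finite rings. The group $\Lambda_2^{unit}$ is the kernel of $\Lambda_1^{unit}\to(\Lambda_1/C)^{unit}/(\Lambda_2/C)^{unit}$ by step 2, hence of finite index. Formula \eqref{5.23} is then the alternating product of orders in the exact sequence \eqref{5.22}, starting from the finite term $\Lambda_1^{unit}/\Lambda_2^{unit}$.

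\textbf{Part (d).} Pass to $\varepsilon$-classes in \eqref{5.21}. The element $L_a$ is $\varepsilon$-trivial in $G([\Lambda_2]_\varepsilon)$ iff $L_a=u\Lambda_2$ with $u\in A^{unit}$; such a $u$ lies in $\Lambda_1^{unit}$ because $\Lambda_1L_a=\Lambda_1$. This gives exactness at the quotient term. Exactness at $G([\Lambda_2]_\varepsilon)$: if $\Lambda_1L=u\Lambda_1$, replace $L$ by $u^{-1}L$ and apply (c). Surjectivity onto $G([\Lambda_1]_\varepsilon)$ follows from (c). Finally, $\Lambda_1^{unit}\cap\Lambda_2=\Lambda_2^{unit}$ because $\Lambda_1$ is integral over $\Lambda_2$ (inverses satisfy monic equations), giving exactness at the left end.
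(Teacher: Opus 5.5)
Most of your proposal is sound: (a), (b), steps 1--3 of (c), and the derivation of (d) and (e) from (c) all work. Your route is a legitimate and more elementary alternative to the localization machinery the paper defers to (\cite[section 7]{HL26}, resp.\ \cite[Ch.~I \S 12]{Ne99}). It uses explicit lattices $L_a=C+a\Lambda_2$, together with the fact that an invertible lattice becomes cyclic modulo the conductor, which you can get from the idempotent decomposition of the finite rings $\Lambda_i/C$.

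Two small justifications are implicit in the parts that work:
\begin{itemize}
\item In step 1 you also need $C\subset L_aL_{a'}$. This follows from $c=a(bc)-(ab-1)c\in aC+C^2$, where $b\in\Lambda_1$ satisfies $ab\in 1_A+C$.
\item In step 2, an $a\in\Lambda_2$ with $a+C\in(\Lambda_1/C)^{unit}$ is a unit of $\Lambda_2/C$. The reason is that its inverse is a power of $a+C$, since $(\Lambda_1/C)^{unit}$ is finite.
\end{itemize}

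\textbf{The gap is step 4,} the surjectivity of \eqref{5.19}. The right end of \eqref{5.21}, the surjectivity in \eqref{5.22} and formula \eqref{5.23} all rest on it.

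First, the identity you write down, $M=m\Lambda_1+MC'$ with $C'+C=\Lambda_1$, does not express that $m$ generates $M$ at the primes containing $C$; it says nothing there. Take $\Lambda_1=\Z[i]$, $\Lambda_2=\Z+3\Z i$, $C=3\Z[i]$, $M=\Lambda_1$, $C'=(1+i)\Z[i]$ and $m=3$. The identity holds, yet $\Lambda_2m+MC=3\Z[i]$. So $\Lambda_1L=3\Z[i]\neq M$ and $\OO(L)=\Lambda_1$. (Presumably you meant $m\Lambda_1=MC'$.)

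Second, and more importantly, the invertibility of $L$ is left as ``check''. That invertibility is the actual content of the surjectivity.

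Here is one way to close the gap.
\begin{itemize}
\item Apply your step-3 argument to $M/CM$ over the finite ring $\Lambda_1/C$ to pick $m\in M$ with $M=m\Lambda_1+CM$. Put $L:=m\Lambda_2+CM$. Then $\Lambda_1L=M$.
\item Since $\Lambda_1=MM^{-1}=mM^{-1}+C$, there is $m'\in M^{-1}$ with $mm'\in 1_A+C\subset\Lambda_2$. Put $L':=m'\Lambda_2+CM^{-1}$.
\item Using $mM^{-1},m'M\subset\Lambda_1$ and $(CM)(CM^{-1})=C^2$, one gets
\begin{align*}
LL'&=mm'\Lambda_2+C\,(mM^{-1})+C\,(m'M)+C^2\subset\Lambda_2,\\
LL'&\supset mm'\Lambda_2+C^2\supset C.
\end{align*}
The last inclusion holds because $c=mm'c-(mm'-1)c\in mm'C+C^2$.
\item Hence $LL'\supset mm'\Lambda_2+C\supset\Lambda_2$, so $LL'=\Lambda_2$.
\item If $xL\subset L$, then $x\Lambda_2=xLL'\subset\Lambda_2$. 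So $\OO(L)=\Lambda_2$, and $L\in G(\Lambda_2)$ by Theorem \ref{t4.3} (b).
\end{itemize}
No preliminary scaling $M\subset\Lambda_1$ is needed.
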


\begin{remarks}\label{t5.13}
(i) If $A$ is separable, the maximal order $\Lambda_{max}$ 
exists. The group $G([\Lambda]_\varepsilon)$ for any order
$\Lambda$ maps surjectively to $G([\Lambda_{max}]_\varepsilon)$.
Formula \eqref{5.23} allows to calculate fairly easily
$|G([\Lambda]_\varepsilon)|$ if one knows
$|G([\Lambda_{max}]_\varepsilon)|$.

(ii) If $A$ is not separable, nevertheless its separable part
$F$ has the maximal order $\Lambda_{max}(F)$.
The group $G([\Lambda]_\varepsilon)$ for any order $\Lambda$
is isomorphic to the group $G([\pr_F\Lambda]_\varepsilon)$
by Theorem \ref{t5.10}, and this group maps surjectively 
to the group $G([\Lambda_{max}(F)]_\varepsilon)$ by Theorem
\ref{t5.12}.
\end{remarks}

\section{Conjugacy classes of matrices and
$\varepsilon$-classes of full lattices}\label{s6}
\setcounter{equation}{0}
\setcounter{table}{0}
\setcounter{figure}{0}

The main body of this paper, the sections \ref{s7}--\ref{s11},
is about the set $S_{1,f}$ of $GL_n(\Z)$-conjugacy 
classes of regular integer matrices in $M_{n\times n}(\Z)$ 
with a fixed characteristic polynomial $f\in \Z[t]$. 
Theorem \ref{t6.3} explains why the sections \ref{s2}--\ref{s5}
are relevant for this. 
It gives a 1:1 correspondence between the set 
$S_{1,f}$ and the set $S_{2,f}:=\{[L]_\varepsilon\,|\, L\in 
\LL(A_f),\ \OO(L)\supset \Lambda_f\}\subset\EE(A_f)$ where
\begin{eqnarray}\label{6.1}
A_f:=\frac{\Q[t]}{(f(t))_{\Q[t]}}&\supset&
\Lambda_f:=\frac{\Z[t]}{(f(t))_{\Z[t]}}.
\end{eqnarray}
$A_f$ is an $n$-dimensional commutative $\Q$-algebra with
unit element $1_{A_f}=[1]$, and $\Lambda_f$ is a cyclic order in $A_f$. Before giving Theorem \ref{t6.3}, we fix some notations 
and recall some basic facts from linear algebra.

\begin{definition/lemma}\label{t6.1}
Let $k$ be a field and $\oooo{k}$ its algebraic closure.

(a) (Definition) The characteristic polynomial of a matrix
$B\in M_{n\times n}(k)$ is denoted by 
$p_B\in k[t]$. (Lemma) It is unitary of degree $n$.

(b) (Definition) Let $f(t)=t^n+f_{n-1}t^{n-1}+...+f_1t+f_0
\in k[t]$ be unitary of degree $n$. The
{\sf companion matrix} $M_f\in M_{n\times n}(k)$ is
\begin{eqnarray}\label{6.2}
M_f:=\begin{pmatrix}0& & & -f_0\\1 & \ddots & & -f_1\\
 & \ddots & 0 & \vdots \\ & & 1 & -f_{n-1}\end{pmatrix}.
\end{eqnarray}
(Lemma) It satisfies $p_{M_f}(t)=f(t)$. 

(c) (Lemma) Let $B\in M_{n\times n}(k)$. The following conditions
are equivalent.
\begin{list}{}{}
\item[(i)]
There is a vector $v\in M_{n\times 1}(k)$ (called 
{\sf cyclic generator}) with
\begin{eqnarray*}
M_{n\times 1}(k) = \bigoplus_{l=0}^{n-1}k\cdot B^lv.
\end{eqnarray*}
\item[(ii)]
$B$ is $GL_n(k)$-conjugate to $M_{p_B}$, i.e. a matrix
$C\in GL_n(k)$ with $C^{-1}BC=M_{p_B}$ exists.
\item[(iii)]
There is a matrix $C\in GL_n(\oooo{k})$ such that the matrix
$C^{-1}BC\in M_{n\times n}(\oooo{k})$ is in Jordan normal form 
and has for each eigenvalue only one Jordan block.
\item[(iv)]
$\{C\in M_{n\times n}(k)\,|\, BC=CB\}=k[B].$
\end{list}

(d) (Definition) A matrix $B\in M_{n\times n}(k)$ which 
satisfies the equivalent conditions in part (c) is called
{\sf regular}.

(e) (Definition) For any matrix $B\in M_{n\times n}(\Z)$ denote
by 
\begin{eqnarray*}
[B]_\Z:= \{C^{-1}BC\,|\, C\in GL_n(\Z)\}
\end{eqnarray*}
the $GL_n(\Z)$-conjugacy class of $B$.
\end{definition/lemma}

\begin{remarks}\label{t6.2}
(i) By Lemma \ref{t6.1} (c) (ii), the regular matrices in
$M_{n\times n}(k)$ with fixed characteristic polynomial form
a single $GL_n(k)$-conjugacy class.

(ii) Especially, a regular integer $n\times n$-matrix $B$
is $GL_n(\Q)$-conjugate to the companion matrix
$M_{p_B}\in M_{n\times n}(\Z)$. 

(iii) But the set of regular integer $n\times n$ matrices with
fixed characteristic polynomial $f\in \Z[t]$ may split
into many $GL_n(\Z)$-conjugacy classes.
The following theorem is a simple and basic, but very useful
observation on this situation, which was already made in 
\cite{LMD33}.
\end{remarks}

\begin{theorem}\label{t6.3} \cite{LMD33}
Let $n\in\N$ and let $f\in\Z[t]$ be unitary of degree $n$.
There is a natural 1:1 correspondence between the sets
$S_{1,f}$ and $S_{2,f}$ where $S_{1,f}$ and $S_{2,f}$ 
are as follows,
\begin{eqnarray*}
S_{1,f}&:=& \{[B]_\Z\,|\, B\in M_{n\times n}(\Z)
\textup{ is regular with }p_B=f\},\\
S_{2,f}&:=& \{[L]_\varepsilon\,|\, L\in \LL(A_f)\textup{ with }
\OO(L)\supset \Lambda_f\}.
\end{eqnarray*}
$S_{1,f}$ is the set of $GL_n(\Z)$-conjugacy classes of
regular integer matrices with characteristic polynomial $f$,
and $S_{2,f}$ is the subsemigroup of $\EE(A_f)$ which consists
of the $\varepsilon$-classes of full lattices in $A$ which are 
$\Lambda_f$-ideals.

(i) From $B$ with $[B]_\Z\in S_{1,f}$ to $L$ with 
$[L]_\varepsilon\in S_{2,f}$: 
Choose $C\in GL_n(\Q)$ with $CBC^{-1}=M_f$. 
Denote $\oooo{t}:=[t]\in \Lambda_f\subset A_f$ and define
\begin{eqnarray}\label{6.3}
(b_1,...,b_n):=\BB&:=&(1,\oooo{t},\oooo{t}^2,...,
\oooo{t}^{n-1})\cdot C,\\
L&:=& \bigoplus_{i=1}^n\Z\cdot b_i.\label{6.4}
\end{eqnarray}

(ii) From $L$ with $[L]_\varepsilon\in S_{2,f}$ to
$B$ with $[B]_\Z\in S_{1,f}$. 
Choose a $\Z$-basis $(b_1,...,b_n)$ of $L$. Then
\begin{eqnarray}\label{6.5}
\oooo{t}(b_1,...,b_n):=(\oooo{t}b_1,...,\oooo{t}b_n)
=(b_1,...,b_n)\cdot B
\end{eqnarray}
for a unique matrix $B\in M_{n\times n}(\Z)$.
\end{theorem}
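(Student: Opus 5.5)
We prove Theorem \ref{t6.3} by showing that the two constructions (i) and (ii) are well defined on the level of classes and mutually inverse. The guiding idea is that a $\Z$-basis $(b_1,\dots,b_n)$ of a full lattice $L$ with $\oooo{t}L\subset L$ is exactly the same datum as an integer matrix $B$ describing multiplication by $\oooo{t}$. Here $\oooo{t}L\subset L$ is equivalent to $\Lambda_f=\Z[\oooo{t}]\subset\OO(L)$.

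\textbf{Step 1: construction (ii) is well defined.} Let $L$ be a $\Lambda_f$-ideal with $\Z$-basis $\BB=(b_1,\dots,b_n)$. Since $\oooo{t}L\subset L$, \eqref{6.5} defines $B\in M_{n\times n}(\Z)$, and $B$ is the matrix of the multiplication map $\mu_{\oooo{t}}:A_f\to A_f$ in the $\Q$-basis $\BB$. In the basis $(1,\oooo{t},\dots,\oooo{t}^{n-1})$ the matrix of $\mu_{\oooo{t}}$ is $M_f$, so $B$ is $GL_n(\Q)$-conjugate to $M_f$. Hence $B$ is regular with $p_B=f$ by Lemma \ref{t6.1} (c).

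Next we check independence of the choices. Another $\Z$-basis of $L$ has the form $\BB\cdot D$ with $D\in GL_n(\Z)$, and it yields $D^{-1}BD$. Replacing $L$ by $aL$ with $a\in A_f^{unit}$ and using the basis $a\BB$ gives the same $B$, because $\oooo{t}a\BB=a\BB B$. So the class $[B]_\Z$ depends only on $[L]_\varepsilon$.

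\textbf{Step 2: construction (i) is well defined.} Let $B$ be regular with $p_B=f$. By Lemma \ref{t6.1} (c) (ii) there is $C$ with $CBC^{-1}=M_f$. Put $\BB=(1,\oooo{t},\dots,\oooo{t}^{n-1})\cdot C$. From $\oooo{t}(1,\dots,\oooo{t}^{n-1})=(1,\dots,\oooo{t}^{n-1})M_f$ we get
\[
\oooo{t}\BB=(1,\dots,\oooo{t}^{n-1})M_fC=(1,\dots,\oooo{t}^{n-1})CB=\BB B .
\]
Since $B$ is integral, $\oooo{t}L\subset L$, so $L$ is a $\Lambda_f$-ideal. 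In particular (ii) applied to this $L$ with basis $\BB$ returns $B$ itself.

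For independence of the choices there are two things to check.

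\emph{Changing $C$.} If $C'$ also satisfies $C'BC'^{-1}=M_f$, then $C'C^{-1}$ commutes with $M_f$. By Lemma \ref{t6.1} (c) (iv), $C'C^{-1}=g(M_f)$ for some $g\in\Q[t]$, and $g(M_f)$ is invertible. The identity $(1,\dots,\oooo{t}^{n-1})\,g(M_f)=g(\oooo{t})\,(1,\dots,\oooo{t}^{n-1})$ then shows $L'=g(\oooo{t})L$. Here $g(\oooo{t})$ is a unit of $A_f$, because $\mu_{g(\oooo{t})}$ has matrix $g(M_f)$, which is invertible.

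\emph{Changing $B$ within its class.} Replacing $B$ by $D^{-1}BD$ with $D\in GL_n(\Z)$, we may take $CD$ in place of $C$. This changes only the $\Z$-basis of the same lattice $L$.

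\textbf{Step 3: the maps are mutually inverse.} Step 2 already gives (ii)$\circ$(i)$=\id$. For the converse, start from $L$ with basis $\BB$ and the matrix $B$ from (ii). Write $\BB=(1,\dots,\oooo{t}^{n-1})\cdot C$ with $C\in GL_n(\Q)$. The computation of Step 2 run backwards gives $M_fC=CB$, i.e. $CBC^{-1}=M_f$. So (i) with this choice of $C$ reproduces $L$ exactly.

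The main obstacle is the independence of $C$ in Step 2. It is the only place where regularity enters essentially, via the centralizer description in Lemma \ref{t6.1} (c) (iv), together with the identification of the centralizer $\Q[M_f]$ with multiplication by elements of $A_f$. Everything else is bookkeeping with bases.
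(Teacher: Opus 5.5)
Your proof is correct and is essentially the paper's argument. The paper shows that the lattice-to-matrix map (ii) is well defined, surjective via (i), and injective: it writes $\mathcal{B}_2=\mathcal{B}_1 C$ with $C$ commuting with $B$, so $C\in\Q[B]$ by Lemma \ref{t6.1} (c) (iv), and hence $C$ comes from a unit of $A_f$. You put the same centralizer argument (applied to $M_f$ instead of $B$) into the independence of the choice of $C$ in (i) and then check that the two maps are mutually inverse, which is logically equivalent; you also make explicit the $\varepsilon$-invariance of (ii), which the paper leaves implicit.
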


{\bf Proof:} The endomorphism
\begin{eqnarray*}
\mu_{\oooo{t}}:=(\textup{multiplication with }\oooo{t}):
A_f\to A_f,\ a\mapsto \oooo{t}a,
\end{eqnarray*}
is regular with characteristic polynomial $f$.
Any full lattice $L\in \LL(A_f)$ induces by \eqref{6.5}
a $GL_n(\Z)$-conjugacy class of regular matrices in
$M_{n\times n}(\Q)$ with characteristic polynomial $f$.
It consists of matrices in $M_{n\times n}(\Z)$ if and only if
$\oooo{t}L\subset L$, so if and only if $\OO(L)\supset\Lambda_f$.

Any class $[B]_\Z\in S_{1,f}$ is obtained by this construction
because in part (i) 
\begin{eqnarray*}
\mu_{\oooo{t}}(1,\oooo{t},...,\oooo{t}^{n-1})&=&
(\oooo{t},\oooo{t}^2,...,\oooo{t}^n)
=(1,\oooo{t},...,\oooo{t}^{n-1})\cdot M_f,\\
\mu_{\oooo{t}}\BB&=&\BB\cdot B.
\end{eqnarray*}

Suppose that two full lattices $L_1$ and $L_2$ in $A_f$
with $\OO(L_1)\supset \Lambda_f$ and $\OO(L_2)\supset \Lambda_f$
give the same conjugacy class $[B]_\Z\in S_{1,f}$.
Then there are $\Z$-bases $\BB_1$ of $L_1$ and
$\BB_2$ of $L_2$ with
\begin{eqnarray*}
\mu_{\oooo{t}}\BB_1=\BB_1\cdot B\quad\textup{and}\quad 
\mu_{\oooo{t}}\BB_2=\BB_2\cdot B.
\end{eqnarray*}
Write $\BB_2=\BB_1\cdot C$ for some matrix $C\in GL_n(\Q)$.
Then $BC=CB$, so by Lemma \ref{t6.1} (c) (iv)
$C=\sum_{l=0}^{n-1}c_lB^l\in\Q[B]$. The element
$$c:=\sum_{l=0}^{n-1}c_l\oooo{t}^l$$
satisfies
\begin{eqnarray*}
c\in A_f^{unit},\quad c\BB_1=\BB_2,\quad cL_1=L_2,
\end{eqnarray*}
so $[L_1]_\varepsilon =[L_2]_\varepsilon$.
All statements in Theorem \ref{t6.3} are proved.
\hfill$\Box$

\begin{remarks}\label{t6.4} 
Let $n\in\N$ and let $f\in \Z[t]$ be unitary of degree $n$.

(i) $S_{2,f}\subset \EE(A_f)$ is a subsemigroup of the
semigroup $\EE(A_f)$. It has the unit element
$[\Lambda_f]_\varepsilon$. So it is a monoid.
Also the division map in $\EE(A_f)$ restricts to $S_{2,f}$ 
because of \eqref{4.2}. 
Therefore also $S_{1,f}$ is a monoid with division map.

(ii) Though, given matrices $B_1$ and $B_2$ with 
$[B_1]_\Z$ and $[B_2]_\Z\in S_{1,f}$, it is not easy to find
matrices $B_3$ and $B_4$ with
\begin{eqnarray}\label{6.6}
[B_3]_\Z = [B_1]_\Z\cdot [B_2]_\Z\quad\textup{and}\quad
[B_4]_\Z = [B_1]_\Z:[B_2]_\Z.
\end{eqnarray}
Taussky gives in \cite[5.]{Ta60} and in \cite[3.]{Ta74}
two ways to obtain $B_3$ from $B_1$ and $B_2$. But they just 
encode two obvious ways how to go from $B_1$ and $B_2$ 
via $L_i\in\LL(A_f)$ with $[L_i]_\varepsilon\sim [B_i]_\Z$
and $L_3=L_1L_2$ to $B_3$. 

(iii) The unit element $[\Lambda_f]$ in $S_{2,f}$ corresponds
to the $GL_n(\Z)$-conjugacy class $[M_f]_\Z$ in $S_{1,f}$.

(iv) The following conditions are equivalent:
\begin{list}{}{}
\item[($\alpha$)]
$A_f$ is separable.
\item[($\beta$)]
$f$ does not have multiple roots in $\C$.
\item[($\gamma$)]
$M_f$ is semisimple.
\item[($\delta$)]
Any regular matrix in $M_{n\times n}(\Z)$ with characteristic
polynomial $f$ is semisimple.
\item[($\epsilon$)]
$S_{1,f}$ is finite.
\end{list}
Here $(\alpha)\Rightarrow(\epsilon)$ follows from 
Corollary \ref{t5.4}. For the inverse implication
$(\epsilon)\Rightarrow(\alpha)$, Remark \ref{t5.11} (i) is 
needed.

(v) In any case, the set $S_{1,f}$
splits into the disjoint union
\begin{eqnarray}\label{6.7}
\dot\bigcup_{\Lambda\supset\Lambda_f\textup{ order}}
\{[L]_\varepsilon\,|\, L\in\LL(A_f),\OO(L)=\Lambda\}
\end{eqnarray}
of sets which are finite by Theorem \ref{t5.3}. 
If the conditions in (iv) do not hold, it is
a union of infinitely many finite sets.
In any case, for $B$ with $[B]_\Z\in S_{1,f}$ the order 
$\Lambda\supset\Lambda_f$ with $\Lambda=\OO(L)$ and
$[B]_\Z\sim [L]_\varepsilon$ is an interesting
invariant of the $GL_n(\Z)$-conjugacy class $[B]_\Z$ of $B$.
\end{remarks}

Finally we make a few comments on integer $n\times n$ matrices
which are not necessarily regular. Part (a) of Theorem 
\ref{t6.5} is linear algebra. Part (b) follows from the
Jordan-Zassenhaus theorem \cite{Za38} 
(see also \cite[(79.1)]{CR62} or \cite[(26.4)]{Re03}). 
Part (c) can be proved together with the derivation of 
part (b) from the Jordan-Zassenhaus theorem.

\begin{theorem}\label{t6.5}
Fix a matrix $C\in M_{n\times n}(\C)$ in Jordan normal form
whose characteristic polynomial $f$ is in $\Z[t]$. Denote by
$[C]_\C$ its $GL_n(\C)$-conjugacy class.

(a) $M_{n\times n}(\Q)\cap [C]_\C$ is a single 
$GL_n(\Q)$-conjugacy class. It contains block diagonal matrices
in $M_{n\times n}(\Z)$ whose diagonal blocks are companion 
matrices for certain divisors in $\Z[t]$ of $f$.

(b) If $C$ is semisimple then $M_{n\times n}(\Z)\cap[C]_\C$
splits into finitely many $GL_n(\Z)$-conjugacy classes.

(c) If $C$ is not semisimple then $M_{n\times n}(\Z)\cap [C]_\C$
splits into infinitely many $GL_n(\Z)$-conjugacy classes.
\end{theorem}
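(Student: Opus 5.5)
For part (a) I will use rational canonical forms. Over $\C$ the Jordan form $C$ determines the invariant factors $d_1\mid d_2\mid\dots\mid d_r$ of $tI-C$. Each $d_i$ is a product of powers $(t-\lambda)^m$ over the eigenvalues $\lambda$, with exponents read off from the block sizes. Since $f\in\Z[t]$, and the Jordan structure over the rationals is invariant under $\mathrm{Gal}(\oooo{\Q}/\Q)$, I first have to argue that every $d_i$ lies in $\Q[t]$. Because the $d_i$ are monic divisors of $f$ and $f$ is monic in $\Z[t]$, Gauss's lemma then places them in $\Z[t]$. Strictly speaking, Galois invariance requires $[C]_\C\cap M_{n\times n}(\Q)\neq\emptyset$. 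I will read the statement under that implicit hypothesis, since it is automatic for the matrices under consideration. Granting this, the block diagonal matrix $\mathrm{diag}(M_{d_1},\dots,M_{d_r})$ is integral and lies in $[C]_\C$. Any two rational matrices in $[C]_\C$ have the same invariant factors, because these are computed by gcd's of minors and are therefore field independent. So they are $GL_n(\Q)$-conjugate by the rational canonical form theorem.

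For part (b) I will reduce to the Jordan--Zassenhaus theorem. Write the semisimple $C$ as having minimal polynomial $g=\prod g_j$, squarefree, with $g_j\in\Z[t]$ irreducible, and put $\Lambda:=\Z[t]/(g)$, an order in the separable algebra $\Q[t]/(g)$. An integer matrix $B\in[C]_\C$ makes $\Z^n$ into a $\Lambda$-lattice $M_B$, and $GL_n(\Z)$-conjugacy classes correspond to isomorphism classes of such lattices. By part (a), all the $\Q\otimes M_B$ are isomorphic to one fixed $\Q\Lambda$-module. Jordan--Zassenhaus says that the $\Lambda$-lattices spanning a fixed module over a separable algebra fall into finitely many isomorphism classes, which gives finiteness.

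For part (c) I need infinitely many classes. First I decompose $\Q^n$, under a rational representative $B_0$ from (a), into its primary components for $B_0$. At least one component, for an irreducible factor $p\in\Z[t]$, carries a non-semisimple operator. My plan is to exhibit infinitely many integral lattices in $\Q^n$ stable under $B_0$ whose endomorphism orders differ. The endomorphism order $\mathrm{End}_{\Z[B]}(\Z^n)$, taken inside the fixed algebra $\mathrm{End}_{\Q[B_0]}(\Q^n)$, is an invariant of the conjugacy class up to conjugation.

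The concrete construction is as follows. Let $N:=p(B_0)$ restricted to that component; it is nilpotent and nonzero. Take a $B_0$-stable lattice $L_0$, for example $\Z^n$ for the integral representative, and set $L_m:=L_0+\sum_{l\geq1}2^{-lm}N^lL_0$. This is the analogue of \eqref{5.18}. Each $L_m$ is a full lattice, and it is $B_0$-stable because $N$ commutes with $B_0$. In a basis of $L_m$, $B_0$ gives an integral matrix in $[C]_\C$.

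To separate the classes $[B_m]_\Z$ I use an invariant that is unchanged under $GL_n(\Z)$-conjugacy: the largest $k$ such that $N_m:=p(B_m)$ is divisible by $2^k$ modulo appropriate terms, or more simply the index $[\,\Z^n : N_m\Z^n + \Z^n\cap\ker N_m^{\,e-1}\,]$. This index is a conjugacy invariant of the integral matrix. I expect it to grow with $m$, since in $L_m$ the operator $N$ becomes $2^m$ times a map in effect sending $2^{-lm}N^lL_0\to 2^{-(l+1)m}N^{l+1}L_0$.

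The main obstacle is choosing a clean invariant that provably separates infinitely many $L_m$. Alternatively, I would reduce via Theorem~\ref{t6.3} to the regular case on the non-semisimple primary block. There Remark~\ref{t6.4}(iv), via Remark~\ref{t5.11}(i), already gives infinitely many orders $\OO(L)$ as invariants. I would then embed this block into the whole matrix, keeping the other summands fixed as a direct sum, and check that the invariant order persists. That check is possible because the endomorphism ring of the direct sum sees the block's endomorphism order as the corner $1_{\text{block}}\cdot\mathrm{End}\cdot1_{\text{block}}$, which is well defined since primary components are canonical.
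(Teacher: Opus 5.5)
Your parts (a) and (b) follow the route the paper indicates: rational canonical form for (a), and Jordan--Zassenhaus for the order $\Z[t]/(g)$ in (b). Your caveat in (a) is a genuine extra hypothesis, not something automatic. For $C=J_2(\sqrt2)\oplus J_1(-\sqrt2)\oplus J_1(-\sqrt2)$ one has $f=(t^2-2)^2\in\Z[t]$, but $[C]_\C$ contains no rational matrix. So (a) and (c) as stated need the Jordan data of $C$ to be Galois stable.

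The gap is in (c): you never establish an invariant that separates the classes $[B_m]_\Z$.

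\textbf{Your index is infinite.} Let $e$ be the nilpotency index of $N$ on the $p$-primary component $V_p$. Then $N(V_p)\subset\ker (N^{e-1}|_{V_p})\subsetneqq V_p$, while $p(B_m)$ is invertible on the other primary components. Hence $N_m\Z^n+\Z^n\cap\ker N_m^{e-1}$ lies in a proper subspace of $\Q^n$.

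\textbf{Your alternative needs $B_0|_{V_p}$ regular.} If it is not, e.g.\ $C=J_2(0)\oplus J_2(0)$ with $V_p=\Q^4$, Theorem \ref{t6.3} does not apply to $V_p$. The corner $\pi\,\mathrm{End}(L)\,\pi$ (with $\mathrm{End}(L)$ the $B_0$-equivariant endomorphisms of $L$) is then an order in a noncommutative algebra. It is attached to the class only up to conjugation by units of the commutant of $B_0$. You would still have to produce infinitely many pairwise non-conjugate such orders.

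\textbf{The repair is to use the central part instead.} Put $\OO_{B_0}(L):=\{x\in\Q[B_0]\,|\,xL\subset L\}$.
\begin{itemize}
\item Two $B_0$-stable full lattices $L,L'\subset\Q^n$ give $GL_n(\Z)$-conjugate matrices iff $L'=XL$ for some $X\in GL_n(\Q)$ with $XB_0=B_0X$. This is the first half of the proof of Theorem \ref{t6.3}, which does not use regularity.
\item Such an $X$ commutes with all of $\Q[B_0]$, so $\OO_{B_0}(L)=\OO_{B_0}(L')$ is an honest invariant. For regular $B_0$ it is the invariant $\OO(L)$ of Remark \ref{t6.4} (v).
\item Take $N:=p(B_0)\pi\in\Q[B_0]$, where $\pi$ is the primary projection, itself a polynomial in $B_0$. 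Your lattices $L_m$ then satisfy $2^{-m}N\in\OO_{B_0}(L_m)$.
\item Each $\OO_{B_0}(L_m)\subset\mathrm{End}_\Z(L_m)$ is a finitely generated subgroup of $\Q[B_0]$. Since $N\neq0$, it contains $2^{-m'}N$ for only finitely many $m'$.
\end{itemize}
Hence the orders $\OO_{B_0}(L_m)$ take infinitely many values, and the $B_m$ lie in infinitely many $GL_n(\Z)$-conjugacy classes. The paper itself only remarks that (c) can be obtained together with the derivation of (b) from Jordan--Zassenhaus. The argument above is the construction of Remark \ref{t5.11} (i) made to work for non-regular $B_0$.
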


A more detailed study of $GL_n(\Z)$-conjugacy classes of 
integer matrices which are not regular should take into
account proofs of the Jordan-Zassenhaus theorem and 
the papers \cite{St11} and \cite{Fr65}.

\section{The irreducible rank 2 cases}\label{s7}
\setcounter{equation}{0}
\setcounter{table}{0}
\setcounter{figure}{0}

Lemma \ref{t7.2} (b) will give an elementary 1:1 correspondence
between $SL_2(\Z)$-conjucagy classes of matrices in
$M_{2\times 2}(\Z)$ and proper equivalence classes of 
binary quadratic forms.
The theory of binary quadratic forms is old, going back to
Lagrange, Legendre and especially Gau{\ss} who gave the first
systematic treatment \cite{Ga01}.
Nowadays, it is covered in many books, e.g.
\cite{Zag81} \cite{Tr13} \cite{We17} \cite{Ha21}.
Also the thesis \cite{Ku12} says quite a lot on the
rank 2 cases. 
Therefore we could stop after Lemma \ref{t7.2}.
But we go on as we consider it useful to see how the objects 
in the sections \ref{s2}--\ref{s6} look in the rank 2 cases.

Subsection \ref{s7.1} discusses the different types of
integer $2\times 2$ matrices and provides Lemma \ref{t7.2}.
The subsections \ref{s7.2}--\ref{s7.4} treat the cases
where $A$ is a quadratic number field.

\subsection{Preliminaries, a correspondence with binary
quadratic forms}\label{s7.1}

A matrix $B\in M_{2\times 2}(\Z)$ with eigenvalues 
$\lambda_1,\lambda_2\in\C$ is of one of the following five types.

\begin{list}{}{}
\item[Type I:] 
$B$ is semisimple with $\lambda_1=\lambda_2\in\Z$.
Then $B=\lambda_1\cdot E_2$. 
\item[Type II:]
$B$ has a $2\times 2$ Jordan block with eigenvalue 
$\lambda_1=\lambda_2\in\Z$.
\item[Type III:]
$B$ is semisimple with $\lambda_1,\lambda_2\in\Z$ and
$\lambda_1\neq\lambda_2$.
\item[Type IV:]
$B$ is semisimple with $\lambda_1,\lambda_2\in\R-\Z$ and
$\lambda_1\neq\lambda_2$.
\item[Type V:]
$B$ is semisimple with $\lambda_1,\lambda_2\in\C-\R$ and
$\lambda_2=\oooo{\lambda_1}$.
\end{list}

Only $B$ of type I is not regular. There is nothing more to
say on type I. 
The types II and III are treated in the subsection \ref{s10.2}
and \ref{s9.2}.

Below we treat the types IV and V. For them the algebra $A$,
the orders and the semigroups $\EE(A)$
have much in common and are treated together.
Because of the relationship to binary quadratic forms,
we consider also $SL_2(\Z)$-conjugacy classes, not only
$GL_2(\Z)$-conjugacy classes.

An $SL_2(\Z)$-conjugacy class of type V has a canonical
representative. This is related to the action of $PSL_2(\Z)$
on the upper half plane $\H$. The question on (semi-)normal forms
for an $SL_2(\Z)$-conjugacy class of type IV is more 
difficult. First we study it simultaneously with type V.
But then we turn to a point of view which we find especially
attractive, Conway's topograph. It allows a clear
overview on different semi-normal forms for type IV
in the literature.

\begin{definition}\label{t7.1}
(a) Define $\Z^2:=M_{2\times 1}(\Z)$.
We write $\uuuu{x}=\begin{pmatrix}x_1\\x_2\end{pmatrix}
\in M_{2\times 1}(\Z)$.

(b) A {\it binary quadratic form} $q$ is a map
\begin{eqnarray*}
q=[a,h,b]_{quad}:\Z^2\to\Z\ \textup{ with }\ 
q(\uuuu{x})=q(\begin{pmatrix}x_1\\x_2\end{pmatrix})
=ax_1^2+hx_1x_2+bx_2^2
\end{eqnarray*}
for some $a,h,b\in\Z$.

(c) Two binary quadratic forms $q_1$ and $q_2$ are 
{\it properly equivalent} if a matrix $A\in SL_2(\Z)$ with
$q_2(\uuuu{x})
=q_1(A\uuuu{x})$ exists.
They are $GL_2(\Z)$-equivalent if a matrix $A\in GL_2(\Z)$ with
$q_2(\uuuu{x})
=\det A\cdot q_1(A\uuuu{x})$ exists.
\end{definition}

\begin{lemma}\label{t7.2}
(a) There is a 1:1 correspondence between binary quadratic forms
$[a,h,b]_{quad}$ and symmetric $\Z$-bilinear forms
$[a,h,b]_{sym}:\Z^2\times \Z^2\to\frac{1}{2}\Z$ of the shape
\begin{eqnarray*}
[a,h,b]_{sym}(\begin{pmatrix}x_1\\x_2\end{pmatrix},
\begin{pmatrix}y_1\\y_2\end{pmatrix})
=\begin{pmatrix}x_1&x_2\end{pmatrix}
\begin{pmatrix}a&h/2\\h/2&b\end{pmatrix}
\begin{pmatrix}y_1\\y_2\end{pmatrix}.
\end{eqnarray*}
$q=[a,h,b]_{quad}$ is obtained from $[a,h,b]_{sym}$ by 
$q(\uuuu{x})=[a,h,b]_{sym}(\uuuu{x},\uuuu{x})$.
$[a,h,b]_{sym}$ is obtained from $q=[a,h,b]_{quad}$ by
\begin{eqnarray*}
[a,h,b]_{sym}(\uuuu{x},\uuuu{y})= 
\frac{1}{2}(q(\uuuu{x}+\uuuu{y})-q(\uuuu{x})-q(\uuuu{y})).
\end{eqnarray*}
(b) Fix $r\in\Z$. There is a 1:1 correspondence between matrices
$A\in M_{2\times 2}(\Z)$ with trace $\tr A=r$ and binary
quadratic forms. It maps 
$B=\begin{pmatrix}a&b\\c&d\end{pmatrix}$ to $[c,d-a,-b]_{quad}$.
One $SL_2(\Z)$-conjugacy class is mapped to one proper 
equivalence class. One $GL_2(\Z)$-conjugacy class is mapped
to one $GL_2(\Z)$-equivalence class.
\end{lemma}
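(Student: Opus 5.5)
Part (a) is pure bookkeeping. The quadratic form $q=[a,h,b]_{quad}$ and the bilinear form with Gram matrix $\begin{pmatrix}a&h/2\\h/2&b\end{pmatrix}$ determine each other. Indeed, $q(\uuuu{x})$ is the bilinear form evaluated on the diagonal, and polarization recovers the bilinear form: expanding $q(\uuuu{x}+\uuuu{y})-q(\uuuu{x})-q(\uuuu{y})$ gives $2ax_1y_1+h(x_1y_2+x_2y_1)+2bx_2y_2$, which is twice the bilinear form. The values lie in $\frac12\Z$ because of the entries $h/2$. This settles (a) completely.

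For (b) I first check that the map is a bijection. A matrix $B=\begin{pmatrix}a&b\\c&d\end{pmatrix}$ with $\tr B=r$ is determined by $(c,d-a,-b)$: from $d-a$ and $a+d=r$ one recovers $a=(r-(d-a))/2$ and $d=(r+(d-a))/2$. This requires $d-a\equiv r \bmod 2$, so as literally stated the map is a bijection onto the forms $[c,h,-b]_{quad}$ with $h\equiv r\bmod 2$. I would note this parity restriction explicitly, or allow $r$ to vary in a fixed residue class mod 2.

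The conceptual core is an identity relating the form to $B$. With $J:=\begin{pmatrix}0&-1\\1&0\end{pmatrix}$ I compute
$$JB-\tfrac{r}{2}J=\begin{pmatrix}-c&-d+r/2\\ a-r/2&b\end{pmatrix}=\begin{pmatrix}-c&-(d-a)/2\\-(d-a)/2&b\end{pmatrix},$$
using $a-r/2=-(d-a)/2$ and $d-r/2=(d-a)/2$. So $q_B(\uuuu{x})=-\uuuu{x}^tJB\uuuu{x}$; the term $\frac r2\uuuu{x}^tJ\uuuu{x}$ drops out because $J$ is antisymmetric. Equivalently, $q_B(\uuuu{x})=\det(\uuuu{x},B\uuuu{x})$ up to a fixed sign, and I would state it in this determinant form.

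Now take $C\in GL_2(\Z)$ and $B'=C^{-1}BC$. Then
$$\det(\uuuu{x},B'\uuuu{x})=\det(C^{-1})\det(C\uuuu{x},BC\uuuu{x}),$$
so $q_{B'}(\uuuu{x})=\det C\cdot q_B(C\uuuu{x})$, since $\det(C^{-1})=\det C=\pm1$. For $C\in SL_2(\Z)$ this is exactly proper equivalence. For $C\in GL_2(\Z)$ it is exactly the $GL_2(\Z)$-equivalence of Definition \ref{t7.1}(c), which carries the factor $\det A$. So conjugacy classes map into equivalence classes. Conversely, if $q_2(\uuuu{x})=\det A\cdot q_1(A\uuuu{x})$ with $q_1=q_{B_1}$, then $A^{-1}B_1A$ has trace $r$ and form $q_2$, so by bijectivity it equals $B_2$. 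Hence a conjugacy class maps onto the entire equivalence class.

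The only steps needing care are the sign and $\det$ conventions, including whether the action is $q(A\uuuu{x})$ or uses $A^{-1}$; the identity $\det(\uuuu{x},C^{-1}\uuuu{y})=\det C^{-1}\det(C\uuuu{x},\uuuu{y})$ handles this. The other point to watch is the parity restriction on $h$ in the bijection.
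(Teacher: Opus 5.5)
Your proof is correct and is essentially the paper's argument: your matrix $-(JB-\frac{r}{2}J)$ is exactly the paper's Gram matrix $\Psi(B)=(B-\frac{r}{2}E_2)^tJ$, and your identity $\det(C^{-1}\uuuu{u},C^{-1}\uuuu{v})=\det C^{-1}\cdot\det(\uuuu{u},\uuuu{v})$ is the paper's relation $A^tJA=\det A\cdot J$ in another form. Your parity remark is also correct, and the paper's proof only claims that $\Psi$ is injective: for fixed $r$ the map is a bijection onto the forms $[c,h,-b]_{quad}$ with $h\equiv r\bmod 2$, not onto all binary quadratic forms.
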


{\bf Proof:} 
(a) Trivial.

(b) Consider the injective map
\begin{eqnarray*}
\Psi:\{B\in M_{2\times 2}(\Z)\,|\, \tr B=r\}\to
\{C\in M_{2\times 2}(\frac{1}{2}\Z)\,|\, C^t=C\},\\
B=\begin{pmatrix}a&b\\c&d\end{pmatrix}\mapsto \Psi(B):=
(B-\frac{r}{2}E_2)^t\begin{pmatrix}0&-1\\1&0\end{pmatrix}
=\begin{pmatrix}c&\frac{d-a}{2}\\ \frac{d-a}{2}&-b\end{pmatrix}.
\end{eqnarray*}
$\Psi(B)$ is equivalent to $[c,d-a,-b]_{sym}$ and to 
$[c,d-a,-b]_{quad}$. One recovers $B$ from $\Psi(B)$
and $\tr B=r$. 

Recall $SL_2(\Z)=Sp_2(\Z)$ and the generalization to 
$GL_2(\Z)$,
\begin{eqnarray*}
A^t\begin{pmatrix}0&-1\\1&0\end{pmatrix}A
=\det A\cdot \begin{pmatrix}0&-1\\1&0\end{pmatrix}
\quad\textup{for }A\in GL_2(\Z).
\end{eqnarray*}
It implies for $A\in GL_2(\Z)$
\begin{eqnarray*}
\Psi(A^{-1}BA)&=& A^t(B-\frac{r}{2}E_2)^tA^{-t}
\begin{pmatrix}0&-1\\1&0\end{pmatrix}\\
&=& \det A\cdot A^t\Psi(B)A.\hspace*{3cm}\Box
\end{eqnarray*}

\begin{remarks}\label{t7.3}
Consider a unitary polynomial $f=f(t)=t^2-rt+s\in\Z[t]$
of degree 2. 

(i) By Theorem \ref{t6.3} 
the problem to find all $GL_2(\Z)$-conjugacy classes of matrices
in $M_{2\times 2}(\Z)$ with characteristic polynomial $f$
leads to the following objects.
\begin{list}{}{}
\item[($\alpha$)]
The two-dimensional $\Q$-algebra $A=A_f=\Q[t]/(f)$ with
generator $\oooo{t}=(\textup{class of }t\textup{ in }A)$,
so $A=\Q\cdot 1+\Q\cdot \oooo{t}=\Q[\oooo{t}]$. 
\item[($\beta$)]
The cyclic order $\Z[\oooo{t}]=\Z\cdot 1+\Z\cdot\oooo{t}
\subset A$.
\item[($\gamma$)]
The full lattices $L\in\LL(A)$ with $\OO(L)\supset\Z[\oooo{t}]$
and the $\varepsilon$-classes of them. 
\end{list}
Then one $GL_2(\Z)$-conjugacy class of matrices in 
$M_{2\times 2}(\Z)$ with characteristic polynomial $f$
corresponds to one $\varepsilon$-class of such full lattices
$L$ with $\OO(L)\supset\Z[\oooo{t}]$. Given such a full lattice
$L$, the choice of a $\Z$-basis $(b_1,b_2)$ provides a matrix
$M\in M_{2\times 2}(\Z)$ with characteristic polynomial $f$ 
by $(\oooo{t}b_1,\oooo{t}b_2)=(b_1,b_2)\cdot M$.

(ii) In order to catch the $SL_2(\Z)$-conjugacy classes of 
matrices, one has to take into account orientations of bases.
The set of $\Q$-bases of $A$ splits into two disjoint subsets
$o_1$ and $o_2$. A $\Q$-basis of $A$ is in $o_1$ 
[respectively $o_2$] if the base change matrix to the
$\Q$-basis $(1,\oooo{t})$ has positive [respectively negative]
determinant. 

The norm $N(a)\in\Q$ of an element $a\in A$ is
\begin{eqnarray*}
N(a):=\det\bigl((\textup{multiplication with }a):A\to A\bigr).
\end{eqnarray*}
This fits to the classical definition of the norm in a 
quadratic number field. 

One considers pairs $(L,o_i)\in\LL(A)\times\{o_1,o_2\}$
and an $(\varepsilon+)$-equivalence on them which is defined by
\begin{eqnarray*}
&&(L,o_i)\sim_{\varepsilon+}(\www{L},o_j)\\
&\iff& \textup{an element }a\in A^{unit}\textup{ exists with }
\www{L}=aL\textup{ and}\\
&& N(a)>0\textup{ if }o_i=o_j\textup{ respectively }
N(a)<0\textup{ if }o_i\neq o_j.
\end{eqnarray*}
Then one $SL_2(\Z)$-conjugacy class of matrices in 
$M_{2\times 2}(\Z)$ with characteristic polynomial $f$
corresponds to an $(\varepsilon+)$-class of pairs
$(L,o_i)$ with $\OO(L)\supset\Z[\oooo{t}]$. 
Given a pair $(L,o_i)$, the choice of a $\Z$-basis
$(b_1,b_2)\in o_i$ of $L$ provides a matrix 
$M\in M_{2\times 2}(\Z)$ with characteristic polynomial by
$(\oooo{t}b_1,\oooo{t}b_2)=(b_1,b_2)\cdot M$.
\end{remarks}

It will be useful to consider binary quadratic forms not
only on $\Z^2$ (as in Definition \ref{t7.1}), but also on
abstract rank 2 $\Z$-lattices.

\begin{definition}\label{t7.4}
An {\it abstract binary quadratic form} is a triple
$(H_\Z,o_1,q)$ where $H_\Z$ is a $\Z$-lattice
of rank 2, $o_1$ is one of the
two orientation classes of $\Z$-bases of $H_\Z$ and
$q:H_\Z\to\Z$ is a map which takes with respect to some
$\Z$-basis $(v_1,v_2)\in o_1$ the shape
\begin{eqnarray*}
q(x_1v_1+x_2v_2)=ax_1^2+hx_1x_2+bx_2^2\quad\textup{for }
x_1,x_2\in\Z
\end{eqnarray*}
for some $a,h,b\in\Z$.
\end{definition}

\begin{remarks}\label{t7.5}
(i)
Let an abstract binary quadratic form $(H_\Z,o_1,q)$ be given.
Each $\Z$-basis $\uuuu{v}=(v_1,v_2)$ in $o_1$ induces an 
isomorphism $\beta_{\uuuu{v}}:H_\Z\to\Z^2$ which maps $\uuuu{v}$
to the standard $\Z$-basis of $\Z^2$. 
It gives a binary quadratic form 
$q_{\uuuu{v}}:=q\circ\beta_{\uuuu{v}}^{-1}:\Z^2\to\Z$. Running through all
$\Z$-bases in $o_1$ one obtains all binary quadratic forms
in one proper equivalence class.

(ii) We call a binary quadratic form 
{\it of type I, II, III, IV or V} 
if some matrix which corresponds to it by Lemma \ref{t7.2} (b)
is of the same type. This is possible as the type of a matrix
$B\in M_{2\times 2}(\Z)$ does not change if one adds a multiple
of $E_2$. We call an abstract binary quadratic form {\it of type
I, II, III, IV or V} if some (equivalent: any) binary form in the
corresponding proper equivalence class is of the same type. 
Lemma \ref{t7.6} says what the types mean for the
binary quadratic forms.
\end{remarks}

\subsection{Algebras, orders, and $SL_2(\Z)$-conjugacy classes}
\label{s7.2}

Lemma \ref{t7.6} collects basic facts on matrices in
$M_{2\times 2}(\Z)$ and the associated binary quadratic forms.

\begin{lemma}\label{t7.6}
Let $B=\begin{pmatrix}a&b\\c&d\end{pmatrix}\in 
M_{2\times 2}(\Z)$, and let $q=[c,d-a,-b]_{quad}$ be the
associated binary quadratic form.

(a) The characteristic polynomial $p_B(t)=t^2-rt+s\in\Z[t]$
satisfies
$$r=\tr B=a+d,\quad s=\det B=ad-bc.$$
The eigenvalues $\lambda_1$ and $\lambda_2$ of $B$ are 
\begin{eqnarray*}
\lambda_{1/2}=\frac{r}{2}\pm\sqrt{D}\quad\textup{with}\quad
D=\frac{r^2}{4}-s=\bigl(\frac{a-d}{2}\bigr)^2+bc.
\end{eqnarray*}
Here $\sqrt{D}$ is the square root with
$\sqrt{D}\in\R_{\geq 0}\,\cup\, i\R_{>0}$. 
One row eigenvector $v_1$ of $B$ with eigenvalue $\lambda_1$,
so with $v_1B=\lambda_1 v_1$, is
\begin{eqnarray}\label{7.1}
v_1=(c,-a+\lambda_1)=(c,\frac{d-a}{2}+\sqrt{D})
\end{eqnarray}
if $v_1\neq 0$.

(b) $B$ is of type I $\iff$ $q=0$.

(c) $B$ is of type II $\iff$ $D=0$ and $q\neq 0$. 
Then $q$ is semidefinite with $0\in q(\Z^2-\{0\})$.

(d) $B$ is of type III $\iff$ $D>0$ and $4D\in\N$ is a square.
Then $q$ is indefinite with $0\in q(\Z^2-\{0\})$. 

(e) $B$ is of type IV $\iff$ $D>0$ and $4D\in\N$ is not a square.
Then $A=\Q[\sqrt{\delta}]$ for the square free $\delta\in\N$
with $4D=\delta\cdot(\textup{a square})$. 
We choose $\oooo{t}=\lambda_1$. Then $q$ is indefinite with
$bc\neq 0$ and $0\notin q(\Z^2-\{0\})$. 

(f) $B$ is of type V $\iff$ $D<0$. Then $A=\Q[\sqrt{\delta}]$
for the $\delta\in \Z_{<0}$ with $|\delta|$ square free
and $4D=\delta\cdot(\textup{a square})$. 
We choose $\oooo{t}=\lambda_1$. Then $bc<0$,
$q$ is positive definite if $c>0$, and $q$ is negative definite
if $c<0$. 
\end{lemma}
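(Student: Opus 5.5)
The plan is to prove the six parts of Lemma \ref{t7.6} in order, by direct computation from the entries of $B$ and the formula $q(\uuuu{x})=cx_1^2+(d-a)x_1x_2-bx_2^2$. Once the relation between $q$ and the eigenvalues is in place, each part is essentially a one-line observation.

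For (a) I would expand $\det(tE_2-B)=t^2-(a+d)t+(ad-bc)$ and solve the quadratic. This gives $\lambda_{1/2}=\frac r2\pm\sqrt D$ with $D=\frac{r^2}4-s=\frac{(a+d)^2}{4}-ad+bc=(\frac{a-d}2)^2+bc$. For the row eigenvector, $vB=\lambda v$ is equivalent to $v(B-\lambda E_2)=0$. Since $B-\lambda_1E_2$ is singular, its second column $(b,d-\lambda_1)^t$ is annihilated by $(c,\lambda_1-a)$ exactly when $cb+(\lambda_1-a)(d-\lambda_1)=0$, and this is the characteristic equation. One then checks that the first column is annihilated too, using rank $\le1$ when $v_1\neq0$. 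Substituting $\lambda_1=\frac{a+d}2+\sqrt D$ gives $-a+\lambda_1=\frac{d-a}2+\sqrt D$.

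The key identity for the remaining parts is that the discriminant of $q$ is $(d-a)^2+4bc=4D$. I would also use the factorisation, for $c\neq0$,
\[
c\,q(\uuuu{x})=\Bigl(cx_1+\tfrac{d-a}{2}x_2\Bigr)^2-Dx_2^2 .
\]
For (b): $q=0$ if and only if $b=c=0$ and $a=d$, which is exactly $B=aE_2$. That is type I, since a semisimple matrix with a double eigenvalue is scalar.

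For (c)--(f) I would use that the type is determined by $D$ together with $q\neq0$. The case $D=0$ with $B$ not scalar gives a single nontrivial Jordan block, so type II. If $D>0$, the eigenvalues are real and distinct, and they are integers precisely when $\sqrt D\in\frac12\Z$ with the right parity. Since $r^2-4s=4D$, the integer $4D$ being a square forces $2\sqrt D\equiv r \pmod 2$, so $\lambda_{1/2}\in\Z$; this settles the split between (d) and (e). If $D<0$, the eigenvalues are non-real conjugates, which is (f).

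The definiteness and zero-representation claims all come from the displayed factorisation; the degenerate case $c=0$ is handled separately, where $x_2=0$ gives a zero. When $D=0$ the form is $\pm$ a square, hence semidefinite, and $q(\uuuu{x})=0$ has a nonzero integer solution by taking $cx_1=-\frac{d-a}2x_2$ with suitable integers. When $4D$ is a square the factorisation splits over $\Q$, giving a rational and hence integral isotropic vector. When $4D>0$ is not a square, a zero would force $\sqrt D\in\Q$, which is false. In that case $c\neq0$, and $b\neq0$ by the symmetric argument in $x_2$, so $bc\neq0$. When $D<0$, $c\,q>0$ on $\Z^2-\{0\}$, so $q$ has the sign of $c$, and $bc=D-(\frac{a-d}2)^2<0$. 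The statements $A=\Q[\sqrt\delta]$ are immediate because $A\cong\Q[t]/(f)=\Q(\lambda_1)=\Q(\sqrt{4D})$.

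I expect the only real obstacle to be bookkeeping in the eigenvector formula and the integrality parity in (d). There I would lean on $4D=r^2-4s$.
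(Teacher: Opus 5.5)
Your proposal is correct and follows the paper's proof: the paper calls (a) and most of (b)--(f) obvious, and its one substantive remark is that the Gram matrix $\begin{pmatrix}c&\frac{d-a}{2}\\ \frac{d-a}{2}&-b\end{pmatrix}$ of $q$ has determinant $-D$, which is exactly your discriminant $4D$ and completed square, while you also supply the parity and isotropic-vector details the paper omits. Two small tidy-ups: the first entry of $v_1(B-\lambda_1E_2)$ is $c(a-\lambda_1)+(\lambda_1-a)c=0$ identically, so you need no rank argument there (it would not cover the case $b=0$, $d=\lambda_1$); and in (e), $c=0$ or $b=0$ would already make $4D=(a-d)^2$ a square, which gives $bc\neq 0$ before you use the factorisation that requires $c\neq 0$.
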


{\bf Proof:}
Part (a) and most of the parts (b) to (f) are obvious. 
Observe
$$\det\begin{pmatrix}c&\frac{d-a}{2}\\ \frac{d-a}{2}&-b
\end{pmatrix}=-D.$$
Therefore $q=[c,d-a,-b]_{quad}$ is indefinite of $D>0$ and
positive or negative definite of $D<0$.\hfill$\Box$

\bigskip
Theorem \ref{t7.7} collects well known facts on quadratic 
number fields and their orders 
(see e.g. \cite[Ch. 2 7.2 and 7.3]{BSh73}\cite[4.2, 6.7]{Tr13}).

\begin{theorem}\label{t7.7}
Let $\delta\in\Z-\{0;1\}$ be square free. Then 
$\delta\equiv 1\textup{ or }2\textup{ or }3(4)$.
Consider the quadratic number field $A=\Q[\sqrt{\delta}]$.

(a) The maximal order (= the ring of algebraic
integers) in $A$ is $\Lambda_1=\langle 1,\omega\rangle_\Z$ with
\begin{eqnarray*}
\omega =\left\{\begin{array}{ll}
\sqrt{\delta}&\textup{ if }\delta\equiv 2\textup{ or }3(4),\\
\frac{1+\sqrt{\delta}}{2}& \textup{ if }\delta\equiv 1(4).
\end{array}\right.
\end{eqnarray*}

(b) The orders in $A$ are the full lattices 
$$\Lambda_n:=\langle 1,n\omega\rangle_\Z\quad \textup{ for }
n\in\N.$$

(c) Consider $\delta<0$. Then the unit group of $\Lambda_n$ is
\begin{eqnarray*}
\Lambda_n^{unit}=\left\{\begin{array}{ll}
\{\pm 1\}&\textup{ if }n\geq 2\textup{ or if }n=1\textup{ and }
\delta\notin\{-1;-3\},\\
\{\pm 1,\pm i\}&\textup{ if }n=1\textup{ and }\delta=-1,\\
\{\pm 1,e^{\pm 2\pi i/3},e^{\pm 2\pi i/6}\}&\textup{ if }
n=1\textup{ and }\delta=-3.
\end{array}\right.
\end{eqnarray*}

(d) Consider $\delta>0$. Then the unit groups of $\Lambda_1$ and
$\Lambda_n$ for $n\geq 2$ are 
\begin{eqnarray*}
\Lambda_1^{unit}&=& \{\pm 1\}\times \{\varepsilon^l\,|\, l\in\Z\}
\quad\textup{ for a unique unit }\varepsilon>1,\\
\Lambda_n^{unit}&=& \{\pm 1\}\times \bigl(
\textup{a subgroup of finite index in }\{\varepsilon^l\,|\, 
l\in\Z\}\bigr).
\end{eqnarray*}
\end{theorem}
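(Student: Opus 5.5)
The plan is to reduce everything to the explicit description of integral elements of $A=\Q[\sqrt{\delta}]$ and then use elementary lattice arguments. The first remark is trivial: a square free integer is not divisible by $4$, so $\delta\not\equiv 0(4)$, and $\delta\neq 1$ by assumption.

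For part (a), I will use the standard criterion. An element $x=u+v\sqrt{\delta}$ with $u,v\in\Q$ is an algebraic integer if and only if its minimal polynomial has integer coefficients. That polynomial is $t^2-2ut+(u^2-\delta v^2)$ (or linear if $v=0$), so the condition is $2u\in\Z$ and $u^2-\delta v^2\in\Z$.
\begin{itemize}
\item Write $u=u'/2$. From $4u^2-4\delta v^2\in 4\Z$ one gets $4\delta v^2\in\Z$, and square freeness of $\delta$ forces $2v=v'\in\Z$.
\item It remains to analyse $u'^2\equiv \delta v'^2\ (4)$. Since squares are $0,1$ mod $4$, this forces $u',v'$ both even when $\delta\equiv 2,3(4)$.
\item When $\delta\equiv 1(4)$, it only forces $u'\equiv v'(2)$.
\end{itemize}
This gives $\Z[\sqrt\delta]$, respectively $\Z[\frac{1+\sqrt\delta}{2}]$. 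These sets are rings and hence the maximal order $\Lambda_1=\langle 1,\omega\rangle_\Z$. Here I use that every element of any order is integral, because an order is a finitely generated $\Z$-module closed under multiplication.

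For part (b), let $\Lambda$ be an order. By the integrality remark just made, $\Lambda\subset\Lambda_1=\Z\oplus\Z\omega$.
\begin{itemize}
\item The coefficient map $\Lambda\to\Z$, $x+y\omega\mapsto y$, has image $n\Z$ for some $n\in\N$; here $n\neq 0$ because $\Lambda$ is full.
\item Its kernel is $\Lambda\cap\Z$, which contains $1$ and lies in $\Z$, so it equals $\Z$.
\item Hence $\Lambda=\langle 1,m+n\omega\rangle_\Z=\langle 1,n\omega\rangle_\Z$.
\end{itemize}
Conversely, $\omega^2=\tr(\omega)\omega-N(\omega)$ with $\tr(\omega),N(\omega)\in\Z$, so $(n\omega)^2=n\tr(\omega)\cdot n\omega-n^2N(\omega)\in\Lambda_n$. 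Therefore $\Lambda_n$ is closed under multiplication and is an order. Note also $\Lambda_n=\Z+n\Lambda_1$, which will be used in (d).

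For part (c), let $\delta<0$. A unit $\epsilon$ of $\Lambda_n$ satisfies $N(\epsilon)N(\epsilon^{-1})=1$ with both factors in $\Z$. Since the norm is positive definite, this gives $N(\epsilon)=1$. Writing $\epsilon=x+yn\omega$ turns this into a definite quadratic equation in $x,y$:
\begin{itemize}
\item For $\delta\equiv 2,3(4)$ the equation is $x^2+|\delta|n^2y^2=1$, so $y=0$ unless $|\delta|n^2=1$, i.e. $\delta=-1,n=1$.
\item For $\delta\equiv 1(4)$ it is $(2x+ny)^2+|\delta|n^2y^2=4$, so $y=0$ unless $|\delta|n^2\le 4$, i.e. $\delta=-3,n=1$.
\end{itemize}
The exceptional cases give $\pm i$ and the sixth roots of unity.

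For part (d), let $\delta>0$. The structure of $\Lambda_1^{unit}$ is Dirichlet's unit theorem for real quadratic fields, equivalently the solvability of Pell's equation $x^2-\delta y^2=\pm1$ (or $\pm4$). I will cite this as known; it is the only genuinely nonelementary input.

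For $\Lambda_n$ I plan the following argument. First, $\Lambda_n^{unit}=\Lambda_n\cap\Lambda_1^{unit}$. If $u\in\Lambda_n\cap\Lambda_1^{unit}$, then $u^{-1}=\pm\bar u$, and conjugation preserves $\Lambda_n=\Z+n\Lambda_1$, so $u^{-1}\in\Lambda_n$. Next, consider the reduction map $\Lambda_1^{unit}\to(\Lambda_1/n\Lambda_1)^{unit}$, whose target is a finite group. Every unit congruent to $1$ mod $n\Lambda_1$ lies in $\Z+n\Lambda_1=\Lambda_n$. So $\Lambda_n^{unit}$ contains the kernel of this map and has finite index in $\Lambda_1^{unit}$. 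Intersecting with the free part $\{\varepsilon^l\}$ then gives the stated shape, since $\pm1\in\Lambda_n$.

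The main obstacle is only the existence of the fundamental unit $\varepsilon$, which I will quote rather than reprove; all remaining steps are routine computations.
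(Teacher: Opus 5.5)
Your proof is correct and complete, apart from Dirichlet's unit theorem for $\Lambda_1$, which you quote. The paper gives no proof of Theorem~\ref{t7.7}. It presents the statement as a collection of well known facts and refers to \cite[Ch.~2, 7.2 and 7.3]{BSh73} and \cite[4.2, 6.7]{Tr13}. Your argument is essentially the standard textbook route the paper defers to, written out in a self-contained way: the integrality criterion via the minimal polynomial, orders as $\Z+n\Lambda_1$, units via the definite norm equation, and Pell/Dirichlet for $\delta>0$.

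The one step worth relating to the rest of the paper is the finite-index claim in (d). Your reduction of $\Lambda_1^{unit}$ modulo $n\Lambda_1$ is the special case $\Lambda_n\subset\Lambda_1$ of the exact sequence \eqref{5.22} in Theorem~\ref{t5.12}, with conductor $C=\Lambda_n:\Lambda_1=n\Lambda_1$ (Theorem~\ref{t7.8} (d)(iii)). That sequence identifies $\Lambda_n^{unit}$ as the kernel of $\Lambda_1^{unit}\to(\Lambda_1/C)^{unit}/(\Lambda_n/C)^{unit}$, and Theorem~\ref{t5.12} (e) gives $[\Lambda_1^{unit}:\Lambda_n^{unit}]<\infty$.

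Your version buys independence from the localization machinery behind Theorem~\ref{t5.12}. The paper's version packages the same fact into formula \eqref{7.2}, which it then uses to compute class numbers of non-maximal orders.
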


Theorem \ref{t7.8} treats the semigroups $\EE(A)$ and
$W(\EE(A))=W(\LL(A))$.

\begin{theorem}\label{t7.8}
Let $\delta\in\Z-\{0;1\}$ be square free, and let 
$A=\Q[\sqrt{\delta]}$ be as in Theorem \ref{t7.7}.

(a) Each full lattice $L\in\LL(A)$ is invertible.

(b) $\EE(A)=\bigcup_{n\in\N}G([\Lambda_n]_\varepsilon)$
is a union of finite groups. 

(c) The semigroup of $w$-classes is
\begin{eqnarray*}
(W(\EE(A)),\cdot)\cong (\{\textup{orders in }A\},\cdot)
\cong (\N,\gcd).
\end{eqnarray*}

(d) (i) $\Lambda_{n_1}\supset \Lambda_{n_2}$ if and only if
$n_1$ divides $n_2$.

(ii) In that case the map
$$G([\Lambda_{n_2}]_\varepsilon)\to 
G([\Lambda_{n_1}]_\varepsilon),\quad 
[L]_\varepsilon\mapsto [\Lambda_{n_1}L]_\varepsilon,$$
is a surjective group homomorphism of finite groups, and
\begin{eqnarray}\label{7.2}
\frac{|G([\Lambda_{n_2}]_\varepsilon)|}
{|G([\Lambda_{n_1}]_\varepsilon)|}
&=& \frac{|(\Lambda_{n_1}/C)^{unit}|}
{|(\Lambda_{n_2}/C)^{unit})|}\cdot 
\frac{1}{|\Lambda_{n_1}^{unit}:\Lambda_{n_2}^{unit}]}.
\end{eqnarray}
Here $C:=\Lambda_{n_2}:\Lambda_{n_1}\in\LL(A)$ 
is their conductor.

(iii) The conductor $C$ satisfies
\begin{eqnarray*}
C=\langle \frac{n_2}{n_1},n_2\omega\rangle_\Z
=\frac{n_2}{n_1}\langle 1,n_1\omega\rangle_\Z
=\frac{n_2}{n_1}\Lambda_{n_1},\\
(\Lambda_{n_2}/C)^{unit}\cong\Z_{n_2/n_1}^{unit},\quad 
|(\Lambda_{n_2}/C)^{unit}|=\varphi(n_2/n_1).
\end{eqnarray*}
The class in $\Lambda_{n_1}/C$ of an element $a\in\Lambda_{n_1}$
is in $(\Lambda_{n_1}/C)^{unit}$ if and only if 
$\gcd(N(a),\frac{n_2}{n_1})=1$. This makes also the 
calculation of $|(\Lambda_{n_1}/C)^{unit}|$ fairly easy.

(iv) The index $[\Lambda_{n_1}^{unit}:\Lambda_{n_2}^{unit}]$ 
can also be calculated fairly easily, see Theorem \ref{t7.7}
(c) and (d). The most difficult part in applying formula 
\eqref{7.2} is to determine the class number 
$|G([\Lambda_1]_\varepsilon)|\in\N$ of $A$.
\end{theorem}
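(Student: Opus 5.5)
(a) is Corollary \ref{t4.15}, since $\dim A=2$. Alternatively, every order $\Lambda_n=\langle 1,n\omega\rangle_\Z=\Z[n\omega]$ is cyclic, so Theorem \ref{t4.12} applies to each exact $\Lambda_n$-ideal.

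(b) By (a) every $L$ is invertible, so by Theorem \ref{t2.2} (f) $\LL(A)$ is the disjoint union of the groups $G(\Lambda)$ over all orders $\Lambda$. By Lemma \ref{t5.2} (c) the same holds for $\EE(A)$ with the groups $G([\Lambda]_\varepsilon)$. Theorem \ref{t7.7} (b) lists the orders as the $\Lambda_n$, and Theorem \ref{t5.3} makes each $G([\Lambda_n]_\varepsilon)$ finite. One point needs checking: distinct orders give distinct idempotents in $\EE(A)$. If $a\Lambda_{n_1}=\Lambda_{n_2}$ with $a\in A^{unit}$, then $\Lambda_{n_1}=\OO(a\Lambda_{n_1})=\OO(\Lambda_{n_2})=\Lambda_{n_2}$.

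(c) By Theorem \ref{t2.4} (d), together with (a), $W(\LL(A))$ is isomorphic to the semigroup of idempotents, which are the orders by Theorem \ref{t4.3} (a). Theorem \ref{t5.5} (c) identifies $W(\EE(A))$ with $W(\LL(A))$. For the isomorphism with $(\N,\gcd)$, use the map $n\mapsto\Lambda_n$. By Lemma \ref{t4.2} (c), $\Lambda_{n_1}\Lambda_{n_2}$ is the smallest order containing both. It contains $n_1\omega$ and $n_2\omega$, hence $\gcd(n_1,n_2)\omega$, so it contains $\Lambda_{\gcd(n_1,n_2)}$. Conversely, $\Lambda_{n_1}\Lambda_{n_2}$ is spanned by products of the generators $1$, $n_1\omega$, $n_2\omega$. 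Since $\omega^2\in\Z+\Z\omega$, the product $n_1n_2\omega^2$ lies in $\Z+\gcd(n_1,n_2)\Z\omega$ because $\gcd\mid n_1n_2$. So the product equals $\Lambda_{\gcd(n_1,n_2)}$.

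(d)(i) If $n_1\mid n_2$, then $n_2\omega\in\Lambda_{n_1}$ and so $\Lambda_{n_2}\subset\Lambda_{n_1}$. Conversely, if $n_2\omega\in\Z+\Z n_1\omega$, then comparing $\omega$-coefficients in the basis $(1,\omega)$ gives $n_1\mid n_2$.

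(d)(ii) and (iv) are Theorem \ref{t5.12} (b), (d), (e) specialised to $\Lambda_1\supset\Lambda_2$; the case $n_1=n_2$ is trivial.

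(d)(iii) Compute $C=\{x\in A : x\Lambda_{n_1}\subset\Lambda_{n_2}\}$ directly. Write $m=n_2/n_1$. A candidate is $x=\alpha+\beta n_1\omega\in\Lambda_{n_2}$, which requires $\alpha\in\Z$ and $m\mid\beta$. The main constraint is $x\cdot n_1\omega\in\Lambda_{n_2}$. Expanding with $\omega^2=u+v\omega$, where $u,v\in\Z$ come from the minimal polynomial, the $\omega$-coefficient of $x\cdot n_1\omega$ is $n_1\alpha+\beta n_1^2v$. This must be divisible by $n_2$. Since $m\mid\beta$, this forces $m\mid\alpha$. Hence $C=m\Lambda_{n_1}=\langle m,n_2\omega\rangle_\Z$, and one checks that $m\Lambda_{n_1}$ indeed satisfies the condition.

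Then $\Lambda_{n_2}/C=\langle 1,n_2\omega\rangle/\langle m,n_2\omega\rangle\cong\Z_m$ as rings, because $n_2\omega\in C$. So its unit group is $\Z_m^{unit}$, of order $\varphi(m)$.

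For the unit criterion, $\Lambda_{n_1}/m\Lambda_{n_1}$ is a finite ring, and the class of $a$ is a unit iff multiplication by $a$ is bijective mod $m$. This holds iff the determinant of multiplication by $a$, which is $N(a)$, is invertible mod $m$, i.e. iff $\gcd(N(a),m)=1$.

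The only genuinely computational step is the conductor identity; everything else is assembly of earlier results.
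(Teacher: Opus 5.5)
Your proposal is correct and follows the paper's proof essentially step by step. You use Corollary~\ref{t4.15} for (a), the product formula $\Lambda_{n_1}\Lambda_{n_2}=\Lambda_{\gcd(n_1,n_2)}$ (from $\omega^2\in\Z+\Z\omega$) together with the general semigroup results for (b) and (c), Theorem~\ref{t5.12} for (d)(ii), and a direct computation of $C=\frac{n_2}{n_1}\Lambda_{n_1}$ for (d)(iii); in that last computation the paper avoids expanding $\omega^2$ by noting $n_2\omega\in C$ and subtracting it.

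The only genuinely different step is the unit criterion. You use that $N(a)$ is the determinant of multiplication by $a$ on $\Lambda_{n_1}/m\Lambda_{n_1}$, whereas the paper passes through the conjugate $a'$ and $N(a)=aa'$. Your version is equally short and does not need the Galois automorphism.
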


{\bf Proof:} (a) See Corollary \ref{t4.15}.

(b) and (c) The product of two orders $\Lambda_{n_1}$ and
$\Lambda_{n_2}$ is
\begin{eqnarray*}
\Lambda_{n_1}\cdot \Lambda_{n_2}
=\langle 1,n_1\omega,n_2\omega,n_1n_2\omega^2\rangle_\Z
=\langle 1\gcd(n_1,n_2)\omega\rangle_\Z
=\Lambda_{\gcd(n_1,n_2)}
\end{eqnarray*}
because
\begin{eqnarray*}
\omega^2&=&\left\{\begin{array}{ll}
\delta & \textup{ if }\delta\equiv 2\textup{ or }3(4),\\
\frac{\delta-1}{4}+\omega&\textup{ if }\delta\equiv 1(4).
\end{array}\right.
\end{eqnarray*}
Everything else follows from part (a) and Theorem \ref{t5.3}.

(d) Part (i) is trivial.
Part (ii) is a special case of Theorem \ref{t5.12}.

(iii) The conductor $C=\Lambda_{n_2}:\Lambda_{n_1}$ obviously
contains $\langle \frac{n_2}{n_1},n_2\omega\rangle_\Z$. 

If $a+b\omega\in C$ for some $a,b\in\Z$, then
\begin{eqnarray*}
\Lambda_{n_2}\owns (a+b\omega)\cdot 1 =a+b\omega,
\quad\textup{so }n_2|b.
\end{eqnarray*}
Then $a\in C$, so $a\cdot n_1\omega\in \Lambda_{n_2}$, so 
$\frac{n_2}{n_1}|a$. Therefore 
$C=\langle\frac{n_2}{n_1},n_2\omega\rangle_\Z$.

The isomorphisms $\Lambda_{n_2}/C\cong \Z_{n_2/n_1}$
(as rings) and $(\Lambda_{n_2}/C)^{unit}\cong \Z_{n_2/n_1}^{unit}$
(as groups) are clear. 

Consider $a\in\Lambda_{n_1}$, the algebraically conjugate
element $a'$ and the norm $N(a)=aa'\in\Z$. Then
\begin{eqnarray*}
&&\textup{The class in }\Lambda_{n_1}/C\textup{ of }a
\textup{ is in }(\Lambda_{n_1}/C)^{unit}\\
&\iff& \textup{the class in }\Lambda_{n_1}/C\textup{ of }a'
\textup{ is in }(\Lambda_{n_1}/C)^{unit}\\
&\iff& \textup{the class in }\Lambda_{n_1}/C\textup{ of }
N(a)=aa'\textup{ is in }(\Lambda_{n_1}/C)^{unit}\\
&\iff& \gcd(N(a),n_2/n_1)=1.
\end{eqnarray*}

(iv) Here nothing has to be proved. \hfill$\Box$

\bigskip
Theorem \ref{t7.9} compares $SL_2(\Z)$-conjugacy classes and
$GL_2(\Z)$-conjugacy classes.

\begin{theorem}\label{t7.9}
Consider an irreducible unitary polynomial
$f=f(t)=t^2-rt+s\in\Z[t]$ of degree 2. It has roots
\begin{eqnarray*}
\lambda_{1/2}=\frac{r}{2}\pm\sqrt{D}&&\textup{with }
D=\frac{r^2}{4}-s\in\frac{1}{4}\Z-\{0\}\\
&&\textup{and }\sqrt{D}\in\R_{\geq 0}\,\cup\, i\R_{>0}.
\end{eqnarray*}
Then $A=\Q[\lambda_1]=\Q[\sqrt{\delta}]$ where 
$\delta\in\Z-\{0;1\}$ is unique with 
$4D=\delta\cdot (\textup{a square})$ and $\delta$ square free.
$\sqrt{\delta}$ denotes the root in $\R_{\geq 0}\,\cup\, 
i\R_{>0}$.

$o_1$ denotes the set of $\Q$-bases of $A$ with the same 
orientation as the $\Q$-bases $(1,\lambda_1)$ and 
$(1,\sqrt{\delta})$. $o_2$ denotes the set of $\Q$-bases 
of $A$ with the other orientation.

(a) Let $B=\begin{pmatrix}a&b\\c&d\end{pmatrix}\in 
M_{2\times 2}(\Z)$ with characteristic polynomial $f$, so
$r=\tr B=a+d$, $s=\det B=ad-bc$, $D=\bigl(\frac{a-d}{2}\bigr)^2
+bc$.

(i) The full lattice 
$$L:=\langle c,-a+\lambda_1\rangle_\Z\quad \textup{with}\quad
\Z\textup{-basis }(c,-a+\lambda_1)$$
satisfies $L\subset\Z[\lambda_1]$, $\OO(L)\supset\Z[\lambda_1]$
and
\begin{eqnarray}\label{7.3}
\lambda_1\cdot (c,-a+\lambda_1)=(c,-a+\lambda_1)\cdot B,
\end{eqnarray}
so the $(\varepsilon+)$-class $[(L,o_i)]_{\varepsilon+}$
with $(c,-a+\lambda_1)\in o_i$ corresponds to the 
$SL_2(\Z)$-conjugacy class of $B$ and to the proper equivalence
class of the binary quadratic form $[c,d-a,-b]_{quad}$.
Here 
\begin{eqnarray}\label{7.4}
(c,-a+\lambda_1)\in o_1\textup{ if }c>0\quad\textup{and}\quad
(c,-a+\lambda_1)\in o_2\textup{ if }c<0.
\end{eqnarray}

(ii) One obtains $[c,d-a,-b]_{quad}$ directly from $L$ and
its $\Z$-basis $(\alpha,\beta):=(c,-a+\lambda_1)$ by the
following formula, which is probably due to Gau{\ss}
(see e.g. \cite[\S 10 (15)]{Zag81})
\begin{eqnarray}\nonumber
&&[c,d-a,-b]_{quad}(\begin{pmatrix}x_1\\x_2\end{pmatrix})\\
&=& \frac{\textup{sign}(c)}{[\Z[\lambda_1]:L]}\bigl(
\alpha\alpha'x_1^2+(\alpha\beta'+\alpha'\beta)x_1x_2+
\beta\beta'x_2^2\bigr).\label{7.5}
\end{eqnarray}

(iii) The condition $\gcd(c,b,a-d)=1$ (then the binary 
quadratic form $[c,d-a,-b]_{quad}$ is {\sf primitive})
is equivalent to $\OO(L)=\Z[\lambda_1]$. 

(b) Consider the case $D<0$ (and $\delta<0$). 
Each $GL_2(\Z)$-conjugacy class of matrices with 
characteristic polynomial $f$ splits into two $SL_2(\Z)$ 
conjugacy classes. One contains matrices 
$\begin{pmatrix}a&b\\c&d\end{pmatrix}$ with $c>0$ and 
corresponds to $(\varepsilon+)$-classes of pairs
$(L,o_1)$ and to proper equivalence classes of positive definite
binary quadratic forms.
The other contains matrices 
$\begin{pmatrix}a&b\\c&d\end{pmatrix}$ with $c<0$ and
corresponds to $(\varepsilon+)$-classes of pairs $(L,o_2)$
and to proper equivalence classes of negative definite 
binary quadratic forms. 

(c) Consider the case $D>0$ (and $\delta>0$).
By Theorem \ref{t6.3}
one $GL_2(\Z)$-conjugacy class of matrices with characteristic
polynomial $f$ corresponds to one $\varepsilon$-class
$[L]_\varepsilon$ of full lattices with $\OO(L)\supset
\Z[\lambda_1]$. The $GL_2(\Z)$-conjugacy class splits into two
$SL_2(\Z)$-conjugacy classes if and only if $\OO(L)$ does not
contain an invertible element $u$ with $N(u)=-1$.
\end{theorem}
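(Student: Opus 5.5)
Part (a) is a direct computation, part (b) follows from the orientation criterion in Remark \ref{t7.3} (ii), and part (c) uses the same criterion together with the fact that an $\varepsilon$-class and the order $\OO(L)$ determine the relevant unit group.

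\emph{Part (a).} For (i) I would verify \eqref{7.3} directly. By \eqref{7.1}, $(c,-a+\lambda_1)$ is a row eigenvector of $B$ with eigenvalue $\lambda_1$. Equivalently, $\lambda_1 c=c a+(-a+\lambda_1)c$, and $\lambda_1(-a+\lambda_1)=cb+(-a+\lambda_1)d$ follows from $\lambda_1^2=r\lambda_1-s$. Since $B$ has integer entries, $\lambda_1L\subset L$, so $\OO(L)\supset\Z[\lambda_1]$; also $L\subset\Z[\lambda_1]$ trivially. Because $f$ is irreducible, $c\neq 0$, so $L$ is a full lattice. Theorem \ref{t6.3} and Remark \ref{t7.3} (ii) then give the correspondence, and Lemma \ref{t7.2} (b) transfers it to the quadratic form. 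For \eqref{7.4}, the base change matrix from $(1,\lambda_1)$ to $(c,-a+\lambda_1)$ is $\begin{pmatrix}c&-a\\0&1\end{pmatrix}$, with determinant $c$.

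For (ii), I would expand $N(\alpha x_1+\beta x_2)=\alpha\alpha'x_1^2+(\alpha\beta'+\alpha'\beta)x_1x_2+\beta\beta'x_2^2$. With $\alpha=c$ and $\beta=-a+\lambda_1$ one gets $\alpha\alpha'=c^2$, $\alpha\beta'+\alpha'\beta=c(-2a+r)=c(d-a)$, and $\beta\beta'=a^2-ar+s=-bc$. Since $[\Z[\lambda_1]:L]=|c|$, dividing by $|c|$ and multiplying by $\mathrm{sign}(c)$ gives exactly $[c,d-a,-b]_{quad}$.

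For (iii), I compute $\OO(L)$ inside $\Q[\lambda_1]$. Write $x=u+v\lambda_1$ and impose $xL\subset L$. Then $x\cdot c\in L$ forces $cv\in\Z$ and a congruence on $u$. Writing $x=u+v\lambda_1=u+va+v(-a+\lambda_1)$ and applying it to both basis vectors via the matrix $B$ gives the condition $u E_2+vB\in M_{2\times 2}(\Z)$ with $u,v\in\Q$. This holds with $v\notin\Z$ exactly when some $m>1$ divides $b$, $c$ and $a-d$. Hence $\OO(L)=\Z[\lambda_1]$ if and only if $\gcd(c,b,a-d)=1$. This is the step needing the most care, namely the translation $\OO(L)\cong\{uE_2+vB\text{ integral}\}$, which holds because $\Q[B]$ is the centralizer (Lemma \ref{t6.1} (c)(iv)).

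\emph{Part (b).} For $\delta<0$, every unit $a\in A^{unit}$ has $N(a)=a\oooo{a}>0$. By Remark \ref{t7.3} (ii), the pairs $(L,o_1)$ and $(L,o_2)$ are therefore never $\varepsilon+$-equivalent. So each $\varepsilon$-class splits into exactly two $\varepsilon+$-classes, and by \eqref{7.4} these are distinguished by the sign of $c$. Definiteness then comes from Lemma \ref{t7.6} (f).

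\emph{Part (c).} By Remark \ref{t7.3} (ii), the $GL_2(\Z)$-class is one $SL_2(\Z)$-class if and only if $(L,o_1)\sim_{\varepsilon+}(L,o_2)$. This holds if and only if there is $a\in A^{unit}$ with $aL=L$ and $N(a)<0$. Now $aL=L$ means $a\in\OO(L)$ and $a^{-1}\in\OO(L)$, i.e.\ $a\in\OO(L)^{unit}$. Such an $a$ has $N(a)=\pm1$, since the norm is an integer on the order and is multiplicative. So the class splits exactly when no unit of norm $-1$ exists in $\OO(L)$. Finally, the condition depends only on $[L]_\varepsilon$, because $\OO(L)$ is invariant under $\varepsilon$-equivalence.
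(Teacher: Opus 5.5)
Your proposal is correct. For (a)(i), (a)(ii), (b) and (c) it is essentially the paper's argument: the row eigenvector, the values of $\alpha\alpha'$, $\alpha\beta'+\alpha'\beta$, $\beta\beta'$ and $[\Z[\lambda_1]:L]=|c|$, positivity of norms for $\delta<0$, and units of $\OO(L)$ of norm $-1$ for $\delta>0$.

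The genuine difference is in (a)(iii).

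\emph{The paper's route.} It writes $4D=\delta g^2$ and uses the classification $\Lambda_n=\langle 1,n\omega\rangle_\Z$ from Theorem \ref{t7.7}. It computes the matrix of $\frac{g}{2}\sqrt{\delta}$ resp.\ $g\omega$ in the basis $(c,-a+\lambda_1)$, obtaining the criteria $\gcd(c,b,\frac{a-d}{2})=1$ resp.\ $\gcd(c,b,\frac{a-d+g}{2},g)=1$. It then needs congruence arguments modulo $4$ and $16$, separately for $\delta\equiv 2,3\ (4)$ and $\delta\equiv 1\ (4)$, to turn these into $\gcd(c,b,a-d)=1$.

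\emph{Your route.} You parametrize $x=u+v\lambda_1$ and read off integrality of $uE_2+vB$. In effect you use the generator $\lambda_1-a$ of $\Z[\lambda_1]$, whose matrix $B-aE_2=\begin{pmatrix}0&b\\ c&d-a\end{pmatrix}$ displays $\gcd(b,c,a-d)$ at once. This needs neither Theorem \ref{t7.7} nor any case distinction. It also proves more: $\OO(L)=\Z+\Z\,\frac{\lambda_1-a}{\gcd(b,c,a-d)}$, i.e.\ $[\OO(L):\Z[\lambda_1]]=\gcd(b,c,a-d)$.

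\emph{What each buys.} The paper's route stays with the canonical generators $\omega$ by which the orders are indexed in Theorem \ref{t7.8}. Your index formula recovers that information as well, with less work.

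Two small remarks on your write-up:
\begin{itemize}
\item The identification of $\OO(L)$ with $\{u+v\lambda_1\,|\,uE_2+vB\in M_{2\times 2}(\Z)\}$ follows directly from $\mu_{u+v\lambda_1}=u\,\id+v\,\mu_{\lambda_1}$ and \eqref{7.3}. The centralizer statement Lemma \ref{t6.1} (c)(iv) is not needed.
\item You should state explicitly that $v\in\Z$ forces $u\in\Z$, because $u+va\in\Z$. This is what gives $x\in\Z[\lambda_1]$ when $\gcd(b,c,a-d)=1$.
\end{itemize}
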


{\bf Proof:}
(a) (i) $L\subset\Z[\lambda_1]$ is clear. As noted in Lemma
\ref{t7.6}, $v_1=(c,-a+\lambda_1)$ is a row eigenvector of $B$,
so $\lambda_1v_1=v_1B$, which is \eqref{7.3}.
It implies $\OO(L)\supset\Z[\lambda_1]$.
Also \eqref{7.4} is clear.

(ii) For \eqref{7.5} observe 
\begin{eqnarray*}
[\Z[\lambda_1]:L]=|c|,\quad \alpha\alpha'=c^2,\\
\alpha\beta'+\alpha'\beta=c(d-a),\quad 
\beta\beta'=\bigl(\frac{a-d}{2}\bigr)^2-D=-bc.
\end{eqnarray*}

(iii) Write 
\begin{eqnarray}\label{7.6}
(a-d)^2+4bc=4D=\delta\cdot g^2 \quad\textup{for some }
g\in\N.
\end{eqnarray}

The case $\delta\equiv 2\textup{ or }3(4)$, so 
$\omega=\sqrt{\delta}$: Then $a-d\equiv 0(2)$, $g\equiv 0(2)$,
$\frac{g}{2}\sqrt{\delta}=\lambda_1-\frac{a+d}{2}$, 
\begin{eqnarray*}
\frac{g}{2}\sqrt{\delta}(c,-a+\lambda_1)=(c,-a+\lambda_1)
\begin{pmatrix}\frac{a-d}{2}& b\\c&\frac{d-a}{2}\end{pmatrix}.
\end{eqnarray*}
Then 
$$\OO(L)=\Z[\lambda_1]=\Z[\frac{g}{2}\sqrt{\delta}]\iff
\gcd(c,b,\frac{a-d}{2})=1.$$
Suppose $\gcd(c,b,\frac{a-d}{2})=1$, but 
$\gcd(c,b,a-d)>1$. Then
$a-d\equiv 2(4)$, $c\equiv 0(2)$, $b\equiv 0(2)$,
$D\equiv 1(4)$. This contradicts
$D=\delta\cdot (\frac{g}{2})^2$ and 
$\delta\equiv 2\textup{ or }3(4)$. Therefore
$\gcd(c,b,a-d)=1\iff\gcd(c,b,\frac{a-d}{2})=1$.

The case $\delta\equiv 1(4)$, so 
$\omega=\frac{1+\sqrt{\delta}}{2}$: 
Then $\lambda_1=\frac{a+d-g}{2}+g\omega$, 
$a+d-g\equiv 0(2)$,
\begin{eqnarray*}
g\omega(c,-a+\lambda_1)=(c,-a+\lambda_1)
\begin{pmatrix}\frac{a-d+g}{2} & b \\ c & \frac{d-a+g}{2}
\end{pmatrix}.
\end{eqnarray*}
Then
$$\OO(L)=\Z[\lambda_1]=\Z[g\omega]\iff
\gcd(c,b,\frac{a-d+g}{2},g)=1.$$
Suppose $\gcd(c,b,\frac{a-d+g}{2},g)=1$, but 
$\gcd(c,b,a-d)=:g_2>1$. 
Because of \eqref{7.6} and $\delta$ square free $g_2$
divides $g$, so $\gcd(c,b,a-d,g)=g_2$. This and
$\gcd(c,b,\frac{a-d+g}{2},g)=1$ imply
$g_2=2$, $a-d+g\equiv 2(4)$, $c\equiv 0(2)$, $b\equiv 0(2)$,
$g\equiv 0(2)$, $a-d\equiv 0(2)$. Then \eqref{7.6} implies
$(a-d+g)(a-d-g)\equiv 0(16)$, which contradicts
$a-d+g\equiv 2(4)$, $g\equiv 0(2)$. 
Therefore 
$\gcd(c,b,a-d)=1\iff\gcd(c,b,\frac{a-d+g}{2},g)=1$.

(b) The binary quadratic forms in one proper equivalence class
are either all positive definite or all negative definite.
Together with Lemma \ref{t7.2} (b) this implies the statements
on $GL_2(\Z)$-conjugacy classes and $SL_2(\Z)$-conjugacy classes.
Because any element $a\in A-\{0\}$ satisfies $N(a)>0$,
$(L,o_i)\sim_{\varepsilon+}(\www{L},o_j)$ implies 
$o_i=o_j$.

(c) The $GL_2(\Z)$-conjugacy class of 
$\begin{pmatrix}a&b\\c&d\end{pmatrix}$ is a single $SL_2(\Z)$ 
conjugacy class if and only if
the full lattice $L=\langle c,-a+\lambda_1\rangle_\Z$ from
part (a) satisfies $(L,o_1)\sim_{\varepsilon+}(L,o_2)$. 
This holds if and only if there is an invertible element 
$u\in\OO(L)$ with $N(u)=-1$. \hfill$\Box$

\subsection{Normal forms and semi-normal forms}
\label{s7.3}

In this section we fix one irreducible unitary polynomial
$f=f(t)=t^2-rt+s\in\Z[t]§$ of degree 2, as in Theorem \ref{t7.9}.
Recall its roots
\begin{eqnarray*}
\lambda_{1/2}=\frac{r}{2}\pm \sqrt{D}\quad\textup{with}\quad
D=\frac{r^2}{4}-s\in\frac{1}{4}\Z-\{0\}\\
\textup{and}\quad \sqrt{D}\in\R_{>0}\,\cup\, i\R_{>0}.
\end{eqnarray*}

Integer matrices $B=\begin{pmatrix}a&b\\c&d\end{pmatrix}$
with characteristic polynomial $f$ are of type 
IV if $D>0$ and of type V if $D<0$. We are interested in 
normal forms or semi-normal forms within each $SL_2(\Z)$
conjugacy class. First we will construct simultaneously 
semi-normal forms for both types. In the case of type V they will
turn out to be normal forms. In the case of type IV we will use
in subsection \ref{s7.4} Conway's topograph 
\cite[ch. 1]{Co97} in order to understand the semi-normal forms
and compare them with two  other semi-normal forms.

Consider the matrices 
\begin{eqnarray*}
E_2=\begin{pmatrix}1&0\\0&1\end{pmatrix},\ 
T=\begin{pmatrix}1&1\\0&1\end{pmatrix},\ 
S=\begin{pmatrix}0&-1\\1&0\end{pmatrix}\ \textup{and}\ 
D_{1,-1}=\begin{pmatrix}1&0\\0&-1\end{pmatrix}.
\end{eqnarray*}
It is well known that $SL_2(\Z)$ is generated by $S$, $T$ and
$-E_2$ and that $GL_2(\Z)$ is generated by $S$, $T$, $-E_2$ and
$D_{1,-1}$. The matrices $S$, $T$ and $D_{1,-1}$ act on
$M_{2\times 2}(\Z)$ by conjugation as follows,
\begin{eqnarray*}
T^{-1}\begin{pmatrix}a&b\\c&d\end{pmatrix}T &=&
\begin{pmatrix}a-c&a+b-c-d\\c&d+c\end{pmatrix},\\
S^{-1}\begin{pmatrix}a&b\\c&d\end{pmatrix}S &=&
\begin{pmatrix}d&-c\\-b&a\end{pmatrix},\\
D_{1,-1}^{-1}\begin{pmatrix}a&b\\c&d\end{pmatrix}D_{1,-1} &=&
\begin{pmatrix}a&-b\\-c&d\end{pmatrix}.
\end{eqnarray*}

The algorithm in the proof of part (c)(i) of Theorem \ref{t7.10}
is ascribed to Legendre in \cite[7.1.6]{Tr13}.  
Theorem \ref{t7.10} treats the types IV and V largely together.

\begin{theorem}\label{t7.10}
Let $f(t)=t^2-rt+s\in\Z[t]$ be irreducible with roots
$\lambda_{1/2}$ and discriminant $D$ as above.

(a) Each matrix $B=\begin{pmatrix}a&b\\c&d\end{pmatrix}
\in M_{2\times 2}(\Z)$ with characteristic polynomial $f$
satisfies $bc\neq 0$. If $D<0$ then $bc<0$.

(b) The set
\begin{eqnarray*}
M(r,s)&:=& \{\begin{pmatrix}a&b\\c&d\end{pmatrix}\in 
M_{2\times 2}(\Z)\,|\, a+d=r,\ ad-bc=s,\\
&& \hspace*{2cm}0<|c|\leq |b|,\ a-d\in (-|c|,|c|],\\ 
&&\hspace*{2cm}\textup{in the case }|c|=|b|\quad a-d\in[0,|c|]\}
\end{eqnarray*}
is finite and not empty.

(c) (i) Each $SL_2(\Z)$-conjugacy class of matrices in 
$M_{2\times 2}(\Z)$ with characteristic polynomial $f$
has representatives in $M(r,s)$.

(ii) In the case of type IV these representatives are called
{\sf semi-normal forms}.

(iii) In the case of type V there is exactly one such 
representative. It is called {\sf normal form}.
\end{theorem}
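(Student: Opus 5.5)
Part (a) follows from the discriminant identity $D=\bigl(\frac{a-d}{2}\bigr)^2+bc$ in Lemma \ref{t7.6} (a). If $bc=0$, then $D$ is a square of a half-integer, so $4D$ is a perfect square. Then $f$ has rational, hence integer, roots, which contradicts irreducibility. If $D<0$, then $bc=D-\bigl(\frac{a-d}{2}\bigr)^2<0$.

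For part (b), finiteness comes from a size bound. In $M(r,s)$ we have $|a-d|\le |c|\le |b|$ and $|bc|\geq c^2$. We split by sign. If $bc<0$, then $|D|=|bc|-\bigl(\frac{a-d}{2}\bigr)^2\geq c^2-c^2/4=\frac34c^2$. If $bc>0$ (possible only when $D>0$), then $D=\bigl(\frac{a-d}{2}\bigr)^2+bc\geq c^2$. Either way $|c|$ is bounded in terms of $|D|$, so $a-d$ is bounded. Together with $a+d=r$ this leaves finitely many $(a,d)$. For each of them $b=(ad-s)/c$ is determined. Non-emptiness will follow from (c)(i), since by Theorem \ref{t6.3} the companion matrix $M_f$ gives at least one conjugacy class.

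For (c)(i) I would run Legendre's reduction using the conjugation formulas for $T$ and $S$ listed above. Conjugating by $T^k$ keeps $c$ and changes $a-d$ to $a-d-2kc$. So one can move $a-d$ into the half-open window $(-|c|,|c|]$ of length $2|c|$. If afterwards $|c|>|b|$, conjugate by $S$, which swaps $b$ and $-c$ and keeps $a-d$ up to sign. This strictly decreases $|c|$, which is a positive integer by (a), so the process terminates with $0<|c|\le|b|$ and $a-d\in(-|c|,|c|]$. In the boundary case $|c|=|b|$ with $a-d<0$, one more $S$-conjugation changes the sign of $a-d$ and keeps $|c|=|b|$. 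This lands in $M(r,s)$, all within $SL_2(\Z)$. Part (c)(ii) is only a definition.

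The main obstacle is (c)(iii), uniqueness for $D<0$. By Lemma \ref{t7.2} (b) and Theorem \ref{t7.9} (b), matrices with $c>0$ correspond to positive definite forms $[c,d-a,-b]_{quad}$, and $SL_2(\Z)$-conjugacy matches proper equivalence. The conditions defining $M(r,s)$ translate into the classical reduction conditions $|h|\le A\le C$, with $h\ge0$ in the boundary cases, on the form $[A,h,C]=[c,d-a,-b]$. The boundary case $|a-d|=|c|$ needs care: only $a-d=|c|$ is allowed, which gives $h=-|c|$ at that end. Sign conventions can be matched by applying $D_{1,-1}$ or by reordering variables. I would then invoke the classical uniqueness of the reduced form in each proper class of positive definite forms (Gauss, Lagrange; see \cite{BSh73}, \cite{Tr13}). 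For negative definite forms, multiplying by $-1$ reduces to the positive definite case. If a self-contained argument is preferred, use the usual one: $A$ is the minimum of $q$ on primitive vectors, $C$ is the second minimum, and the boundary conventions remove the remaining ambiguity $h\leftrightarrow -h$. Checking that the paper's boundary conventions match exactly is the delicate step.
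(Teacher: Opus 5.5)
Your proof is correct. For (a), (b) and (c)(i) it is the paper's argument. The paper splits (b) by the sign of $D$ and shows $bc>0$ when $D>0$, while you split by the sign of $bc$; this is the same computation.

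For (c)(iii) you take a different route. The paper does not pass through quadratic forms. It attaches to $B$ the point $z_B=\frac{1}{c}(-d+\lambda_1)$ (if $c>0$) or $\frac{1}{c}(-d+\lambda_2)$ (if $c<0$) in $\H$, and notes that $SL_2(\Z)$-conjugation acts on $z_B$ by M\"obius transformations. It then computes $\Re z_B=\frac{a-d}{2c}$ and $|z_B|^2=\frac{|b|}{|c|}$. So $B\in M(r,s)$ if and only if $z_B$ lies in $F_1$ (for $c>0$) or in its mirror image $F_2$ (for $c<0$). Uniqueness is then the standard fact that each $PSL_2(\Z)$-orbit meets $F_1$ and $F_2$ exactly once. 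Your route via Lemma \ref{t7.2} (b) and Gauss--Lagrange reduction uses the same classical theorem in algebraic form, and it fits the quadratic-form viewpoint of the rest of the section. The paper's version absorbs the sign conventions into the choice between $F_1$ and $F_2$.

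The step you flagged as delicate closes cleanly, but your first translation is slightly off. Write $q=[c,d-a,-b]_{quad}$. For $c>0$ the conditions of $M(r,s)$ say $h=d-a\in[-|c|,|c|)$, and $h\le 0$ if $|b|=|c|$. These are the mirrored conditions, not the classical ones.

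Uniformly, $B\in M(r,s)$ if and only if $[|c|,a-d,|b|]_{quad}$ is reduced in the classical sense: $|h|\le A\le C$, and $h\ge 0$ if $|h|=A$ or $A=C$. For $c>0$ this form is $q(x_1,-x_2)$, and for $c<0$ it is $-q$. Both operations permute proper equivalence classes, since $D_{1,-1}SL_2(\Z)D_{1,-1}=SL_2(\Z)$.

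Hence two matrices of $M(r,s)$ in one $SL_2(\Z)$-class give two classically reduced forms in one proper class. These forms are equal, so the matrices are equal, because $B\mapsto[c,d-a,-b]_{quad}$ is injective for fixed trace $r$.
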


{\bf Proof:}
(a) $4D=(a-d)^2+4bc$ is not a square because $f$ is 
irreducible. Therefore $bc\neq 0$. If $D<0$ then $bc<0$.

(b) The case $D<0$: $|c|\leq |b|$ and $a-d\in (-|c|,|c|]$ imply
\begin{eqnarray*}
c^2&\leq& |b||c|=-bc=\bigl(\frac{a-d}{2}\bigr)^2-D
\leq \frac{1}{4}c^2-D,\quad\textup{so}\\
c^2&\leq& \frac{4}{3}|D|,\quad\textup{so }|c|\leq 
\sqrt{\frac{4}{3}|D|},\\
|a-\frac{r}{2}|&=&|d-\frac{r}{2}|=|\frac{a-d}{2}|
\leq \frac{|c|}{2}\leq \sqrt{\frac{1}{3}|D|}.
\end{eqnarray*}
$|c|\in\N$, so $|c|\geq 1$, so 
\begin{eqnarray*}
|b|&\leq& \frac{1}{4}c^2-D\leq \frac{4}{3}|D|.
\end{eqnarray*}
Therefore the set $M(r,s)$ is finite.

The case $D>0$: First we show $bc>0$. Suppose $bc<0$. Then
\begin{eqnarray*}
c^2&\leq& |b||c|=-bc=\bigl(\frac{a-d}{2}\bigr)^2-D
\leq \frac{1}{4}c^2-D,\quad\textup{so}\\
\frac{3}{4}c^2&\leq& -D<0,
\end{eqnarray*}
a contradiction. So $bc>0$. Then
\begin{eqnarray*}
c^2&\leq& bc=D-\bigl(\frac{a-d}{2}\bigr)^2\leq D,
\quad\textup{so }|c|\leq \sqrt{D},\\
|a-\frac{r}{2}|&=&|d-\frac{r}{2}|=|\frac{a-d}{2}|
\leq \frac{|c|}{2}\leq \frac{1}{2}\sqrt{|D|},\\
|b|&\leq& D\quad\textup{because }|c|\geq 1.
\end{eqnarray*}
Therefore the set $M(r,s)$ is finite. 

In both cases $D<0$ and $D>0$ the set $M(r,s)$ is not empty 
because of part (c)(i) and because one has always the 
$SL_2(\Z)$-conjugacy class of the matrix
$\begin{pmatrix}0&-s\\1&r\end{pmatrix}$.

(c) (i) In each step of the following algorithm for constructing
a representative in $M(r,s)$, a given matrix in 
$M_{2\times 2}(\Z)$ with characteristic polynomial 
$f(t)=t^2-rt+s$ is called 
$B=\begin{pmatrix}a&b\\c&d\end{pmatrix}$.

{\bf Step 1:} Go from $B$ to $\www{B}=T^{-k}BT^k$ for the
unique $k\in\Z$ with $a-d-2kc\in(-|c|,|c|]$. Then
$\www{c}=c$, $\www{a}-\www{d}\in (-|c|,|c|]$.
If then $|\www{c}|\leq |\www{b}|$ go to Step 3, else go to
Step 2.

{\bf Step 2:} Here $|b|<|c|$. Go from $B$ to 
$\www{B}=S^{-1}BS$. Then $|\www{c}|=|b|<|c|$. Go to Step 1.

At some point one goes from Step 1 to Step 3 because 
$|b|,|c|\in\N$ and each application of Step 2 diminishes 
$|c|$ strictly.

{\bf Step 3:} Now $|c|\leq |b|$ and $a-d\in (-|c|,|c|]$.
Stop if $|c|<|b|$ or if $|c|=|b|$ and $a-d\in[0,|c|]$.
If $|c|=|b|$ and $a-d\in (-|c|,0)$ go from $B$ to
$\www{B}=S^{-1}BS$. Then $|\www{c}|=|b|=|c|=|\www{b}|$ and
$\www{a}-\www{d}=d-a\in (0,|\www{c}|)$. 

(ii) Nothing has to be proved.

(iii) Now $D<0$. Consider one $SL_2(\Z)$-conjugacy class of
matrices in $M_{2\times 2}$ with characteristic polynomial
$f$. Either all matrices 
$\begin{pmatrix}a&b\\c&d\end{pmatrix}$ in it satisfy $c>0$ 
(1st case) or all matrices in it satisfy $c<0$ (2nd case).
For each matrix $B=\begin{pmatrix}a&b\\c&d\end{pmatrix}$
in it define
\begin{eqnarray*}
z_B&:=&\left\{\begin{array}{ll}
\frac{1}{c}(-d+\lambda_1)=\frac{1}{c}(\frac{a-d}{2}+\sqrt{D}) &
\textup{ in the 1st case,}\\
\frac{1}{c}(-d+\lambda_2)=\frac{1}{c}(\frac{a-d}{2}-\sqrt{D}) &
\textup{ in the 2nd case.}
\end{array}\right.
\end{eqnarray*}
Then $z_B$ is in the upper half plane $\H$, and 
$\begin{pmatrix}z_B\\1\end{pmatrix}$ is an eigenvector of $B$
with eigenvalue $\lambda_1$ in the 1st case and eigenvalue
$\lambda_2$ in the 2nd case. 
For $A=\begin{pmatrix}\alpha&\beta\\ \gamma&\delta\end{pmatrix}
\in SL_2(\Z)$ 
$$z_{ABA^{-1}}=\frac{\alpha z_B+\beta}{\gamma z_B+\delta}.$$
It is well known (e.g. \cite[VII \S 1]{Se73}) that the
$PSL_2(\Z)$ orbit of any $z\in\H$ intersects each of the two
fundamental domains
\begin{eqnarray*}
F_1&:=& \{z\in\Z\,|\, \Re z\in (-\frac{1}{2},
\frac{1}{2}],\ |z|\geq 1,\ \textup{if }|z|=1\textup{ then }
\Re z\in[0,\frac{1}{2}]\},\\
F_2&:=& \{z\in\Z\,|\, \Re z\in {}[-\frac{1}{2},
\frac{1}{2}),\ |z|\geq 1,\ \textup{if }|z|=1\textup{ then }
\Re z\in[-\frac{1}{2},0]\},
\end{eqnarray*}
of the action of $PSL_2(\Z)$ on $\H$ in precisely one point.
With
\begin{eqnarray*}
\Re z_B&=&\frac{a-d}{2c}\quad\textup{and}\\
|z_B|^2&=& |\frac{(a-d)/2\pm\sqrt{D}}{c}|^2
=\frac{1}{c^2}\Bigl(\bigl(\frac{a-d}{2}\bigr)^2-D\Bigr)
=\frac{|b|}{|c|}
\end{eqnarray*}
one sees easily
\begin{eqnarray*}
\begin{pmatrix}a&b\\c&d\end{pmatrix}\in M(r,s)&\iff&
\left\{\begin{array}{ll}
z_B\in F_1&\textup{ in the 1st case,}\\
z_B\in F_2&\textup{ in the 2nd case.}
\end{array}\right.
\end{eqnarray*}
Therefore the $SL_2(\Z)$-conjugacy class above contains exactly
one matrix in $M(r,s)$.\hfill$\Box$

\begin{examples}\label{t7.11}
(i) The quadratic number field $A=\Q[\sqrt{-5}]$ and its
maximal order $\Lambda_1=\Z[\sqrt{-5}]=\langle 1,\sqrt{-5}
\rangle_\Z$ are treated in many algebra books because
they form a small example with class number 2. 

Therefore there are two $GL_2(\Z)$-conjugacy classes of matrices
with characteristic polynomial $p_1(t)=t^2+5$. We will recover
this with Theorem \ref{t7.10} (b) and (c)(iii). One sees easily
that the set $M(0,5)$ in Theorem \ref{t7.10} (b) is
\begin{eqnarray*}
M(0,5)=\{\begin{pmatrix}0&-5\\1&0\end{pmatrix}, 
\begin{pmatrix}0&5\\-1&0\end{pmatrix},
\begin{pmatrix}1&-3\\2&-1\end{pmatrix},
\begin{pmatrix}1&3\\-2&-1\end{pmatrix}\}.
\end{eqnarray*}
Therefore there are four $SL_2(\Z)$-conjugacy classes
and two $GL_2(\Z)$-conjugacy classes with the representatives
$\begin{pmatrix}0&-5\\1&0\end{pmatrix}$ and 
$\begin{pmatrix}1&-3\\2&-1\end{pmatrix}$. 
Theorem \ref{t7.9} (a)(i) provides full lattices 
$\langle 1,\sqrt{-5}\rangle_\Z=\Lambda_1$ and 
$\langle 2,-1+\sqrt{-5}\rangle_\Z=:L_0$ and $\Z$-bases
of them which give rise to these matrices,
\begin{eqnarray*}
\sqrt{-5}(1,\sqrt{-5})&=&(1,\sqrt{-5})
\begin{pmatrix}0&-5\\ 1&0\end{pmatrix},\\
\sqrt{-5}(2,-1+\sqrt{-5})&=& (2,-1+\sqrt{-5})
\begin{pmatrix}1&-3\\ 2&-1\end{pmatrix}.
\end{eqnarray*}
The corresponding group $G([\Lambda_1]_\varepsilon)$ of 
$\varepsilon$-classes of full lattices is
$G([\Lambda_1]_\varepsilon)=\{[\Lambda_1]_\varepsilon,
[L_0]_\varepsilon\}$ 
with unit element $[\Lambda_1]_\varepsilon$. 

(ii) Now we are interested in matrices in $M_{2\times 2}(\Z)$
with characteristic polynomial $p_2(t)=t^2+20$. The algebra
$A=\Q[\sqrt{-20}]=\Q[\sqrt{-5}]$ is the same as in part (i).
The $GL_2(\Z)$-conjugacy classes of these matrices 
correspond by Theorem \ref{t6.3}
to the $\varepsilon$-classes of full lattices $L$
with $\OO(L)\supset \Z[\sqrt{-20}]=\langle 1,2\sqrt{-5}]\rangle_\Z
=:\Lambda_2$. Only the order $\Lambda_1$ is above $\Lambda_2$.
Therefore
\begin{eqnarray*}
\{[L]_\varepsilon\,|\, L\in\LL(A),\OO(L)\supset \Lambda_2\}
=G([\Lambda_1]_\varepsilon)\,\cup\, 
G([\Lambda_2]_\varepsilon).
\end{eqnarray*}
Theorem \ref{t5.12} (d) gives a surjective group homomorphism
$$G([\Lambda_2]_\varepsilon)\to G([\Lambda_1]_\varepsilon),\quad
[L]_\varepsilon\mapsto [\Lambda_1]_\varepsilon [L]_\varepsilon$$
and allows to determine the size of the finite commutative group
$G([\Lambda_2]_\varepsilon)$. 

But applying Theorem \ref{t7.10} (b) and (c)(iii)
is faster and gives more details. Here $r=0$, $s=20$,
$D=\frac{r^2}{4}-s=-20<0$. The proof of Theorem \ref{t7.10} (b)
gives for matrices $\begin{pmatrix}a&b\\c&d\end{pmatrix}
\in M(0,20)$ the (in)equalities
\begin{eqnarray*}
0<|c|\leq \Bigl\lfloor\sqrt{\frac{4}{3}|D|}\Bigr\rfloor =5,\\
d=-a,\ 2a\in(-|c|,|c|],\textup{ so }a\in[-2,2],\\
\textup{if }|b|=|c|\textup{ then }a\in[0,2],\\
|c|\leq |b|\leq \Bigl\lfloor \frac{4}{3}|D|\Bigr\rfloor =26.
\end{eqnarray*}

The case $a=2$: $-bc=24$, $|c|\geq 2a=4$, 
$\begin{pmatrix}2&-6\\4&-2\end{pmatrix},\ 
\begin{pmatrix}2&6\\-4&-2\end{pmatrix}$.

The case $a=1$: $-bc=21$, $|c|\geq 2a=2$, 
$\begin{pmatrix}1&-7\\3&-1\end{pmatrix},\ 
\begin{pmatrix}1&7\\-3&-1\end{pmatrix}$.

The case $a=0$: $-bc=20$, 
$$\begin{pmatrix}0&-20\\ 1&0 \end{pmatrix},  
\begin{pmatrix}0&20\\ -1&0 \end{pmatrix}, 
\begin{pmatrix}0&-10\\ 2&0 \end{pmatrix}, 
\begin{pmatrix}0&10\\ -2&0 \end{pmatrix}, 
\begin{pmatrix}0&-5\\ 4&0 \end{pmatrix}, 
\begin{pmatrix}0&5\\ -4&0 \end{pmatrix}.$$

The case $a=-1$: $-bc=21$, $|c|>2|a|=2,
\begin{pmatrix}-1&-7\\3&1\end{pmatrix},\ 
\begin{pmatrix}-1&7\\-3&1\end{pmatrix}$.

The case $a=-2$: $-bc=24$, $|c|>2|a|=4$, $|c|\leq |b|$,
impossible.

$M(0,20)$ contains 12 matrices, so there are 12 $SL_2(\Z)$
conjugacy classes and 6 $GL_2(\Z)$-conjugacy classes of matrices
with characteristic polynomial $p_2(t)=t^2+20$, with
representatives those 6 matrices 
$B=\begin{pmatrix}a&b\\c&d\end{pmatrix}$ of the 12 matrices
above which satisfy $c>0$. Theorem \ref{t7.9} (a)(i) provides
for such a matrix $B$ the full lattice $L$ with $\Z$-basis
$(c,-a+\sqrt{-20})$ with 
$$\sqrt{-20}(c,-a+\sqrt{-20})=(c,-a+\sqrt{-20})B.$$
\begin{eqnarray*}
\begin{pmatrix}0&-10\\2&0\end{pmatrix}:&&
\langle 2,\sqrt{-20}\rangle_\Z =2\langle 1,\sqrt{-5}\rangle_\Z
=2\Lambda_1,\\
\begin{pmatrix}2&-6\\4&-2\end{pmatrix}:&&
\langle 4,-2+\sqrt{-20}\rangle_\Z 
=2\langle 2,-1+\sqrt{-5}\rangle_\Z =2 L_0,\ \OO(L_0)=\Lambda_1,\\
\begin{pmatrix}0&-20\\1&0\end{pmatrix}:&&
\langle 1,\sqrt{-20}\rangle_\Z =\langle 1,2\sqrt{-5}\rangle_\Z
=\Lambda_2,\\
\begin{pmatrix}0&-5\\4&0\end{pmatrix}:&&
\langle 4,\sqrt{-20}\rangle_\Z =2\langle 2,\sqrt{-5}\rangle_\Z
=: 2L_2,\ \OO(L_2)=\Lambda_2,\\
\begin{pmatrix}1&-7\\3&-1\end{pmatrix}:&&
\langle 3,-1+\sqrt{-20}\rangle_\Z =:L_1,\ \OO(L_1)=\Lambda_2,\\
\begin{pmatrix}-1&-7\\3&1\end{pmatrix}:&&
\langle 3,1+\sqrt{-20}\rangle_\Z =:L_3,\ \OO(L_3)=\Lambda_2.
\end{eqnarray*}
Therefore 
$$G([\Lambda_2]_\varepsilon)=\{[\Lambda_2]_\varepsilon,
[L_2]_\varepsilon,[L_1]_\varepsilon,[L_3]_\varepsilon\}.$$
It turns out that this group is cyclic of order four
with generators $[L_1]_\varepsilon$ and $[L_3]_\varepsilon$
and that therefore the surjective group homomorphism
$$G([\Lambda_2]_\varepsilon\to G([\Lambda_1]_\varepsilon,\quad
[L]_\varepsilon\mapsto [\Lambda_1]_\varepsilon [L]_\varepsilon,$$
maps $[\Lambda_2]_\varepsilon$ and $[L_2]_\varepsilon$ to
$[\Lambda_1]_\varepsilon$ and $[L_1]_\varepsilon$ and
$[L_3]_\varepsilon$ to $[L_0]_\varepsilon$. This can all 
be read off from the following multiplication table for
$\Lambda_1,L_0,\Lambda_2,L_2,L_1,L_3$. Its calculation
uses the norms of $1+\sqrt{-5}$ and $2+\sqrt{-5}$, which are
$$(1+\sqrt{-5})(1-\sqrt{-5})=6,\quad
(2+\sqrt{-5})(2-\sqrt{-5})=9.$$
\begin{table}[H]
\begin{eqnarray*}
\begin{array}{l|ccccccc}
 & \Lambda_1 & L_0 & \Lambda_2 & L_2 & L_1 & L_3 \\ \hline 
\Lambda_1 & \Lambda_1 & L_0 & \Lambda_1 & \Lambda_1 & 
\frac{1+\sqrt{-5}}{2}L_0 & \frac{1-\sqrt{-5}}{2}L_0 \\
L_0 & & 2\Lambda_1 & L_0 & L_0 & (1+\sqrt{-5})\Lambda_1 & 
(1-\sqrt{-5})\Lambda_1 \\
\Lambda_2 & & & \Lambda_2 & L_2 & L_1 & L_3 \\
L_2 & & & & \Lambda_2 & \frac{2-\sqrt{-5}}{3}L_3 & 
\frac{2+\sqrt{-5}}{3}L_1 \\
L_1 & & & & & (2-\sqrt{-5})L_2 & 3\Lambda_2 \\
L_3 & & & & & & (2+\sqrt{-5})L_4
\end{array}
\end{eqnarray*}
\caption[Table 7.1]{Multiplication table of  
$\Lambda_1,L_0,\Lambda_2,L_2,L_1,L_3$}
\label{tab7.1}
\end{table}
\end{examples}

\subsection{Conway's topograph}
\label{s7.4}

In the case of type IV, Theorem \ref{t7.10} (c)(ii) leads to
a finite family of semi-normal forms for an 
$SL_2(\Z)$-conjugacy class of matrices in $M_{2\times 2}(\Z)$.
The purpose of this subsection is to make this family and two
other families of semi-normal forms very transparent.
This is possible thanks to an idea of Conway.
He proposed a beautiful geometric way to
see binary quadratic forms \cite[ch. 1]{Co97}. 
Behind it is an infinite binary tree
embedded in the plane, the {\it topograph}, which serves as an
atlas for primitive vectors and $\Z$-bases of a rank 2 
$\Z$-lattice. First we recall this general construction.

\begin{definition}\label{t7.12}
(a) An {\it infinite binary tree} is an infinite (undirected)
graph which is a tree and where at each vertex three edges meet.

(b) A {\it topograph} $(D,G)$ is a contractible oriented
$C^\infty$ surface $D$ (e.g. $\R^2$ or $\H$ or the unit disk,
it does not matter) with an embedded infinite binary tree
$G\subset D$ (the edges may be embedded as $C^\infty$ curves
which meet transversely at the vertices), such that two points
which are close to one edge, but on different sides of that edge
are not in the same connected component of $D-G$.

(c) Let $(D,G)$ be a topograph. The connected components
$\rho$ of $D-G$ are called {\it regions}. The set of all regions
is called $R_1$. Two regions which share an edge are called
{\it neighbors}. The set of all ordered pairs $(\rho_1,\rho_2)$
of neighbors is called $R_2$, so $R_2\subset R_1\times R_1$.
In the figures below a pair $(\rho_1,\rho_2)$ of neighbors
is indicated by a bullet point near the common edge of 
$\rho_1$ and $\rho_2$ in the region $\rho_1$. 
Later the pair $(\rho_1,\rho_2)$ will be identified with 
the common edge as an oriented edge, where the orientation will be 
indicated by an arrow such that looking along the arrow,
$\rho_1$ is on the left side. So $R_2$ will be identified
with the set of oriented edges.

Two actions 
$$\sigma:R_2\to R_2\quad\textup{and}\quad \tau:R_2\to R_2$$
are defined as follows,
\begin{eqnarray*}
\sigma:(\rho_1,\rho_2)&\mapsto&(\rho_2,\rho_1),\\
\tau:(\rho_1,\rho_2)&\mapsto& (\rho_1,\rho_3),
\end{eqnarray*}
where $\rho_3$ is the neighbor of $\rho_1$ behind that edge
which is the next edge to the left (looking from $\rho_1$)
of the edge between $\rho_1$ and $\rho_2$. 
Figure \ref{fig:7.1} shows these actions on a small part of $R_2$.

\begin{figure}
\includegraphics[width=0.5\textwidth]{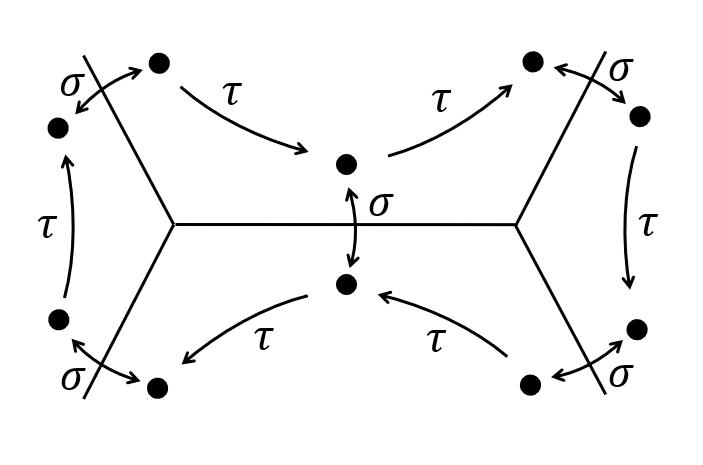}
\caption[Figure 7.1]{Action of $\sigma$ and $\tau$ on $R_2$}
\label{fig:7.1}
\end{figure}

These actions extend to an action on $R_2$ of the group
$\langle\sigma,\tau\rangle\subset \{\textup{bijections }
R_2\to R_2\}$ {\it from the right}.

(d) Let $H_\Z$ be a rank 2 $\Z$-lattice.
The set of $\Z$-bases of $H_\Z$ splits into two orientation
classes $o_1$ and $o_2$.

A {\it lax vector} is a set $\{\pm v\}$ where $v\in H_\Z$ is
a primitive vector, i.e. a vector whose coefficients with
respect to some $\Z$-basis are coprime.
Let $L_1$ be the set of all lax vectors. 

A {\it lax basis} is a set $\{\pm (v_1,v_2)\}$ where
$(v_1,v_2)\in o_1$. Let $L_2$ be the set of all lax bases.

(e) The class in $PSL_2(\Z)$ of a matrix 
$A=\begin{pmatrix}\alpha&\beta\\ \gamma&\delta\end{pmatrix}
\in SL_2(\Z)$ is denoted by $\oooo{A}$. 
Let $(H_\Z,o_1,L_1,L_2)$ be as above. The group $PSL_2(\Z)$ acts
simply transitively from the right on $L_2$ by
\begin{eqnarray*}
\oooo{A}:\pm (v_1,v_2)\mapsto \pm (v_1,v_2)A
=\pm(\alpha v_1+\gamma v_2,
\beta v_1+\delta v_2).
\end{eqnarray*}
\end{definition}

\begin{lemma}\label{t7.13}
Let $(D,G,R_1,R_2,\sigma,\tau)$ be as in Definition \ref{t7.12}
(b)+(c), and let $(H_\Z,o_1,L_1,L_2)$ be as in Definition
\ref{t7.12} (d).
Choose one pair $(\rho_1,\rho_2)\in L_2$ of neighboring regions
and one lax basis $\pm(v_1,v_2)$. These choices give rise to
two unique bijections
\begin{eqnarray*}
\psi_1:R_1\to L_1\quad\textup{and}\quad\psi_2:R_2\to L_2
\end{eqnarray*}
with the following properties, where $(\rho_3,\rho_4)\in R_2$ is arbitrary:
\begin{eqnarray}
\psi_1(\rho_1)&=&\pm v_1,\quad \psi_1(\rho_2)=\pm v_2,
\label{7.7}\\
\psi_2((\rho_3,\rho_4))&=&\{\pm\psi_1(\rho_3),\pm\psi_2(\rho_4)\}
\cap o_1,
\label{7.8}\\
\textup{especially }&&\psi_2((\rho_1,\rho_2))=\pm (v_1,v_2),
\label{7.9}\\
\psi_2((\rho_3,\rho_4)\sigma)&=&\psi_2((\rho_3,\rho_4))\cdot 
\oooo{S},\label{7.10}\\
\psi_2((\rho_3,\rho_4)\tau)&=&\psi_2((\rho_3,\rho_4))\cdot
\oooo{T}.\label{7.11}
\end{eqnarray}
In this way, the regions and the pairs of neighboring regions
in the topograph give an atlas for the lax vectors and for the
lax bases of $H_\Z$. Figure \ref{fig:7.2} shows this atlas in a 
small part of the topograph.
\end{lemma}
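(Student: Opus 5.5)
The plan is to build $\psi_2$ first from the simply transitive actions on both sides, and then read off $\psi_1$ from it. On the lattice side, $PSL_2(\Z)$ acts simply transitively from the right on $L_2$ by Definition \ref{t7.12} (e). On the tree side, I claim that the group $\langle\sigma,\tau\rangle$ acts simply transitively on $R_2$. Moreover, the relations satisfied by $\sigma$ and $\tau$ should match those of $\oooo{S}$ and $\oooo{T}$ under a suitable identification. This is the same picture as the free product $PSL_2(\Z)\cong \Z_2*\Z_3$.

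To make the matching precise, I would not use $\tau$ directly, since it is a rotation around a region and has infinite order. Instead I would use the rotation $\kappa:=\sigma\tau$ (or $\tau\sigma$, depending on conventions) around a vertex, which has order $3$. One checks from the picture that $\sigma^2=\id$ and $\kappa^3=\id$. Correspondingly, $\oooo{S}^2=1$ and $(\oooo{S}\oooo{T})^3=1$ hold in $PSL_2(\Z)$, after adjusting the order of composition to match the right actions.

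With these relations in hand, I would define $\psi_2$ by
$$\psi_2((\rho_1,\rho_2)\,w(\sigma,\tau)):=\pm(v_1,v_2)\cdot w(\oooo{S},\oooo{T})$$
for every word $w$. Three things then need to be checked.

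First, well-definedness. The stabilizer in the free group on $\sigma,\tau$ of an oriented edge must map to $1$ in $PSL_2(\Z)$. Since $G$ is a tree, the group $\langle\sigma,\kappa\rangle$ acts on $R_2$ freely. Its word relations are exactly those generated by $\sigma^2$ and $\kappa^3$: a reduced alternating word in $\sigma$ and $\kappa^{\pm1}$ describes a non-backtracking path in the tree, so it moves the edge. Hence the action is simply transitive, and any word fixing $(\rho_1,\rho_2)$ is trivial modulo these relations. Because $\oooo{S}$ and $\oooo{S}\oooo{T}$ satisfy the same relations, such a word maps to $1$.

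Second, bijectivity. Surjectivity follows from the transitivity of $PSL_2(\Z)$ on $L_2$ together with the fact that $\oooo{S},\oooo{T}$ generate $PSL_2(\Z)$. Injectivity follows from the freeness of the action on $L_2$ combined with the presentation $PSL_2(\Z)=\langle \oooo{S},\oooo{S}\oooo{T}\mid \oooo{S}^2,(\oooo{S}\oooo{T})^3\rangle$, which I would cite as well known. Properties \eqref{7.9}, \eqref{7.10} and \eqref{7.11} then hold by construction.

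Third, the definition of $\psi_1$. For $\rho_3\in R_1$, choose a neighbor $\rho_4$ and let $\psi_1(\rho_3)$ be the first lax vector of $\psi_2((\rho_3,\rho_4))$. This choice is independent of $\rho_4$: the neighbors of $\rho_3$ are reached by powers of $\tau$, and $\oooo{T}$ fixes the first basis vector, since $(v_1,v_2)T=(v_1,v_1+v_2)$. Applying $\sigma$, i.e. multiplying by $\oooo{S}$, turns $(v_1,v_2)$ into $(v_2,-v_1)$. This shows that the second lax vector of $\psi_2((\rho_3,\rho_4))$ equals $\psi_1(\rho_4)$, which gives \eqref{7.7} and \eqref{7.8}.

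It remains to show that $\psi_1$ is a bijection. For surjectivity, every primitive vector extends to a basis in $o_1$, so it appears as a first basis vector. For injectivity, two oriented edges whose lax bases share the first lax vector $\pm v$ differ by an element of $PSL_2(\Z)$ fixing $\pm v$. That stabilizer is $\langle\oooo{T}\rangle$, which corresponds to powers of $\tau$, so the two edges lie in the same region.

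Uniqueness of $\psi_1$ and $\psi_2$ is immediate. Properties \eqref{7.9}, \eqref{7.10} and \eqref{7.11} force the value of $\psi_2$ on every edge, because $R_2$ is a single orbit, and \eqref{7.8} then determines $\psi_1$.

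I expect the main obstacle to be the first check: a clean argument that the action of $\langle\sigma,\tau\rangle$ on $R_2$ is free modulo $\sigma^2=\kappa^3=1$, with conventions matching those of $\oooo{S},\oooo{T}$. The tree property is the only place where real input enters.
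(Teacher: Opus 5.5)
Your proposal is correct and follows essentially the same route as the paper: both identify $\langle\sigma,\tau\rangle$ with $PSL_2(\Z)$ through the free-product decompositions $\langle\sigma\rangle*\langle\tau\sigma\rangle$ and $\langle\overline{S}\rangle*\langle\overline{T}\,\overline{S}\rangle$, obtain $\psi_2$ from the two simply transitive right actions, and define $\psi_1$ as the common first lax vector along a $\langle\tau\rangle$-orbit, with $\overline{S}$ identifying the second vector. Your choice $\kappa=\sigma\tau$ is conjugate to $\tau\sigma$, so either works, and your explicit bijectivity argument for $\psi_1$ (primitive vectors extend to bases, and the stabilizer of a first lax vector is $\langle\overline{T}\rangle$) spells out what the paper compresses into ``because $\psi_2$ is a bijection''.
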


\begin{figure}
\includegraphics[width=1.0\textwidth]{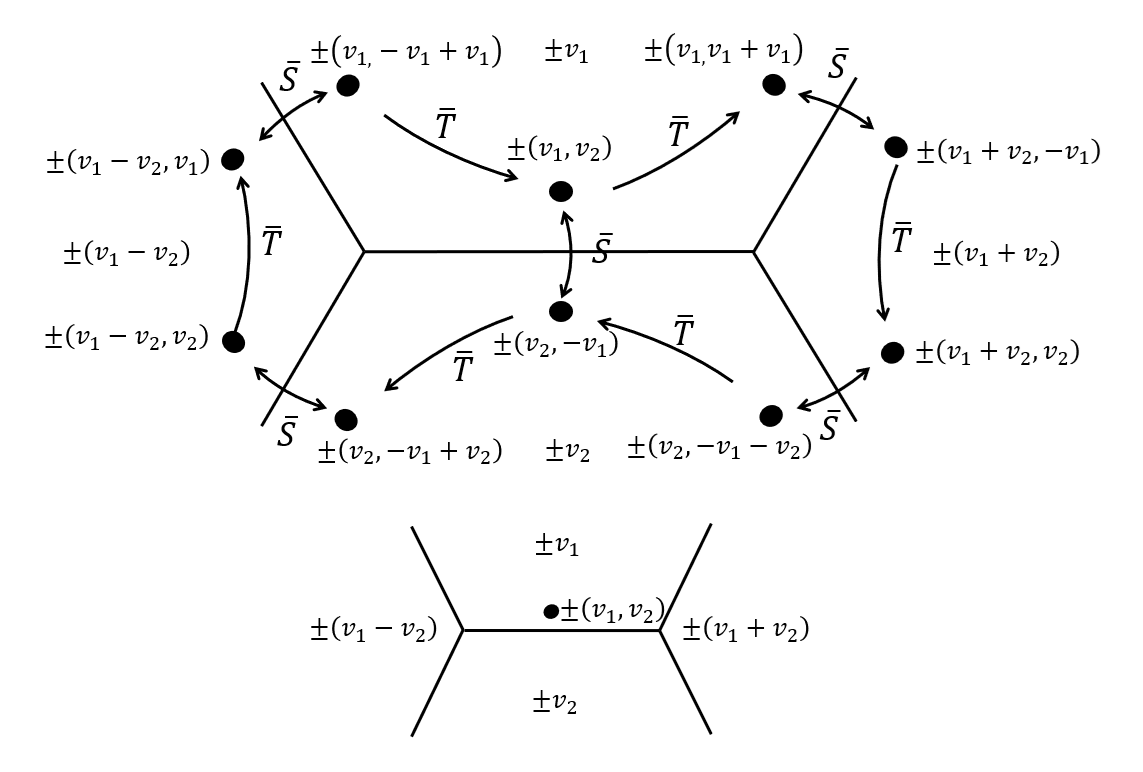}
\caption[Figure 7.2]{Action of $PSL_2(\Z)$ on $L_2$ and the maps
$\psi_1:R_1\to L_1$ and $\psi_2:R_2\to L_2$}
\label{fig:7.2}
\end{figure}

{\bf Proof:} Because $G$ is an infinite binary tree, the 
subgroups $\langle \sigma\rangle$ and $\langle\tau\sigma\rangle$
of $\langle \sigma,\tau\rangle$ have order 2 respectively 3,
and $\langle \sigma,\tau\rangle$ is their free product.
It is well known that the group $PSL_2(\Z)$ is the free product
of the subgroups $\langle \oooo{S}\rangle$ and
$\langle \oooo{T}\oooo{S}\rangle$ of order 2 respectively 3.
Therefore there is an isomorphism of groups
\begin{eqnarray*}
\langle\sigma,\tau\rangle&\to& PSL_2(\Z)\quad\textup{with}\quad
\sigma\mapsto \oooo{S},\ \tau\mapsto\oooo{T}.
\end{eqnarray*}
The group $\langle \sigma,\tau\rangle$ acts simply transitively
on $R_2$. The group $PSL_2(\Z)$ acts simply transitively on 
$L_2$. Therefore there is a unique bijection
$$\psi_2:R_2\to L_2$$
with \eqref{7.9}, \eqref{7.10} and \eqref{7.11}.

It remains to see the bijection $\psi_1:R_1\to L_1$ with
\eqref{7.7} and the compatibility \eqref{7.8}.
Fix one region $\rho_3$. The subgroup $\langle\tau\rangle$
acts simply transitively on the set of all pairs
$(\rho_3,\rho_4)$ of neighboring regions with first region
$\rho_3$. Because of \eqref{7.11} the first element of the
corresponding lax bases $\psi_2((\rho_3,\rho_4))$ is the same
for all these pairs. Define $\psi_1:R_1\to L_1$ by
\begin{eqnarray*}
\psi_1(\rho_3)&:=&(\textup{the common first lax vector of the
lax pairs }\psi_2((\rho_3,\rho_4))\\
&&\textup{for all regions }\rho_4\textup{ which are neighbors
of }\rho_3).
\end{eqnarray*}
Suppose $(\rho_3,\rho_4)\in R_2$. Then
\begin{eqnarray*}
\psi_1(\rho_4)&=& (\textup{the first element of the lax basis }
\psi_2((\rho_4,\rho_3)))\\
&\stackrel{\eqref{7.10}}{=}& (\textup{the first element of the
lax basis }\psi_2((\rho_3,\rho_4))\oooo{S})\\
&=& (\textup{the second element of the lax basis }
\psi_2((\rho_3,\rho_4))).
\end{eqnarray*}
This shows \eqref{7.8} and \eqref{7.7}.
$\psi_1$ is a bijection because $\psi_2$ is a bijection.
\hfill$\Box$

\begin{remarks}\label{t7.14}
(i) Consider $H_\Z=\Z^2=M_{2\times 1}(\Z)$ with orientation
class $o_1$ of $\Z$-bases such that 
$(\begin{pmatrix}1\\0\end{pmatrix},
\begin{pmatrix}0\\1\end{pmatrix})\in o_1$. The map
\begin{eqnarray*}
\varphi_2: PSL_2(\Z)\to L_2,\quad \oooo{A}=
\oooo{\begin{pmatrix}\alpha&\beta\\ \gamma&\delta\end{pmatrix}}
\mapsto \pm (\begin{pmatrix}\alpha\\ \gamma\end{pmatrix},
\begin{pmatrix}\beta\\ \delta\end{pmatrix}),
\end{eqnarray*}
is a bijection which is compatible with the right multiplication
of $PSL_2(\Z)$ on $PSL_2(\Z)$ and the action from the right
in Definition \ref{t7.12} (e) of $PSL_2(\Z)$ on $L_2$.

(ii) In the case of $H_\Z=\Z^2$ all the data in Definition 
\ref{t7.12} and Lemma \ref{t7.13} can be made explicit as 
follows. Choose $D$ as the upper half plane $\H$.
The group $PSL_2(\Z)$ acts on $\H$ properly discontinuously
with fundamental domain $F_1$ in the proof of Theorem \ref{t7.10}
(c)(iii). This fundamental domain $F_1$ and all its images 
$\oooo{A}F_1$ for $A\in SL_2(\Z)$ are degenerate hyperbolic
triangles with one degenerate vertex in $\Q\cup\{\infty\}
\subset\R\cup\{\infty\}=\partial \H$. The map
\begin{eqnarray*}
\chi_2:\{\oooo{A}F_1\,|\, \oooo{A}\in PSL_2(\Z)\}\to L_2,\quad
\oooo{A}F_1\mapsto \varphi_2(\oooo{A}),
\end{eqnarray*}
is a bijection which is compatible with the actions of
$PSL_2(\Z)$ on the right. Here $\oooo{A}_2\in PSL_2(\Z)$
maps $\oooo{A}_1F_1$ to $\oooo{A}_1\oooo{A}_2F_1$. 

The lax bases $\varphi_2(\oooo{A}_1F_1)$ and 
$\varphi_2(\oooo{A}_3F_1)$ have the same first lax vector
if and only if $\oooo{A}_1F_1$ and $\oooo{A}_3F_1$ have the
same degenerate vertex in $\Q\cup\{\infty\}$. 

The binary tree $G\subset D=\H$ and the regions in $D-G$ 
are defined simultaneously as follows. 
For each $q\in\Q\cup\{\infty\}$ the set
\begin{eqnarray*}
\rho(q)&:=&\{\textup{the interior of the union of all }
\oooo{A}F_1\\
&&\textup{with degenerate vertex }q\}\subset\H
\end{eqnarray*}
is a region, and the binary tree $G$ is
$$G=\H-\cup_{q\in\Q\cup\{\infty\}}\rho(q).$$
\cite[page 247]{We17} and \cite[4.1, page 78]{Ha21} show 
pictures where $D$ is the unit disk
instead of the upper half plane.
\end{remarks}

Now we apply the general constructions in Definition \ref{t7.12}
and Lemma \ref{t7.13} to give a {\it value atlas} for an  
abstract binary quadratic form $(H_\Z,o_1,q)$. 
Definition \ref{t7.15} defines the value atlas.
Lemma \ref{t7.16} states some basic properties.
Recall from Definition \ref{t7.4} 
that in an abstract binary quadratic form,
$H_\Z$ is a rank 2 $\Z$-lattice, $o_1$ is one of the two
orientation classes of $\Z$-bases of $H_\Z$, and 
$q:H_\Z\to\Z$ is a map such that for any $\Z$-basis 
$\uuuu{v}=(v_1,v_2)\in o_1$ the map
$q_{\uuuu{v}}:\Z^2\to\Z$ with $q_{\uuuu{v}}(\uuuu{x})=
q(x_1v_1+x_2v_2)$ is a binary quadratic form 
$[a,h,b]_{quad}$ for some $a,h,b\in\Z$.
Running through all $\Z$-bases in $o_1$, one obtains a proper
equivalence class of binary quadratic forms.
Observe $q_{-\uuuu{v}}=q_{\uuuu{v}}$ and $q(-v_1)=q(v_1)$.

\begin{definition}\label{t7.15}
Let $(H_\Z,o_1,q)$ be an abstract binary quadratic form.
Let $L_1$ and $L_2$ be the set of lax vectors respectively
lax bases of $H_\Z$. Then $(H_\Z,o_1,L_1,L_2)$ is as in 
Definition \ref{t7.12} (d). 
Let $(D,G,R_1,R_2,\sigma,\tau)$ be as in Definition \ref{t7.12}
(b)+(c). Choose bijections $\psi_1:R_1\to L_1$ and
$\psi_2:R_2\to L_2$ with \eqref{7.8}, \eqref{7.10}
and \eqref{7.11}. Define the maps
\begin{eqnarray*}
\omega_1:=q\circ \psi_1:R_1&\to&\Z,\\
\omega_2:R_2&\to&\Z,\quad (\rho_1,\rho_2)\mapsto h\\
&&\textup{where }q_{\psi_2((\rho_1,\rho_2))}=[a,h,b]_{quad}.
\end{eqnarray*}
$(D,G,R_1,R_2,\omega_1,\omega_2)$ is the {\it value atlas}
of $(H_\Z,o_1,q)$. 
In (a part of) the topograph one encodes (part of) the maps
$\omega_1$ and $\omega_2$ 
as follows. One writes the number $\omega_1(\rho)\in\Z$ 
into each region $\rho$. The set $R_2$ is identified with the
set of oriented edges in the binary tree. Here a pair
$(\rho_1,\rho_2)\in R_2$ is mapped to the edge between
$\rho_1$ and $\rho_2$ with an orientation which is indicated
by an arrow on the edge such that $\rho_1$ is to the left.

Observe $h:=\omega_2((\rho_1,\rho_2))
=-\omega_2((\rho_2,\rho_1))$.
If $h=0$ write $0$ near the edge and write no arrow on the edge.
If $h\neq 0$ either write $h$ near the edge and write the
arrow from $(\rho_1,\rho_2)$ on the edge or write $-h$ near
the edge and write the arrow from $(\rho_2,\rho_1)$ on the
edge. If $h\neq 0$ more often we choose the arrow such that 
the label of the edge is $|h|$, but not always.
The two choices are equivalent and can be recovered from one
another. See Figure \ref{fig:7.3}. There $\omega_1(\rho_1)=a$,
$\omega_1(\rho_2)=b$, $\omega_2((\rho_1,\rho_2))=h$.
\end{definition}

\begin{figure}
\includegraphics[width=0.7\textwidth]{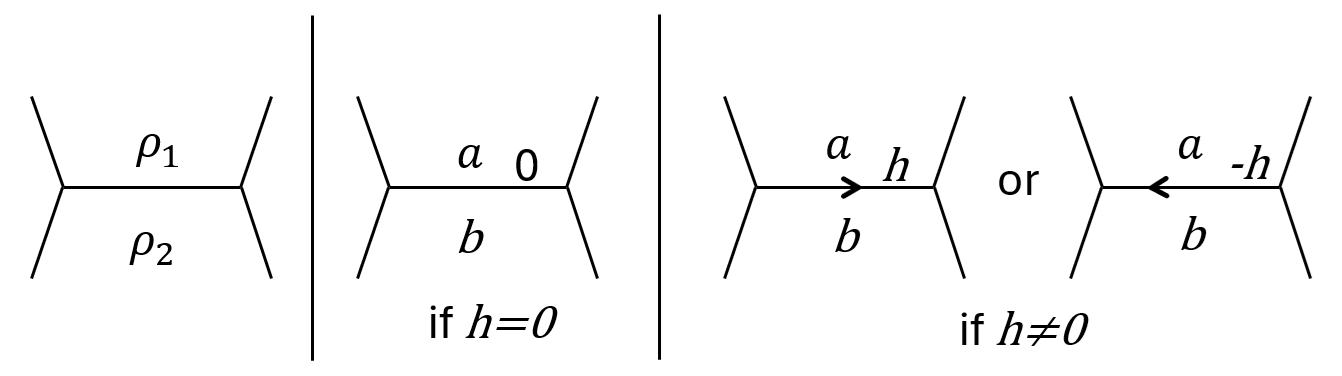}
\caption[Figure 7.3]{Encoding the value atlas by labeling
regions and oriented edges}
\label{fig:7.3}
\end{figure}

\begin{lemma}\label{t7.16}
Consider the data in Definition \ref{t7.15}.

(a) In each of the three pictures in Figure \ref{fig:7.3}
(one for $h=0$, two for $h\neq 0$), one recovers from
$(\rho_1,\rho_2)$ and Figure \ref{fig:7.3}
first the triple $(a,h,b)=(\omega_1(\rho_1),
\omega_2((\rho_1,\rho_2)),\omega_1(\rho_2))$ of values,
then the binary quadratic form $q_{\psi_2((\rho_1,\rho_2))}
=[a,h,b]_{quad}$, then the abstract binary quadratic form
$(H_\Z,o_1,q)$, and finally the maps $\omega_1:R_1\to\Z$
and $\omega_2:R_2\to\Z$.

(b) But these maps can also be recovered more directly from
$(\rho_1,\rho_2)$ and the triple $(a,h,b)=(\omega_1(\rho_1),
\omega_2((\rho_1,\rho_2)),\omega_1(\rho_2))$ of values, 
as follows.

(i) Suppose $\psi_2((\rho_1,\rho_2))=\pm (v_1,v_2)=\pm\uuuu{v}$ 
with $\uuuu{v}=(v_1,v_2)\in o_1$, so especially 
$\psi_1(\rho_1)=\pm v_1$, $\psi_1(\rho_2)=\pm v_2$, 
and suppose $\tau((\rho_1,\rho_2))=(\rho_1,\rho_3)$,
$\tau^{-1}((\rho_1,\rho_2))=(\rho_1,\rho_4)$, so
$\psi_1(\rho_3)=\pm (v_1+v_2)$, $\psi_1(\rho_4)=\pm (v_1-v_2)$.
Then
\begin{eqnarray}
\omega_1(\rho_3)=q(\pm(v_1+v_2))=q_{\uuuu{v}}(1,1)
=a+b+h,\label{7.12}\\
\omega_1(\rho_4)=q(\pm(v_1-v_2))=q_{\uuuu{v}}(1,-1)
=a+b-h,\label{7.13}\\
\textup{so }\omega_1(\rho_3)+\omega_1(\rho_4)=
2(\omega_1(\rho_1)+\omega_1(\rho_2)).\label{7.14}
\end{eqnarray}
See Figure \ref{fig:7.4}.

\begin{figure}
\includegraphics[width=0.9\textwidth]{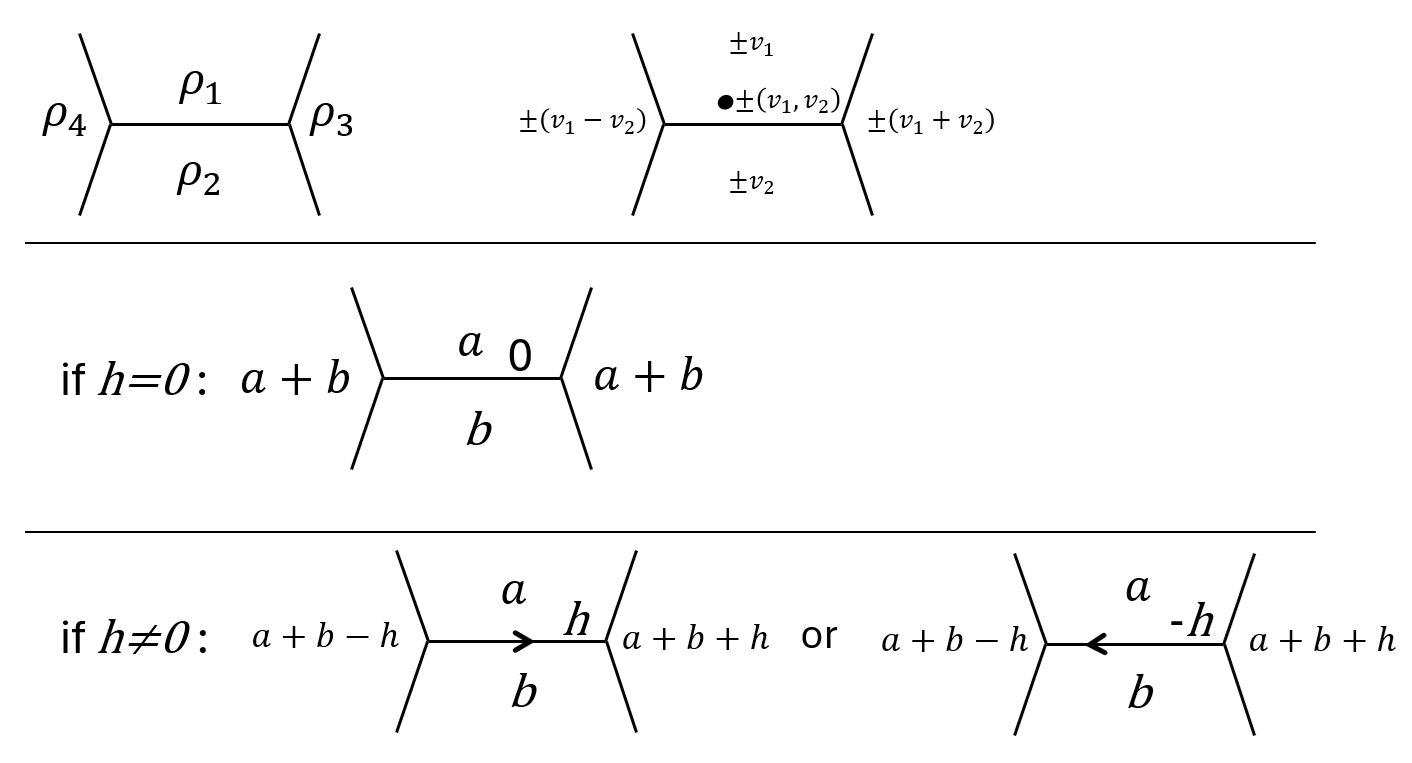}
\caption[Figure 7.4]{From the values 
$(\omega_1(\rho_1)$,$\omega_2((\rho_1,\rho_2))$,
$\omega_1(\rho_2))$
to the values $\omega_1(\rho_3)$ and $\omega_1(\rho_4)$}
\label{fig:7.4}
\end{figure}

(ii) Suppose three regions $\rho_1,\rho_2$ and $\rho_3$ meet
at one vertex with $\tau((\rho_1,\rho_2))=(\rho_1,\rho_3)$,
and $\omega_1(\rho_1)=a$, $\omega_1(\rho_2)=b$, 
$\omega_1(\rho_3)=c$. Then
\begin{eqnarray}
\left.\begin{array}{l}
\omega_2((\rho_1,\rho_2))=c-a-b,\\ 
\omega_2((\rho_2,\rho_3))=a-b-c,\\
\omega_2((\rho_3,\rho_1))=b-a-c.\end{array}\right\}
\label{7.15}
\end{eqnarray}
See Figure \ref{t7.5}.

\begin{figure}
\includegraphics[width=0.6\textwidth]{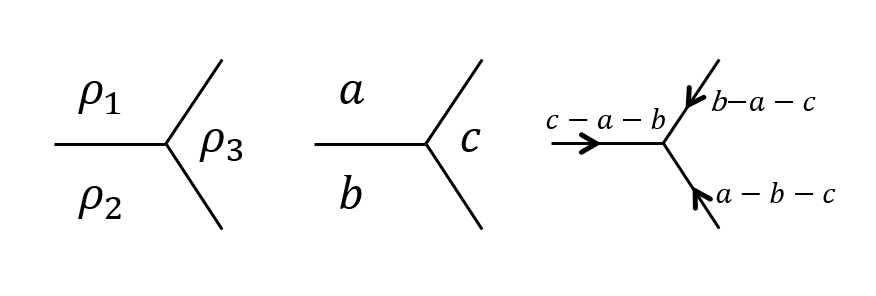}
\caption[Figure 7.5]{From the values of three neighboring
regions to the values of three oriented edges}
\label{fig:7.5}
\end{figure}

(iii) Suppose $(\rho_1,\rho_2)\in R_2$ with 
$\omega_1(\rho_1)=a$ and $\omega_2((\rho_1,\rho_2))=h$. Then
\begin{eqnarray}
\omega_2((\rho_1,\rho_2)\tau^k)=h+k\cdot 2a\quad\textup{for }
k\in\Z.\label{7.16}
\end{eqnarray}
See Figure \ref{t7.6}.

\begin{figure}
\includegraphics[width=0.5\textwidth]{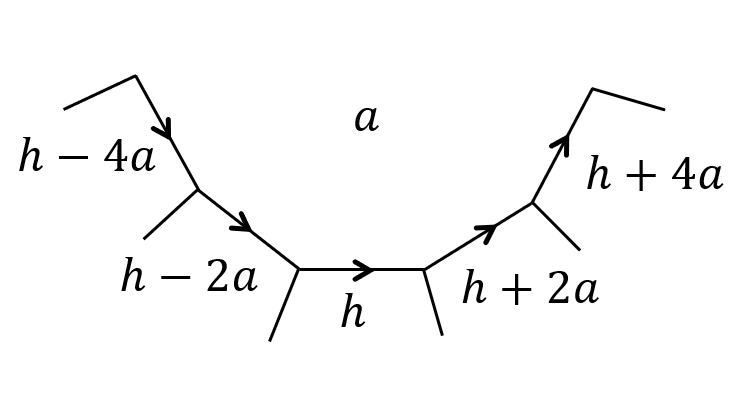}
\caption[Figure 7.6]{The values of all (consistently) oriented
edges of one region form an arithmetic progression}
\label{fig:7.6}
\end{figure}

\end{lemma}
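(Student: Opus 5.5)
The plan is to reduce everything to the explicit description of the action of $\oooo{S}$ and $\oooo{T}$ on lax bases, together with the transformation rule for binary quadratic forms under a base change. Fix $(\rho_1,\rho_2)\in R_2$ with $\psi_2((\rho_1,\rho_2))=\pm\uuuu{v}$, $\uuuu{v}=(v_1,v_2)\in o_1$, and $q_{\uuuu{v}}=[a,h,b]_{quad}$. For any $A\in SL_2(\Z)$ the basis $\uuuu{v}A$ lies in $o_1$. Its form is
$$q_{\uuuu{v}A}(\uuuu{x})=q_{\uuuu{v}}(A\uuuu{x}).$$
Since $q_{-\uuuu{v}}=q_{\uuuu{v}}$, this depends only on $\oooo{A}$. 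By \eqref{7.10} and \eqref{7.11}, every pair in $R_2$ is $(\rho_1,\rho_2)g$ for a unique $g\in\langle\sigma,\tau\rangle$. Its lax basis is then $\pm\uuuu{v}A_g$, where $A_g$ is the corresponding word in $S$ and $T$.

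For part (a), I would read off $(a,h,b)$ directly from the picture and obtain $q_{\uuuu{v}}$. The abstract form $(H_\Z,o_1,q)$ is recovered up to isomorphism by taking $H_\Z=\Z^2$ with the standard orientation. Then $\omega_1$ and $\omega_2$ at every other pair are computed as above via $A_g$. The only point to check is consistency of the three encodings in Figure~\ref{fig:7.3}. This follows from $\omega_2((\rho_2,\rho_1))=-\omega_2((\rho_1,\rho_2))$, which I would verify with $q_{\uuuu{v}S}(\uuuu{x})=q_{\uuuu{v}}(-x_2,x_1)=[b,-h,a]_{quad}$.

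For (b)(i), the key computation is that $\uuuu{v}T=(v_1,v_1+v_2)$ and $\uuuu{v}T^{-1}=(v_1,v_2-v_1)$. Hence $\psi_1(\rho_3)=\pm(v_1+v_2)$ and $\psi_1(\rho_4)=\pm(v_1-v_2)$. Evaluating $q_{\uuuu{v}}$ at $(1,\pm1)$ gives \eqref{7.12} and \eqref{7.13}, and adding them gives \eqref{7.14}.

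For (b)(iii), I would compute
$$q_{\uuuu{v}T^k}(\uuuu{x})=q_{\uuuu{v}}(x_1+kx_2,x_2)=ax_1^2+(h+2ka)x_1x_2+(ak^2+hk+b)x_2^2.$$
This yields \eqref{7.16}.

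For (b)(ii), the first equation of \eqref{7.15} is just \eqref{7.12} rearranged: $c=a+b+h$. For the other two, I would use that the three oriented edges around a vertex are related by the order-3 element $\tau\sigma$. In matrix form, $\uuuu{v}TS$ has first vector $v_1+v_2$ and second vector $-v_1$. Up to sign, this is the pair $(\rho_3,\rho_1)$; the orientation needs care here. Its form is
$$q_{\uuuu{v}TS}(\uuuu{x})=q_{\uuuu{v}}(x_1-x_2,x_1)=(a+b+h)x_1^2+(-2a-h)x_1x_2+ax_2^2.$$
So the middle coefficient is $-2a-h=b-a-c$. Similarly, applying $(TS)^2$ gives the pair $(\rho_2,\rho_3)$, whose middle coefficient $a-b-c$ is obtained the same way, or by symmetry.

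The main obstacle is bookkeeping rather than mathematics. I must make sure that the right action of $\tau\sigma$ on $R_2$ matches the matrix $TS$ under the isomorphism of Lemma~\ref{t7.13}. I must also confirm which ordered pair around the vertex, $(\rho_3,\rho_1)$ versus $(\rho_1,\rho_3)$, it produces, since a wrong orientation flips the sign of $h$. I would settle this by checking against the explicit atlas in Figure~\ref{fig:7.2}, or with the concrete form $[1,0,1]_{quad}$.
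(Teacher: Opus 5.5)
Your proposal is correct. Parts (a) and (b)(i) are essentially the paper's argument; the paper just says ``clear'' and ``observe $q_{\uuuu{v}}=[a,h,b]_{quad}$''.

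For (b)(ii) and (b)(iii) you take a more computational route than the paper does.

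\textbf{The paper's route.} It stays inside the topograph and uses only the local rule \eqref{7.12}. For \eqref{7.15}, it applies \eqref{7.12} to each of the three oriented edges around the vertex, using $\tau((\rho_2,\rho_3))=(\rho_2,\rho_1)$ and $\tau((\rho_3,\rho_1))=(\rho_3,\rho_2)$. For \eqref{7.16}, it does one step, $\omega_2((\rho_1,\rho_3))=-\omega_2((\rho_3,\rho_1))=a+(a+b+h)-b=h+2a$, and then inducts.

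\textbf{Your route.} You transform the form explicitly by $T^k$, by $TS$ and by $(TS)^2$.

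\textbf{What each buys.} Your computation gives \eqref{7.16} for all $k\in\Z$ at once, including negative $k$ and without induction. It also produces, for free, the values $ak^2+hk+b$ of the regions adjacent to $\rho_1$. The paper's argument instead shows that the whole value atlas is generated by the single scalar identity \eqref{7.12}, which is the point of Conway's picture.

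\textbf{The orientation issue is not an obstacle.} You single it out as the main difficulty, but it is settled by the definitions, with no need to check a figure or a test form. First, $(\rho_1,\rho_2)\tau\sigma=(\rho_1,\rho_3)\sigma=(\rho_3,\rho_1)$. Then \eqref{7.10} and \eqref{7.11} give
$\psi_2((\rho_3,\rho_1))=\pm\uuuu{v}TS=\pm(v_1+v_2,-v_1)$.
Equivalently, \eqref{7.8}, together with the requirement that the lax basis lie in $o_1$, forces this ordering and sign, since its determinant with respect to $\uuuu{v}$ is $1$. In the same way, $\psi_2((\rho_2,\rho_3))=\pm(v_2,-v_1-v_2)=\pm\uuuu{v}(TS)^2$.

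\textbf{A small misattribution.} The existence and uniqueness of $g$ with $(\rho,\rho')=(\rho_1,\rho_2)g$ comes from the simple transitivity of $\langle\sigma,\tau\rangle$ on $R_2$. Relations \eqref{7.10} and \eqref{7.11} then identify the corresponding lax basis as $\pm\uuuu{v}A_g$.
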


{\bf Proof:}
(a) Clear.

(b) (i) Observe $q_{\uuuu{v}}=[a,h,b]_{quad}$. 

(ii) This follows from part (i).

(iii) Suppose $\tau((\rho_1,\rho_2))=(\rho_1,\rho_3)$.
Write $b:=\omega_1(\rho_2)$. By \eqref{7.12} and \eqref{7.15}
$$\omega_2((\rho_1,\rho_3))=a+(a+b+h)-b=h+2a.$$
Inductively one obtains \eqref{7.16}.\hfill$\Box$

\bigskip
At this point Conway \cite[ch. 1]{Co97}, 
Weissman \cite[ch. 10 and ch. 11]{We17}
and Hatcher \cite[section 5.1]{Ha21} apply Lemma \ref{t7.16}
to binary quadratic forms of different types and discuss
the specific behaviour of distribution of values of the
regions and the oriented edges. 
Here we restrict to type IV, i.e. to indefinite binary quadratic
forms $[a,h,b]_{quad}$ with discriminant 
$D=\frac{h^2}{4}-ab>0$ and $4D\in\N$ not a square.
By Lemma \ref{t7.2} (b) they correspond to matrices of type IV.
The following theorem is proved in 
\cite[ch. 1]{Co97}, 
\cite[Corollary 11.8, Proposition 11.9, Theorem 11.13]{We17} 
and \cite[section 4.2 and Theorem 5.2]{Ha21}.

\begin{theorem}\label{t7.17}
Consider the data in Definition \ref{t7.15}.
Suppose that the abstract binary form $(H_\Z,o_1,q)$
is indefinite with determinant $D>0$ and $4D\in\N$ not a square.

(a) The binary tree $G$ contains an infinite path of edges
such that the values of all regions on one side are positive
and the values of all regions on the other side are negative.
The path of edges is called {\sf Conway river}.
Let all edges with labels $\neq 0$ be oriented such that the
labels are positive. Then all edges on the negative side
have arrows pointing to the river, and all edges on the 
positive side have arrows pointing away from the river.

(b) The atlas of values on the regions and oriented 
edges is periodic
in the following sense. There is a nontrivial orientation 
preserving automorphism of the pair $(D,G)$ which maps
each region to a region with the same value and each 
oriented edge to an oriented edge with the same value.
It preserves the Conway river. Its restriction to the Conway
river is a translation of the river. 
\end{theorem}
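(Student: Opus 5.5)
The plan is to work with a fixed $\Z$-basis $\uuuu{v}$ of $H_\Z$ and the quadratic form $q_{\uuuu{v}}=[a,h,b]_{quad}$. Since $4D=h^2-4ab$ is positive and not a square, $q$ factors over $\R$ as
\[
a(x_1-\theta_1x_2)(x_1-\theta_2x_2)
\]
with irrational $\theta_1\neq\theta_2$ (if $a=0$ then $4D=h^2$ would be a square). Hence $q(v)\neq 0$ for every $v\neq 0$. The two real lines $\ell_1,\ell_2$ of zeros divide $H_\Z\otimes\R$ into four sectors. Opposite sectors carry the same sign, so every region of the topograph gets a definite sign.

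For part (a), I would first show that neighbouring regions of opposite sign exist. A lax basis with $a>0>b$ is the same as a basis whose two vectors lie in different-sign sectors. Such a basis exists: take a primitive vector with $q>0$, complete it to a basis, and apply $T^k$ to the second vector. For large $|k|$ the second vector crosses an irrational line, and the sign changes.

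Next, call an edge a river edge if the two adjacent regions have opposite signs. At a vertex, three regions meet, with values $a,b$ and $a+b+h$ or $a+b-h$. If one region has the opposite sign to the other two, exactly two of the three edges at that vertex are river edges; otherwise none is. So the river edges form a subgraph in which every vertex has degree $0$ or $2$. In a tree, the components of such a subgraph are paths that are infinite in both directions.

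To prove there is only one component, I would use the following claim: the positive regions form a connected set (through shared edges), and so do the negative ones. In lattice terms, lax vectors $v,w$ in the same cone pair of one sign are joined by a chain of lax vectors of that sign, consecutive ones forming bases. This follows from the Farey/continued-fraction structure, because the Farey path between two rationals in an interval stays inside that interval. Given this, two distinct river components would separate the tree into pieces, and some sign class would be disconnected, which is a contradiction.

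The arrow claims in (a) then follow from \eqref{7.15}. For an edge between positive regions $b,c$ that leads away from the river towards a new region $a'$, the value is $\pm(a'-b-c)$. Growth of values away from the river ("climbing lemma", using \eqref{7.14} with $\omega_1(\rho_4)$ on the river side being smaller) gives the stated orientation; I would prove this by induction on distance from the river. The negative side follows by symmetry, replacing $q$ by $-q$.

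For (b), I would take the fundamental unit $u$ of the order $\OO(L)$, where $L$ is the lattice of $q$ as in Theorem \ref{t7.9}, and use a unit with $N(u)=1$ (replacing $u$ by $u^2$ if needed). Multiplication by $u$ preserves $L$, the orientation, and the form up to the factor $N(u)=1$, by formula \eqref{7.5}. Hence it induces an element of $PSL_2(\Z)$ preserving $q$. Through Lemma \ref{t7.13} this becomes an orientation-preserving automorphism of $(D,G)$ that preserves all values. Since $u\neq\pm1$ it is nontrivial. It maps the river to itself, because the river is defined by the values. It has no fixed vertex or edge: a fixed point would give a finite-order element, whereas the eigenvalues $u,u'$ are real with $u\neq\pm1$. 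An automorphism of a bi-infinite path without fixed points that preserves orientation is a translation.

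The main obstacle is the uniqueness and connectedness step in (a), namely showing that the sign classes of regions are connected. I would handle it via the Farey-interval argument sketched above.
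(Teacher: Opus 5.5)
\textbf{A step that fails: existence of a river edge.} As written, your argument that two neighbouring regions of opposite sign exist does not work. With $q(v_1)=a>0$ and $(v_1,v_2)T^k=(v_1,kv_1+v_2)$ you get $q(kv_1+v_2)=ak^2+hk+b$. This is \emph{positive} for all large $|k|$, because the direction of $kv_1+v_2$ tends to $\pm v_1$. It is negative only for $k$ strictly between the two real roots, an interval of length $2\sqrt{D}/a$, which need not contain an integer.

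For example, take $q=[1,0,-7]_{quad}$ on $\Z^2$, $v_1=(4,1)^t$ and $v_2=(3,1)^t$. Then $q(kv_1+v_2)=9k^2+10k+2>0$ for every $k\in\Z$. So every neighbour of the region of $\pm v_1$ is positive, and since every completion of $v_1$ to a basis has the form $kv_1+v_2$, no other completion helps.

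The repair is immediate. The sign of $q$ on a primitive vector depends only on its slope in $\Q\cup\{\infty\}$. The irrational roots $\theta_1,\theta_2$ cut $\R\cup\{\infty\}$ into two open arcs, each of which contains rational points, so both signs occur among the regions. Any two regions are joined by a chain of neighbouring regions; this is the connectedness of the Farey graph, which you use later anyway. Hence some pair of neighbours has opposite signs.

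\textbf{The rest of (a).} Once this is fixed, your remaining steps are correct.
\begin{itemize}
\item At each vertex the river-degree is $0$ or $2$, so the components of the river subgraph are bi-infinite paths.
\item Each sign class is connected, because the rationals in an open arc span a connected subgraph of the Farey graph (provable by induction on $|\det(v,w)|$). This rules out a second river component and also shows that all regions on one side of the river have the same sign.
\item The climbing lemma, by induction on the distance from the river, gives the arrows. Base case: at a vertex on the river the third region $f$ is negative, so $f<0<b+c<2(b+c)-f$. Inductive step: the induction hypothesis gives $f<c-b<b+c$.
\end{itemize}

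\textbf{Part (b): a different route.} Your proof of (b) is correct but differs from the standard argument in the sources the paper cites; the paper itself gives no proof. There, periodicity comes from a pigeonhole argument. Orient each river edge so that its positive region is on the left. It then carries a triple $(a,h,b)$ with $a>0>b$ and $h^2-4ab=4D$, so $a,-b\le D$ and $|h|<2\sqrt{D}$, and there are only finitely many such triples. Two distinct river edges with equal triples are therefore related, via Lemma \ref{t7.16} (a), by some $g\in SL_2(\Z)$ with $q\circ g=q$. This automorphism moves the river along itself.

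Your argument instead uses a norm-one unit of $\OO(L)$ together with \eqref{7.5}. The step showing that a hyperbolic element fixes no vertex and no edge is fine. However, this route imports Dirichlet's unit theorem (Theorem \ref{t7.7} (d)). The pigeonhole argument is elementary and in fact produces the norm-one units, which is the direction the paper takes in Remarks \ref{t7.20} (v).
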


A semi-normal form of the abstract binary quadratic form
$(H_\Z,o_1,q)$ in Theorem \ref{t7.17} is a binary 
quadratic form $[a,h,b]_{quad}$ in the corresponding proper
equivalence class such that the triple $(a,h,b)$ has
certain properties. 

Because of the periodicity in Theorem \ref{t7.17} (b),
each semi-normal form is equal to $q_{\uuuu{v}}$ for 
infinitely many $\Z$-bases $\uuuu{v}\in o_1$.
A lax basis $\pm\uuuu{v}$ corresponds to a pair $(\rho_1,\rho_2)$
and to an oriented edge.

For three families of semi-normal forms we will discuss
the positions of the oriented edges which generate the 
semi-normal forms. The three families of semi-normal forms
are defined in Definition \ref{t7.18} and discussed in
Lemma \ref{t7.19} and the Remarks \ref{t7.20}.

\begin{definition}\label{t7.18}
(a) (Compare Theorem \ref{t7.10} (b) and 
\cite[\S 13 (2) and (3)]{Zag81}.
A binary quadratic form $[a,h,b]_{quad}$ is a semi-normal form
of type A if
\begin{eqnarray}\label{7.17}
|a|\leq |b|\quad\textup{and}\quad h\in [-|a|,|a|].
\end{eqnarray}

(a) (Compare \cite[\S 13 (6)]{Zag81}) A binary quadratic form
$[a,h,b]_{quad}$ is a semi-normal form of type B if
\begin{eqnarray}\label{7.18}
a>0,\ b>0\quad\textup{and}\quad h>a+b.
\end{eqnarray}

(c) (Compare \cite[page 291]{We17} and \cite[page 113]{Ha21}) 
A binary quadratic form
$[a,h,b]_{quad}$ is a semi-normal form of type C if
\begin{eqnarray}\label{7.19}
a>0,\ b<0,\ a+b+|h|>0,\ a+b-|h|<0.
\end{eqnarray}
\end{definition}

\begin{lemma}\label{t7.19}
Consider the data in Theorem \ref{t7.17}, so
$(H_\Z,o_1,q)$ is an abstract binary quadratic form
with discriminant $D>0$ and $4D\in\N$ not a square.
Consider the semi-normal forms of the types A, B and C
in the corresponding proper equivalence class of binary
quadratic forms. Consider the oriented edges in the binary
tree which generate these semi-normal forms. 

(a) An oriented edge $(\rho_1,\rho_2)$ generates a semi-normal
form of type A if and only if it is on the Conway river with
$|\omega_1(\rho_1)|\leq |\omega_1(\rho_2)|$ and satisfies
one of the following two conditions:
\begin{list}{}{}
\item[(i)]
Its label $\omega_2((\rho_1,\rho_2))$ is 0.
\item[(ii)] 
Its label $h:=\omega_2((\rho_1,\rho_2))$ is not 0.
Consider only the Conway river and orient all its edges
with labels $\neq 0$ such that these labels are positive
(on the given edge this may be the same or the other 
orientation, depending on the sign of $h$).
Then (at least) one of the two vertices of the edge is a source
or a well with respect to the two neighboring edges on the
Conway river. The value $|h|$ of the given edge is smaller 
or equal than the value $|\www{h}|$ of the other edge 
at this vertex.
\end{list}
See Figure \ref{fig:7.7}.
There $0<h\leq \www{h}$, and the horizontal edges are on the
Conway river.

\begin{figure}
\includegraphics[width=0.9\textwidth]{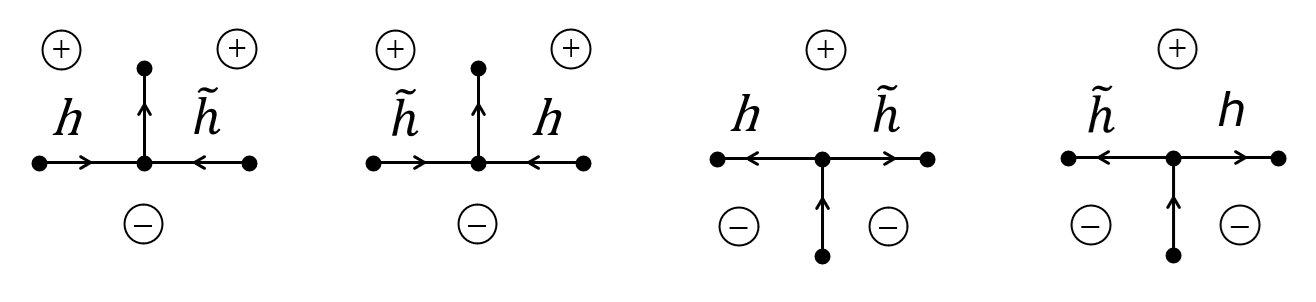}
\caption[Figure 7.7]{Lemma \ref{t7.19} (a)(ii): The oriented
edge with label $h$ generates a binary quadratic form
in semi-normal form of type A}
\label{fig:7.7}
\end{figure}

(b) An oriented edge $(\rho_1,\rho_2)$ generates a semi-normal
form of type B if and only if it is on the positive side
of the Conway river with one vertex on the Conway river
and pointing away from the Conway river. 
See Figure \ref{fig:7.8}.
There the two horizontal edges are on the Conway river.

\begin{figure}
\includegraphics[width=0.3\textwidth]{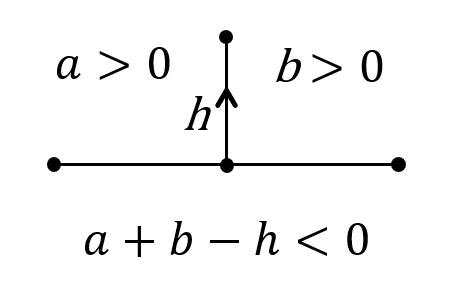}
\caption[Figure 7.8]{Lemma \ref{t7.19} (b): The oriented
edge with label $h$ generates a binary quadratic form
in semi-normal form of type B}
\label{fig:7.8}
\end{figure}

(c) An oriented edge $(\rho_1,\rho_2)$ generates a 
semi-normal form of type C if and only if it is on the Conway
river with $\omega_1(\rho_1)=a>0$, $\omega_1(\rho_2)=b<0$ and
the two neighboring edges which are not on the Conway river
are on different sides of the Conway river.
See Figure \ref{fig:7.9}.
There all horizontal edges are on the Conway river.

\begin{figure}
\includegraphics[width=0.8\textwidth]{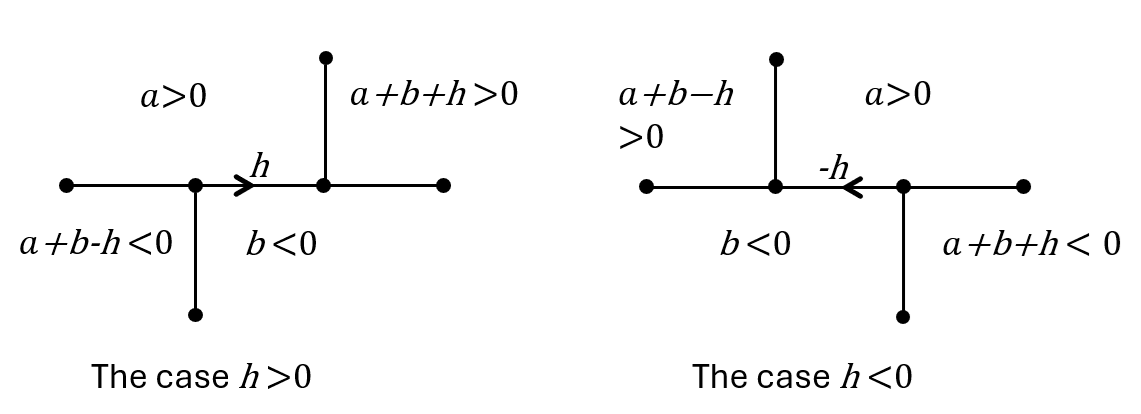}
\caption[Figure 7.9]{Lemma \ref{t7.19} (c): The oriented
edge with label $h$ generates a binary quadratic form
in semi-normal form of type C}
\label{fig:7.9}
\end{figure}

\end{lemma}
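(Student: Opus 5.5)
The plan is to translate each of the inequality conditions in Definition \ref{t7.18} into statements about the values of the four regions adjacent to an oriented edge. The tools are the local rules of Lemma \ref{t7.16} together with the global picture of the Conway river in Theorem \ref{t7.17}. Fix an oriented edge $(\rho_1,\rho_2)$ with $a=\omega_1(\rho_1)$, $b=\omega_1(\rho_2)$ and $h=\omega_2((\rho_1,\rho_2))$. Let $\rho_3,\rho_4$ be the regions at the two endpoints of the edge, so by \eqref{7.12} and \eqref{7.13} their values are $a+b+h$ and $a+b-h$. Since $4D$ is not a square, $0$ is never a value. An edge lies on the Conway river exactly when its two regions have opposite signs, i.e.\ $ab<0$. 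Also, for an edge not on the river, the arrow of the positive label points away from the river on the positive side and toward it on the negative side (Theorem \ref{t7.17} (a)).

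\textbf{Type C.} Condition \eqref{7.19} says $a>0>b$, so the edge is on the river. It also says that the two end regions $a+b\pm|h|$ have values of opposite signs. At the vertex where $\rho_1,\rho_2,\rho_3$ meet, the two river regions have opposite signs, so the river continues along the side edge that separates $\rho_3$ from the river region of the other sign. The third edge at that vertex, which leaves the river, then lies on the side of the sign of $\rho_3$. Hence the two side edges not on the river lie on different sides exactly when $\rho_3$ and $\rho_4$ have opposite signs. This gives (c) directly.

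\textbf{Type B.} Condition \eqref{7.18} gives $a,b>0$, so the edge lies in the positive region off the river. With $h>a+b$ we get $a+b-h<0<a+b+h$. So exactly one endpoint region is negative, namely $\rho_4$, which means one vertex lies on the river. The sign of $h$ together with \eqref{7.15} (the label toward the endpoint with the larger neighbouring value) determines the arrow direction, which is away from the vertex at $\rho_4$, i.e.\ away from the river. Conversely, an edge on the positive side with a vertex on the river has $a,b>0$ and one negative end region. Orienting it away from the river forces $h>0$ and $a+b-h<0$, which is \eqref{7.18}.

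\textbf{Type A.} This is where I expect the main work. First show that $|a|\le|b|$ and $|h|\le|a|$ force $ab<0$. If $ab>0$, say both positive, then $a+b-|h|\ge b>0$ and $a+b+|h|>0$. So all four regions are positive, and then $D=h^2/4-ab\le a^2/4-a^2<0$, a contradiction; the case of both negative is symmetric. Hence the edge is on the river. Next, the two river edges adjacent at the vertex $\rho_1\rho_2\rho_3$ have labels given by \eqref{7.15}. Using $a>0>b$ (the other case is symmetric) and checking sign patterns of $a+b\pm h$, the condition $|h|\le|a|\le|b|$ should translate into the following: at one endpoint the adjacent river edge label $\tilde h$ satisfies $|\tilde h|\ge|h|$, with orientation making that vertex a source or a well. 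Case (i), $h=0$, is the degenerate instance. The converse goes by reading the same identities backwards. Some care with the boundary cases (equalities, and which vertex is meant) will be the fiddly step.
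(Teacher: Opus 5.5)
Your arguments for (b) and (c) are correct, and so is the first step of (a): if $ab>0$, then $|h|\le|a|\le|b|$ gives $D=h^2/4-ab\le a^2/4-a^2<0$, so a type A edge must lie on the river. The paper gives no proof here at all. It calls (b) and (c) obvious and leaves (a) to the reader, so your sign checks are exactly what is needed. In (b) you do not even need the arrow convention: by Theorem~\ref{t7.17}~(a), an off-river edge on the positive side is oriented away from the river exactly when its label $h$ is positive.

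The gap is in (a). You only say that the conditions ``should translate'' and that the converse follows ``by reading the same identities backwards''. That converse step fails for (a)(ii) as it is worded, as the following computation shows.

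Orient all labels into a vertex. By \eqref{7.15}, at the endpoint carrying $\rho_3$ (value $c=a+b+h$) the given edge has label $h$. The other river edge there has label:
\begin{itemize}
\item $-2a-h$ if $c$ has the sign of $b$ (then $\rho_1$ borders both river edges there);
\item $-2b-h$ if $c$ has the sign of $a$ (then $\rho_2$ does).
\end{itemize}
At the endpoint carrying $\rho_4$, replace $h$ by $-h$.

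So at a source or well where $\rho_1$ borders both river edges you get $|\tilde h|=2|a|-|h|$, hence $|h|\le|\tilde h|\iff|h|\le|a|$. For $0<|h|\le|a|$, one of the two endpoints is such a source or well. But at a source or well where $\rho_2$ borders both river edges you only get $|\tilde h|=2|b|-|h|$, that is, $|h|\le|b|$.

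Concretely, take $[2,4,-5]_{quad}$, with $4D=56$. The edge lies on the river with $|a|=2\le 5=|b|$. The endpoint carrying $\rho_3$ (value $1$) is a well with river labels $4$ and $6$. So (ii) holds literally, yet $|h|=4>|a|$ and the form is not of type A.

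You therefore have to add the condition that the source or well is an endpoint at which $\rho_1$ borders both river edges; with it, both directions follow from the labels above. Equivalently, it is a source if $\omega_1(\rho_1)>0$ and a well if $\omega_1(\rho_1)<0$. This holds because at a river vertex the two river labels pointing into it sum to $-2$ times the value of the region bordering both. Hence sources have their tributary on the negative side and wells on the positive side. Presumably Figure~\ref{fig:7.7} is meant to show this configuration, but neither the text of the lemma nor your sketch contains it.
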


{\bf Proof:}
The proofs of the parts (b) and (c) are rather obvious.
We leave them and the proof of part (a) to the reader.
\hfill$\Box$

\begin{remarks}\label{t7.20}
(i) Fix $r,s\in\Z$. The following set $\www{M}(r,s)$ contains
the set $M(r,s)$ in Theorem \ref{t7.10} (b). It might be slightly
bigger, but it is still finite.
\begin{eqnarray*}
\www{M}(r,s)&:=&\{\begin{pmatrix}a&b\\c&d\end{pmatrix}
\in M_{2\times 2}(\Z)\,|\, a+d=r,\ ad-bc=s, \\
&&\hspace*{2cm} 0<|c|\leq |b|,\ a-d\in [-|c|,|c|]\}.
\end{eqnarray*}
The matrices $\begin{pmatrix}a&b\\c&d\end{pmatrix}\in 
M_{2\times 2}(\Z)$ of type IV with fixed characteristic polynomial
$f(t)=t^2-rt+s$ which correspond by Lemma 
\ref{t7.2} (b) to binary quadratic forms $[c,d-a,-b]_{quad}$
in semi-normal form of type A are obviously precisely the
matrices in the set $\www{M}(r,s)$. If one wants to restrict to
$M(r,s)$, one has to refine part (a)(ii) of Lemma \ref{t7.19} suitably.

(ii) Also in \cite[\S 13 (2) and (3)]{Zag81} binary quadratic
forms in semi-normal form of type A are considered.
But there the binary quadratic forms in semi-normal form of type
B are preferred because of the following.
Consider two oriented edges $(\rho_1,\rho_2)$ and 
$(\rho_3,\rho_4)$ on the positive side of the Conway river 
which each have one vertex on the Conway river and which point
away from the river. So they generate binary quadratic forms
in semi-normal form of type B.
Suppose that walking along the Conway river with the positive
side to the right, $(\rho_3,\rho_4)$ is the next edge
on the positive side after the edge $(\rho_1,\rho_2)$
(in between there may be edges on the negative side).
Then $(\rho_3,\rho_4)=(\rho_1,\rho_2)\tau^{-k}\sigma$ for
some $k\in\N$. There is even a closed formula for $k$ in terms
of the value triple $(a,h,b)$ of the binary quadratic form
which is generated by $(\rho_1,\rho_2)$, see
\cite[\S 13 (5)]{Zag81}.

(iii) Binary quadratic forms of type IV in semi-normal form 
of type C are considered in \cite[page 291]{We17}
and \cite[page 113]{Ha21}. They are called {\it reduced forms}
in \cite[page 113]{Ha21}. The corresponding edges on the Conway
river are called {\it riverbends} in \cite[page 291]{We17}.
They exist because there are edges on both
sides of the Conway river.

(iv) Because of the periodicity in Theorem \ref{t7.17} (b) 
there are only finitely many semi-normal forms of each type.

(v) Let $[c,d-a,-b]_{quad}$ be a primitive (so 
$\gcd(c,d-a,b)=1$ by Theorem \ref{t7.9} (a)(iii)) binary
quadratic form in semi-normal form of type B.
By part (ii) and the periodicity in Theorem \ref{t7.17} (b)
it is mapped by a suitable smallest product 
$(T^{-k_1}S)(T^{-k_2}S)...(T^{-k_l}S)\in SL_2(\Z)$ with
$l,k_1,...,k_l\in\N$ to itself. This product commutes with
the matrix $\begin{pmatrix}a&b\\c&d\end{pmatrix}$ and induces
a unit of norm 1 in the corresponding order 
$\Lambda=\Z[\lambda_1]$ in the real quadratic number field 
$A=\Q[\lambda_1]$. This unit and $-1$
generate the group of units of norm 1 in $\Lambda$.

(vi) Consider an arbitrary binary quadratic form
$[a,h,b]_{quad}$. If its $GL_2(\Z)$ equivalence class splits
into two proper equivalence classes, then $[-a,h,-b]_{quad}$
is one representative of the other proper equivalence class.

{\bf Claim:} {\it The  value atlas from $[-a,h,-b]_{quad}$ 
is obtained from the value atlas from $[a,h,b]_{quad}$ 
by a reflection  $r:D\to D$ (along an arbitrary line in $D$) 
and the new maps
$\www{\omega}_1:=-\omega_1\circ r:r(R_1)\to\Z$ and
$\www{\omega}_2:=\omega_2\circ (r\times r):
(r\times r)(R_2)\to\Z$. }

We leave the proof of the claim to the reader.

(vii) Consider a primitive binary quadratic form $[a,h,b]_{quad}$
with discriminant $D>0$ and $4D\in\N$ not a square.
The reflection in part (vi) can be chosen to be along the
Conway river.

If the $GL_2(\Z)$ equivalence class and the proper equivalence
class of $[a,h,b]_{quad}$ coincide, the procedure in part (vi)
together with a translation along the Conway river leads
to the old value atlas. 
Similarly to part (v), the composition of the reflection
along the Conway river and the translation along the 
Conway river give rise to a unit of norm $-1$ in the
corresponding order $\Lambda=\Z[\lambda_1]$ in the real quadratic 
number field $A=\Q[\lambda_1]$, which together with $-1$ generates the group of all units of $\Lambda$. Compare 
Theorem \ref{t7.9} (c).
\end{remarks}

\begin{examples}\label{t7.21}
Consider the polynomial $f(t)=t^2-rt+s=t^2-7$, so with
$r=0$, $s=-7$, $D=\frac{r^2}{4}-s=7>0$. It will turn out
that there is only one $GL_2(\Z)$-conjugacy class of matrices
in $M_{2\times 2}(\Z)$ with characteristic polynomial $f$,
and that it splits into two $SL_2(\Z)$-conjugacy classes.

We are interested in matrices in semi-normal forms which
correspond by Lemma \ref{t7.2} (b) to binary quadratic forms
in semi-normal forms of types A, B and C (in Definition 
\ref{t7.18}). For that we will consider the value atlasses for
both proper equivalence classes in Conway's topograph.

But first we look at the semi-normal forms in Theorem \ref{t7.10}
(c)(ii). One sees easily
\begin{eqnarray*}
M(0,7)=\{\begin{pmatrix}0&7\\1&0\end{pmatrix},\ 
\begin{pmatrix}0&-7\\-1&0\end{pmatrix},\ 
\begin{pmatrix}1&3\\2&-1\end{pmatrix},\ 
\begin{pmatrix}1&-3\\-2&-1\end{pmatrix}\}.
\end{eqnarray*}
The possibly slightly bigger set $\www{M}(0,7)$ in Remark
\ref{t7.20} (i) is indeed slightly bigger, it is
\begin{eqnarray*}
\www{M}(0,7)=M(0,7)\,\cup\, 
\{\begin{pmatrix}-1&3\\2&1\end{pmatrix},\ 
\begin{pmatrix}-1&-3\\-2&1\end{pmatrix}\}.
\end{eqnarray*}
Thus there are six matrices in semi-normal form of type A.
But Theorem \ref{t7.10} does not say in how many 
$GL_2(\Z)$ and $SL_2(\Z)$-conjugacy classes they lie.
To see this, the topograph and the value atlasses are better
suited.

The companion matrix of the polynomial $f(t)=t^2-7$ is
$\begin{pmatrix}0&7\\1&0\end{pmatrix}$. It corresponds by
Lemma \ref{t7.2} (b) to the binary quadratic form
$[1,0,-7]_{quad}$. The corresponding edge in the topograph
lies on the Conway river. The value atlas near the Conway river
is given in Figure \ref{fig:7.10}.
The horizontal edges are on the Conway river. The periodicity
is expressed by the two small circles. Glueing infinitely many
copies of the graph in Figure \ref{fig:7.10}
at the small circles gives the Conway river and a neighborhood
of it. 

With Lemma \ref{t7.19} one reads off from Figure \ref{fig:7.10}
the binary quadratic forms in semi-normal forms of types A, B 
and C
which are in the proper equivalence class of $[1,0,-7]_{quad}$.
They are listed in Table \ref{tab7.2}. Below them the matrices
in the $SL_2(\Z)$-conjugacy class of
$\begin{pmatrix}0&7\\1&0\end{pmatrix}$ which correspond to them
by Lemma \ref{t7.2} (b) are listed.

\begin{figure}
\includegraphics[width=0.8\textwidth]{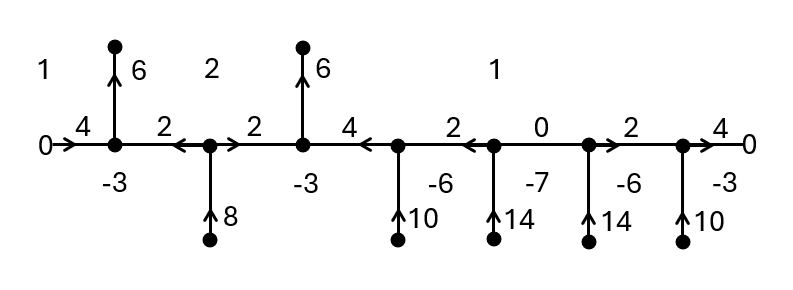}
\caption[Figure 7.10]{Part of the value atlas for 
$[1,0,-7]_{quad}$}
\label{fig:7.10}
\end{figure}

\begin{table}[H]
\begin{eqnarray*}
\begin{array}{lllll}
\textup{Type A} & [2,-2,-3]_{quad} & [2,2,-3]_{quad} & 
[1,0,-7]_{quad} & \\
 & \begin{pmatrix}-1&3\\2&1\end{pmatrix}  
 & \begin{pmatrix}1&3\\2&-1\end{pmatrix}  
 & \begin{pmatrix}0&7\\1&0\end{pmatrix} & \\
\textup{Type B} & [1,6,2]_{quad} & [2,6,1]_{quad} & & \\
 & \begin{pmatrix}3&-2\\1&-3\end{pmatrix} 
 & \begin{pmatrix}3&-1\\2&-3\end{pmatrix} & & \\
\textup{Type C} & [1,4,-3]_{quad} & [2,-2,-3]_{quad} 
 & [2,2,-3]_{quad} & [1,-4,-3]_{quad} \\
 & \begin{pmatrix}2&3\\1&-2\end{pmatrix}  
 & \begin{pmatrix}-1&3\\2&1\end{pmatrix}  
 & \begin{pmatrix}1&3\\2&-1\end{pmatrix}  
 & \begin{pmatrix}-2&3\\1&2\end{pmatrix}  
\end{array}
\end{eqnarray*}
\caption[Table 7.2]{Semi-normal forms of types A, B and C
in the proper equivalence class of $[1,0,-7]_{quad}$}
\label{tab7.2}
\end{table}

Figure \ref{fig:7.11} is obtained from Figure \ref{fig:7.10}
by the procedure in the claim in Remark \ref{t7.20} (vi).
It belongs to the other proper equivalence class within the
$GL_2(\Z)$ equivalence class of $[1,0,-7]_{quad}$, namely to
the proper equivalence class of $[-1,0,7]_{quad}$.
Table \ref{tab7.3} is analogous to Table \ref{tab7.2}.
It lists the binary quadratic forms in semi-normal forms of
types A, B and C in the proper equivalence class of 
$[-1,0,7]_{quad}$ and the corresponding matrices in
the $SL_2(\Z)$-conjugacy class of 
$\begin{pmatrix}0&-7\\-1&0\end{pmatrix}$.

\begin{figure}
\includegraphics[width=0.8\textwidth]{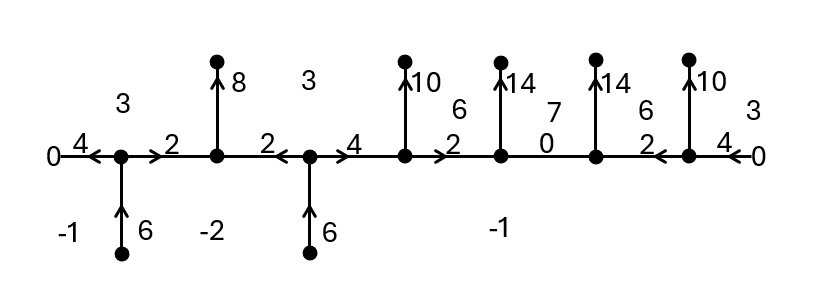}
\caption[Figure 7.11]{Part of the value atlas for
$[-1,0,7]_{quad}$}
\label{fig:7.11}
\end{figure}

\begin{table}[H]
\begin{eqnarray*}
\begin{array}{llllll}
\textup{Type A} & [-2,-2,3]_{q.} & [-2,2,3]_{q.} & 
[-1,0,7]_{q.} & & \\
 & \begin{pmatrix}-1&-3\\-2&1\end{pmatrix}  
 & \begin{pmatrix}1&-3\\-2&-1\end{pmatrix}  
 & \begin{pmatrix}0&-7\\-1&0\end{pmatrix} & \\
\textup{Type B} & [3,8,3]_{q.} & [3,10,6]_{q.} & 
{[6,14,7]_{q.}} & [7,14,6]_{q.} & [6,10,3]_{q.} \\
 & \begin{pmatrix}4&-3\\3&-4\end{pmatrix} 
 & \begin{pmatrix}5&-6\\3&-5\end{pmatrix} 
 & \begin{pmatrix}7&-7\\6&-7\end{pmatrix} 
 & \begin{pmatrix}7&-6\\7&-7\end{pmatrix} 
 & \begin{pmatrix}5&-3\\6&-5\end{pmatrix}  \\
\textup{Type C} & [3,4,-1]_{q.} & [3,2,-2]_{q.} 
 & [3,-2,-2]_{q.} & [3,4,-1]_{q.} & \\
 & \begin{pmatrix}2&1\\3&-2\end{pmatrix}  
 & \begin{pmatrix}1&2\\3&-1\end{pmatrix}  
 & \begin{pmatrix}-1&2\\3&1\end{pmatrix}  
 & \begin{pmatrix}2&1\\3&-2\end{pmatrix}  
\end{array}
\end{eqnarray*}
\caption[Table 7.3]{Semi-normal forms of types A, B and C
in the proper equivalence class of $[-1,0,7]_{quad}$}
\label{tab7.3}
\end{table}

One can make three observations.
\begin{list}{}{}
\item[(1)] 
All matrices in $\www{M}(0,7)$ are in the two tables.
Therefore there is only one $GL_2(\Z)$-conjugacy class
of matrices with characteristic polynomial $f(t)=t^2-7$.
\item[(2)]
There is a 1:1 correspondence between the semi-normal forms
of type A in both tables (and both figures). The same
holds for type C, but not for type B.
\item[(3)]
The semi-normal forms of type A and type C overlap.
\end{list}
\end{examples}

\section{An irreducible rank 3 case}\label{s8}
\setcounter{equation}{0}
\setcounter{table}{0}
\setcounter{figure}{0}

\noindent 
Dade, Taussky and Zassenhaus report in \cite[1.4]{DTZ62}
that they calculated with computer for some algebraic number
fields $A$ and orders $\Lambda_4\subset A$ the finite
subsemigroup 
$\{[L]_w\,|\, L\in\LL(A),\OO(L)\supset \Lambda_4\}$
of the semigroup $W(\LL(A))=W(\EE(A))$, and also
the division maps on this semigroup.
In \cite[1.5]{DTZ62} they document one example.
There $A=\Q[\alpha]$ with $\alpha$ a zero of the 
irreducible polynomial
$$t^3+4t^2+8t+16$$
and
$$\Lambda_4=\Z[\alpha]\subset A.$$
The semigroup $\{[L]_w\,|\, L\in\LL(A),\OO(L)\supset \Lambda_4\}$
turns out to have five elements, four of them idempotent.
\cite[1.5]{DTZ62} gives full lattices $I_1,...,I_5$ which
represent the five $w$-classes and presents the multiplication
table $([I_i]_w\cdot [I_j]_w)_{i,j\in\{1,...,5\}}$ and the
division table $([I_i]_w:[I_j]_w)_{i,j\in\{1,...,5\}}$.

We will study the finer subsemigroup 
$\{[L]_\varepsilon\,|\, L\in\LL(A),\OO(L)\supset \Lambda_4\}$
of $\EE(A)$. It will turn out to have six elements. While 
working this out in this section, we will see several results 
in the sections \ref{s4} and \ref{s5} in action.

First, we cite (and use without own proof) some facts which
\cite{LMFDB23} states on the algebraic number field
$A:=\Q[\alpha]$ and its maximal order $\Lambda_{max}$
of algebraic integers:
\begin{eqnarray*}
A&=&\Q[\gamma]\quad\textup{where } \gamma
\textup{ is a zero of }t^3-t^2+t+1,\\
\Lambda_{max}&=& \Z[\gamma]
=\langle 1,\gamma,\gamma^2\rangle_\Z,\\
\Lambda_{max}^{unit}&=& \{\pm\gamma^l\,|\, l\in\Z\},\\
|G([\Lambda_{max}]_\varepsilon)|&=&
(\textup{class number of }A)=1.
\end{eqnarray*}
Define 
$$\beta:=\gamma-1.$$
Then
\begin{eqnarray*}
0&=&\beta^3+2\beta^2+2\beta+2,\\
\alpha&=&2\beta,\\
\Lambda_1&:=&\Lambda_{max}=\Z[\beta]
=\langle 1,\beta,\beta^2\rangle_\Z,\\
\Lambda_{max}^{unit}&=& \{\pm (\beta+1)^l\,|\, l\in\Z\},\\
\Lambda_4&=&\Z[2\beta]=\langle 1,2\beta,4\beta^2\rangle_\Z.\
\end{eqnarray*}

\begin{lemma}\label{t8.1}
The only orders $\Lambda\supset\Lambda_4$ are
\begin{eqnarray*}
\Lambda_1&=& \langle 1,\beta,\beta^2\rangle_\Z,\\
\Lambda_2&=& \langle 1,2\beta,\beta^2\rangle_\Z,\\
\Lambda_3&=& \langle 1,2\beta,2\beta^2\rangle_\Z,\\
\Lambda_4&=& \langle 1,2\beta,4\beta^2\rangle_\Z.
\end{eqnarray*}
\end{lemma}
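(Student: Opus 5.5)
Every order $\Lambda\supset\Lambda_4$ lies in $\Lambda_{max}=\Lambda_1$. Since $2\beta\in\Lambda_4\subset\Lambda$, the group $\Lambda/\Lambda_4$ is a subgroup of $\Lambda_1/\Lambda_4$. This quotient is generated by the classes of $\beta$ and $\beta^2$ and is isomorphic to $\Z_2\oplus\Z_4$, since $\Lambda_4=\langle 1,2\beta,4\beta^2\rangle_\Z$. So the plan is to enumerate the finitely many full lattices $L$ with $\Lambda_4\subset L\subset\Lambda_1$, i.e. the subgroups of $\Z_2\oplus\Z_4$. For each one we check whether it is closed under multiplication; the condition $1\in L$ is automatic. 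The only multiplication data needed are the products of basis elements of $\Lambda_1$, computed from $\beta^3=-2\beta^2-2\beta-2$ and hence $\beta^4=2\beta^2+2\beta+4$.

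Concretely, I write an intermediate lattice as
$$L=\Z\cdot 1+\Lambda_4+\Z(x\beta+y\beta^2)+\dots,$$
and sort the subgroups of $\Z_2\oplus\Z_4$ (with generators $\bar\beta$ of order 2 and $\bar\beta^2$ of order 4) by which elements they contain.

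The first case is $L\ni\beta$. Then $\beta\cdot\beta=\beta^2\in L$, so $L=\Lambda_1$.

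Otherwise $L\subset\langle 1,2\beta,\beta^2\rangle+\Z(\beta+y\beta^2)$ for some $y$. Elements of the form $\beta+y\beta^2$ with $y\in\{1,2,3\}$ must be excluded. For such an element, $(\beta+y\beta^2)^2=\beta^2+2y\beta^3+y^2\beta^4$ reduces to $\beta^2+(2y^2-4y)\beta^2+\dots$. This forces $\beta^2$-components into $L$, and then the product $\beta^2\cdot(\beta+y\beta^2)$ or a similar product yields $\beta\in L$, a contradiction with the case assumption (or else it lands us back at $\Lambda_1$). Some care is needed here, since for odd $y$ the $\beta^2$-component has order 4 in the quotient. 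I expect this elimination to be the main, if routine, obstacle: the entries mod $\Lambda_4$ must be tracked correctly.

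This leaves $L\subset\Lambda_2=\langle1,2\beta,\beta^2\rangle_\Z$. The subgroups of $\Lambda_2/\Lambda_4\cong\Z_4$ are $0$, $\langle 2\beta^2\rangle$ and the whole group, giving $\Lambda_4$, $\Lambda_3$ and $\Lambda_2$. Closure must then be verified for each:
\begin{itemize}
\item for $\Lambda_2$: $(\beta^2)^2=\beta^4=2\beta^2+2\beta+4\in\Lambda_2$ and $2\beta\cdot\beta^2=2\beta^3\in\Lambda_2$;
\item for $\Lambda_3$: $(2\beta)(2\beta^2)=4\beta^3=-8\beta^2-8\beta-8$ and $(2\beta^2)^2=4\beta^4$ both lie in $\Lambda_3$, and $(2\beta)^2=4\beta^2\in\Lambda_3$;
\item $\Lambda_4=\Z[2\beta]$ is an order by definition.
\end{itemize}

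Finally, to avoid sloppy case splitting in the middle step, I would systematically list all eight-or-so subgroups of $\Z_2\oplus\Z_4$, for example those generated by $\bar\beta+\bar\beta^2$, by $\bar\beta+2\bar\beta^2$, or by $\bar\beta$ alone. For each I would exhibit one product of generators lying outside the subgroup; for instance $\beta\cdot\beta=\beta^2\notin\langle1,\beta,2\beta^2\rangle+\Lambda_4$. This shows that exactly four of them are orders.
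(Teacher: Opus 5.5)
Your proposal is correct and is essentially the paper's proof. The paper also splits according to whether $\Lambda$ contains some $\beta+l\beta^2$: it reduces $l$ modulo $4$ and shows by squaring that each $l\in\{0,1,2,3\}$ forces $\Lambda=\Lambda_1$, and otherwise it reads off $\Lambda=\langle 1,2\beta\rangle_\Z+\Lambda\cap\Z\beta^2$ with $\Lambda\cap\Z\beta^2\supset 4\Z\beta^2$.

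The step you flag as the main obstacle is immediate. The $\beta^2$-coefficient $2y^2-4y+1$ of $(\beta+y\beta^2)^2$ is odd, so $\beta^2\in\Lambda$ (using $1,2\beta,4\beta^2\in\Lambda$), and then $\beta=(\beta+y\beta^2)-y\beta^2\in\Lambda$ by subtraction. No further product is needed; indeed $\beta^2(\beta+y\beta^2)$ has even $\beta$-coefficient and would not help.
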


{\bf Proof:} Let $\Lambda\supset \Lambda_4$ be an order.

First case, $\Lambda\cap \{\beta+l\beta^2\,|\, l\in\Z\}
=\emptyset$: Then $\Lambda=\langle 1,2\beta\rangle_\Z
+\Lambda\cap \Z\beta^2$ and 
$\Lambda\cap\Z\beta^2\supset \Z4\beta^2$.
Only $\Lambda_2,\Lambda_3$ and $\Lambda_4$ are possible.

Second case, for some $l\in\Z$ we have $\beta+l\beta^2\in\Lambda$:
Because $4\beta^2\in\Lambda_4\subset\Lambda$ we can suppose
$l\in\{0,1,2,3\}$. We will show in each case $\Lambda=\Lambda_1$.

$l=0$: $\beta^2\in\Lambda$, $\Lambda=\Lambda_1$.

$l=1$: $(\beta+\beta^2)^2=\beta^2+2\beta^3+\beta^4
=-2\beta-\beta^2=-\beta-(\beta+\beta^2)$, so 
$\beta\in\Lambda$, so $\Lambda=\Lambda_1$.

$l=2$: $(\beta+2\beta^2)^2=\beta^2+4\beta^3+4\beta^4
=\beta^2+8$, so $\beta^2\in\Lambda$, so $\beta\in\Lambda$,
so $\Lambda=\Lambda_1$.

$l=3$: $\beta+3\beta^2=-(\beta+\beta^2)+2\beta+4\beta^2$,
so $\beta+\beta^2\in\Lambda$, so $\Lambda=\Lambda_1$.
\hfill$\Box$ 

\bigskip
For $i\in\{1,2,3,4\}$ denote by 
$$\tau_i:= |\{[L]_w\,|\, \OO(L)=\Lambda_i\}|$$
the number of $w$-equivalence classes of full lattices with
order $\Lambda_i$. $\Lambda_1$ and $\Lambda_4$ are cyclic
orders, so $\tau_1=\tau_4=1$ by Theorem \ref{t4.12}.
$\Lambda_2$ satisfies the hypothesis in Theorem \ref{t4.9},
namely $\Lambda_1=\Lambda_2+\beta\Lambda_2$, so
$\tau_2=\tau_1=1$. We will determine $\tau_3$ with Theorem 
\ref{t5.7}. In fact, we will apply Theorem \ref{t5.7}
to each $\Lambda_i$ and recover $\tau_1=\tau_2=\tau_4=1$.
The following table lists for $i\in\{1,2,3,4\}$ the
ingredients of Theorem \ref{t5.7}:
The generators $m_{i,1}$ and $m_{i,2}\in\Lambda_i$ with
$\Lambda_i=\langle 1,m_{i,1},m_{i,2}\rangle_\Z$,
the numbers $a_i,b_i,c_i,d_i\in\Z$, $\mu_i\in\N$,
$t_i\in \Z_{\geq 0}$ and $\tau_i\in\N$ with
\begin{eqnarray*}
m_{i,1}^2&=& b_im_{i,1}+a_im_{i,2}-a_ic_i,\\
m_{i,1}m_{i,2}&=& a_id_i,\\
m_{i,2}^2&=& d_im_{i,1}+c_im_{i,2}-b_id_i,\\
\mu_i&=& \gcd(a_i,b_i,c_i,d_i),\\
t_i&=& |\{p\in\P\,|\, p\textup{ divides }\mu_i\}|,\\
\tau_i&=& 2^{t_i}.
\end{eqnarray*}

\begin{table}[h]
\begin{eqnarray*}
\begin{array}{c|c|c|c|c|c|c|c|c|c}
i & m_{i,1} & m_{i,2} & a_i & b_i & c_i & d_i & \mu_i & 
t_i & \tau_i \\ \hline 
1 & \beta+2 & \beta^2+2 & 1 & 4 & 6 & 2 & 1 & 0 & 1 \\
2 & 2\beta+4 & \beta^2+2 & 4 & 8 & 6 & 1 & 1 & 0 & 1 \\
3 & 2\beta+4 & 2\beta^2+4 & 2 & 8 & 12 & 4 & 2 & 1 & 2 \\
4 & 2\beta+4 & 4\beta^2+8 & 1 & 8 & 24 & 16 & 1 & 0 & 1 
\end{array}
\end{eqnarray*}
\caption{The data in Theorem \ref{t5.7} for $\Lambda_i$,
$i\in\{1,2,3,4\}$}
\label{table3.1}
\end{table}

So $\tau_3=2$. One element in the $w$-class of non-invertible
full lattices $L$ with $\OO(L)=\Lambda_3$ can be found with
Dedekind's observation Lemma \ref{t4.6}, namely
\begin{eqnarray}\label{8.1}
L_3:=\langle 1,\beta,2\beta^2\rangle_\Z.
\end{eqnarray}
$L_3$ satisfies $1\in L_3\subsetneqq \Lambda_1$ and
$L_3^2=\Lambda_1$, so $L_3$ is not invertible by
Lemma \ref{t4.6}. One sees easily $\OO(L_3)=\Lambda_3$, so
\begin{eqnarray*}
\{[L]_w\,|\, L\in\LL(A),\OO(L)=\Lambda_3\}
=\{[\Lambda_3]_w,[L_3]_w\}.
\end{eqnarray*}
We have recovered the five $w$-equivalence classes in
\cite[1.5]{DTZ62} of full lattices $L$ with 
$\OO(L)\supset\Lambda_4$. 

But our representatives $\Lambda_1,\Lambda_2,\Lambda_3,L_3$
and $\Lambda_4$ differ from the representatives 
$I_5,I_3,I_4,I_2$ and $I_1$ in \cite[1.5]{DTZ62}.
The following list gives both sets of representatives
and their relations.
\begin{eqnarray*}
\Lambda_1=\langle 1,\beta,\beta^2\rangle_\Z &,& 
I_5=\langle 4,4\beta,4\beta^2\rangle =4\cdot \Lambda_1,\\
\Lambda_2=\langle 1,2\beta,\beta^2\rangle_\Z &,& 
I_3=\langle 4,2\beta,4\beta^2\rangle 
=(4\beta^2+2\beta+4)\cdot \Lambda_2,\\
\Lambda_3=\langle 1,2\beta,2\beta^2\rangle_\Z &,& 
I_4=\langle 4,2\beta+2,4\beta^2\rangle 
=2(\beta+1)\cdot \Lambda_3,\\
L_3      =\langle 1,\beta,2\beta^2\rangle_\Z &,& 
I_2=\langle 2,2\beta,4\beta^2\rangle =2\cdot L_3,\\
\Lambda_4=\langle 1,2\beta,4\beta^2\rangle_\Z &,& 
I_1=\Lambda_4.
\end{eqnarray*}

Contrary to \cite[1.5]{DTZ62} we also want to see the
$\varepsilon$-classes $[L]_\varepsilon$ with
$\OO(L)\supset\Lambda_4$. We know already
$|G([\Lambda_1])|=(\textup{class number of }A)=1$ from
\cite{LMFDB23}. We want to determine the numbers
$|G([\Lambda_i]_\varepsilon)|$ for $i\in\{2,3,4\}$.
Define for $i\in\{2,3,4\}$ the conductor
$$C_i:=\Lambda_i:\Lambda_1$$
of the pair $(\Lambda_1,\Lambda_i)$. 
Then Theorem \ref{t5.12} (e)
(respectively in the case of an algebraic number field as here
already \cite[Ch. I, (12.12) Theorem]{Ne99}) gives for
$i\in\{2,3,4\}$
\begin{eqnarray*}
|G([\Lambda_i]_\varepsilon)| &=& 
\frac{|G([\Lambda_1]_\varepsilon)|}
{[\Lambda_1^{unit}:\Lambda_i^{unit}]}\cdot 
\frac{|(\Lambda_1/C_i)^{unit}|}{|(\Lambda_i/C_i)^{unit}|}.
\end{eqnarray*}
By \cite{LMFDB23}
\begin{eqnarray*}
\Lambda_1^{unit}&=& \{\pm (\beta+1)^l\,|\, l\in\Z\}.
\end{eqnarray*}
Observe
\begin{eqnarray*}
\beta+1 & \notin& \Lambda_2,\Lambda_3,\Lambda_4,\\
(\beta+1)^2&=& \beta^2+2\beta+1\in\Lambda_2,
\notin\Lambda_3,\notin\Lambda_4,\\
(\beta+1)^3&\notin& \Lambda_2,\Lambda_3,\Lambda_4,\\
(\beta+1)^4&=& -2\beta-3 \in\Lambda_2,\Lambda_3,\Lambda_4,
\end{eqnarray*}
so
\begin{eqnarray*}
\Lambda_2^{unit}=\{\pm (\beta+1)^{2l}\,|\, l\in\Z\},&&
[\Lambda_1^{unit}:\Lambda_2^{unit}]=2,\\
\Lambda_3^{unit}=\Lambda_4^{unit}=
\{\pm(\beta+1)^{4l}\,|\, l\in\Z\},&&
[\Lambda_1^{unit}:\Lambda_3^{unit}]=[\Lambda_1^{unit}:\Lambda_4^{unit}]=4.
\end{eqnarray*}
In the following isomorphisms $\Lambda_1/C_i\cong ...$ 
or $\Lambda_i/C_i\cong ...$, on the right hand side
the multiplication of the quotient ring is not made precise.
But on the right hand side $[\beta]$, $[2\beta]$ and 
$[\beta^2]$ are nilpotent, so all summands except
$\Z_m\cdot {[1]}$ are nilpotent, 
and that is sufficient to determine the groups 
$(\Lambda_1/C_i)^{unit}$ and $(\Lambda_i/C_i)^{unit}$
of units. Here recall $\Z_m=\Z/m\Z$ for $m\in\N$ 
from the Notations \ref{t1.1} (i). Observe
\begin{eqnarray*}
C_2&=& \langle 2,2\beta,\beta^2\rangle_\Z,\\
\frac{\Lambda_2}{C_2}&=& 
\frac{\langle 1,2\beta,\beta^2\rangle_\Z}
{\langle 2,2\beta,\beta^2\rangle_\Z}
\cong \Z_2\cdot [1],\\
\Bigl(\frac{\Lambda_2}{C_2}\Bigr)^{unit}&\cong& \{[1]\},\quad 
|\Bigl(\frac{\Lambda_2}{C_2}\Bigr)^{unit}|=1,\\
\frac{\Lambda_1}{C_2}&=& 
\frac{\langle 1,\beta,\beta^2\rangle_\Z}
{\langle 2,2\beta,\beta^2\rangle_\Z} \cong
\Z_2\cdot [1]+\Z_2\cdot [\beta],\\
\Bigl(\frac{\Lambda_1}{C_2}\Bigr)^{unit}
&\cong& [1]+\Z_2\cdot [\beta],\quad 
|\Bigl(\frac{\Lambda_1}{C_2}\Bigr)^{unit}|=2.
\end{eqnarray*}

\begin{eqnarray*}
C_3&=& \langle 2,2\beta,2\beta^2\rangle_\Z,\\
\frac{\Lambda_3}{C_3}&=& 
\frac{\langle 1,2\beta,2\beta^2\rangle_\Z}
{\langle 2,2\beta,2\beta^2\rangle_\Z}
\cong \Z_2\cdot [1],\\
\Bigl(\frac{\Lambda_3}{C_3}\Bigr)^{unit}&\cong& \{[1]\},\quad
|\Bigl(\frac{\Lambda_3}{C_3}\Bigr)^{unit}|=1,\\
\frac{\Lambda_1}{C_3}&=& 
\frac{\langle 1,\beta,\beta^2\rangle_\Z}
{\langle 2,2\beta,2\beta^2\rangle_\Z} \cong
\Z_2\cdot [1]+\Z_2\cdot [\beta]+\Z_2\cdot [\beta^2]\\
\Bigl(\frac{\Lambda_1}{C_3}\Bigr)^{unit}
&\cong& [1]+\Z_2\cdot [\beta]+\Z_2\cdot [\beta^2],\quad
|\Bigl(\frac{\Lambda_1}{C_3}\Bigr)^{unit}|=4.
\end{eqnarray*}

\begin{eqnarray*}
C_4&=& \langle 4,4\beta,4\beta^2\rangle_\Z,\\
\frac{\Lambda_4}{C_4}&=& 
\frac{\langle 1,2\beta,4\beta^2\rangle_\Z}
{\langle 4,4\beta,4\beta^2\rangle_\Z}
\cong \Z_4\cdot [1]+\Z_2\cdot[2 \beta],\\
\Bigl(\frac{\Lambda_4}{C_4}\Bigr)^{unit}&\cong& 
\{[1],[3]\}+\Z_2\cdot[2\beta],\quad
|\Bigl(\frac{\Lambda_4}{C_4}\Bigr)^{unit}|=4,\\
\frac{\Lambda_1}{C_4}&=& 
\frac{\langle 1,\beta,\beta^2\rangle_\Z}
{\langle 4,4\beta,4\beta^2\rangle_\Z} \cong
\Z_4\cdot [1]+\Z_4\cdot [\beta]+\Z_4\cdot [\beta^2]\\
\Bigl(\frac{\Lambda_1}{C_4}\Bigr)^{unit}
&\cong& \{[1],[3]\}+\Z_4\cdot [\beta]+\Z_4\cdot [\beta^2],\quad
|\Bigl(\frac{\Lambda_1}{C_4}\Bigr)^{unit}|=32,
\end{eqnarray*}
so
\begin{eqnarray*}
|G([\Lambda_2]_\varepsilon)|&=& 
\frac{1}{2}\cdot \frac{2}{1}=1,\\
|G([\Lambda_3]_\varepsilon)|&=& 
\frac{1}{4}\cdot \frac{4}{1}=1,\\
|G([\Lambda_4]_\varepsilon)|&=& 
\frac{1}{4}\cdot \frac{32}{4}=2.
\end{eqnarray*}

We want to find an invertible full lattice $L_4$ with
$G([\Lambda_4]_\varepsilon)=\{[\Lambda_4]_\varepsilon,
[L_4]_\varepsilon\}$. By Theorem \ref{t5.12} (c)+(d) 
we can choose $L_4=C_4+a\Lambda_1$ where $a\in\Lambda_1$
with 
\begin{eqnarray*}
a+C_4&\in& (\Lambda_1/C_4)^{unit} - 
\Bigl(\textup{the subgroup generated by }(\Lambda_4/C_4)^{unit}\\
&&\hspace*{3cm}\textup{ and the image of }\Lambda_1^{unit}\Bigr)\\
&=& (\Lambda_1/C_4)^{unit} - 
(\Lambda_4/C_4)^{unit}\cdot \{(\beta+1)^l\,|\, l\in\{0,1,2,3\}\}\\
&=& (32\textup{ elements}) - (16\textup{ elements}). 
\end{eqnarray*}
$a:=1+2\beta^2$ works and gives
\begin{eqnarray}
L_4&=& C_4+a\Lambda_4 \nonumber\\
&=& \langle 4,4\beta,4\beta^2\rangle_\Z
+ \langle 1+2\beta^2,(1+2\beta^2)2\beta,
(1+2\beta^2)4\beta^2\rangle_\Z \nonumber\\
&=& \langle 2,2\beta,2\beta^2+1\rangle_\Z.\label{8.2}
\end{eqnarray}
So the set $\{[L]_\varepsilon\,|\, L\in \LL(A),\OO(L)\supset
\Lambda_4\}$ consists of the following classes,
\begin{eqnarray*}
\begin{array}{c|c|c|c|l}
[\Lambda_1]_\varepsilon & [\Lambda_2]_\varepsilon & 
[\Lambda_3]_\varepsilon & [\Lambda_4]_\varepsilon, 
[L_4]_\varepsilon & \textup{: invertible} \\
 & & [L_3]_\varepsilon & & \textup{: not invertible}
\end{array}
\end{eqnarray*}

We are interested in the multiplication table for the six
$\varepsilon$-classes. Surprisingly, the set $\{\Lambda_1,\Lambda_2,
\Lambda_3,L_3,\Lambda_4,L_4\}$ of representatives itself
is multiplication invariant. Table \ref{table3.2} 
is their multiplication table. Part of it is easy to
see because the orders contain one another,
$\Lambda_1\supset\Lambda_2\supset\Lambda_2\supset\Lambda_4$,
so $\Lambda_i\Lambda_j=\Lambda_i$ if $i<j$. 
Only some of the products involving $L_3$ or $L_4$ need calculations.

\begin{table}
\begin{eqnarray*}
\begin{array}{c|cccccc}
\cdot & \Lambda_1 & \Lambda_2 & 
{} \Lambda_3 & L_3 & 
{} \Lambda_4 & L_4 \\ \hline
{} \Lambda_1 & \Lambda_1 & 
{} \Lambda_1 & \Lambda_1 & 
{} \Lambda_1 & \Lambda_1 & 
{} \Lambda_1 \\
{} \Lambda_2 & &  
{} \Lambda_2 & \Lambda_2 & 
{} \Lambda_1 & \Lambda_2 & 
{} \Lambda_2 \\
{} \Lambda_3 & & & \Lambda_3 & 
{} L_3 & \Lambda_3 & 
{} \Lambda_3 \\
{} L_3 & & & & 
{} \Lambda_1 & L_3 & 
{} L_3 \\
{} \Lambda_4 & & & & & \Lambda_4 & 
{} L_4 \\
{} L_4 & & & & & & \Lambda_4 
\end{array}
\end{eqnarray*}
\caption{The multiplication table for the six 
full lattices}
\label{table3.2}
\end{table}

Table \ref{table3.3} is the division table. 
\cite[Lemma 5.3 (b)]{HL26} and 
\cite[Lemma 8.1 (a)]{HL26}
give the major part of its entries. 

\begin{table}
\begin{eqnarray*}
\begin{array}{c|cccccc}
 : & [\Lambda_1]_\varepsilon & [\Lambda_2]_\varepsilon & 
{} [\Lambda_3]_\varepsilon & [L_3]_\varepsilon & 
{} [\Lambda_4]_\varepsilon & [L_4]_\varepsilon \\ \hline
{} [\Lambda_1]_\varepsilon & [\Lambda_1]_\varepsilon & 
{} [\Lambda_1]_\varepsilon & [\Lambda_1]_\varepsilon & 
{} [\Lambda_1]_\varepsilon & [\Lambda_1]_\varepsilon & 
{} [\Lambda_1]_\varepsilon \\
{} [\Lambda_2]_\varepsilon & [\Lambda_1]_\varepsilon &  
{} [\Lambda_2]_\varepsilon & [\Lambda_2]_\varepsilon & 
{} [\Lambda_1]_\varepsilon & [\Lambda_2]_\varepsilon & 
{} [\Lambda_2]_\varepsilon \\
{} [\Lambda_3]_\varepsilon &  [\Lambda_1]_\varepsilon & 
{} [\Lambda_1]_\varepsilon & [\Lambda_3]_\varepsilon & 
{} [\Lambda_1]_\varepsilon & [\Lambda_3]_\varepsilon & 
{} [\Lambda_3]_\varepsilon \\
{} [L_3]_\varepsilon &  [\Lambda_1]_\varepsilon & 
{} [\Lambda_2]_\varepsilon &  [L_3]_\varepsilon & 
{} [\Lambda_3]_\varepsilon & [L_3]_\varepsilon & 
{} [L_3]_\varepsilon \\
{} [\Lambda_4]_\varepsilon &  [\Lambda_1]_\varepsilon & 
{} [\Lambda_2]_\varepsilon &  [L_3]_\varepsilon & 
{} [\Lambda_3]_\varepsilon & [\Lambda_4]_\varepsilon & 
{} [L_4]_\varepsilon \\
{} [L_4]_\varepsilon & [\Lambda_1]_\varepsilon & 
{} [\Lambda_2]_\varepsilon &  [L_3]_\varepsilon & 
{} [\Lambda_3]_\varepsilon &  [L_4]_\varepsilon &  
{} [\Lambda_4]_\varepsilon 
\end{array}
\end{eqnarray*}
\caption{The division table for the six $\varepsilon$-classes}
\label{table3.3}
\end{table}
For example, it is remarkable that 
$\Lambda_4:\Lambda_3=2L_3\sim_\varepsilon L_3$
is not invertible. 

Finally, the matrices
$$M_L\in M_{3\times 3}(\Z)\quad\textup{ with }\quad
2\beta\cdot\BB_L=\BB_L\cdot M_L$$
for $L\in\{\Lambda_1,\Lambda_2,\Lambda_3,L_3,\Lambda_4,L_4\}$
and $\BB_L$ the $\Z$-basis of $L$ in Lemma \ref{t8.1}
(for $\Lambda_1,\Lambda_2,\Lambda_3,\Lambda_4$),
in \eqref{8.1} (for $L_3$) and in \eqref{8.2} (for $L_4$)
are as follows. They are representatives of the six
$GL_3(\Z)$-conjugacy classes of integer matrices with
characteristic polynomial $t^3+4t^2+8t+16$.
\begin{eqnarray*}
M_{\Lambda_1}&=& 
\begin{pmatrix}0&0&-4\\2&0&-4\\0&2&-4\end{pmatrix},\quad
M_{\Lambda_2}=
\begin{pmatrix}0&0&-4\\1&0&-2\\0&4&-4\end{pmatrix},\\
M_{\Lambda_3}&=&
\begin{pmatrix}0&0&-8\\1&0&-4\\0&2&-4\end{pmatrix},\quad
M_{L_3}=
\begin{pmatrix}0&0&-8\\2&0&-8\\0&1&-4\end{pmatrix},\\
M_{\Lambda_4}&=&
\begin{pmatrix}0&0&-16\\1&0&-8\\0&1&-4\end{pmatrix},\quad
M_{L_4}=
\begin{pmatrix}0&-1&-2\\2&0&-3\\0&2&-4\end{pmatrix}.
\end{eqnarray*}

\section{The separable cases with 1-dimensional summands}
\label{s9}
\setcounter{equation}{0}
\setcounter{table}{0}
\setcounter{figure}{0}

\subsection{Observations on the case of arbitrary rank}
\label{s9.1}

\noindent 
The algebra $A=\Q e_1+...+\Q e_n$ with 
$e_ie_j=\delta_{ij}e_i$ and $n\in\Z_{\geq 2}$ is separable,
but not irreducible, so it is not an algebraic number field.
We will apply our tools and study the full lattices, orders
and semigroups. First we give a few general statements.
Then we treat the case $n=2$ systematically,
Then we give quite some details for the case $n=3$. Finally we 
apply them to one example, the conjugacy classes of $3\times 3$
matrices with eigenvalues $2,0,-2$.

The maximal order $\Lambda_{max}$ in $A$ is by the proof of 
Theorem \ref{t4.8}
$$\Lambda_{max}=\Z e_1+...+\Z e_n.$$
Each $\Lambda_{max}$-ideal $L\in\LL(A)$ splits as
\begin{eqnarray*}
&&L=\bigoplus_{i=1}^n L\cap \Q e_i
=\bigoplus_{i=1}^n \Z \beta_{ii} e_i\quad\textup{for some }
\beta_{11},...,\beta_{nn}\in\Q_{>0},\\
\textup{so}&&L=\Bigl(\sum_{i=1}^n\beta_{ii}e_i\Bigr)\cdot 
\Lambda_{max}\in [\Lambda_{max}]_\varepsilon,\\
\textup{so}&&G([\Lambda_{max}]_\varepsilon)= 
\{[\Lambda_{max}]_\varepsilon\},
\end{eqnarray*}
so the class number of $A$ is 
$|G([\Lambda_{max}]_\varepsilon)|=1$.

Each full lattice $L\in\LL(A)$ has a unique $\Z$-basis of the shape
\begin{eqnarray}\label{9.1}
\uuuu{e}\cdot 
\begin{pmatrix}\beta_{11} & \beta_{12} & \cdots & \beta_{1n}\\
0 & \beta_{22} & \cdots & \beta_{2n} \\
\vdots & \ddots & \ddots & \vdots \\
0 & \cdots & 0 & \beta_{nn}\end{pmatrix}
\quad\textup{with }\beta_{ii}\in\Q_{>0}\textup{ for }1\leq i\leq n\\
\textup{and }\beta_{ij}\in (-\frac{1}{2}\beta_{ii},
\frac{1}{2}\beta_{ii}]\cap \Q\textup{ for }i<j.\nonumber
\end{eqnarray}
Multiplication of $L$ with a unit 
$u=\sum_{i=1}^n\varepsilon_i\beta_{ii}^{-1}e_i\in A^{unit}$ 
with suitable signs $\varepsilon_1,...,\varepsilon_n\in\{\pm 1\}$
leads to a full lattice $uL$ which has a $\Z$-basis of the shape
\begin{eqnarray}\label{9.2}
\uuuu{e}\cdot 
\begin{pmatrix} 1 & \delta_{12} & \cdots & \delta_{1n}\\
0 & 1 & \cdots & \vdots \\
\vdots & \ddots & \ddots & \vdots \\
0 & \cdots & 0 & 1 \end{pmatrix}
\quad\textup{with }\delta_{ij}\in(-\frac{1}{2},\frac{1}{2}]
\cap\Q\textup{ for }i<j<n\\
\textup{and }\delta_{in}\in [0,\frac{1}{2}]\cap\Q 
\textup{ for }i<n.\nonumber
\end{eqnarray}
If $\delta_{1n},...,\delta_{n-1,n}\in(0,\frac{1}{2})$ the 
$\Z$-basis
is unique. But if some $\delta_{in}\in\{0;\frac{1}{2}\}$
and $n\geq 3$, there are several possibilities, so in these
boundary cases \eqref{9.2} is only a semi-normal form.

An order has a unique $\Z$-basis of the shape
\begin{eqnarray}\label{9.3}
\uuuu{e}\cdot 
\begin{pmatrix}\alpha_{11} & \alpha_{12} & \cdots & 
\alpha_{1,n-1} & 1 \\
0 & \alpha_{22} & \cdots & \alpha_{2,n-1} & 1 \\
\vdots & \ddots & \ddots & \vdots & \vdots \\
\vdots & \cdots & 0 & \alpha_{n-1,n-1} & \vdots \\
0 & \cdots & \cdots & 0 & 1 \end{pmatrix}
\quad\textup{with }\alpha_{ii}\in\N\textup{ for }1\leq i<n\\
\textup{and }\alpha_{ij}\in (-\frac{1}{2}\alpha_{ii},
\frac{1}{2}\alpha_{ii}]\cap \Z\textup{ for }i<j<n.\nonumber
\end{eqnarray}
This is almost the $\Z$-basis in \eqref{9.1}. 
Only the last element
has been replaced by $1_A=e_1+...+e_n$. For $n\geq 3$ the condition
that $\Lambda$ is multiplication invariant gives additional
constraints on the entries $\alpha_{ij}$ in \eqref{9.3}.

The group of units $\Lambda^{unit}$ of an order $\Lambda$ 
is quite different from the case of algebraic number fields. 
It is finite, namely
\begin{eqnarray*}
\Lambda_{max}^{unit}&=& \{\sum_{i=1}^n\varepsilon_ie_i\,|\, 
\varepsilon_{1},...,\varepsilon_n\in\{\pm 1\}\},
\quad\textup{so }|\Lambda_{max}^{unit}|=2^n,
\end{eqnarray*}
and for any order $\Lambda$
\begin{eqnarray*}
\Lambda_{max}^{unit}\supset\Lambda^{unit}\supset \{\pm 1_A\},
\quad\textup{so }|\Lambda^{unit}|\in\{2,4,...,2^n\}.
\end{eqnarray*}

For an order $\Lambda$ the conductor $C:=\Lambda:\Lambda_{max}$
is by Theorem \ref{t5.12} (a) 
the maximal $\Lambda_{max}$-ideal in $\Lambda$, so it is 
\begin{eqnarray*}
C=\bigoplus_{i=1}^n \Lambda\cap\Z e_i
=\bigoplus_{i=1}^n\Z\gamma_i e_i
\end{eqnarray*}
for unique $\gamma_1,...,\gamma_n\in\N$.
The finite algebra $\Lambda_{max}/C$ is
\begin{eqnarray*}
\Lambda_{max}/C \cong \bigoplus_{i=1}^n\Z_{\gamma_i}\quad 
\textup{with }[e_i]\cdot [e_j]=0\textup{ for }i\neq j
\end{eqnarray*}
with group of units
\begin{eqnarray*}
\Bigl(\Lambda_{max}/C\Bigr)^{unit}
&=& \prod_{i=1}^n \Z_{\gamma_i}^{unit},\quad
|\Bigl(\Lambda_{max}/C\bigr)^{unit}|
=\prod_{i=1}^n\varphi(\gamma_i).
\end{eqnarray*}
The subalgebra $\Lambda/C$ and its group of units
$\bigl(\Lambda/C\bigr)^{unit}\subset (\Lambda_{max}/C)^{unit}$
are usually not difficult to determine. Therefore all
ingredients of the formula from Theorem \ref{t5.12} (e)
\begin{eqnarray}\nonumber
|G([\Lambda]_\varepsilon)|&=& |G([\Lambda_{max}]_\varepsilon)|
\cdot \frac{|(\Lambda_{max}/C)^{unit}|}{(\Lambda/C)^{unit}|}
\cdot \frac{1}{[\Lambda_{max}^{unit}:\Lambda^{unit}]}\\
&\stackrel{\textup{here}}{=}&
1\cdot \frac{\prod_{i=1}^n\varphi(\gamma_i)}{|(\Lambda/C)^{unit}|}
\cdot \frac{|\Lambda^{unit}|}{2^n}\label{9.4}
\end{eqnarray}
can be determined rather easily.

Unfortunately, except for the case $n=3$ 
by Theorem \ref{t5.7}  from \cite{Fa65-2}, 
there is no formula available for the number
of $w$-classes in the set $\{[L]_\varepsilon\,|\, L\in\LL(A), 
\OO(L)=\Lambda\}$ for an arbitrary order $\Lambda$.

\subsection{The rank 2 case}\label{s9.2}

\noindent
Now we come to the case $n=2$.

\begin{theorem}\label{t9.1}
Let $A=\Q e_1+\Q e_2$ with $e_ie_j=\delta_{ij}e_i$ for
$i,j\in\{1,2\}$. 

(a) Each order in $A$ has a unique $\Z$-basis 
$\uuuu{e}\begin{pmatrix}\alpha & 1 \\ 0 & 1\end{pmatrix}$
for some $\alpha\in\N$. Vice versa for each $\alpha\in\N$
$\uuuu{e}\begin{pmatrix}\alpha&1\\0&1\end{pmatrix}$
is a $\Z$-basis of an order. 
So there is a 1:1 correspondence between the set of orders
and the set $\N$.

(b) Each $\varepsilon$-class of full lattices contains a unique
full lattice $L_{\delta}$ with a $\Z$-basis
\begin{eqnarray}\label{9.5}
\uuuu{e}\begin{pmatrix}1&\delta\\0&1\end{pmatrix}
\quad\textup{with }\delta\in[0,\frac{1}{2}]\cap\Q.
\end{eqnarray}
The $\Z$-basis is unique. So there is a 1:1 correspondence
between the set $\EE(A)$ and the set $[0,\frac{1}{2}]\cap\Q$.
Here an order with invariant $\alpha\in\N$ in part (a)
corresponds to $\frac{1}{\alpha}$ if $\alpha\geq 2$ and
to $0$ if $\alpha=1$
(the case with $\alpha=1$ is the maximal order).

(c) The semigroup structure in $\EE(A)$ corresponds to the 
following semigroup structure on the set
$[0,\frac{1}{2}]\cap\Q$. Write 
$\delta_i=\frac{\beta_i}{\alpha_i}\in [0,\frac{1}{2}]\cap\Q$
with $\beta_i\in\Z$, $\alpha_i\in\N$, $\gcd(\beta_i,\alpha_i)=1$
for $i\in\{1,2,3\}$. Then
$[L_{\delta_1}]_\varepsilon\cdot [L_{\delta_2}]_\varepsilon
={}[L_{\delta_3}]_\varepsilon$ with
\begin{eqnarray*}
\alpha_3&=&\gcd(\alpha_1,\alpha_2),\\
\beta_3&\equiv& \varepsilon \beta_1\beta_2\mmod \alpha_3
\textup{ for some }\varepsilon\in\{\pm 1\}.
\end{eqnarray*}
Here the sign $\varepsilon$ is chosen so that 
$\varepsilon\beta_1\beta_2\equiv (\textup{an element of }
[0,\frac{1}{2}\alpha_3]\cap\Z)\mmod\alpha_3$.

(d) Each full lattice is invertible. For an order $\Lambda$
with $\Z$-basis 
$\uuuu{e}\begin{pmatrix}\alpha&1\\0&1\end{pmatrix}$
the group of $\varepsilon$-classes of (automatically invertible)
exact $\Lambda$-ideals is 
\begin{eqnarray*}
G([\Lambda]_\varepsilon)&=& \{[L_\delta]_\varepsilon\,|\, 
\delta=\frac{\beta}{\alpha}\textup{ with }
\beta\in[0,\frac{1}{2}\alpha]\cap\Z,\gcd(\beta,\alpha)=1\}\\
&\cong& \left\{\begin{array}{ll}
\Z_{\alpha}^{unit}/\{\pm 1\}&\textup{ if }\alpha\geq 3,\\
\{1\}&\textup{ if }\alpha\in\{1,2\}.\end{array}\right.
\end{eqnarray*}
Therefore 
\begin{eqnarray*}
|G([\Lambda]_\varepsilon)|&=&
\left\{\begin{array}{ll}
\frac{\varphi(\alpha)}{2}&\textup{ if }\alpha\geq 3,\\
1 &\textup{ if }\alpha\in\{1,2\}.\end{array}\right.
\end{eqnarray*}
\end{theorem}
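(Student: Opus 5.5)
The plan is to work throughout in the coordinates $\uuuu{e}=(e_1,e_2)$ and to use the Hermite-type normal forms \eqref{9.1}--\eqref{9.3} from subsection \ref{s9.1}, specialised to $n=2$.

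\emph{Part (a).} By \eqref{9.3}, an order has a basis $\uuuu{e}\begin{pmatrix}\alpha&1\\0&1\end{pmatrix}$ with $\alpha\in\N$. Conversely, for every $\alpha\in\N$ the lattice $\Z\alpha e_1+\Z 1_A$ is an order: it contains $1_A$, and $(\alpha e_1)^2=\alpha\cdot\alpha e_1$. Uniqueness of $\alpha$ holds because $\alpha$ is the index $[\Lambda_{max}:\Lambda]$.

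\emph{Part (b).} Let $L$ have the basis \eqref{9.1}. Multiplying by $u=\pm\beta_{11}^{-1}e_1\pm\beta_{22}^{-1}e_2$ gives a basis $\uuuu{e}\begin{pmatrix}1&\delta'\\0&1\end{pmatrix}$. We may change $\delta'$ by integers, using column operations, and negate it by choosing the sign of $u$ on $e_1$. This brings $\delta'$ into $[0,\frac12]$.

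For uniqueness, suppose $uL_\delta=L_{\delta'}$ with $u=u_1e_1+u_2e_2$. Intersecting with $\Q e_2$ shows that $u_2=\pm1$, because $L_\delta\cap\Q e_2=\Z e_2$. Projecting to $\Q e_1$ shows that $u_1=\pm1$, because $\pr_1L_\delta=\Z e_1+\Z\delta e_1$ combined with the intersection with $\Q e_1$ forces this. More directly, $L_\delta\cap \Q e_1=\Z e_1$ holds because $\delta e_1+e_2$ is the only way to reach $e_2$. Hence $\delta'\equiv\pm\delta \bmod 1$, and so $\delta=\delta'$.

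To identify the orders, note that $\Z\alpha e_1+\Z(e_1+e_2)$, multiplied by $\alpha^{-1}e_1+e_2$, becomes $\langle e_1, e_1+e_2\rangle$ after rescaling. Then $e_2+\frac1\alpha e_1$ gives $\delta=\frac1\alpha$ for $\alpha\ge2$, and $\delta=0$ for $\alpha=1$.

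\emph{Order of $L_\delta$.} Next I compute $\OO(L_\delta)$ for $\delta=\beta/\alpha$ in lowest terms. The element $xe_1+ye_2$ preserves $L_\delta$ if and only if $x,y\in\Z$ and $(x-y)\delta\in\Z$, that is, $\alpha\mid x-y$. Hence $\OO(L_\delta)$ is the order with invariant $\alpha$ (with $\alpha=1$ when $\delta=0$).

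\emph{Part (c).} I expect this to be the main computation. Write $L_{\delta}=\langle e_1,\delta e_1+e_2\rangle$. The product $L_{\delta_1}L_{\delta_2}$ is generated by $e_1$, $\delta_1e_1$, $\delta_2e_1$, and $\delta_1\delta_2e_1+e_2$. Its intersection with $\Q e_1$ is $\Z\gcd_\Q(1,\delta_1,\delta_2)e_1=\frac1{\alpha'}\Z e_1$ with $\alpha'=\lcm(\alpha_1,\alpha_2)$. Rescaling by $\alpha' e_1+e_2$ gives the lattice $\langle e_1,\alpha'\delta_1\delta_2e_1+e_2\rangle$. Here $\alpha'\beta_1\beta_2/(\alpha_1\alpha_2)=\beta_1\beta_2/\gcd(\alpha_1,\alpha_2)$, which yields $\alpha_3=\gcd(\alpha_1,\alpha_2)$ and $\beta_3\equiv\pm\beta_1\beta_2 \pmod{\alpha_3}$. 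The remaining step is to reduce into $[0,\frac12]$ as in part (b). The only care needed is in computing the $e_1$-intersection and in the reduction modulo sign.

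\emph{Part (d).} Invertibility follows from Corollary \ref{t4.15}. By the computation of $\OO(L_\delta)$ above, the classes with order $\Lambda$ are exactly those $L_{\beta/\alpha}$ with $\gcd(\beta,\alpha)=1$ and $0\le\beta\le\alpha/2$. By part (c), the product of such classes is given by multiplication of $\beta$ modulo $\alpha$ up to sign. This gives an isomorphism with $\Z_\alpha^{unit}/\{\pm1\}$ for $\alpha\ge3$, and a trivial group for $\alpha\le2$. The cardinality then follows, and it can be cross-checked against formula \eqref{9.4}.
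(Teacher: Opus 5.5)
Your route is the same as the paper's. Parts (a) and (b) are read off from the normal forms \eqref{9.3} and \eqref{9.2}. Part (c) is the same computation: generators of $L_{\delta_1}L_{\delta_2}$, followed by multiplication with $\lcm(\alpha_1,\alpha_2)e_1\pm e_2$. Part (d) reads off $\OO(L_\delta)$ and restricts the multiplication rule of (c). Your explicit uniqueness check in (b) and your direct computation of $\OO(L_\delta)$ spell out what the paper leaves implicit.

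There is one false step, in the uniqueness argument of (b). The claim $L_\delta\cap\Q e_2=\Z e_2$ fails whenever $\delta\notin\Z$. For $\delta=\beta/\alpha$ in lowest terms, an element $(m+n\delta)e_1+ne_2$ lies in $\Q e_2$ only if $\alpha\mid n$, so $L_\delta\cap\Q e_2=\Z\alpha e_2$. For the $e_2$-coordinate you need the projection $\pr_2:A\to\Q e_2$ instead. We have $\pr_2L_\delta=\pr_2L_{\delta'}=\Z e_2$ and $\pr_2(uL_\delta)=u_2\,\pr_2L_\delta$, which gives $u_2=\pm1$. For the $e_1$-coordinate, your ``more direct'' intersection argument $L_\delta\cap\Q e_1=\Z e_1$ is the right one. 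You had projection and intersection swapped between the two coordinates; with this swap, $\delta'\equiv\pm\delta \bmod 1$ and the rest goes through.

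A minor wording point: $(\alpha^{-1}e_1+e_2)\cdot\langle\alpha e_1,e_1+e_2\rangle_\Z$ is directly $\langle e_1,\alpha^{-1}e_1+e_2\rangle_\Z=L_{1/\alpha}$. There is no further rescaling to $\langle e_1,e_1+e_2\rangle_\Z$, which would be $\Lambda_{max}$.
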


{\bf Proof:} 
(a) A special case of \eqref{9.3}.

(b) A special case of \eqref{9.2}.

(c) The full lattice $L_{\delta_1}\cdot L_{\delta_2}$
is generated over $\Z$ by the following elements,
\begin{eqnarray*}
\uuuu{e}\begin{pmatrix} 1 & \frac{\beta_1}{\alpha_1} & 
\frac{\beta_2}{\alpha_2} & 
\frac{\beta_1\beta_2}{\alpha_1\alpha_2} \\
0 & 0 & 0 & 1\end{pmatrix},\\
\textup{so also by }\quad
\uuuu{e}\begin{pmatrix} \frac{1}{\alpha_1} & 
\frac{1}{\alpha_2} & 
\frac{\beta_1\beta_2}{\alpha_1\alpha_2} \\
0 & 0 & 1\end{pmatrix},\\
\textup{so also by }\quad
\uuuu{e}\begin{pmatrix} \frac{1}{\lcm(\alpha_1,\alpha_2)} & 
\frac{\beta_1\beta_2}{\alpha_1\alpha_2} \\
0 & 1\end{pmatrix}.
\end{eqnarray*}
The full lattices 
$(\lcm(\alpha_1,\alpha_2)e_1+e_2)L_{\delta_1}L_{\delta_2}$
and $(\lcm(\alpha_1,\alpha_2)e_1-e_2)L_{\delta_1}L_{\delta_2}$ 
are generated by
\begin{eqnarray*}
\uuuu{e}\begin{pmatrix}1&
\frac{\beta_1\beta_2}{\gcd(\alpha_1,\alpha_2)}\\
0&1\end{pmatrix}\textup{ respectively by }
\uuuu{e}\begin{pmatrix}1&
-\frac{\beta_1\beta_2}{\gcd(\alpha_1,\alpha_2)}\\
0&1\end{pmatrix}
\end{eqnarray*}
Both are $\varepsilon$-equivalent to $L_{\delta_1}L_{\delta_2}$.

(d) Consider a full lattice $L_\delta$ as in (b) and an order
$\Lambda$ as in (a). Then $\Lambda\cdot L_\delta$ 
is generated over $\Z$ by the following elements,
\begin{eqnarray*}
\uuuu{e}\begin{pmatrix}\alpha & \alpha\delta & 1 & \delta\\
0 & 0 & 0 & 1 \end{pmatrix},
\end{eqnarray*}
so it is $L_\delta$ if and only if $\alpha\delta\in\Z$. Also
\begin{eqnarray*}
\OO(L_\delta)=\Lambda&\iff& 
\left\{\begin{array}{ll}
\alpha=1 &\textup{ if }\delta=0,\\
\alpha=\www{\alpha}&\textup{ if }
\delta=\frac{\beta}{\www{\alpha}}\in (0,\frac{1}{2}]\cap\Q
\textup{ with }\gcd(\beta,\www{\alpha})=1
\end{array}\right.
\end{eqnarray*}
is clear now. The semigroup structure in part (c) restricts to
the group $G([\Lambda]_\varepsilon)$, and this is isomorphic
to $\Z_\alpha^{unit}/\{\pm 1\}$ if $\alpha\geq 3$
and to $\{1\}$ if $\alpha\in\{1;2\}$. \hfill$\Box$

\begin{remarks}\label{t9.2}
As we know already $|G([\Lambda]_\varepsilon)|=
\frac{\varphi(\alpha)}{2}$ if $\alpha\geq 3$ and
$=1$ if $\alpha\in\{1,2\}$, it is not necessary, but it is still
instructive to determine the ingredients of formula \eqref{9.4}
in the case $n=2$. 

Consider an order $\Lambda\subsetneqq\Lambda_{max}$ with 
$\Z$-basis $\uuuu{e}\begin{pmatrix}\alpha&1\\0&1\end{pmatrix}$
with $\alpha\in\Z_{\geq 3}$. Then
\begin{eqnarray*}
\Lambda_{max}^{unit}&=&\{\pm e_1\pm e_2\},\quad
|\Lambda_{max}^{unit}|=4,\\
\Lambda^{unit}&=&\{\pm (e_1+e_2)\},\quad
|\Lambda^{unit}|=2,\\
C&=&\Lambda:\Lambda_{max}=\Z\alpha e_1+\Z \alpha e_2,\\
\Lambda_{max}/C&\cong & \Z_{\alpha}\oplus \Z_{\alpha},\\
(\Lambda_{max}/C)^{unit}&\cong & \Z_{\alpha}^{unit}\times
\Z_{\alpha}^{unit},\quad 
|(\Lambda_{max}/C)^{unit}|=\varphi(\alpha)^2,\\
\Lambda/C&=& \Z_{\alpha}[e_1+e_2]\hookrightarrow
\Lambda_{max}/C,\\
(\Lambda/C)^{unit}&\cong& \Z_{\alpha}^{unit}[e_1+e_2]
\hookrightarrow (\Lambda_{max}/C)^{unit},\quad
|(\Lambda/C)^{unit}|=\varphi(\alpha),\\
|G([\Lambda]_\varepsilon)|&=& 1\cdot 
\frac{\varphi(\alpha)^2}{\varphi(\alpha)}\cdot \frac{2}{4}
=\frac{\varphi(\alpha)}{2}.
\end{eqnarray*}

Consider the order $\Lambda_2$ with 
$\Z$-basis $\uuuu{e}\begin{pmatrix}2&1\\0&1\end{pmatrix}$. Then
\begin{eqnarray*}
\Lambda_2^{unit}&=&\{\pm e_1\pm e_2\},\quad
|\Lambda^{unit}|=4,\\
C&=&\Lambda_2:\Lambda_{max}=\Z 2 e_1+\Z 2 e_2,\\
\Lambda_{max}/C&\cong & \Z_2\oplus \Z_2,\\
(\Lambda_{max}/C)^{unit}&\cong & \Z_2^{unit}\times
\Z_2^{unit},\quad 
|(\Lambda_{max}/C)^{unit}|=1,\\
|(\Lambda/C)^{unit}|&=&1,\\
|G([\Lambda]_\varepsilon)|&=& 1\cdot 
\frac{1}{1}\cdot \frac{4}{4}=1.
\end{eqnarray*}
\end{remarks}

\begin{example}\label{t9.3}
Choose $\lambda\in\N$. By Theorem \ref{t6.3}   
the conjugacy classes of
integer $2\times 2$ matrices with eigenvalues $\lambda$ and $0$
correspond to the $\varepsilon$-classes of full lattices
$L$ with $\OO(L)\supset \Lambda$ where $\Lambda$ is the
order $\Lambda=\Z[\lambda e_1]$, so the order with $\Z$-basis
$\uuuu{e}\begin{pmatrix}\lambda&1\\0&1\end{pmatrix}$.
By Theorem \ref{t9.1} (d)
\begin{eqnarray*}
\{[L]_\varepsilon\,|\, \OO(L)\supset\Lambda\}
=\bigcup_{\alpha\in\N:\, \alpha|\lambda}
\{[L_\delta]_\varepsilon\,|\, \delta=\frac{\beta}{\alpha}
\textup{ with }\beta\in[0,\frac{1}{2}\alpha]
\cap\Z_{\alpha}^{unit}\}.
\end{eqnarray*}
The $\Z$-basis 
$\uuuu{e}\begin{pmatrix}1&\delta\\0&1\end{pmatrix}$
of a full lattice $L_\delta$ 
with $\delta=\frac{\beta}{\alpha}\in[0,\frac{1}{2}]$,
$\alpha\in\N$, $\alpha|\lambda$, $\beta\in\Z_{\geq 0}$ 
and $\gcd(\beta,\alpha)=1$ gives
rise to the matrix $M_\delta$ with
\begin{eqnarray*}
\lambda e_1\cdot 
\uuuu{e}\begin{pmatrix}1&\delta\\0&1\end{pmatrix}
=\uuuu{e}\begin{pmatrix}\lambda&\lambda\delta\\0&0\end{pmatrix}
=\uuuu{e}\begin{pmatrix}1&\delta\\0&1\end{pmatrix}\cdot 
\begin{pmatrix}\lambda&\lambda\delta\\0&0\end{pmatrix}
=\uuuu{e}\begin{pmatrix}1&\delta\\0&1\end{pmatrix}\cdot M_\delta
\end{eqnarray*}
so
$$M_\delta=\begin{pmatrix}\lambda&\lambda\delta\\
0&0\end{pmatrix}
=\begin{pmatrix}\lambda&\lambda\frac{\beta}{\alpha}\\
0&0\end{pmatrix}.$$
We obtain the following theorem.
\end{example}

\begin{theorem}\label{t9.4}
Each conjugacy class of $2\times 2$ matrices with eigenvalues
$\lambda\in\N$ and 0 has a unique representative 
\begin{eqnarray*}
\begin{pmatrix}\lambda&\mu\\0&0\end{pmatrix}\quad
\textup{with }\mu\in[0,\frac{1}{2}\lambda]\cap\Z.
\end{eqnarray*}
This matrix comes from an $\varepsilon$-class of full lattices
with order with $\Z$-basis
$\uuuu{e}\begin{pmatrix}\alpha&1\\0&1\end{pmatrix}$
where $\alpha=\lambda/\gcd(\lambda,\mu)$.
\end{theorem}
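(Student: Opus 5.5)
The plan is to read Theorem \ref{t9.4} off from Theorem \ref{t6.3} together with Example \ref{t9.3} and Theorem \ref{t9.1}. Set $f(t)=t(t-\lambda)$. Then $A_f\cong\Q e_1\oplus\Q e_2$, with $\oooo{t}$ corresponding to $\lambda e_1$. Since $\lambda\neq 0$, a $2\times 2$ integer matrix with eigenvalues $\lambda$ and $0$ has distinct eigenvalues, so it is automatically regular with characteristic polynomial $f$. Therefore Theorem \ref{t6.3} identifies its $GL_2(\Z)$-conjugacy classes with the $\varepsilon$-classes $[L]_\varepsilon$ satisfying $\OO(L)\supset\Lambda_f=\Z[\lambda e_1]$. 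This order has $\Z$-basis $\uuuu{e}\begin{pmatrix}\lambda&1\\0&1\end{pmatrix}$, since $\Z\cdot 1+\Z\lambda e_1$ has exactly this basis.

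First I determine the orders $\Lambda_\alpha$ (with $\Z$-basis $\uuuu{e}\begin{pmatrix}\alpha&1\\0&1\end{pmatrix}$) that contain $\Lambda_f$. The condition is $\lambda e_1\in\Lambda_\alpha=\Z\alpha e_1+\Z(e_1+e_2)$, which holds if and only if $\alpha\mid\lambda$. Next, Theorem \ref{t9.1} (b),(d) shows that each $\varepsilon$-class contains a unique $L_\delta$ with $\delta\in[0,\frac12]\cap\Q$. Writing $\delta=\beta/\alpha$ in lowest terms, the order of $L_\delta$ is $\Lambda_\alpha$, with the convention that $\delta=0$ gives $\alpha=1$. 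So the classes in $S_{2,f}$ correspond bijectively to the reduced fractions $\delta=\beta/\alpha\in[0,\frac12]$ with $\alpha\mid\lambda$. These are exactly the rationals $\delta\in[0,\frac12]$ with $\lambda\delta\in\Z$: if $\alpha\mid\lambda$ then $\lambda\delta\in\Z$, and conversely $\lambda\beta/\alpha\in\Z$ together with $\gcd(\alpha,\beta)=1$ forces $\alpha\mid\lambda$. Setting $\mu:=\lambda\delta$ turns this into a bijection with $\mu\in[0,\frac12\lambda]\cap\Z$.

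Then I apply the computation of Example \ref{t9.3}. The basis $\uuuu{e}\begin{pmatrix}1&\delta\\0&1\end{pmatrix}$ of $L_\delta$ yields the matrix $\begin{pmatrix}\lambda&\lambda\delta\\0&0\end{pmatrix}=\begin{pmatrix}\lambda&\mu\\0&0\end{pmatrix}$. Existence of a representative of this form follows, and uniqueness follows from the uniqueness of $L_\delta$ in its $\varepsilon$-class (Theorem \ref{t9.1} (b)) combined with the injectivity in Theorem \ref{t6.3}. Finally, since $\mu/\lambda=\beta/\alpha$ with $\gcd(\alpha,\beta)=1$, we get $\alpha=\lambda/\gcd(\lambda,\mu)$. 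This also covers $\mu=0$: there $\gcd(\lambda,0)=\lambda$ gives $\alpha=1$, the maximal order.

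The main point to be careful about is the order statement in Theorem \ref{t9.1} (d), namely $\OO(L_{\beta/\alpha})=\Lambda_\alpha$ in lowest terms, including the boundary cases $\delta\in\{0,\frac12\}$. At $\delta=\frac12$ we have $\alpha=2$, which requires $\lambda$ even, and this is consistent with $\mu=\lambda/2$. Otherwise the argument is bookkeeping.
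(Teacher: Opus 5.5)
Your proposal is correct and follows the paper's route. The paper obtains Theorem \ref{t9.4} directly from Example \ref{t9.3}: it applies Theorem \ref{t6.3} to the order $\Z[\lambda e_1]$, uses Theorem \ref{t9.1} (b),(d) to list the classes $[L_\delta]_\varepsilon$ whose order contains it (exactly those with denominator $\alpha\mid\lambda$), and computes $M_\delta=\begin{pmatrix}\lambda&\lambda\delta\\0&0\end{pmatrix}$. You additionally spell out the bookkeeping the paper leaves implicit, namely the reformulation $\lambda\delta\in\Z$, the substitution $\mu=\lambda\delta$, the identity $\alpha=\lambda/\gcd(\lambda,\mu)$, and the uniqueness via Theorem \ref{t6.3}.
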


\subsection{The rank 3 case}\label{s9.3}

\noindent
Now we come to the case $n=3$. Now 
$$A=\Q e_1+\Q e_2+\Q e_3.$$
First we discuss the orders in $A$. 
Here $\uuuu{e}=(e_1,e_2,e_3)$.

\begin{lemma}\label{t9.5}
(a) For each triple 
\begin{eqnarray}\label{9.6}
(\alpha_1,\alpha_2,\alpha_3)\in\N^2\times\Z&&
\textup{with }
\alpha_3\in(-\frac{1}{2}\alpha_1,\frac{1}{2}\alpha_1]\\
&&\textup{and }
\alpha_1|(\alpha_3(\alpha_2-\alpha_3))\nonumber
\end{eqnarray}
the full lattice with $\Z$-basis
\begin{eqnarray}\label{9.7}
\uuuu{e}\begin{pmatrix}\alpha_1&\alpha_3&1\\
0&\alpha_2&1\\ 0&0&1\end{pmatrix}
\end{eqnarray}
is an order. This order is called 
$\Lambda_{\alpha_1\alpha_2\alpha_3}$. There are no other orders.
For each order, the $\Z$-basis in \eqref{9.7} with 
\eqref{9.6} is unique.
The maximal order is $\Lambda_{max}=\Lambda_{110}$. 

(b) An order $\Lambda_{\beta_1\beta_2\beta_3}$ contains
an order $\Lambda_{\alpha_1\alpha_2\alpha_3}$ if and only if 
\begin{eqnarray}\label{9.8}
\beta_1|\alpha_1,\quad \beta_2|\alpha_2,\quad 
\frac{\alpha_2}{\beta_2}\beta_3\equiv \alpha_3\mmod \beta_1.
\end{eqnarray}
Especially, the set of orders which contain a given order 
$\Lambda_{\alpha_1\alpha_2\alpha_3}$ is finite and can be 
determined easily.

(c) The size $|\Lambda_{\alpha_1\alpha_2\alpha_3}^{unit}|$
of the set $\Lambda_{\alpha_1\alpha_2\alpha_3}^{unit}
\subset\Lambda_{max}^{unit}=\{\pm e_1\pm e_2\pm e_3\}$
of units in $\Lambda_{\alpha_1\alpha_2\alpha_3}$ is 
$2$ or $4$ or $8$. In most cases it is 2, then
$\Lambda_{\alpha_1\alpha_2\alpha_3}^{unit}=\{\pm 1_A\}$.
\begin{eqnarray*}
&&|\Lambda_{\alpha_1\alpha_2\alpha_3}^{unit}|=8
\iff (\alpha_1,\alpha_2,\alpha_3)\in \\
&&\hspace*{1cm}\{(1,1,0),(1,2,0),(2,1,0),
(2,1,1),(2,2,0)\},\\
&&|\Lambda_{\alpha_1\alpha_2\alpha_3}^{unit}|=4\iff 
(\alpha_1,\alpha_2,\alpha_3)\in \\
&&\hspace*{1cm}\{(1,\alpha_2,\alpha_3)\,|\, \alpha_2\geq 3\}
\cup\{(2,\alpha_2,\alpha_3)\,|\, \alpha_2\geq 3\}
\cup\{(2,2,1)\}\\
&& \hspace*{1cm}\cup\,\{(\alpha_1,1,0)\,|\, \alpha_1\geq 3\}
\cup\{(\alpha_1,2,0)\,|\, \alpha_1\geq 3\},\\
&& \hspace*{1cm}\cup\,\{(\alpha_1,1,1)\,|\, \alpha_1\geq 3\}
\cup\{(\alpha_1,2,2)\,|\, \alpha_1\geq 4\}
\cup\{(3,2,-1)\}\\
&&|\Lambda_{\alpha_1\alpha_2\alpha_3}^{unit}|=2\textup{ else.}
\end{eqnarray*}

(d) The conductor $C:=\Lambda_{\alpha_1\alpha_2\alpha_3}:
\Lambda_{max}$ of an order $\Lambda_{\alpha_1\alpha_2\alpha_3}$
is the maximal $\Lambda_{max}$-ideal in
$\Lambda_{\alpha_1\alpha_2\alpha_3}$. It is 
\begin{eqnarray}\label{9.9}
C&=&\sum_{i=1}^3\Lambda_{\alpha_1\alpha_2\alpha_3}\cap \Z e_i
=\sum_{i=1}^3\Z\beta_ie_i\\
\textup{with }\beta_1&=&\alpha_1,\ 
\beta_2=\frac{\alpha_1\alpha_2}{\gcd(\alpha_1,\alpha_3)},\ 
\beta_3=\frac{\alpha_1\alpha_2}
{\gcd(\alpha_1,\alpha_2-\alpha_3)}.\label{9.10}
\end{eqnarray}
Then
\begin{eqnarray}\nonumber
\lcm(\beta_1,\beta_2,\beta_3)&=&\lcm(\beta_1,\beta_2)
=\lcm(\beta_1,\beta_3)=\lcm(\beta_2,\beta_3)\\
&=&\frac{\alpha_1\alpha_2}{\gcd(\alpha_1,\alpha_2,\alpha_3)}
\label{9.11}
\end{eqnarray}
and 
\begin{eqnarray}\label{9.12} 
\Bigl(\Lambda_{max}/C\Bigr)^{unit}&\cong& 
\Z_{\beta_1}^{unit}\times \Z_{\beta_2}^{unit}
\times \Z_{\beta_3}^{unit},\\
|\Bigl(\Lambda_{max}/C\Bigr)^{unit}|
&=&\prod_{i=1}^3\varphi(\beta_i),\label{9.13}\\
\Bigl(\Lambda_{max}/C\Bigr)^{unit}
&\supset&
\Bigl(\Lambda_{\alpha_1\alpha_2\alpha_3}/C\Bigr)^{unit}
\supset
\Z_{\lcm(\beta_1,\beta_2,\beta_3)}^{unit}\cdot [1_A].\label{9.14}
\end{eqnarray}

(e) \cite[Corollary 4.1]{Fa65-2} which is cited as Theorem 
\ref{t5.7} applies to $A$ and its orders. Let 
$\tau_{\alpha_1\alpha_2\alpha_3}
=\tau(\Lambda_{\alpha_1\alpha_2\alpha_3})$ be the number of 
$w$-classes in the set 
$\{[L]_\varepsilon\,|\, L\in\LL(A),\OO(L)=
\Lambda_{\alpha_1\alpha_2\alpha_3}\}$. Then
\begin{eqnarray}\label{9.15}
\tau_{\alpha_1\alpha_2\alpha_3}&=&2^t,\quad 
t=|\{p\in\P\,|\, p|\mu\}|,\quad 
\mu=\gcd(a,b,c,d),\\
(a,b,c,d)&=&(\frac{\alpha_3(\alpha_3-\alpha_2)}{\alpha_1},
\alpha_2-2\alpha_3,\alpha_1,0).\nonumber
\end{eqnarray}
\end{lemma}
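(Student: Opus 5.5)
We prove the parts in order, working throughout with the Hermite normal form \eqref{9.3} of an order. For (a), \eqref{9.3} with $n=3$ says that every order has a unique $\Z$-basis of the shape \eqref{9.7}, with $\alpha_1,\alpha_2\in\N$ and $\alpha_3\in(-\frac12\alpha_1,\frac12\alpha_1]$. It remains to find exactly when the full lattice $\Lambda$ spanned by $b_1=\alpha_1e_1$, $b_2=\alpha_3e_1+\alpha_2e_2$, $b_3=1_A$ is closed under multiplication. Since $1_A\in\Lambda$, closure reduces to the three products $b_1^2$, $b_1b_2$ and $b_2^2$. The first two are $\alpha_1^2e_1=\alpha_1b_1$ and $\alpha_1\alpha_3e_1=\alpha_3b_1$, both automatically in $\Lambda$. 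For the third we write
$$b_2^2=\alpha_3^2e_1+\alpha_2^2e_2=\alpha_2b_2+\alpha_3(\alpha_3-\alpha_2)e_1 .$$
This lies in $\Lambda$ if and only if $\alpha_1\mid\alpha_3(\alpha_2-\alpha_3)$, which is exactly \eqref{9.6}. The maximal order $\Lambda_{max}=\Z e_1+\Z e_2+\Z e_3$ corresponds to $(1,1,0)$.

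For (b), the containment $\Lambda_{\alpha}\subset\Lambda_{\beta}$ holds if and only if the basis vector $\alpha_1e_1$ and the vector $\alpha_3e_1+\alpha_2e_2$ are integral combinations of $\beta_1e_1$, $\beta_3e_1+\beta_2e_2$ and $1_A$. Comparing the $e_3$-coordinates, $1_A$ cannot occur in these combinations. Hence $\beta_1\mid\alpha_1$. From the $e_2$-coordinate of the second vector we get $\beta_2\mid\alpha_2$ with multiplier $\alpha_2/\beta_2$, and then the $e_1$-coordinate requires $\alpha_3-\frac{\alpha_2}{\beta_2}\beta_3\in\beta_1\Z$. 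This is \eqref{9.8}, and finiteness of the set of overorders follows because $\beta_1$, $\beta_2$ range over divisors of $\alpha_1$, $\alpha_2$ and $\beta_3$ lies in a bounded range.

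For (c), a unit $\pm e_1\pm e_2\pm e_3$ lies in $\Lambda$ if and only if, after subtracting $\pm1_A$, one of $2e_1$, $2e_2$, $2(e_1+e_2)$ (up to sign) lies in $\Lambda$. So we only need to test when each of these three vectors belongs to $\Lambda_{\alpha_1\alpha_2\alpha_3}$:
\begin{itemize}
\item $2e_1\in\Lambda$ if and only if $\alpha_1\mid2$;
\item $2e_2\in\Lambda$ if and only if $\alpha_2\mid2$ and $\alpha_1\mid\frac{2}{\alpha_2}\alpha_3$;
\item $2e_1+2e_2\in\Lambda$ if and only if $\alpha_2\mid2$ and $\alpha_1\mid2-\frac{2}{\alpha_2}\alpha_3$.
\end{itemize}
The case list then follows by running through $\alpha_2\in\{1,2\}$ or $\alpha_2\ge3$ together with the normalization of $\alpha_3$ and the divisibility condition from (a). This bookkeeping is the main obstacle: one has to check each boundary case, such as $(3,2,-1)$ and $(\alpha_1,2,2)$ with $\alpha_1\ge4$, against the constraints $\alpha_3\in(-\frac12\alpha_1,\frac12\alpha_1]$ and $\alpha_1\mid\alpha_3(\alpha_2-\alpha_3)$.

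For (d), Theorem \ref{t5.12}(a) says that $C$ is the largest $\Lambda_{max}$-ideal in $\Lambda$, and a $\Lambda_{max}$-ideal is a direct sum $\bigoplus_i(C\cap\Z e_i)$. Hence $C=\sum_i(\Lambda\cap\Z e_i)$, and we compute each intersection.
\begin{itemize}
\item $\Lambda\cap\Z e_1=\Z\alpha_1e_1$.
\item An element $me_2$ with $m=k\alpha_2$ lies in $\Lambda$ if and only if $\alpha_1\mid k\alpha_3$. This gives the generator $\beta_2=\alpha_2\alpha_1/\gcd(\alpha_1,\alpha_3)$.
\item For $e_3$ we write $me_3=m1_A-me_1-me_2$. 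The $e_2$-coordinate forces $m=k\alpha_2$, and then the $e_1$-coordinate $-m-k\alpha_3=-k(\alpha_2+\alpha_3)$ must lie in $\alpha_1\Z$. This gives $\beta_3=\alpha_1\alpha_2/\gcd(\alpha_1,\alpha_2+\alpha_3)$.
\end{itemize}
This disagrees with the stated $\gcd(\alpha_1,\alpha_2-\alpha_3)$ in \eqref{9.10}. Since $\alpha_1\mid\alpha_3(\alpha_2-\alpha_3)$, the two gcds plausibly coincide up to this divisibility. I would recheck the computation and the paper's sign conventions at this point rather than trust either formula.

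The lcm identity \eqref{9.11} should then follow prime by prime, using $\alpha_1\mid\alpha_3(\alpha_2-\alpha_3)$ to see that at each prime at least two of the three gcds agree with $\gcd(\alpha_1,\alpha_2,\alpha_3)$. Next, \eqref{9.12} and \eqref{9.13} follow from the isomorphism $\Lambda_{max}/C\cong\bigoplus_i\Z_{\beta_i}$ with orthogonal idempotents. For \eqref{9.14}, $\Z\cdot1_A$ maps onto $\Z_{\lcm(\beta_1,\beta_2,\beta_3)}[1_A]$ inside $\Lambda/C$, which gives the lower inclusion.

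For (e), we apply Theorem \ref{t5.7} with $\www\omega_1=b_1$ and $\www\omega_2=b_2$. From the products computed in (a),
$$b_1b_2=\alpha_3b_1\quad\text{and}\quad b_2^2=\alpha_2b_2+\frac{\alpha_3(\alpha_3-\alpha_2)}{\alpha_1}b_1 .$$
The theorem's shift $\omega_1=\www\omega_1-k_21_A$, $\omega_2=\www\omega_2-k_11_A$ with $k_1=\alpha_3$, $k_2=0$ gives $\omega_1=b_1$ and $\omega_2=b_2-\alpha_3$. Expanding $\omega_1^2$, $\omega_1\omega_2$ and $\omega_2^2$ in these new generators and matching with Theorem \ref{t5.7} yields $a,b,c,d$, possibly after swapping the roles of $\omega_1$ and $\omega_2$. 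This should reproduce the quadruple in \eqref{9.15} up to sign and ordering, and $\mu=\gcd(a,b,c,d)$ and $\tau$ are insensitive to both.
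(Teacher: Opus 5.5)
Your parts (a), (b) and (e) are correct and follow the paper. In (e) you take the generators in the opposite order. That gives $(a,b,c,d)=(0,\alpha_1,\alpha_2-2\alpha_3,\frac{\alpha_3(\alpha_3-\alpha_2)}{\alpha_1})$, which is the paper's quadruple with $a\leftrightarrow d$ and $b\leftrightarrow c$, so $\mu$ is the same. The problems are in (d), where there is an actual error, and in (c), where the step you defer does not go through as stated.

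In (d), your computation of $\beta_3$ contains a sign slip, and you leave the resulting discrepancy unresolved. Suppose $me_3=m1_A-me_1-me_2\in\Lambda$ with $m=k\alpha_2$. Then $-me_1-me_2=x\alpha_1e_1+y(\alpha_3e_1+\alpha_2e_2)$ forces $y=-k$, not $k$. So the leftover $e_1$-coordinate is $-m+k\alpha_3=-k(\alpha_2-\alpha_3)$. Hence $\alpha_1\mid k(\alpha_2-\alpha_3)$ and $\beta_3=\alpha_1\alpha_2/\gcd(\alpha_1,\alpha_2-\alpha_3)$, exactly as in \eqref{9.10}. This is also the paper's route, via $k\alpha_2(e_1+e_2)=k(\alpha_3e_1+\alpha_2e_2)+k(\alpha_2-\alpha_3)e_1$.

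Your guess that the two gcds coincide is false. For $\Lambda_{8,2,-2}$ from Table \ref{table4.2} one has $\gcd(8,4)=4\neq 8=\gcd(8,0)$. Directly, solving $me_3=x\cdot 8e_1+y(-2e_1+2e_2)+z1_A$ gives $z=m$, $y=-m/2$ and $8x=-2m$. So $\Lambda_{8,2,-2}\cap\Z e_3=4\Z e_3$, whereas your formula would put $2e_3$ into this order.

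For \eqref{9.11} you do not need $\alpha_1\mid\alpha_3(\alpha_2-\alpha_3)$. Write $\beta_i=\alpha_1\alpha_2/h_i$ with $h_1=\alpha_2$, $h_2=\gcd(\alpha_1,\alpha_3)$ and $h_3=\gcd(\alpha_1,\alpha_2-\alpha_3)$; all three divide $\alpha_1\alpha_2$. Then $\lcm(\beta_i,\beta_j)=\alpha_1\alpha_2/\gcd(h_i,h_j)$, and every pairwise $\gcd(h_i,h_j)$ equals $\gcd(\alpha_1,\alpha_2,\alpha_3)$.

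In (c), your three membership tests are correct, but the bookkeeping you postpone is the whole content, and it does not produce the displayed list. Take $\alpha_1\geq 3$ and $\alpha_2=1$.
\begin{itemize}
\item Your test gives $2e_2\in\Lambda\iff\alpha_1\mid 2\alpha_3$. Besides $\alpha_3=0$, this allows $\alpha_3=\alpha_1/2$.
\item Your test gives $2e_1+2e_2\in\Lambda\iff\alpha_1\mid 2(1-\alpha_3)$. Besides $\alpha_3=1$, this allows $\alpha_3=1-\alpha_1/2$.
\item In both new cases the order condition $\alpha_1\mid\alpha_3(1-\alpha_3)$ holds exactly when $\alpha_1\equiv 2\bmod 4$.
\end{itemize}
For example, $\Lambda_{6,1,3}=\{x_1e_1+x_2e_2+x_3e_3\,|\,x_i\in\Z,\ x_1\equiv x_2\bmod 2,\ x_1\equiv x_3\bmod 3\}$ has the four units $\pm1_A,\pm(e_1-e_2+e_3)$. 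Likewise $\Lambda_{6,1,-2}$ has the four units $\pm1_A,\pm(e_1+e_2-e_3)$. Neither triple is in the stated list. Conversely, $(2,2,1)$ is listed, but $2\nmid 1\cdot 1$, so it is not an order.

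The paper's own sketch makes the same omission when it asserts $2e_2\in\Lambda\iff(\alpha_2,\alpha_3)\in\{(1,0),(2,0)\}$ for $\alpha_1\geq 3$. Your method therefore proves a corrected statement. In it, the families $(\alpha_1,1,\frac{\alpha_1}{2})$ and $(\alpha_1,1,1-\frac{\alpha_1}{2})$ with $\alpha_1\equiv 2\bmod 4$, $\alpha_1\geq 6$, are added to the four-unit list, and $(2,2,1)$ is removed. The later example, where only $\alpha_1\in\{1,2,4,8\}$ occur, is unaffected.
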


{\bf Proof:}
(a) Any full lattice $L\subset \Lambda_{max}$ with $1\in L$
has a unique $\Z$-basis of the shape in \eqref{9.7} with
$\alpha_1,\alpha_2\in\N$, $\alpha_3\in (-\frac{1}{2}\alpha_1,
\frac{1}{2}\alpha_1]\cap\Z$. It is an order if it is
multiplication invariant. The only constraint is
\begin{eqnarray*}
&&(\alpha_3e_1+\alpha_2e_2)(\alpha_3e_1+\alpha_2e_2)\in L\\
&=& \alpha_3^2e_1+\alpha_2^2e_2
=\alpha_3(\alpha_3-\alpha_2)e_1
+\alpha_2(\alpha_3e_1+\alpha_2e_2),
\end{eqnarray*}
so $\alpha_1\,|\, (\alpha_3(\alpha_2-\alpha_3))$.

(b) \eqref{9.8} rewrites the conditions
$\beta_1e_1\in\Lambda_{\alpha_1\alpha_2\alpha_3}$ and
$\beta_3e_1+\beta_2e_2\in \Lambda_{\alpha_1\alpha_2\alpha_3}$.

(c) The characterization of $\Lambda$ with $|\Lambda^{unit}|=8$
follows from 
\begin{eqnarray*}
\Lambda_{\alpha_1\alpha_2\alpha_3}^{unit}
=\Lambda_{max}^{unit}&\iff& 
2e_1\in\Lambda_{\alpha_1\alpha_2\alpha_3}\textup{ and }
2e_1+2e_2\in \Lambda_{\alpha_1\alpha_2\alpha_3}\\
&\iff& \alpha_1\in\{1;2\},\ \alpha_2\in\{1;2\},\\
&&\alpha_3e_1+\alpha_2e_2\neq e_1+2e_2\textup{ if }\alpha_1=2.
\end{eqnarray*}
Similarly, the characterization of $\Lambda$ with
$|\Lambda_{\alpha_1\alpha_2\alpha_3}^{unit}|=4$ follows from
\begin{eqnarray*}
|\Lambda_{\alpha_1\alpha_2\alpha_3}|=4&\iff& 
\textup{either }2e_1\in\Lambda_{\alpha_1\alpha_2\alpha_3}\\
&&\textup{ or }2e_2\in \Lambda_{\alpha_1\alpha_2\alpha_3}\\
&&\textup{ or }2e_3\in \Lambda_{\alpha_1\alpha_2\alpha_3}\\
&\iff& \textup{either }\alpha_1\in\{1;2\}\\
&&\textup{ or }(\alpha_2,\alpha_3)\in\{(1,0),(2,0)\}\\
&&\textup{ or }
\Bigl((\alpha_2,\alpha_3)\in\{(1,1),2,2)\}\textup{ or }\\
&&\hspace*{2cm}
(\alpha_1,\alpha_2,\alpha_3)=(3,2,-1)\Bigr).
\end{eqnarray*}

(d) Recall that $C$ is the maximal $\Lambda_{max}$-ideal in 
$\Lambda_{\alpha_1\alpha_2\alpha_3}$.
It splits as in \eqref{9.9} because it is a 
$\Lambda_{max}$-ideal.
Obviously $\beta_1=\alpha_1$.
$\beta_2$ is the smallest multiple of $\alpha_2$ such that
$\Z\frac{\beta_2}{\alpha_2}\alpha_3 e_1$ contains 
$\Z\alpha_1e_1$, so 
$$\frac{\beta_2}{\alpha_2}
=\frac{\lcm(\alpha_1,\alpha_3)}{\alpha_3}
=\frac{\alpha_1}{\gcd(\alpha_1,\alpha_3)}.$$
Because of 
$$\alpha_3e_1+\alpha_2e_2=(\alpha_3-\alpha_2)e_1+
\alpha_2(e_1+e_2)$$
$\beta_3$ is the smallest multiple of $\alpha_2$ such that
$\Z\frac{\beta_3}{\alpha_2}(\alpha_3-\alpha_2)$ contains
$\Z\alpha_1e_1$, so 
$$\frac{\beta_3}{\alpha_2}
=\frac{\lcm(\alpha_1,\alpha_2-\alpha_3)}{\alpha_2-\alpha_3}
=\frac{\alpha_1}{\gcd(\alpha_1,\alpha_2-\alpha_3)}.$$

We leave the proof of \eqref{9.11} to the reader.
\eqref{9.12} and \eqref{9.13} are trivial. The ring 
$\Lambda_{max}/C\cong \Z_{\beta_1}\times\Z_{\beta_2}\times 
\Z_{\beta_3}$ contains the ring 
$\Z_{\lcm(\beta_1,\beta_2,\beta_3)}[1_A]$. This shows
\eqref{9.14}.
The possible input into 
$(\Lambda_{\alpha_1\alpha_2\alpha_3}/C)^{unit}$ from 
$[\alpha_3e_1+\alpha_2e_2]$ is nontrivial. It has to 
be analyzed case by case.

(e) Compare Theorem \ref{t5.7}.
We start with $\www{\omega}_1=\alpha_3e_1+\alpha_2e_2$
and $\www{\omega}_2=\alpha_1e_1$. Because of
$\www{\omega}_1\www{\omega}_2=
\alpha_3\alpha_1e_1=\alpha_3\www{\omega}_2$ we set
\begin{eqnarray*}
\omega_1&:=& \www{\omega}_1-\alpha_3\cdot 1
=(\alpha_2-\alpha_3)e_2-\alpha_3e_3\\
\omega_2&:=& \www{\omega}_2=\alpha_1e_1
\end{eqnarray*}
and obtain
\begin{eqnarray*}
\omega_1\omega_2&=&0=ad\cdot 1,\\
\omega_1^2&=&(\alpha_2-2\alpha_3)\omega_1+
\frac{\alpha_3(\alpha_3-\alpha_2)}{\alpha_1}\omega_2
+\alpha_3(\alpha_2-\alpha_3)\cdot 1\\
&=& b\omega_1+a\omega_2-ac\cdot 1,\\
\omega_2^2&=& \alpha_1\omega_2 =d\omega_1+c\omega_2-bd\cdot 1,
\end{eqnarray*}
so $(a,b,c,d),\delta,t$ and $\tau_{\alpha_1\alpha_2\alpha_3}$ 
are as in \eqref{9.15}.
\hfill$\Box$

\bigskip
Next we find normal forms and the orders for all
$\varepsilon$-classes of full lattices in $A$. 
The conditions in \eqref{9.17} for a normal form in \eqref{9.16}
are much more involved than the conditions 
$\delta_2,\delta_3\in[0,\frac{1}{2}]$, $\delta_1\in[0;1]$ for a
semi-normal form in \eqref{9.16}.

\begin{lemma}\label{t9.6}
(a) Each $\varepsilon$-class of full lattices in $A$ contains
a unique full lattice with a $\Z$-basis
\begin{eqnarray}\label{9.16}
\uuuu{e}\cdot\begin{pmatrix}1&\delta_1&\delta_3\\
0&1&\delta_2\\0&0&1\end{pmatrix}
\textup{ with }\delta_1,\delta_2,\delta_3\in\Q
\end{eqnarray}
and
\begin{eqnarray}
\left.\begin{array}{lll}
\delta_2,\delta_3\in[0,\frac{1}{2}],&&\\
\delta_1\in[0,1]&\textup{if}&
(\delta_2,\delta_3)\in(0,\frac{1}{2})\times(0,\frac{1}{2}),\\
\delta_1\in[0,\frac{1}{2}]&\textup{if}&
(\delta_2,\delta_3)\in(0,\frac{1}{2})\times \{0,\frac{1}{2}\}
\cup \{0\}\times (0,\frac{1}{2})\\
&& \cup\, \{(0,0),(0,\frac{1}{2}),(\frac{1}{2},0)\},\\
\delta_1\in[0,2\delta_3]&\textup{if}& 
(\delta_2,\delta_3)\in\{\frac{1}{2}\}\times (0,\frac{1}{2}),\\
\delta_1\in[0,\frac{1}{2})&\textup{if}& 
(\delta_2,\delta_3)=(\frac{1}{2},\frac{1}{2}).
\end{array}\right\}\label{9.17}
\end{eqnarray}
The $\Z$-basis is unique. 

(b) Consider an order $\Lambda_{\alpha_1\alpha_2\alpha_3}$ and
a full lattice $L$ in $A$ with a $\Z$-basis as in \eqref{9.16}
(not necessarily also \eqref{9.17}). Then 
$\Lambda_{\alpha_1\alpha_2\alpha_3}\cdot L=L$ if and only if 
\begin{eqnarray}\label{9.18}
\alpha_1\delta_1,\ (\alpha_2-\alpha_3)\delta_1,\ 
\alpha_1\delta_3,\ \alpha_2\delta_2,
\ \alpha_3\delta_3-\alpha_2\delta_1\delta_2
\in\Z.
\end{eqnarray}

(c) Let $L$ be a full lattice in $A$ with a $\Z$-basis as in
\eqref{9.16} (not necessarily also \eqref{9.17}).
Then $\OO(L)=\Lambda_{\alpha_1\alpha_2\alpha_3}$ with
\begin{eqnarray}\label{9.19}
\left.\begin{array}{lll}
\alpha_1&=&\min(\beta_1\in\N\,|\, \beta_1\delta_1\in\Z,
\beta_1\delta_3\in\Z),\\
\alpha_2&=&\min(\beta_2\in\N\,|\, \textup{ a }\beta_3\in\Z
\textup{ exists with }\beta_2\delta_2\in\Z,\\
&&\hspace*{2cm} (\beta_2-\beta_3)\delta_1\in\Z,
\beta_3\delta_3-\beta_2\delta_1\delta_2\in\Z,\\
\alpha_3&=&\Bigl(\textup{the }\beta_3\in(-\frac{1}{2}\alpha_1,
\frac{1}{2}\alpha_1]\cap\Z\textup{ with }\\
&&(\alpha_2-\beta_3)\delta_1\in\Z,\beta_3\delta_3-
\alpha_2\delta_1\delta_2\in\Z\Bigr).
\end{array}\right\}
\end{eqnarray}
\end{lemma}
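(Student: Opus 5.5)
Part (a) starts from the semi-normal form \eqref{9.2}, specialised to $n=3$. Every $\varepsilon$-class contains a lattice with $\Z$-basis \eqref{9.16} satisfying $\delta_1\in(-\frac12,\frac12]$ and $\delta_2,\delta_3\in[0,\frac12]$. What remains is to decide when two such bases give $\varepsilon$-equivalent lattices, and to select one representative.

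If $uL=L'$ with both lattices upper unitriangular in this sense, then comparing the $\Q e_i$-components of the triangular bases \eqref{9.1} forces $u=\sum\varepsilon_ie_i\in\Lambda_{max}^{unit}$. So the residual freedom is the finite group $\{\pm e_1\pm e_2\pm e_3\}$, composed with re-choosing the $\Z$-basis by unipotent upper triangular integer matrices. Multiplying by $\varepsilon_1e_1+\varepsilon_2e_2+\varepsilon_3e_3$ and rescaling the basis vectors by signs sends
\[
(\delta_1,\delta_2,\delta_3)\mapsto(\varepsilon_1\varepsilon_2\delta_1,\ \varepsilon_2\varepsilon_3\delta_2,\ \varepsilon_1\varepsilon_3\delta_3).
\]
After this we must re-reduce to the interval conditions. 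This means adding integers to $\delta_2,\delta_3$, or subtracting the second basis vector from the third, which changes $\delta_3$ to $\delta_3-\delta_1$ and $\delta_2$ to $\delta_2-1$.

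The plan is then a case analysis on $(\delta_2,\delta_3)$. In the generic case $(\delta_2,\delta_3)\in(0,\frac12)^2$, only $u=\pm1$ preserves both interior values. Hence $\delta_1$ is determined modulo $1$ only, and we may take $\delta_1\in[0,1)$ (or $[0,1]$ as stated, after checking the endpoint convention). If exactly one of $\delta_2,\delta_3$ lies in $\{0,\frac12\}$, one further sign flip survives. It sends $\delta_1\mapsto-\delta_1$ (mod $1$, possibly after adjusting $\delta_3$), which halves the interval to $[0,\frac12]$.

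The delicate case is $\delta_2=\frac12$. Here restoring $\delta_2=-\frac12$ to $\frac12$ requires adding the second basis vector. This shifts $\delta_3$ by $\delta_1$, and the resulting identification $\delta_1\sim 2\delta_3-\delta_1$ produces the interval $[0,2\delta_3]$. When in addition $\delta_3=\frac12$ we get $[0,\frac12)$. I expect this bookkeeping, together with checking that the listed intervals are exact fundamental domains, to be the main obstacle. I would organise it by computing the orbit of the full sign group on the triple in each boundary stratum.

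Part (b) is a direct computation. We multiply the three generators $\alpha_1e_1$, $\alpha_3e_1+\alpha_2e_2$, $1$ of $\Lambda_{\alpha_1\alpha_2\alpha_3}$ with the three basis vectors of $L$. We then express each product in the basis \eqref{9.16} by back-substitution, solving the triangular system. Requiring integer coordinates yields exactly the five conditions in \eqref{9.18}. For example:
\begin{itemize}
\item $\alpha_1e_1\cdot(\delta_1e_1+e_2)=\alpha_1\delta_1e_1$ gives $\alpha_1\delta_1\in\Z$;
\item $(\alpha_3e_1+\alpha_2e_2)(\delta_3e_1+\delta_2e_2+e_3)$ gives the mixed condition $\alpha_3\delta_3-\alpha_2\delta_1\delta_2\in\Z$, after subtracting $\alpha_2\delta_2$ times the second basis vector.
\end{itemize}

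Part (c) then follows from (b) and Lemma \ref{t9.5}(a). The order $\OO(L)$ is the largest order $\Lambda$ with $\Lambda L=L$, and it has the form \eqref{9.7}. Its first diagonal entry is the least $\beta_1$ with $\beta_1e_1\in\OO(L)$. Its second is the least $\beta_2$ for which some $\beta_3e_1+\beta_2e_2\in\OO(L)$. The entry $\alpha_3$ is that $\beta_3$ reduced modulo $\alpha_1$. Reading the membership conditions off \eqref{9.18} one element at a time gives \eqref{9.19}.
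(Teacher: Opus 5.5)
Your parts (b) and (c) are correct and follow the paper's argument: multiply the generators of $\Lambda_{\alpha_1\alpha_2\alpha_3}$ against the basis of $L$, then read off $\OO(L)=\{x\,|\,xL\subset L\}$ element by element. Your observation that the residual freedom in (a) is only $\{\pm e_1\pm e_2\pm e_3\}$ is also correct; the paper uses it tacitly.

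The gap is the case $\delta_2=\frac12$. The identification $\delta_1\sim 2\delta_3-\delta_1$ with $\delta_3$ fixed is not induced by any unit. Write $v_1,v_2,v_3$ for the basis \eqref{9.16}. The nontrivial moves act on $(\delta_1,\delta_3)$ modulo $\Z^2$ as follows:
\begin{itemize}
\item via $-e_1+e_2+e_3$: $(\delta_1,\delta_3)\mapsto(-\delta_1,-\delta_3)$;
\item via $e_1-e_2+e_3$, negating the second vector and adding it to the third: $(\delta_1,\delta_3)\mapsto(-\delta_1,\delta_3-\delta_1)$;
\item via $e_1+e_2-e_3$, replacing the third vector by $v_2-v_3$: $(\delta_1,\delta_3)\mapsto(\delta_1,\delta_1-\delta_3)$.
\end{itemize}
The bound $2\delta_3$ comes from the last involution, whose fixed locus is $\delta_1=2\delta_3$. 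The paper applies exactly this move when $\delta_1>2\delta_3$. A reflection $\delta_1\mapsto 2\delta_3-\delta_1$ would instead identify $[0,\delta_3]$ with $[\delta_3,2\delta_3]$, so it cannot yield $[0,2\delta_3]$ as a fundamental domain.

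The more serious point is that these moves change $\delta_3$. So in this stratum $\delta_3$ is not an invariant, and different strata interact. If you carry out the orbit computation you propose, you will find that \eqref{9.17} is not a fundamental domain. Take $0<x<\frac12$. The triples $(\delta_1,\delta_2,\delta_3)=(x,\frac12,x)$ and $(x,\frac12,0)$ both satisfy \eqref{9.17}. Let $L$ be the lattice of the first and $u=e_1+e_2-e_3$. Then $uL$ has the basis $e_1$, $xe_1+e_2$, $(xe_1+e_2)-u(xe_1+\frac12e_2+e_3)=\frac12e_2+e_3$, which is the normal basis of the second.

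So the uniqueness claim in (a) is false as stated. A correct domain is obtained, for example, by requiring $\delta_1\in[0,2\delta_3]\setminus\{\delta_3\}$ in the case $\{\frac12\}\times(0,\frac12)$. Alternatively, keep that case and restrict $(\delta_2,\delta_3)=(\frac12,0)$ to $\delta_1\in\{0,\frac12\}$.

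Likewise, in the generic case you were right that the interval must be $[0,1)$. With $[0,1]$, the choices $\delta_1=0$ and $\delta_1=1$ are two bases of the same lattice, so do not defer to the stated endpoint.

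The paper's proof does not detect either problem. It only establishes existence and leaves the absence of further identifications in the boundary cases to the reader.
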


{\bf Proof:} 
(a) From \eqref{9.2} it is clear that each
$\varepsilon$-class contains a full lattice $L$ with a 
$\Z$-basis as in \eqref{9.16} with 
$\delta_2,\delta_3\in[0,\frac{1}{2}]$ and $\delta_1\in[0,1]$.
It is also clear that in the interior cases 
$(\delta_2,\delta_3)\in(0,\frac{1}{2})\times (0,\frac{1}{2})$
this is unique and is a normal form.

But in the boundary cases
$(\delta_2,\delta_3)\in\{0,\frac{1}{2}\}\times [0,\frac{1}{2}]
\,\cup\, (0,\frac{1}{2})\times\{0,\frac{1}{2}\}$ 
also a full lattice $u\cdot L$ with a suitable unit 
$u\in\{\pm e_1\pm e_2\pm e_3\}$ has such a 
$\Z$-basis, with other coefficients.

{\bf The case} 
$(\delta_2,\delta_3)\in [0,\frac{1}{2}]\times\{0\}$:
If $\delta_1\in (\frac{1}{2},1]$, first we replace
$\delta_1e_1+e_2$ by $(\delta_1-1)e_1+e_2$, so 
$\delta_1$ by $\delta_1-1\in(-\frac{1}{2},0]$.
The full lattice $(-e_1+e_2+e_3)L$ has the $\Z$-basis
$\uuuu{e}\begin{pmatrix}1&1-\delta_1&0\\0&1&\delta_2\\0&0&1
\end{pmatrix}$.

{\bf The case} 
$(\delta_2,\delta_3)\in\{0\}\times (0,\frac{1}{2}]$: 
If $\delta_1\in(\frac{1}{2},1]$, first we replace 
$\delta_1e_1+e_2$ by $(\delta_1-1)e_1+e_2$, so 
$\delta_1$ by $\delta_1-1\in(-\frac{1}{2},0]$. The full lattice 
$(e_1-e_2+e_3)L$ has the $\Z$-basis 
$\uuuu{e}\begin{pmatrix}1&1-\delta_1&\delta_3\\0&1&0\\0&0&1
\end{pmatrix}$.

{\bf The case} 
$(\delta_2,\delta_3)\in (0,\frac{1}{2}]\times 
\{\frac{1}{2}\}$: If $\delta_1\in(\frac{1}{2},1]$,
first we replace $\delta_1e_1+e_2$ by $(\delta_1-1)e_1+e_2$
and $\frac{1}{2}e_1+\delta_2e_2+e_3$ by
$-\frac{1}{2}e_1+\delta_2e_2+e_3$.
The full lattice $(-e_1+e_2+e_3)L$ has the $\Z$-basis 
$\uuuu{e}\begin{pmatrix}1&1-\delta_1&\frac{1}{2}\\
0&1&\delta_2\\0&0&1 \end{pmatrix}$.

In the special case $(\delta_1,\delta_2,\delta_3)=
(\frac{1}{2},\frac{1}{2},\frac{1}{2})$ we replace
$\frac{1}{2}e_1+\frac{1}{2}e_2+e_3$ by
$(\frac{1}{2}-\frac{1}{2})e_1+(\frac{1}{2}-1)e_2+e_3$ and
$\frac{1}{2}e_1+e_2$ by $(\frac{1}{2}-1)e_1+e_2$.
The full lattice $(e_1-e_2+e_3)L$ has the $\Z$-basis 
$\uuuu{e}\begin{pmatrix}1&\frac{1}{2}&0\\0&1&\frac{1}{2}\\0&0&1
\end{pmatrix}$.

{\bf The case} 
$(\delta_2,\delta_3)\in\{\frac{1}{2}\}\times 
(0,\frac{1}{2})$: If $\delta_1\in (2\delta_3,1]$, first
we replace $\delta_3e_1+\frac{1}{2}e_2+e_3$ by 
$(\delta_3-\delta_1)e_1+(\frac{1}{2}-1)e_2+e_3$.
The full lattice $\www{L}:=(-e_1-e_2+e_3)L$ has the $\Z$-basis 
$$\uuuu{e}\begin{pmatrix}1&\delta_1&\delta_1-\delta_3\\
0&1&\frac{1}{2}\\0&0&1\end{pmatrix}=:
\uuuu{e}\begin{pmatrix}1&\www{\delta}_1&\www{\delta}_3\\
0&1&\frac{1}{2}\\0&0&1\end{pmatrix}.$$
Here $\www{\delta}_1\in [0,2\www{\delta}_3)$ because
$$\www{\delta}_1<2\www{\delta}_3\iff \delta_1<2\delta_1
-2\delta_3\iff \delta_1>2\delta_3.$$
If $\www{\delta}_3=\delta_1-\delta_3\in[0,\frac{1}{2}]$
the case is fine. Suppose 
$\www{\delta}_3\in (\frac{1}{2},1)$.
We replace $\www{\delta}_1e_1+e_2$ by $(\www{\delta}_1-1)e_1+e_2$
and $\www{\delta}_3e_1+\frac{1}{2}e_2+e_3$ by 
$(\www{\delta}_3-1)e_1+\frac{1}{2}e_2+e_3$. 
The full lattice $\widehat{L}
=(-e_1+e_2+e_3)\www{L}$ has the $\Z$-basis
$$\uuuu{e}\begin{pmatrix}1&1-\www{\delta}_1&1-\www{\delta}_3\\
0&1&\frac{1}{2}\\0&0&1\end{pmatrix}=:
\uuuu{e}\begin{pmatrix}1&\widehat{\delta}_1&\widehat{\delta}_3\\
0&1&\frac{1}{2}\\0&0&1\end{pmatrix}.$$
Now
$\widehat{\delta}_3\in (0,\frac{1}{2})$ and 
$\widehat{\delta}_1\in [0,2\widehat{\delta}_3)$ because
\begin{eqnarray*}
\widehat{\delta}_1&=& 1-\www{\delta}_1=1-\delta_1\geq 0,\\
\widehat{\delta}_1-2\widehat{\delta}_3&=&
(1-\www{\delta}_1)-2(1-\www{\delta}_3)
=-1-\www{\delta}_1+2\www{\delta}_3\\
&=& -1-\delta_1+2(\delta_1-\delta_3)
= -1+\delta_1-2\delta_3\leq -2\delta_3<0.
\end{eqnarray*}
We leave it to the reader to show that in the boundary cases
no further reductions are possible.

(b) The full lattice $\Lambda_{\alpha_1\alpha_2\alpha_3}\cdot L$ 
is generated over $\Z$ by the following tuple of elements of $A$,
$$\uuuu{e}\begin{pmatrix}\alpha_1 & \alpha_1\delta_1 & 
\alpha_1\delta_3 & \alpha_3 & \alpha_3\delta_1 & 
\alpha_3\delta_3 & 1 & \delta_1 & \delta_3 \\
0 & 0 & 0 & 0 & \alpha_2 & \alpha_2\delta_2 & 0 & 1 & \delta_2\\
0 & 0 & 0 & 0 & 0 & 0 & 0 & 0 & 1 \end{pmatrix}.$$
The elements from the columns $1,4,7,8,9$ are in $L$ and 
generate $L$. The conditions that the elements from the columns
$2,3,5,6$ are in $L$ are equivalent to \eqref{9.18}.
Here observe
\begin{eqnarray*}
\alpha_3\delta_1e_1+\alpha_2e_2 &=&
(\alpha_3-\alpha_2)\delta_1e_1 + \alpha_2(\delta_1e_1+e_2),\\
\alpha_3\delta_3e_1+\alpha_2\delta_2e_2 &=& 
(\alpha_3\delta_3-\alpha_2\delta_1\delta_2)e_1 
+ \alpha_2\delta_2(\delta_1e_1+e_2).
\end{eqnarray*}

(c) This follows from part (b).\hfill$\Box$

\begin{remarks}\label{t9.7}
(i) The parts (c) and (d) of Lemma \ref{t9.5} give the 
ingredients for formula \eqref{9.4} for the size of the
group $G([\Lambda]_\varepsilon)$ in the case $n=3$.

(ii) The parts (a) and (b) of Lemma \ref{t9.6} can be used to
determine for a given order 
$\Lambda_{\alpha_1\alpha_2\alpha_3}$ the unique representatives
in part (a) of the set $\{[L]_\varepsilon\,|\, 
L\in\LL(A),\OO(L)\supset\Lambda_{\alpha_1\alpha_2\alpha_3}\}$.

(iii) Consider three different integers $\lambda_1,\lambda_2$ and
$\lambda_3$ and the cyclic order 
$\Lambda=\Z[\lambda_1e_1+\lambda_2e_2+\lambda_3e_3]$. 
An $\varepsilon$-class $[L]_\varepsilon$ of a full lattice $L$
with $\OO(L)\supset \Lambda$ determines a conjugacy class of
integer $3\times 3$ matrices with eigenvalues $\lambda_1,
\lambda_2$ and $\lambda_3$. Lemma \ref{t9.6} (a) offers a unique
matrix $M$ in the conjugacy class via 
$(\lambda_1e_1+\lambda_2e_2+\lambda_3e_3)\cdot \uuuu{e}D
=\uuuu{e}D\cdot M$ where $\uuuu{e}D$ is the $\Z$-basis
in \eqref{9.16} with \eqref{9.17} for the full lattice in
Lemma \ref{t9.6} (a) in the $\varepsilon$-class 
$[L]_\varepsilon$. In the case $(\lambda_1,\lambda_2,\lambda_3)
=(-2,2,0)$ column 5 in Table \ref{table4.4} gives the matrices
in this normal form for the conjugacy classes of matrices.
\end{remarks}

\subsection{Conjugacy classes of matrices with
eigenvalues $2,0,-2$} 

First we find heuristically six $3\times 3$ matrices 
with eigenvalues $2,0,-2$ and guess that they are in 
different conjugacy classes. It will turn out that this is true.
From different decompositions of the characteristic polynomial
$(t-2)t(t+2)$ one finds immediately five matrices
$M_1,M_2,M_3,M_4$ and $M_6$ which arise as 
companion matrices for the factors of the decompositions
of the characteristic polynomial.

Example \ref{t9.3} provides three conjugacy classes of
$2\times 2$ matrices with eigenvalues $2$ and $-2$,
with representatives 
$\begin{pmatrix}2&0\\0&-2\end{pmatrix}$, 
$\begin{pmatrix}2&1\\0&-2\end{pmatrix}$,
$\begin{pmatrix}2&2\\0&-2\end{pmatrix}$. 
The third one gives rise to the matrix $M_5$.
Observe that the matrices
$\begin{pmatrix}2&1\\0&-2\end{pmatrix}$ and 
$\begin{pmatrix}0&4\\1&0\end{pmatrix}$ are conjugate.
Table \ref{table4.1} lists the matrices $M_1,...,M_6$ and
$\Z$-bases $\uuuu{e}\cdot B_i$ ($i\in\{1,...,6\}$)
of six chosen full lattices with
$$(-2e_1+2e_2)\cdot \uuuu{e}B_i 
= \uuuu{e}B_i\cdot M_i.$$
The six full lattices turn out to be pairwise different orders
$\Lambda_{\alpha_1\alpha_2\alpha_3}$. Also these orders
are listed. As the orders are pairwise different, the 
conjugacy classes are pairwise different.

\begin{table}
\begin{eqnarray*}
\begin{array}{ccccc}
i & \textup{factors} & M_i & \uuuu{e}B_i & 
\Lambda_{\alpha_1\alpha_2\alpha_3}\\
\hline 
1 & \begin{matrix}t-2\\t\\t+2\end{matrix} 
&\begin{pmatrix}2&0&0\\0&0&0\\0&0&-2\end{pmatrix} & 
\uuuu{e}\begin{pmatrix}0&0&1\\1&0&0\\0&1&0\end{pmatrix} & 
\Lambda_{110}=\Lambda_{max}\\
2 & \begin{matrix}t(t-2)\\ t+2\end{matrix} 
& \begin{pmatrix}0&0&0\\1&2&0\\0&0&-2\end{pmatrix} & 
\uuuu{e}\begin{pmatrix}0&0&1\\1&2&0\\1&0&0\end{pmatrix} & 
\Lambda_{120} \\
3 & \begin{matrix}t(t+2)\\ t-2\end{matrix} 
& \begin{pmatrix}0&0&0\\1&-2&0\\0&0&2\end{pmatrix} & 
\uuuu{e}\begin{pmatrix}1&-2&0\\0&0&1\\1&0&0\end{pmatrix} & 
\Lambda_{210} \\
4 & \begin{matrix}(t-2)(t+2)\\ t\end{matrix} 
& \begin{pmatrix}0&4&0\\1&0&0\\0&0&0\end{pmatrix} & 
\uuuu{e}\begin{pmatrix}1&-2&0\\1&2&0\\0&0&1\end{pmatrix} & 
\Lambda_{411} \\
5 & \begin{matrix}(t-2)(t+2)\\ t\end{matrix} 
& \begin{pmatrix}2&2&0\\0&-2&0\\0&0&0\end{pmatrix} & 
\uuuu{e}\begin{pmatrix}0&1&0\\2&1&0\\0&0&1\end{pmatrix} & 
\Lambda_{211} \\
6 & (t-2)t(t+2) 
& \begin{pmatrix}0&0&0\\1&0&4\\0&1&0\end{pmatrix} & 
\uuuu{e}\begin{pmatrix}1&-2&4\\1&2&4\\1&0&0\end{pmatrix} & 
\Lambda_{8,2,-2} 
\end{array}
\end{eqnarray*}
\caption{Six matrices with eigenvalues $2,0,-2$ and six 
$\Z$-bases of full lattices (in fact, orders) which realize 
these matrices w.r.to the multiplication with $-2e_1+2e_2$}
\label{table4.1}
\end{table}

The following systematic analysis will show that there are
four more conjugacy classes. Consider the endomorphism
$$(\textup{multiplication with }-2e_1+2e_2):A\to A$$
and the order $\Z[-2e_1+2e_2]=\Lambda_{8,2,-2}$
which the element $-2e_1+2e_2$ generates.
By Theorem \ref{t6.3} 
the conjugacy classes of integer matrices are in 1:1 
correspondence with the elements of the set
$$\{[L]_\varepsilon\,|\, L\in\LL(A),\OO(L)\supset
\Lambda_{8,2,-2}\}.$$
Lemma \ref{t9.5} (b) allows to determine the orders
$\Lambda\supset \Lambda_{8,2,-2}$ easily.
They are eight orders. Figure \ref{fig4.1} lists them
and their inclusions (inclusions are indicated by
non-horizontal lines and transitivity of inclusion).
It lists also $L_1$ and $L_2$ which are full lattices
which will be explained later.

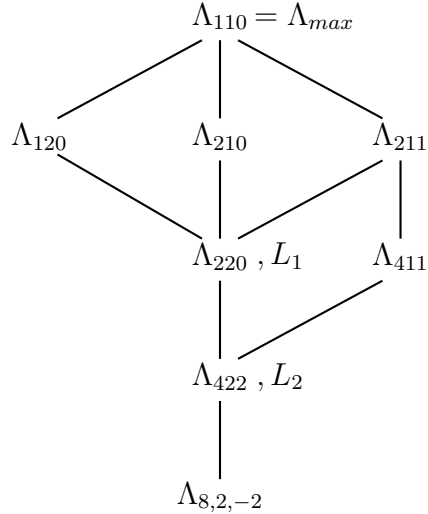
\begin{figure}
\begin{tikzpicture}[scale=0.8]
\node at (3,0) {$\Lambda_{8,2,-2}$};
\node at (3,2) {$\Lambda_{422}$};
\node at (4,2) {$,L_2$};
\node at (3,4) {$\Lambda_{220}$};
\node at (4,4) {$,L_1$};
\node at (3,6) {$\Lambda_{210}$};
\node at (3,8) {$\Lambda_{110}$};
\node at (4.4,8) {$=\Lambda_{max}$};
\node at (6,4) {$\Lambda_{411}$};
\node at (0,6) {$\Lambda_{120}$};
\node at (6,6) {$\Lambda_{211}$};
\draw [thick] (3,0.3)--(3,1.6);
\draw [thick] (3,2.3)--(3,3.6);
\draw [thick] (3,4.3)--(3,5.6);
\draw [thick] (3,6.3)--(3,7.6);
\draw [thick] (6,4.3)--(6,5.6);
\draw [thick] (3.3,2.3)--(5.7,3.6);
\draw [thick] (3.3,4.3)--(5.7,5.6);
\draw [thick] (0.3,6.3)--(2.7,7.6);
\draw [thick] (5.7,6.3)--(3.3,7.6);
\draw [thick] (0.3,5.7)--(2.7,4.3);
\end{tikzpicture}
\caption[Figure 4.1]{The orders which contain the order
$\Lambda_{8,2,-2}$, and two other full lattices $L_1$ and $L_2$}
\label{fig4.1}
\end{figure}

Lemma \ref{t9.5} (c) allows for each order $\Lambda$ of 
the eight orders to determine the size of the group
$\Lambda^{unit}$ of units.
They are listed in column 2 of table \ref{table4.2}. 
All five orders $\Lambda$ in Lemma \ref{t9.5} (c) with
$|\Lambda^{unit}|=8$ turn up.

Lemma \ref{t9.5} (d) allows to determine for each order
$\Lambda$ of the eight orders the conductor 
$$C:= \Lambda:\Lambda_{max}=\Z\beta_1e_1
+\Z\beta_2e_2+\Z\beta_3e_3$$
and the sizes of the groups of units 
$(\Lambda_{max}/C)^{unit}$ and $(\Lambda/C)^{unit}$.
With formula \eqref{9.4} one calculates the size of the 
group $G([\Lambda]_\varepsilon)$. 
Table \ref{table4.2}
gives in the columns 3, 4, 5 and 6 these data.

\begin{table}
\begin{eqnarray*}
\begin{array}{cccccc}
\Lambda & |\Lambda^{unit}| & (\beta_1,\beta_2,\beta_3) & 
|(\Lambda_{max}/C)^{unit}| & |(\Lambda/C)^{unit}| & 
|G([\Lambda]_\varepsilon)| \\ \hline 
\Lambda_{110} & 8 & (1,1,1) & 1 & 1 & 
\frac{1}{1}\cdot \frac{8}{8}=1 \\
\Lambda_{120} & 8 & (1,2,2) & 1 & 1 & 
\frac{1}{1}\cdot \frac{8}{8}=1 \\
\Lambda_{210} & 8 & (2,1,2) & 1 & 1 & 
\frac{1}{1}\cdot \frac{8}{8}=1 \\
\Lambda_{211} & 8 & (2,2,1) & 1 & 1 & 
\frac{1}{1}\cdot \frac{8}{8}=1 \\
\Lambda_{220} & 8 & (2,2,2) & 1 & 1 & 
\frac{1}{1}\cdot \frac{8}{8}=1 \\
\Lambda_{411} & 4 & (4,4,1) & 4 & 2 & 
\frac{4}{2}\cdot \frac{4}{8}=1 \\
\Lambda_{422} & 4 & (4,4,2) & 4 & 2 & 
\frac{4}{2}\cdot \frac{4}{8}=1 \\
\Lambda_{8,2,-2} & 2 & (8,8,4) & 32 & 8 & 
\frac{32}{8}\cdot \frac{2}{8}=1 
\end{array}
\end{eqnarray*}
\caption{Data for the eight orders which are needed in 
formula \eqref{9.4} for $|G([\Lambda]_\varepsilon)|$}
\label{table4.2}
\end{table}

By Theorem \ref{t5.12} (d)
the map $G([\Lambda_{8,2,-2}]_\varepsilon)\to 
G([\Lambda]_\varepsilon)$ is surjective for each order
$\Lambda\supset \Lambda_{8,2,-2}$. Therefore 
$|G([\Lambda_{8,2,-2}]_\varepsilon)|=1$ implies
$|G([\Lambda]_\varepsilon)|=1$ for any order 
$\Lambda\supset\Lambda_{8,2,-2}$, so it would have been
sufficient to calculate the last row of table \ref{table4.2}.

By Theorem \ref{t5.5} (d)
$|G([\Lambda]_\varepsilon)|=1$ for each order 
$\Lambda\supset\Lambda_{8,2,-2}$ implies that each 
$w$-class in the semigroup 
$$\{[L]_\varepsilon\,|\, L\in\LL(A),\OO(L)\supset
\Lambda_{8,2,-2}\}$$ has only one element, so this semigroup
coincides with its quotient by $w$-equivalence.

Lemma \ref{t9.5} (e) allows to determine for each order
$\Lambda\supset\Lambda_{8,2,-2}$ the number
$\tau\in\N$ of $w$-classes in the set
$\{[L]_\varepsilon\,|\, L\in\LL(A),\OO(L)=\Lambda\}$.
Table \ref{table4.3} 
gives the data $(a,b,c,d),\mu,t$ and $\tau$ from
Lemma \ref{t9.5} (e) for each order 
$\Lambda\supset\Lambda_{8,2,-2}$. 

\begin{table}
\begin{eqnarray*}
\begin{array}{ccccc}
\textup{Order} & (a,b,c,d) & \mu & t & \tau=2^t \\ \hline
\Lambda_{110} & (0,1,1,0) & 1 & 0 & 1 \\
\Lambda_{120} & (0,2,1,0) & 1 & 0 & 1 \\
\Lambda_{210} & (0,1,2,0) & 1 & 0 & 1 \\
\Lambda_{211} & (0,-1,2,0) & 1 & 0 & 1 \\
\Lambda_{220} & (0,2,2,0) & 2 & 1 & 2 \\
\Lambda_{411} & (0,-1,4,0) & 1 & 0 & 1 \\
\Lambda_{422} & (0,-2,4,0) & 2 & 1 & 2 \\
\Lambda_{8,2,-2} & (1,6,8,0) & 1 & 0 & 1
\end{array}
\end{eqnarray*}
\caption{Data in Lemma \ref{t9.5} (e) for the eight orders
$\Lambda\supset\Lambda_{8,2,-2}$}
\label{table4.3}
\end{table}

In fact, the orders
\begin{eqnarray*}
\Lambda_{120}= \Z[e_1+2e_2],&&
\Lambda_{210}= \Z[2e_1+e_2]\\
\Lambda_{211}= \Z[-e_1+e_2],&&
\Lambda_{8,2,-2}= \Z[-2e_1+2e_2],
\end{eqnarray*}
are cyclic, so by Theorem \ref{t4.12} 
$\tau=1$ for each of them anyway.
The order $\Lambda_{411}$ satisfies 
$\Lambda_{max}=\Lambda_{411}+e_1\cdot \Lambda_{411}$,
so by Theorem \ref{t4.9} 
$\tau=1$ for $\Lambda_{411}$ (and for $\Lambda_{211}$
and $\Lambda_{max}$) anyway.
So it would have been sufficient to calculate the rows
for $\Lambda_{220}$ and $\Lambda_{422}$ in table
\ref{table4.3}. 

$\tau(\Lambda_{220})=\tau(\Lambda_{422})=2$ says that there
is one $\varepsilon$-class $[L_1]_\varepsilon$
of non-invertible full lattices with $\OO(L_1)=\Lambda_{220}$
and that there is one $\varepsilon$-class $[L_2]_\varepsilon$
of non-invertible full lattices with $\OO(L_2)=\Lambda_{422}$.
One way to find representatives $L_1$ and $L_2$ is to
determine the full lattices in normal form in 
Lemma \ref{t9.6} (a) which satisfy \eqref{9.18} for
$(\alpha_1,\alpha_2,\alpha_3)=(8,2,-2)$. 
One finds ten lattices, eight in the $\varepsilon$-classes
of the eight orders $\Lambda\supset\Lambda_{8,2,-2}$ and 
two representatives for the classes $[L_1]_\varepsilon$
and $[L_2]_\varepsilon$. Here we just give for two 
representatives $L_1$ and $L_2$ pretty $\Z$-bases:
\begin{eqnarray}\label{9.20} 
L_1:\quad \uuuu{e}
\begin{pmatrix}1&\frac{1}{2}&1\\0&1&1\\0&0&1\end{pmatrix},\quad
L_2:\quad \uuuu{e}
\begin{pmatrix}4&-1&1\\0&1&1\\0&0&1\end{pmatrix}.
\end{eqnarray}

Table \ref{table4.4} 
lists for each full lattice 
$L\in\{\Lambda\textup{ order }\,|\, \Lambda\supset
\Lambda_{8,2,-2}\}\ \cup\ \{L_1,L_2\}$ the following data:

\begin{list}{}{}
\item[Column 2:] 
The $\Z$-basis $\uuuu{e}B$ in Lemma \ref{t9.5} (a)
of $L$ if $L$ is an order respectively the $\Z$-basis in
\eqref{9.20} if $L\in\{L_1,L_2\}$.
\item[Column 3:]
The matrix $M\in M_{3\times 3}(\Z)$ with 
$(-2e_1+2e_2)\cdot\uuuu{e}B=\uuuu{e}B\cdot M$.
\item[Column 4:]
The $\Z$-basis $\uuuu{e}D$ of the
$\varepsilon$-equivalent full lattice $\www{L}$
in the normal form in Lemma \ref{t9.6} (a).
\item[Column 5:]
The matrix $\www{M}\in M_{3\times 3}(\Z)$ with 
$(-2e_1+2e_2)\cdot\uuuu{e}D=\uuuu{e}D\cdot M$.
\end{list}
The ten matrices $M$ in column 4 and the ten matrices
$\www{M}$ in column 5 are different representatives of the
ten conjugacy classes of integer $3\times 3$ matrices with
eigenvalues $2,0,-2$.

\begin{table}
\begin{eqnarray*}
\begin{array}{ccccc}
L & \uuuu{e}B & M & \uuuu{e}D & \www{M} \\ \hline
\Lambda_{110} & 
\uuuu{e}\begin{pmatrix}1&0&1\\0&1&1\\0&0&1\end{pmatrix} &
\begin{pmatrix}-2&0&-2\\0&2&2\\0&0&0\end{pmatrix} & 
\uuuu{e}\begin{pmatrix}1&0&0\\0&1&0\\0&0&1\end{pmatrix} & 
\begin{pmatrix}-2&0&0\\0&2&0\\0&0&0\end{pmatrix} \\
\Lambda_{120} & 
\uuuu{e}\begin{pmatrix}1&0&1\\0&2&1\\0&0&1\end{pmatrix} &
\begin{pmatrix}-2&0&-2\\0&2&1\\0&0&0\end{pmatrix} & 
\uuuu{e}\begin{pmatrix}1&0&0\\0&1&\frac{1}{2}\\0&0&1
\end{pmatrix} & 
\begin{pmatrix}-2&0&0\\0&2&1\\0&0&0\end{pmatrix} \\
\Lambda_{210} & 
\uuuu{e}\begin{pmatrix}2&0&1\\0&1&1\\0&0&1\end{pmatrix} &
\begin{pmatrix}-2&0&-1\\0&2&2\\0&0&0\end{pmatrix} & 
\uuuu{e}\begin{pmatrix}1&0&\frac{1}{2}\\0&1&0\\0&0&1
\end{pmatrix} & 
\begin{pmatrix}-2&0&-1\\0&2&0\\0&0&0\end{pmatrix} \\
\Lambda_{211} & 
\uuuu{e}\begin{pmatrix}2&1&1\\0&1&1\\0&0&1\end{pmatrix} &
\begin{pmatrix}-2&-2&-2\\0&2&2\\0&0&0\end{pmatrix} & 
\uuuu{e}\begin{pmatrix}1&\frac{1}{2}&0\\0&1&0\\0&0&1
\end{pmatrix} & 
\begin{pmatrix}-2&-2&0\\0&2&0\\0&0&0\end{pmatrix} \\
\Lambda_{220} & 
\uuuu{e}\begin{pmatrix}2&0&1\\0&2&1\\0&0&1\end{pmatrix} &
\begin{pmatrix}-2&0&-1\\0&2&1\\0&0&0\end{pmatrix} & 
\uuuu{e}\begin{pmatrix}1&0&\frac{1}{2}\\0&1&\frac{1}{2}\\
0&0&1\end{pmatrix} & 
\begin{pmatrix}-2&0&-1\\0&2&1\\0&0&0\end{pmatrix} \\
L_1 & 
\uuuu{e}\begin{pmatrix}1&\frac{1}{2}&1\\0&1&1\\0&0&1
\end{pmatrix} &
\begin{pmatrix}-2&-2&-3\\0&2&2\\0&0&0\end{pmatrix} & 
\uuuu{e}\begin{pmatrix}1&\frac{1}{2}&\frac{1}{2}\\0&1&0\\
0&0&1\end{pmatrix} & 
\begin{pmatrix}-2&-2&-1\\0&2&0\\0&0&0\end{pmatrix} \\
\Lambda_{411} & 
\uuuu{e}\begin{pmatrix}4&1&1\\0&1&1\\0&0&1\end{pmatrix} &
\begin{pmatrix}-2&-1&-1\\0&2&2\\0&0&0\end{pmatrix} & 
\uuuu{e}\begin{pmatrix}1&\frac{1}{4}&0\\0&1&0\\0&0&1
\end{pmatrix} & 
\begin{pmatrix}-2&-1&0\\0&2&0\\0&0&0\end{pmatrix} \\
\Lambda_{422} & 
\uuuu{e}\begin{pmatrix}4&2&1\\0&2&1\\0&0&1\end{pmatrix} &
\begin{pmatrix}-2&-2&-1\\0&2&1\\0&0&0\end{pmatrix} & 
\uuuu{e}\begin{pmatrix}1&\frac{1}{2}&\frac{1}{4}\\
0&1&\frac{1}{2}\\0&0&1\end{pmatrix} & 
\begin{pmatrix}-2&-2&-1\\0&2&1\\0&0&0\end{pmatrix} \\
L_2 & 
\uuuu{e}\begin{pmatrix}4&-1&1\\0&1&1\\0&0&1\end{pmatrix} &
\begin{pmatrix}-2&1&0\\0&2&2\\0&0&0\end{pmatrix} & 
\uuuu{e}\begin{pmatrix}1&\frac{1}{4}&\frac{1}{2}\\0&1&0\\
0&0&1\end{pmatrix} & 
\begin{pmatrix}-2&-1&-1\\0&2&0\\0&0&0\end{pmatrix} \\
\Lambda_{8,2,-2} & 
\uuuu{e}\begin{pmatrix}8&-2&1\\0&2&1\\0&0&1\end{pmatrix} &
\begin{pmatrix}-2&1&0\\0&2&1\\0&0&0\end{pmatrix} & 
\uuuu{e}\begin{pmatrix}1&\frac{1}{4}&\frac{3}{8}\\
0&1&\frac{1}{2}\\0&0&1\end{pmatrix} & 
\begin{pmatrix}-2&-1&-1\\0&2&1\\0&0&0\end{pmatrix} 
\end{array}
\end{eqnarray*}
\caption{Ten conjugacy classes of $3\times 3$ matrices: 
two families of ten full lattices (given by $\Z$-bases) 
and of ten matrices}
\label{table4.4}
\end{table}

Table \ref{table4.5} 
is the multiplication table for the eight orders
$\Lambda\supset\Lambda_{8,2,-2}$ 
and the full lattices $L_1$ and $L_2$.
In most cases the product of two full lattices is given,
though in most cases with a factor $L_1$ only the
$\varepsilon$-class of the product is given.
Only the products with a factor $L_1$ or $L_2$ have to
be calculated. The squares $L_1^2$ and $L_2^2$ are invertible
by Theorem \ref{t4.4}, so they are $\varepsilon$-equivalent
to two of the eight orders. The product of two orders is by 
Lemma \ref{t4.2} (c)
the minimal order which contains the two orders, so it can
be read off from figure \ref{fig4.1}.
We did not calculate the division table.

\begin{table}
\begin{eqnarray*}
\begin{array}{ccccccccccc}
\cdot & \Lambda_{110} & \Lambda_{120} & \Lambda_{210} & 
\Lambda_{211} & \Lambda_{220} & L_1 & 
\Lambda_{411} & \Lambda_{422} & L_2 & \Lambda_{8,2,-2} \\ \hline
\Lambda_{110} & \Lambda_{110} & \Lambda_{110} & \Lambda_{110} & 
\Lambda_{110} & \Lambda_{110} & [\Lambda_{110}]_\varepsilon & 
\Lambda_{110} & \Lambda_{110} & \Lambda_{110} & \Lambda_{110} \\
\Lambda_{120} & & \Lambda_{120} & \Lambda_{110} & 
\Lambda_{110} & \Lambda_{120} & [\Lambda_{110}]_\varepsilon & 
\Lambda_{110} & \Lambda_{120} & \Lambda_{110} & \Lambda_{120} \\
\Lambda_{210} & & & \Lambda_{210} & 
\Lambda_{110} & \Lambda_{210} & [\Lambda_{110}]_\varepsilon & 
\Lambda_{110} & \Lambda_{210} & \Lambda_{110} & \Lambda_{210} \\
\Lambda_{211} & & & &
\Lambda_{211} & \Lambda_{211} & [\Lambda_{110}]_\varepsilon & 
\Lambda_{211} & \Lambda_{211} & \Lambda_{211} & \Lambda_{211} \\
\Lambda_{220} & & & & 
& \Lambda_{220} & L_1 & 
\Lambda_{211} & \Lambda_{220} & \Lambda_{211} & \Lambda_{220} \\
L_1           & & & & & & [\Lambda_{110}]_\varepsilon & 
[\Lambda_{110}]_\varepsilon & L_1 & 
[\Lambda_{110}]_\varepsilon & L_1 \\
\Lambda_{411} & & & & & & & 
\Lambda_{411} & \Lambda_{411} & \Lambda_{211} & \Lambda_{411} \\
\Lambda_{422} & & & & & & & & \Lambda_{422} & L_2 & 
\Lambda_{422} \\
L_2           & & & & & & & & & \Lambda_{211} & L_2 \\
\Lambda_{8,2,-2} &  & & & & & & & & & \Lambda_{8,2,-2} 
\end{array}
\end{eqnarray*}
\caption{The multiplication table for the ten full lattices
respectively (when one factor is $L_1$) their $\varepsilon$-classes}
\label{table4.5}
\end{table}

\section{The cases with a single Jordan block}\label{s10}
\setcounter{equation}{0}
\setcounter{table}{0}
\setcounter{figure}{0}

\subsection{Observations on the case of arbitrary rank}
\label{s10.1}

\noindent 
Let $n\in\N$. An integer $n\times n$ matrix with a single
Jordan block has one integer eigenvalue. We can restrict 
to the eigenvalue 0, so the nilpotent case, by subtracting
the eigenvalue times the unit matrix from the given matrix.

The corresponding algebra is
\begin{eqnarray}\label{10.1}
A=\Q\cdot 1_A\oplus \Q \cdot a+ ...+\Q\cdot a^{n-1}
\quad\textup{with}\quad a^n=0.
\end{eqnarray}
It is irreducible. Its separable part is $F=\Q\cdot 1_A$,
its nilpotent part is $\NN=\Q\cdot a+ ... + \Q\cdot a^{n-1}$.
The conjugacy classes of integer $n\times n$ matrices 
correspond 1:1 to the $\varepsilon$-classes of
$\Lambda_a$-ideals 
$\{[L]_\varepsilon\,|\, L\in\LL(A),\OO(L)\supset\Lambda_a\}$
where $\Lambda_a$ is the cyclic order
$$\Lambda_a=\Z\cdot 1_A+\Z\cdot a + ...+ \Z\cdot a^{n-1}
=\Z[a].$$
Here a $\Z$-basis $\BB\in M_{1\times n}(L)$ 
of a $\Lambda_a$-ideal $L$ gives rise
to the matrix $M\in M_{n\times n}(\Z)$ with
$$a\cdot\BB=\BB\cdot M.$$

The fact that the separable part of $A$ is $F=\Q\cdot 1_A$ has
strong implications.

\begin{lemma}\label{t10.1}
For each order $\Lambda$ in $A$ the group of $\varepsilon$-classes
of invertible exact $\Lambda$-ideals is just
$G([\Lambda]_\varepsilon)=\{[\Lambda]_\varepsilon\}$.
Each $w$-class of full lattices contains only one $\varepsilon$-class,
so $W(\EE(A))=\EE(A)$. 
\end{lemma}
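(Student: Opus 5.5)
The plan is to reduce everything to the separable part $F=\Q\cdot 1_A$ via Theorem \ref{t5.10}. Let $\Lambda$ be an order in $A$ and put $\Lambda_0:=\pr_F\Lambda$, an order in $F=\Q\cdot 1_A\cong\Q$. The only order in $\Q$ is $\Z\cdot 1_A$, since an order is a finitely generated subring containing $1$, and a finitely generated subring of $\Q$ consists of integral elements, so it equals $\Z$. Hence $\Lambda_0=\Z\cdot 1_A$.

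Next, every full lattice in $F\cong\Q$ has the form $q\Z\cdot 1_A$ with $q\in\Q_{>0}$. Since $q\cdot 1_A\in F^{unit}$, every such lattice is $\varepsilon$-equivalent to $\Z\cdot 1_A$, and so $\EE(F)$ has exactly one element. In particular $G([\Lambda_0]_\varepsilon)=\{[\Lambda_0]_\varepsilon\}$. By Theorem \ref{t5.10}, the map $G([\Lambda]_\varepsilon)\to G([\Lambda_0]_\varepsilon)$ in \eqref{5.17} is an isomorphism, so $G([\Lambda]_\varepsilon)=\{[\Lambda]_\varepsilon\}$. This gives the first statement.

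For the second statement, take a full lattice $L$ and set $\Lambda:=\OO(L)$. By Theorem \ref{t5.5} (d), formula \eqref{5.3} gives a bijection $G([\Lambda]_\varepsilon)\to[[L]_\varepsilon]_w$. Since the left side has one element, the $w$-class of $[L]_\varepsilon$ in $\EE(A)$ is the singleton $\{[L]_\varepsilon\}$. Now use Theorem \ref{t5.5} (a) (i)$\iff$(iv): $L_1\sim_w L_2$ holds if and only if $[L_1]_\varepsilon\sim_w[L_2]_\varepsilon$. Therefore each $w$-class of full lattices is exactly one $\varepsilon$-class, and $W(\EE(A))=W(\LL(A))$ coincides with $\EE(A)$ via Theorem \ref{t5.5} (c).

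The only step that needs any care is the identification $\Lambda_0=\Z\cdot 1_A$, together with the observation that all full lattices in $F$ are $\varepsilon$-equivalent. Both are elementary, so I expect no real obstacle. The substantive input is Theorem \ref{t5.10}, which is assumed.
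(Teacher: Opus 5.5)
Your proposal is correct and follows the paper's proof. You identify $\pr_F\Lambda=\Z\cdot 1_A$, use the isomorphism of Theorem \ref{t5.10} to get $G([\Lambda]_\varepsilon)=\{[\Lambda]_\varepsilon\}$, and then apply the bijection \eqref{5.3} of Theorem \ref{t5.5} (d). Beyond the paper, you only spell out two points it leaves implicit: why $\Z\cdot 1_A$ is the unique order in $F$ with a single $\varepsilon$-class, and how Theorem \ref{t5.5} (a) passes from $w$-classes in $\EE(A)$ to $w$-classes in $\LL(A)$.
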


{\bf Proof:}
$F=\Q\cdot 1_A\subset A$ has only one order, 
the order $\Lambda_0:=\Z\cdot 1_A$. 
It satisfies $G([\Lambda_0]_\varepsilon)
=\{[\Lambda_0]_\varepsilon\}$. 
Under the projection 
$\pr_F:A\to F$ with respect to the decomposition
$A=F\oplus \NN$, each order $\Lambda$ is mapped to $\Lambda_0$.
By Theorem \ref{t5.10}
the induced group homomorphism 
$G([\Lambda]_\varepsilon)\to G([\Lambda_0]_\varepsilon)$ is
an isomorphism. Therefore 
$G([\Lambda]_\varepsilon)=\{[\Lambda]_\varepsilon\}$.
By Theorem \ref{t5.5} (d)
each $w$-class of full lattices contains only one
$\varepsilon$-class.\hfill$\Box$ 

\bigskip
Write $\uuuu{a}:=(1_A,a,a^2, ...,a^{n-1})$. Each full lattice
$L\in\LL(A)$ has a unique $\Z$-basis of the shape
\begin{eqnarray}\label{10.2}
\uuuu{a}\cdot 
\begin{pmatrix}\beta_{11} & 0 & \cdots & 0\\
\beta_{21} & \beta_{22} & \ddots & \vdots \\
\vdots & \ddots & \ddots & 0 \\
\beta_{n1} & \cdots & \beta_{n,n-1} & \beta_{nn}\end{pmatrix}
\quad\textup{with }\beta_{ii}\in\Q_{>0}
\textup{ for }1\leq i\leq n\\
\textup{and }\beta_{ij}\in (-\frac{1}{2}\beta_{ii},
\frac{1}{2}\beta_{ii}]\cap \Q\textup{ for }i>j.\nonumber
\end{eqnarray}
Multiplication of $L$ with the unit 
$u=\bigl(\sum_{i=1}^n\beta_{i1}a^{i-1}\bigr)^{-1}\in A^{unit}$ 
leads to the full lattice $uL$ which has a $\Z$-basis of the shape
\begin{eqnarray}\label{10.3}
\uuuu{a}\cdot 
\begin{pmatrix} 1 & 0 & \cdots & 0 \\
0 & \gamma_{22} & \ddots & \vdots \\
\vdots & \vdots & \ddots & 0  \\
0 & \gamma_{n2} & \cdots & \gamma_{nn} 
\end{pmatrix}
\quad\textup{with }
\gamma_{ii}\in\Q_{>0}\textup{ for }2\leq i\leq n\\
\textup{and }\gamma_{ij}\in(-\frac{1}{2}\gamma_{ii},
\frac{1}{2}\gamma_{ii}]
\cap\Q\textup{ for }i>j\geq 2\nonumber
\end{eqnarray}
Though this is not unique. It is only a semi-normal form,
because one can add multiples of the other columns to the
first column and multiply $u\cdot L$ with a new unit similar
to the one above.

Each order $\Lambda$ in $\Lambda$ satisfies $\Z\cdot 1_A\subset 
\Lambda \subset \Z\cdot 1_A \oplus \NN$ because of the proof
of Lemma \ref{t10.1}. 
It has a unique $\Z$-basis of the shape
\begin{eqnarray}\label{10.4}
\uuuu{a}\cdot 
\begin{pmatrix} 1 & 0 & \cdots & 0 \\
0 & \alpha_{22} & \ddots & \vdots \\
\vdots & \vdots & \ddots & 0  \\
0 & \alpha_{n2} & \cdots & \alpha_{nn} 
\end{pmatrix}
\quad\textup{with }\alpha_{ii}\in\Q_{>0}\textup{ for }2\leq i<n\\
\textup{and }\alpha_{ij}\in (-\frac{1}{2}\alpha_{ii},
\frac{1}{2}\alpha_{ii}]\cap \Q\textup{ for }i>j\geq 2.\nonumber
\end{eqnarray}
For $n\geq 3$ the condition that $\Lambda$ is 
multiplication invariant gives additional constraints 
on the entries $\alpha_{ij}$ in \eqref{10.4}.

Theorem \ref{t10.2} is Theorem \ref{t4.4} in the given case.
We offer the following independent proof because it is short
and instructive. 

\begin{theorem}\label{t10.2}
Let $n\in\N$ and let $A$ be as in \eqref{10.1}.
For each full lattice $L\in\LL(A)$ the $(n-1)$-th power
$L^{n-1}$ is invertible.
\end{theorem}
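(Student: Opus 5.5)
Since $F=\Q\cdot 1_A$, I would first normalize $L$ by a unit: every full lattice is $\varepsilon$-equivalent to one with $1_A\in L$, by the semi-normal form \eqref{10.3}. Invertibility is preserved under multiplication by units, and $(uL)^{n-1}=u^{n-1}L^{n-1}$. So we may assume $1_A\in L$ and, more precisely, $L=\Z\cdot 1_A\oplus L_N$ with $L_N:=L\cap\NN$. This holds because the basis \eqref{10.3} has first vector $1_A$ and the remaining vectors lie in $\NN$.

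With $1_A\in L$ the powers increase, $L\subset L^2\subset L^3\subset\cdots$, and the plan is to show that $\Lambda:=L^{n-1}$ is an order. Then $\Lambda$ is invertible, being an idempotent (Theorem \ref{t2.2} (b), Theorem \ref{t4.3} (a)).

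The key step is the claim $L^{n-1}\cdot L^{n-1}=L^{n-1}$, i.e.\ $L^{2n-2}=L^{n-1}$. Write $L=\Z 1_A+L_N$ with $L_N\subset\NN$. Then
\[
L^k=\Z 1_A+L_N+L_N^2+\cdots+L_N^k .
\]
Since $L_N^j\subset\NN^j$ and $\NN^n=0$, we get $L_N^j=0$ for $j\geq n$. Hence $L^k=\Z 1_A+L_N+\cdots+L_N^{n-1}$ for all $k\geq n-1$. So $L^{n-1}=L^{n}=\cdots$ is closed under multiplication and contains $1_A$; it is a full lattice because it contains $L$. Thus $L^{n-1}$ is an order, hence idempotent, hence invertible. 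Undoing the normalization, the original $L^{n-1}$ equals $u^{-(n-1)}$ times an order, which lies in $[\Lambda]_\varepsilon$ and is invertible by Lemma \ref{t5.2} (c). The same argument shows that all higher powers $L^k$, $k\geq n-1$, are invertible.

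The only point needing care is the normalization: that $L$ really splits as $\Z 1_A\oplus(L\cap\NN)$ after multiplying by a unit. By \eqref{10.3} the first basis vector is $1_A$ and the other basis vectors lie in $\NN$, so this holds. I expect no serious obstacle. The argument is even simpler than Lemma \ref{t4.6}, since the nilpotency bound $\NN^n=0$ gives the stabilization index $n-1$ directly.
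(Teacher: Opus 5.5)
Your proof is correct. It follows the same outline as the paper: normalize so that $1_A\in L$ as in \eqref{10.3}, show that $L^{n-1}$ is closed under multiplication, and conclude that it is an order and hence invertible. The key step, however, is different and shorter.

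The paper fixes triangular $\Z$-bases $(b_0^{(k)},\dots,b_{n-1}^{(k)})$ of all powers $L^k$ and proves the description \eqref{10.6} of $L^k$ by induction. It then checks generator by generator that each product $b_i^{(i)}b_j^{(j)}$ either vanishes or lies in $(a^{i+j}\Q[a])\cap L^{i+j}\subset L^{n-1}$.

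You instead use the splitting $L=\Z 1_A\oplus L_N$ and the expansion $L^k=\sum_{j=0}^{k}L_N^j$. This gives $L^{k+1}=L^k+L_N^{k+1}$ with $L_N^{k+1}\subset a^{k+1}\Q[a]$, from which \eqref{10.6} follows immediately. Then $L_N^n=0$ gives $L^{n-1}=L^n=\dots=L^{2n-2}$ in one line.

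Your route also generalizes with no change. It only uses $A=\Q 1_A\oplus\NN$ with $\NN$ nilpotent; the normalization can be done by dividing by an element $x\in L$ with $\pr_F x$ generating $\pr_F L$. So whenever $\NN^m=0$ with $m\geq 2$, it shows that $L^{m-1}$ is invertible. For the algebra of section \ref{s12} ($m=2$) this recovers the invertibility of every full lattice in Lemma \ref{t12.1}.

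The paper's version, in exchange, keeps explicit track of triangular bases of the powers $L^k$. This matches the normal-form computations used throughout section \ref{s10}.
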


{\bf Proof:}
We can restrict to a full lattice as in \eqref{10.3},
so with $1_A\in L$. Each power $L^k$ with 
$k\in\{1,2,...,n-1\}$ has also a $\Z$-basis as in \eqref{10.3},
which we denote as follows,
\begin{eqnarray}\label{10.5}
(b_0^{(k)},b_1^{(k)},...,b_{n-1}^{(k)}) =
\uuuu{a}\cdot 
\begin{pmatrix} 1 & 0 & \cdots & 0 \\
0 & \gamma_{22}^{(k)} & \ddots & \vdots \\
\vdots & \vdots & \ddots & 0  \\
0 & \gamma_{n2}^{(k)} & \cdots & \gamma_{nn}^{(k)} 
\end{pmatrix}\\
\quad\textup{with }
\gamma_{ii}^{(k)}\in\Q_{>0}\textup{ for }2\leq i\leq n
\quad\textup{and}\quad\gamma_{ij}^{(k)}\in\Q
\textup{ for }i>j\geq 2\nonumber
\end{eqnarray}
(we do not need 
$\gamma_{ij}^{(k)}\in(-\frac{1}{2}\gamma_{ii},
\frac{1}{2}\gamma_{ii}]
\cap\Q\textup{ for }i>j\geq 2$).
Then $b_0^{(k)}=1_A$ and for $i\in\{0,1,...,n-1\}$
\begin{eqnarray*}
b_i^{(k)}&\in& a^i\Q[a],\\
\langle b_i^{(k)},b_{i+1}^{(k)},...,b_{n-1}^{(k)}\rangle_\Z
&=& (a^i\Q[a])\cap L^k.
\end{eqnarray*}

{\bf Claim:} For $k\in\{1,...,n-1\}$
\begin{eqnarray}\label{10.6}
L^k&=& \langle 1_A,b_1^{(1)},b_2^{(2)},...,b_{k-1}^{(k-1)},
b_k^{(k)},b_{k+1}^{(k)},...,b_{n-1}^{(k)}\rangle_\Z.
\end{eqnarray}

{\bf Proof of the claim:}
We prove this by induction in $k$. It is obviously true for
$k=1$. Suppose it is true for some $k\in\{1,...,n-2\}$.
We want to prove it for $k+1$. 
$1_A\in L$ implies $L^k\subset L^{k+1}$. Therefore
\begin{eqnarray*}
L^{k+1}\supset \langle 1_A,b_1^{(1)},b_2^{(2)},...,
b_{k-1}^{(k-1)},b_k^{(k)},b_{k+1}^{(k+1)},...,b_{n-1}^{(k+1)}
\rangle_\Z =:\www{L},\\
(a^{k+1}\Q[a])\cap L^{k+1}=\langle b_{k+1}^{(k+1)},...,
b_{n-1}^{(k+1)}\rangle_\Z 
=(a^{k+1}\Q[a])\cap \www{L},
\end{eqnarray*}
\begin{eqnarray*}
\www{L}&=& \langle 1_A,b_1^{(1)},b_2^{(2)},...,b_{k-1}^{(k-1)},
b_k^{(k)}\rangle_\Z + (a^{k+1}\Q[a])\cap L^{k+1}\\
&\supset& \langle 1_A,b_1^{(1)},b_2^{(2)},...,b_{k-1}^{(k-1)},
b_k^{(k)}\rangle_\Z + (a^{k+1}\Q[a])\cap L^{k}\\
&=& \langle 1_A,b_1^{(1)},b_2^{(2)},...,b_{k-1}^{(k-1)},
b_k^{(k)}\rangle_\Z + \langle b_{k+1}^{(k)},...,b_{n-1}^{(k)}
\rangle_\Z 
\stackrel{\eqref{10.6}}{=}L^k.
\end{eqnarray*}
We want to show $L\cdot L^k\subset\www{L}$. 
We use the generators of $L^k$ in \eqref{10.6}
and the generators of $L$ in \eqref{10.6} with $k=1$.
We have
\begin{eqnarray*}
1_A\cdot L^{k}&=& L^k\subset \www{L},\\
b_i^{(1)}\cdot b_j^{(j)}&\in& L^{j+1}\subset L^k
\subset\www{L}\quad\textup{for }i\in\{1,...,n-1\},
j\in\{1,...,k-1\},\\
b_i^{(1)}\cdot b_j^{(k)}&\in& (a^{k+1}\Q[a])\cap L^{k+1}
=(a^{k+1}\Q[a])\cap\www{L}\subset\www{L}\\
&&\hspace*{2cm}\textup{for }i\in\{1,...,n-1\},
j\in\{k,...,n-1\}.
\end{eqnarray*}
Therefore $L\cdot L^k\subset \www{L}$, so $L^{k+1}=\www{L}$.
The induction step works. The claim is proved
\hfill$(\Box)$

\medskip
We will show that $L^{n-1}$ is multiplication invariant,
so $L^{n-1}\cdot L^{n-1}\subset L^{n-1}$. Then it is an order
because of $1_A\in L^{n-1}$, so it is invertible.
We use the generators of $L^{n-1}$ in \eqref{10.6} with
$k=n-1$. 
\begin{eqnarray*}
1_A\cdot L^{n-1}&=& L^{n-1},\\
b_i^{(i)}\cdot b_j^{(j)}&=& 0\textup{ for }
i,j\in\{1,...,n-1\}\textup{ with }i+j\geq n,\\
b_i^{(i)}\cdot b_j^{(j)}&\in& (a^{i+j}\Q[a])\cap L^{i+j}
\subset (a^{i+j}\Q[a])\cap L^{n-1}\subset L^{n-1}\\
&&\hspace*{2cm}\textup{for }i,j\in\{1,...,n-1\}\textup{ with }
i+j\leq n-1.
\end{eqnarray*}
Therefore $L^{n-1}\cdot L^{n-1}\subset L^{n-1}$, so $L^{n-1}$
is an order, so it is invertible.\hfill$\Box$

\subsection{The rank 2 case}\label{s10.2}
Now we come to the case $n=2$. 
Recall the Notations \ref{t1.1} (v) and (vi).

\begin{theorem}\label{t10.3}
Let $A=\Q\cdot 1_A + \Q\cdot a$ with $a^2=0$.

(a) The set of orders in $A$ is 
$$\{\textup{orders in }A\}=\{\Z\cdot 1_A + \Z\cdot \alpha a\,|\, 
\alpha\in\Q_{>0}\}.$$

(b) Each full lattice $L$ in $A$ has a unique $\Z$-basis
\begin{eqnarray*}
(1_A,a)\begin{pmatrix}\beta_{11}&0\\
\beta_{21}&\beta_{22}\end{pmatrix}\textup{ with }
\beta_{11},\beta_{22}\in\Q_{>0},
\beta_{21}\in (-\frac{1}{2}\beta_{22},\frac{1}{2}\beta_{22}]
\cap\Q.
\end{eqnarray*}
Its $\varepsilon$-class contains the order 
$\Z\cdot 1_A+\Z\cdot\frac{\beta_{22}}{\beta_{11}}a$
(and no other order). 
So $L$ is invertible and $\OO(L)=\Z\cdot 1_A+\Z\cdot 
\frac{\beta_{22}}{\beta_{11}}a$. The semigroup structure
on $\EE(A)=W(\EE(A))$ is
\begin{eqnarray*}
(\EE(A),\cdot)\cong (\{\textup{orders}\},\cdot)
\cong (\Q_{>0},{\gcd}_\Q)\cong 
(\{\Z\textup{-lattices}\neq0\textup{ in }\Q\},+).
\end{eqnarray*}

(c) Consider the order $\Lambda_a:=\Z\cdot 1_A+\Z\cdot a$. Then
the set of orders which contain $\Lambda_a$ is 
$$\{\textup{orders }\Lambda\supset\Lambda_a\}
=\{\Z\cdot 1_A+\Z\cdot \frac{1}{m}a\,|\, m\in\N\}$$
with semigroup structure isomorphic to
\begin{eqnarray*}
\bigl(\{\frac{1}{m}\,|\, m\in\N\},{\gcd}_\Q\bigr)
\stackrel{\cong}{\longrightarrow} 
\bigl(\N,\lcm),\quad\frac{1}{m}\mapsto m.
\end{eqnarray*}

(d) Each conjugacy class of nilpotent $2\times 2$ matrices
$\neq 0$ has a unique representative 
$\begin{pmatrix}0&m\\0&0\end{pmatrix}$ with $m\in\N$.
It comes from the $\varepsilon$-class of full lattices
which contains the order $\Z\cdot 1_A+\Z\cdot 
\frac{1}{m}a$.
\end{theorem}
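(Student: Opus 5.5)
We treat the four parts in order. Everything follows from the normal form \eqref{10.2} for $n=2$, from Lemma \ref{t10.1}, and from Theorem \ref{t6.3}.

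For (a), let $\Lambda$ be an order. By the remark after \eqref{10.3} we have $\Z\cdot 1_A\subset\Lambda\subset\Z\cdot 1_A\oplus\Q a$. Hence $\Lambda=\Z\cdot 1_A+(\Lambda\cap\Q a)$, and $\Lambda\cap\Q a$ is a nonzero cyclic group $\Z\alpha a$ with $\alpha\in\Q_{>0}$. Conversely, every $\Z\cdot 1_A+\Z\alpha a$ is an order, because $(\alpha a)^2=0$.

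For (b), the displayed $\Z$-basis is the case $n=2$ of \eqref{10.2}, so it exists and is unique. Put $u:=(\beta_{11}+\beta_{21}a)^{-1}=\beta_{11}^{-1}-\beta_{21}\beta_{11}^{-2}a\in A^{unit}$. Then $u(\beta_{11}+\beta_{21}a)=1_A$ and $u\beta_{22}a=\frac{\beta_{22}}{\beta_{11}}a$, so
\[
uL=\Z\cdot 1_A+\Z\cdot\tfrac{\beta_{22}}{\beta_{11}}a .
\]
Thus $L$ is $\varepsilon$-equivalent to an order, and therefore invertible with $\OO(L)=\OO(uL)=uL$ (an order is its own order).

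Two distinct orders are never $\varepsilon$-equivalent. Indeed, if $v\Lambda=\Lambda'$ for orders $\Lambda,\Lambda'$ and a unit $v$, then $\Lambda'=\OO(\Lambda')=\OO(\Lambda)=\Lambda$. So $\EE(A)$ is in bijection with the set of orders, with multiplication the product of orders. Alternatively, invoke Lemma \ref{t10.1}, which gives $W(\EE(A))=\EE(A)$ and trivial groups $G([\Lambda]_\varepsilon)$.

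The product of two orders is
\[
(\Z+\Z\alpha a)(\Z+\Z\alpha' a)=\Z+(\Z\alpha+\Z\alpha')a=\Z+\Z\,{\gcd}_\Q(\alpha,\alpha')\,a ,
\]
which gives the isomorphisms with $(\Q_{>0},{\gcd}_\Q)$ and, by Notation \ref{t1.1} (vi), with the lattices in $\Q$ under addition.

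For (c), the inclusion $\Z+\Z\alpha a\supset\Z+\Z a$ holds exactly when $1\in\Z\alpha$, that is, when $\alpha=\frac1m$ with $m\in\N$. Moreover ${\gcd}_\Q(\frac1m,\frac1{m'})=\frac1{\lcm(m,m')}$, since $\Z\frac1m+\Z\frac1{m'}=\Z\frac1{\lcm(m,m')}$.

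For (d), apply Theorem \ref{t6.3} with $f=t^2$, so $A_f=A$ and $\Lambda_f=\Lambda_a$. The conjugacy classes of nonzero nilpotent integer $2\times2$ matrices are exactly the regular ones with $p_B=t^2$. By (b) and (c), each $\varepsilon$-class of $\Lambda_a$-ideals contains exactly one order $\Z\cdot 1_A+\Z\frac1m a$. Using its basis $(1_A,\frac1m a)$, we compute
\[
a\cdot(1_A,\tfrac1m a)=(a,0)=(1_A,\tfrac1m a)\begin{pmatrix}0&0\\ m&0\end{pmatrix}.
\]
Reordering the basis as $(\frac1m a,1_A)$, which is conjugation by a permutation matrix in $GL_2(\Z)$, gives the matrix $\begin{pmatrix}0&m\\0&0\end{pmatrix}$.

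Uniqueness of this representative follows from the bijectivity in Theorem \ref{t6.3}. It can also be seen directly: $m$ is the gcd of the entries, which is a $GL_2(\Z)$-conjugation invariant.

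None of these steps is a real obstacle. The only point needing care is the uniqueness claim in (b), that no other order lies in the class, and it is settled by the observation $\OO(\Lambda)=\Lambda$.
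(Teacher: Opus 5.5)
Your proof is correct and takes essentially the same route as the paper. Both arguments use the normal form \eqref{10.2}, normalize by the unit $(\beta_{11}+\beta_{21}a)^{-1}$, compute the product of orders via ${\gcd}_\Q$, and prove (d) with Theorem \ref{t6.3} and the basis $(\frac1m a,1_A)$. Two points the paper leaves implicit are spelled out in your version: that an $\varepsilon$-class contains at most one order (via $\OO(\Lambda)=\Lambda$), and uniqueness in (d) directly from the gcd-of-entries invariant.
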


{\bf Proof:} 
(a) A special case of \eqref{10.4}

(b) The first statement is a special case of \eqref{10.2}.
For $L$ as in part (b) and 
$u:=(\beta_{11}\cdot 1_A + \beta_{21}\cdot a)^{-1}\in A^{unit}$
$$u\cdot L=\Z\cdot 1_A + \Z\cdot \frac{\beta_{22}}{\beta_{11}}a.$$
The semigroup structure $(\{\textup{orders in }A\},\cdot)$
is given for $\alpha_1,\alpha_2\in\Q_{>0}$ by
\begin{eqnarray*}
&&(\Z\cdot 1_A+\Z\cdot \alpha_1a)
(\Z\cdot 1_A+\Z\cdot \alpha_2a)\\
&=&\Z\cdot 1_A + (\Z \alpha_1+\Z \alpha_2)a \\
&=&\Z\cdot 1_A +\Z\cdot {\gcd}_\Q(\alpha_1,\alpha_2)
a.
\end{eqnarray*}

(c) This follows from part (b).

(d) By Theorem \ref{t6.3} 
there is a 1:1 correspondence between the set of conjugacy
classes of nilpotent $2\times 2$ matrices $\neq 0$ and the set
of $\varepsilon$-classes $[L]_\varepsilon$ of full lattices
with $\OO(L)\supset L_{st}$. Choose in such an 
$\varepsilon$-class the order. It is $\Z\cdot 1_A +\Z \cdot 
\frac{1}{m}a$ for some $m\in\N$. With respect to the $\Z$-basis
$(\frac{1}{m}a,1_A)$ we obtain the nilpotent 
$2\times 2$ matrix $M$ with
$$a\cdot (\frac{1}{m}a,1_A) 
=(0,a)= (\frac{1}{m}a,1_A)\cdot M,\quad
\textup{so }M=\begin{pmatrix}0&m\\0&0\end{pmatrix}.
\hspace*{2cm}\Box$$

\subsection{The rank 3 case}\label{s10.3}
Now we come to the case $n=3$, so now
\begin{eqnarray}\label{10.7}
A=\Q\cdot 1_A+ \Q\cdot a +\Q\cdot a^2\textup{ with }a^3=0.
\end{eqnarray}
Now $\uuuu{a}=(1_A,a,a^2)$. 
By \eqref{10.2} each full lattice $L\in\LL(A)$ has a unique
$\Z$-basis of the shape
\begin{eqnarray}\label{10.8}
\uuuu{a}\begin{pmatrix}\beta_{11}&0&0\\
\beta_{21}&\beta_{22}&0\\
\beta_{31}&\beta_{32}&\beta_{33}\end{pmatrix}
\quad\textup{with }\beta_{11},\beta_{22},\beta_{33}\in\Q_{>0}\\
\textup{and }\beta_{ij}\in (-\frac{1}{2}\beta_{ii},
\frac{1}{2}\beta_{ii}]\cap\Q\textup{ for }i>j.\nonumber
\end{eqnarray}
Define the map
\begin{eqnarray*}
\Delta:\LL(A)\to\Q_{>0},\quad L\mapsto 
\frac{\beta_{22}^2}{\beta_{11}\beta_{33}}.
\end{eqnarray*}
The group $\Aut(A)$ of algebra automorphisms of $A$ is
\begin{eqnarray*}
\Aut(A)=\{f:A\to A&|& f\ \Q\textup{-linear},
f(1_A)=1_A,f(a)=f_1a+f_2a^2,\\ 
&&f(a^2)=f(a)^2=f_1^2a^2,f_1\in\Q-\{0\},f_2\in\Q\}.
\end{eqnarray*}

\begin{theorem}\label{t10.4} 
(a) The map $\Delta$ is constant on each $\varepsilon$-class,
so $\Delta:\EE(A)\to\Q_{>0}$ is well defined.

(b) It is also constant on each orbit in $\LL(A)$ of 
$\Aut(A)$.

(c) For each $L$ there are a unit $u\in A^{unit}$ and an
automorphism $f\in\Aut(A)$ such that $f(u\cdot L)$ has the
$\Z$-basis
\begin{eqnarray}\label{10.9} 
\uuuu{a}\begin{pmatrix}1&0&0\\0&1&0\\0&0&\Delta(L)^{-1}
\end{pmatrix}.
\end{eqnarray}

(d) For $L\in\LL(A)$
\begin{eqnarray}\label{10.10}
\Delta(\OO(L))&=& \nu_\Q(\Delta(L))\cdot \delta_\Q(\Delta(L)),\\
\Delta(L^2)&=& \nu_\Q(\Delta(L)).\label{10.11}
\end{eqnarray}

(e) $L\in\LL(A)$ is invertible $\iff$ $\Delta(L)\in\N$.

(f) Consider an order $\Lambda$ in $A$. Then $\Delta(\Lambda)
\in\N$ by part (e). The set
$\{[L]_\varepsilon\,|\, L\in\LL(A),\OO(L)=\Lambda\}$
contains for each decomposition $\Delta(\Lambda)=n_1\cdot d_1$
with $n_1,d_1\in\N$ and $\gcd(n_1,d_1)=1$ a unique
$\varepsilon$-class $[L]_\varepsilon$ with 
$\Delta([L]_\varepsilon)=\frac{n_1}{d_1}$.
It does not contain other $\varepsilon$-classes. 
\end{theorem}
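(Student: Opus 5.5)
The plan is to reduce everything to explicit lattices of the form \eqref{10.9}, and to compute orders, squares and invertibility directly for them.

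\textbf{Parts (a) and (b).} For (a), note that a unit is $u=u_0+u_1a+u_2a^2$ with $u_0\neq 0$. The matrix of multiplication by $u$ in the basis $\uuuu{a}$ is lower triangular with all diagonal entries equal to $u_0$. Multiplying the basis matrix \eqref{10.8} by it keeps it lower triangular and multiplies each diagonal entry by $u_0$. After renormalizing to \eqref{10.8}, the diagonal entries change only by signs, which can be absorbed. Hence $\beta_{22}^2/(\beta_{11}\beta_{33})$ is unchanged. For (b), an automorphism $f$ has a lower triangular matrix with diagonal $(1,f_1,f_1^2)$. It scales $(\beta_{11},\beta_{22},\beta_{33})$ by $(1,|f_1|,f_1^2)$ up to sign, so $\Delta$ is again invariant.

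\textbf{Part (c).} Start from \eqref{10.3}, i.e.\ a basis $1_A,\ \gamma_{22}a+\gamma_{32}a^2,\ \gamma_{33}a^2$. Choose $f$ with $f_1=\gamma_{22}^{-1}$ and $f_2$ chosen to kill the $a^2$ term of the second vector. This gives the basis $1_A,\ a+c a^2,\ \gamma a^2$ with $\gamma=\gamma_{33}/\gamma_{22}^2$ and some $c\in\Q$. The remaining task is to remove $c$. First apply the automorphism $a\mapsto a-ca^2$, which sends $a+ca^2$ to $a$ and fixes $a^2$. Then multiply by a unit $1+ta$ if needed to restore $1_A$ while keeping $a$ and $a^2$ coefficients right. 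Since $\Delta$ is invariant by (a) and (b), the resulting scalar equals $\Delta(L)^{-1}$.

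\textbf{Parts (d)--(f).} By (a), (b) and (c), and since $\OO$, products and invertibility commute with units and automorphisms, it suffices to treat $L=\langle 1,a,\gamma a^2\rangle_\Z$ with $\gamma=\Delta^{-1}=d/n$ in lowest terms.

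\emph{Computing $\OO(L)$.} We need $x=x_0+x_1a+x_2a^2$ with $xL\subset L$. The three conditions are:
\begin{itemize}
\item $x\cdot 1\in L$: $x_0,x_1\in\Z$ and $x_2\in\gamma\Z$;
\item $x\cdot a=x_0a+x_1a^2\in L$: $x_1\in\gamma\Z$;
\item $x\cdot\gamma a^2$: automatic.
\end{itemize}
So $x_1\in\Z\cap\frac dn\Z=d\Z$ and $x_2\in\frac dn\Z$, which gives
$\OO(L)=\langle 1,da,\tfrac dn a^2\rangle_\Z$ and $\Delta(\OO(L))=d^2/(d/n)=nd$. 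This proves \eqref{10.10}.

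\emph{Computing $L^2$.} We have $L^2=\langle 1,a,a^2,\gamma a^2\rangle_\Z=\langle 1,a,\gcd_\Q(1,d/n)a^2\rangle_\Z=\langle 1,a,\frac1n a^2\rangle_\Z$. Hence $\Delta(L^2)=n$, which proves \eqref{10.11}.

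\emph{Part (e).} The order $\OO(L)$ has $\Delta=nd$, while $L^2$ has $\Delta=n$. If $L$ is invertible, then $L\cdot L^{-1}=\OO(L)$. I will use the product rule $\Delta$ on invertible elements, checked via (c), or more simply the following argument. If $d=1$, then $L$ is itself an order, namely $L=\OO(L)$, so $L$ is invertible. Conversely, if $d>1$, show non-invertibility with Theorem \ref{t4.3}(b)(iv). Compute $\OO(L):L$ explicitly and find that its order strictly contains $\OO(L)$. This last computation is the main obstacle and is where the explicit calculation will be done.

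\emph{Part (f).} Normal forms realizing $\Delta(\Lambda)=nd$ with coprime $n,d$ are the lattices $\langle 1,a,\frac dn a^2\rangle$ transported by the automorphism that identifies their order with $\Lambda$. Uniqueness follows because $\Delta$ together with the order determines the class. Indeed, two classes with the same $\Delta$ are related by an automorphism by (c), and the automorphism must fix $\Lambda$. Finally, use $|G([\Lambda]_\varepsilon)|=1$ from Lemma \ref{t10.1} to conclude there is exactly one class per coprime factorization.
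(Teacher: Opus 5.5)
Your parts (a)--(d) are correct and follow the paper's route. Your explicit computations $\OO(L)=\langle 1,da,\frac dn a^2\rangle_\Z$ and $L^2=\langle 1,a,\frac1n a^2\rangle_\Z$ are exactly what the paper only states. In (c) the detour through $c$ is unnecessary, because your choice of $f_2$ already makes $c=0$.

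In (e) you leave the direction ``$L$ invertible $\Rightarrow d=1$'' as an undone computation. Your planned computation does work: $\OO(L):L=\langle d,da,\frac dn a^2\rangle_\Z$, and its order $\langle 1,a,\frac1n a^2\rangle_\Z$ is strictly bigger than $\OO(L)$ when $d>1$. But the direction is immediate from (d). $L^2$ is an order, so if $L$ is invertible then $\OO(L)=\OO(L^2)=L^2$, hence $nd=\Delta(\OO(L))=\Delta(L^2)=n$.

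Existence in (f) via transport is fine, but the uniqueness in (f) has a genuine gap. Part (c) only gives you an automorphism $g$ with $g(\Lambda)=\Lambda$ and $g(L)\sim_\varepsilon L'$. You never show $g(L)\sim_\varepsilon L$. The fact $|G([\Lambda]_\varepsilon)|=1$ does not supply this. It only says that each $w$-class of exact $\Lambda$-ideals is a single $\varepsilon$-class. It says nothing about how the stabilizer of $\Lambda$ in $\Aut(A)$ permutes these classes, and in general such automorphisms do permute classes. For example, complex conjugation fixes $\Lambda_2$ but swaps $[L_1]_\varepsilon$ and $[L_3]_\varepsilon$ in Example \ref{t7.11} (ii).

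In the present case, take $\Lambda=\langle 1,a,\frac1N a^2\rangle_\Z$ with $N=\Delta(\Lambda)$. Its stabilizer consists of the $g$ with $g(a)=\pm a+\frac kN a^2$, $k\in\Z$. These $g$ change the coefficient $\gamma_{32}$ of an exact ideal written in the form \eqref{10.3}. To see that this does not change the class, you need exactly the computation you skipped, which is the paper's proof:
\begin{itemize}
\item Put $\Lambda$ in this normal form and $L$ in the form \eqref{10.3}. Since $\OO(L)\cap\Q a^2=L\cap\Q a^2$, you get $\gamma_{33}=\frac1N$.
\item Then $\Delta(L)=\gamma_{22}^2N$ together with \eqref{10.10} forces $\gamma_{22}=\delta_\Q(\Delta(L))^{-1}$.
\item Since $a=a\cdot 1_A\in L$, you get $\gamma_{32}\in\Z\cdot{\gcd}_\Q(\gamma_{22}^2,\gamma_{33})$.
\item By Lemma \ref{t10.5}, $L\sim_\varepsilon\langle 1,\gamma_{22}a,\gamma_{33}a^2\rangle_\Z$, which depends only on $\Delta(L)$.
\end{itemize}
Once this is in place, the automorphism argument is no longer needed.
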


{\bf Proof:}
(a) Multiplication of $L$ by a unit $u_0\cdot 1_A+u_1\cdot a
+ u_2\cdot a^2$ with $u_0\in\Q-\{0\}$, $u_1,u_2\in\Q$, changes
$\beta_{11},\beta_{22},\beta_{33}$ to 
$u_0\beta_{11},u_0\beta_{22},u_0\beta_{33}$, so 
$\Delta(u\cdot L)=\Delta(L)$.

(b) An automorphism $f\in\Aut(A)$ with $f(a)=f_1a+f_2a^2$,
$f_1\in\Q-\{0\}$, $f_2\in\Q$, changes $\beta_{11},\beta_{22},
\beta_{33}$ to $\beta_{11},f_1\beta_{22},f_1^2\beta_{33}$,
so $\Delta(f(L))=\Delta(L)$.

(c) Multiplication of $L$ with the unit
$\Bigl(\beta_{11}\cdot 1_A+\beta_{21}\cdot a+\beta_{31}\cdot a^2
\Bigr)^{-1}$ leads to the full lattice $u\cdot L$ with a
$\Z$-basis as in \eqref{10.3}. The automorphism
$f\in\Aut(A)$ with $f(\gamma_{22}a+\gamma_{32}a^2)=a$ leads to
the full lattice $f(u\cdot L)$ with a $\Z$-basis as in 
\eqref{10.9}. 

(d) It is sufficient to consider a full lattice $L\in\LL(A)$
with a $\Z$-basis as in \eqref{10.9}. Then $\OO(L)$ has the
$\Z$-basis 
$$\uuuu{a}\begin{pmatrix}1&0&0\\0&\delta_\Q(\Delta(L))&0\\
0&0&\Delta(L)^{-1}\end{pmatrix},$$
and $L^2$ has the $\Z$-basis
$$\uuuu{a}\begin{pmatrix}1&0&0\\0&1&0\\0&0&\nu_\Q(\Delta(L))^{-1}
\end{pmatrix},$$
so $\Delta(\OO(L))$ and $\Delta(L^2)$ are as claimed.

(e) Again it is sufficient to consider a full lattice with a 
$\Z$-basis as in \eqref{10.9}. The $\Z$-basis of $L^2$
in part (d) shows that $L^2$ is an order. 

$\Leftarrow$: If $\Delta(L)\in\N$
then $\delta_\Q(\Delta(L))=1$ and the $\Z$-basis of $L$ in 
\eqref{10.9} show that $L$ is an order, so invertible.

$\Rightarrow$: 
If $L$ is invertible then $\OO(L)=\OO(L^2)=L^2$ and 
$\Delta(\OO(L))=\Delta(L^2)$, so by part (d) 
$\delta_\Q(\Delta(L))=1$ and $\Delta(L)\in\N$.

(f) Let $\Lambda$ be an order. Consider a $\Z$-basis as in 
\eqref{10.4}. The automorphism $f\in\Aut(A)$ with
$f(\alpha_{22}a+\alpha_{32}a^2)=a$ maps $\Lambda$ to the 
order $f(\Lambda)$ with the $\Z$-basis
$$\uuuu{a}\begin{pmatrix}1&0&0\\0&1&0\\0&0&\Delta(L)^{-1}
\end{pmatrix}\quad\textup{where}\quad 
\Delta(L)=\frac{\alpha_{22}^2}{\alpha_{33}}\in\N.$$
We can suppose $f=\id$ and $\Lambda=f(\Lambda)$.
Now we proceed in three steps.

(i) Consider an $\varepsilon$-class $[L]_\varepsilon$ of full
lattices with $\OO([L]_\varepsilon)=\Lambda$ and an element
$L$ in it with a $\Z$-basis as in \eqref{10.3},
\begin{eqnarray*}
\uuuu{a}\begin{pmatrix}1&0&0\\0&\gamma_{22}&0\\
0&\gamma_{32}&\gamma_{33}\end{pmatrix}\quad
\textup{with }\gamma_{22},\gamma_{33}\in\Q_{>0}
\textup{ and }\gamma_{32}\in\Q.
\end{eqnarray*}
$\OO(L)=\Lambda$ implies $\gamma_{33}=\Delta(\Lambda)^{-1}$. 
With \eqref{10.10} this gives 
$$\gamma_{33}=\Delta(\Lambda)^{-1}
=\delta_\Q(\Delta(L))^{-1}\cdot \nu_\Q(\Delta(L))^{-1}.$$
Therefore 
\begin{eqnarray*}
\frac{\nu_\Q(\Delta(L))}{\delta_\Q(\Delta(L))}
&=&\Delta(L)=\frac{\gamma_{22}^2}{\gamma_{33}} 
=\gamma_{22}^2\Delta(\Lambda)
=\gamma_{22}^2\nu_\Q(\Delta(L))\cdot \delta_\Q(\Delta(L)),\\
\textup{so}&&\gamma_{22}=\delta_\Q(\Delta(L)) ^{-1}.
\end{eqnarray*}
$L$ contains $a\cdot 1_A=a$ and 
$\delta_\Q(\Delta(L))\bigl(\gamma_{22}a+\gamma_{32}a^2)=a
+\delta_\Q(\Delta(L))\cdot \gamma_{32}a^2$, so
$\delta_\Q(\Delta(L))\cdot \gamma_{32}a^2$.
Therefore $\delta_\Q(\Delta(L))\cdot \gamma_{32}\in 
\Z\cdot \gamma_{33}$, so 
$$\gamma_{32}\in\Z\cdot \frac{1}{\delta_\Q(\Delta(L))^2\cdot
\nu_\Q(\Delta(L))}
=\Z\cdot{\gcd}_\Q(\gamma_{22}^2,\gamma_{33}).$$

By Lemma \ref{t10.5}, which comes next, there is a unit
$u\in A^{unit}$ such that $u\cdot L$ has the $\Z$-basis
$$\uuuu{a}\begin{pmatrix}1&0&0\\0&\gamma_{22}&0\\0&0&
\gamma_{33}\end{pmatrix}.$$

(ii) Vice versa, suppose that a decomposition 
$\Delta(\Lambda)=n_1\cdot d_1$ with $n_1,d_1\in\N$ and
$\gcd(n_1,d_1)=1$ is given. The full lattice $\www{L}$ with
$\Z$-basis
$$\uuuu{a}\begin{pmatrix}1&0&0\\0&d_1^{-1}&0\\0&0& 
d_1^{-1}n_1^{-1}\end{pmatrix}$$
satisfies $\OO(\www{L})=\Lambda$.

(iii) (i) and (ii) together give part (f).\hfill$\Box$

\bigskip
The semi-normal form in \eqref{10.3} can be improved to a normal
form in the case $n=3$.

\begin{lemma}\label{t10.5}
Let $A$ be as in \eqref{10.7}.
Each $\varepsilon$-class of full lattices in $A$ contains a 
unique full lattice $L$ with a $\Z$-basis
\begin{eqnarray}\label{10.12}
\uuuu{a}\begin{pmatrix}1&0&0&\\0&\gamma_{22}&0\\
0&\gamma_{32}&\gamma_{33}\end{pmatrix}\quad
\textup{with }\gamma_{22},\gamma_{33}\in\Q_{>0}\\
\textup{and }\gamma_{32}\in [0,{\gcd}_\Q(\gamma_{22}^2,
\gamma_{33}))\cap\Q.\nonumber
\end{eqnarray}
The $\Z$-basis is unique.
\end{lemma}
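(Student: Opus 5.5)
Existence starts from the semi-normal form \eqref{10.3}: every $\varepsilon$-class contains a full lattice $L$ with $\Z$-basis
$$(1_A,\ \gamma_{22}a+\gamma_{32}a^2,\ \gamma_{33}a^2),\qquad \gamma_{22},\gamma_{33}\in\Q_{>0}.$$
I will then determine all units $u\in A^{unit}$ for which $uL$ again has a basis of the shape \eqref{10.12}. Write $u=u_0(1_A+u_1a+u_2a^2)$. Since $uL\ni u\cdot 1_A$, and we need $1_A\in uL$ with first basis vector $1_A$, we must have $u\in L^{-1}$-type constraints; concretely $u^{-1}\in L$. Hence, after normalizing $u_0$ (the sign and size of $u_0$ are forced, because the coefficient of $1_A$ in the lattice must be $\Z$), $u^{-1}=1_A+x(\gamma_{22}a+\gamma_{32}a^2)+y\gamma_{33}a^2$ with $x,y\in\Z$.

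Next I compute $uL$ for such $u$ and read off its basis in the form \eqref{10.12}. The diagonal entries $\gamma_{22},\gamma_{33}$ are unchanged: they are determined by the projections $(a\Q[a]\cap uL)\to \Q a$ and by $a^2\Q[a]\cap uL$, and multiplication by a unit $1_A+(\text{nilpotent})$ does not change these. So only $\gamma_{32}$ can move, modulo $\Z\gamma_{33}$ as in \eqref{10.3}. Using $(1_A+n)^{-1}=1_A-n+n^2$ for $n=x\gamma_{22}a+(x\gamma_{32}+y\gamma_{33})a^2$, I find the second basis vector of $uL$:
$$u(\gamma_{22}a+\gamma_{32}a^2)=\gamma_{22}a+(\gamma_{32}-x\gamma_{22}^2)a^2 .$$
Also $u\cdot\gamma_{33}a^2=\gamma_{33}a^2$. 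Thus the achievable changes are $\gamma_{32}\mapsto \gamma_{32}-x\gamma_{22}^2+z\gamma_{33}$ with $x,z\in\Z$. Since $\Z\gamma_{22}^2+\Z\gamma_{33}=\Z\,{\gcd}_\Q(\gamma_{22}^2,\gamma_{33})$ by Notation \ref{t1.1} (vi), one can move $\gamma_{32}$ into $[0,{\gcd}_\Q(\gamma_{22}^2,\gamma_{33}))$. This gives existence.

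For uniqueness, suppose two lattices $L,L'$ in normal form satisfy $L'=uL$. Comparing intersections with $a\Q[a]$ and $a^2\Q[a]$ shows that they have the same $\gamma_{22},\gamma_{33}$. Since $1_A\in L'=uL$, we get $u^{-1}\in L$, and the sign is fixed by positivity and the coefficient $1$. The computation above then shows $\gamma_{32}'-\gamma_{32}\in\Z\gamma_{22}^2+\Z\gamma_{33}$, and both values lie in the half-open fundamental interval, so they are equal. Uniqueness of the $\Z$-basis itself follows from uniqueness of the triangular Hermite-type basis, as in \eqref{10.2}.

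The main obstacle is the careful bookkeeping that $u^{-1}$ really must lie in $L$ with $u_0=\pm1$ and only a $\Z$-combination of basis vectors. The key point is that a basis vector $1_A$ in both lattices forces the $1_A$-coordinate lattice to be $\Z$ in both, so $u_0=\pm1$. The case $u_0=-1$ is absorbed by replacing $u$ with $-u$, which leaves $uL$ unchanged.
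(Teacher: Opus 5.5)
Your proposal is correct and follows the paper's route. The paper also starts from the semi-normal form \eqref{10.3} and multiplies by $(1_A+r\gamma_{22}a+r\gamma_{32}a^2)^{-1}$, which is your case $x=r$, $y=0$; this shifts $\gamma_{32}$ by $-r\gamma_{22}^2$, and together with the freedom modulo $\Z\gamma_{33}$ and ${\gcd}_\Q$ it gives existence. Your additional step shows that every unit $u$ with $uL$ again of this shape has $u_0=\pm1$ and $u^{-1}=1_A+x(\gamma_{22}a+\gamma_{32}a^2)+y\gamma_{33}a^2\in L$; this correctly supplies the uniqueness argument, which the paper's proof leaves implicit.
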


{\bf Proof:}
We start with a semi-normal form as in \eqref{10.3}, so almost
as in \eqref{10.12}, but we start with the condition 
$\gamma_{32}\in [0,\gamma_{33})\cap\Q$
instead of the sharper condition
$\gamma_{32}\in[0,{\gcd}_\Q(\gamma_{22}^2,\gamma_{33}))\cap\Q$.
For $r\in\Z$
$$\uuuu{a}\begin{pmatrix}1&0&0\\r\gamma_{22}&\gamma_{22}&0\\
r\gamma_{32}&\gamma_{32}&\gamma_{33}\end{pmatrix}$$
is a $\Z$-basis of $L$. Then a $\Z$-basis of 
$\bigl(1+r\gamma_{22}a+r\gamma_{32}a^2\bigr)^{-1}\cdot L$
is 
$$\uuuu{a}\begin{pmatrix}1&0&0\\0&\gamma_{22}&0\\
0&\gamma_{32}-r\gamma_{22}^2&\gamma_{33}\end{pmatrix}.$$
Therefore we can change $\gamma_{32}$ by adding multiples
of ${\gcd}_\Q(\gamma_{22}^2,\gamma_{33})$.
\hfill$\Box$

\begin{remarks}\label{t10.6}
(i) Consider an order $\Lambda$ in $A$. 
Then $\Delta(\Lambda)\in\N$ by Theorem \ref{t10.4} (e).
Theorem \ref{t10.4} (f) says that the number of 
$\varepsilon$-classes of exact $\Lambda$-ideals is
$\tau=2^t$ where $t=|\{p\in\P\,|\, p|\Delta(\Lambda)\}|$.
By Lemma \ref{t10.1} the $\varepsilon$-classes coincide
with the $w$-classes. Therefore the number $\tau=2^t$
is completely analogous to Faddeev's result
$\tau=2^t$ \cite[Corollary 4.1]{Fa65-2} 
(cited in Theorem \ref{t5.7})
for the separable 3-dimensional case.

(ii) By the beginning of the proof of Theorem \ref{t10.4} (f) 
an order $\Lambda$ is cyclic if and only if $\Delta(L)=1$. 
By part (i) this is equivalent to $\tau=1$.
The implication $(\textup{cyclic}\Rightarrow\tau=1)$
is a special case of Theorem \ref{t4.12}.

(iii) The semigroup structure on the set $\EE(A)$ of
$\varepsilon$-classes is not easy to describe.
Suppose that two full lattices
$L$ and $\www{L}$ with $\Z$-bases in normal form
$$\uuuu{a}\begin{pmatrix}1&0&0\\0&\gamma_{22}&0\\
0&\gamma_{32}&\gamma_{33}\end{pmatrix}
\quad\textup{and}\quad
\uuuu{a}\begin{pmatrix}1&0&0\\0&\www{\gamma}_{22}&0\\
0&\www{\gamma}_{32}&\www{\gamma}_{33}\end{pmatrix}$$
are given. Their product $L\cdot \www{L}$ is generated by the
tuple
$$\uuuu{a}\begin{pmatrix}
1&0&0&0&0&0\\0&\gamma_{22}&\www{\gamma}_{22}&0&0&0\\
0&\gamma_{32}&\www{\gamma}_{32}&\gamma_{22}\www{\gamma}_{22}&
\gamma_{33}&\www{\gamma}_{33}\end{pmatrix}
\sim \uuuu{a}\begin{pmatrix}1&0&0\\
0&\varepsilon_{22}&0\\0&\varepsilon_{32}&\varepsilon_{33}
\end{pmatrix}$$
with $\varepsilon_{22}={\gcd}_\Q(\gamma_{22},\www{\gamma}_{22})$
and $\varepsilon_{33}={\gcd}_\Q(\gamma_{22}\www{\gamma}_{22},
\gamma_{33},\www{\gamma}_{33})$. But $\varepsilon_{32}$
is difficult to determine.
\end{remarks}

Consider the automorphism
$$(\textup{multiplication with }a):A\to A$$
and the cyclic order $\Lambda_a=\Z\cdot 1 +\Z\cdot a 
+\Z\cdot a^2=\Z[a]$. Each $\varepsilon$-class $[L]_\varepsilon$
of full lattices with $\OO([L]_\varepsilon)\supset\Lambda_a$
gives rise to one conjugacy class of nilpotent integer
$3\times 3$ matrices with one $3\times 3$ Jordan block.
One such matrix $M$ is obtained by choosing a full lattice
$L$ and a $\Z$-basis $\BB=(b_0,b_1,b_2)$ of $L$ and defining
$M$ by 
$$a\cdot\BB=\BB\cdot M.$$

\begin{theorem}\label{t10.7}
Let $A$ be as in \eqref{10.7}. Consider the cyclic order
$\Lambda_a=\Z[a]\subset A$. 

(a) Each order $\Lambda\supset\Lambda_a$ has a unique 
$\Z$-basis of the following shape,
\begin{eqnarray}\label{10.13}
\uuuu{a}\begin{pmatrix}1&0&0\\0&\alpha_{22}&0\\
0&\alpha_{32}&\alpha_{33}\end{pmatrix}\\
\textup{with }\alpha_{22}=\frac{1}{n_2},\quad 
\alpha_{33}=\frac{1}{n_2^2n_3},\quad 
\alpha_{32}=\frac{n_4}{n_2^3n_3}\nonumber\\
\textup{for some }n_2,n_3\in\N,\ n_4\in [0,n_2)\cap\Z.
\nonumber
\end{eqnarray}
Then $\Delta(\Lambda)=n_3$.

(b) Consider an order $\Lambda$ as in part (a) and a 
decomposition $\Delta(\Lambda)=n_3=n_1d_1$ with
$n_1,d_1\in\N$ and $\gcd(n_1,d_1)=1$.
By Theorem \ref{t10.4} (f) this decomposition gives rise to one
$\varepsilon$-class $[L]_\varepsilon$ of exact $\Lambda$-ideals
with $\Delta([L]_\varepsilon)=\frac{n_1}{d_1}$.
The representative $L$ in Lemma \ref{t10.5} of this
$\varepsilon$-class has the $\Z$-basis
\begin{eqnarray}\label{10.14}
\uuuu{a}\begin{pmatrix}1&0&0\\0&\gamma_{22}&0\\
0&\gamma_{32}&\gamma_{33}\end{pmatrix}\quad\textup{with }\quad\\
\gamma_{22}=\frac{1}{d_1}\alpha_{22}=\frac{1}{d_1n_2},\ 
\gamma_{33}=\alpha_{33}=\frac{1}{n_2^2n_3},\ 
\gamma_{32}=\frac{1}{d_1}\alpha_{32}=\frac{n_4}{d_1n_2^3n_3}.
\nonumber
\end{eqnarray}

(c) Call the $\Z$-basis in \eqref{10.14} $(b_0,b_1,b_2)$. 
The nilpotent integer $3\times 3$ matrix $M$ with a single
Jordan block which is determined by
$$a\cdot (b_2,b_1,b_0)=(b_2,b_1,b_0)\cdot M$$
is
\begin{eqnarray}\label{10.15}
M&=& \begin{pmatrix}0&n_1n_2&-n_4\\0&0&d_1n_2\\0&0&0 
\end{pmatrix}.
\end{eqnarray}
Here $n_2\in\N$, $n_3(=n_1d_1)\in \N$ and $n_4\in [0,n_2)\cap\Z$
determine the associated order $\Lambda$, with
$\Delta(\Lambda)=n_3$, and the decomposition
$n_3=n_1d_1$ with $n_1,d_1\in\N$ and $\gcd(n_1,d_1)=1$ 
fixes the associated $\varepsilon$-class of exact 
$\Lambda$-ideals.

(d) The set 
\begin{eqnarray*}
\{\begin{pmatrix}0&m_1&-m_3 \\ 0&0&m_2\\ 0&0&0\end{pmatrix}\,|\,
m_1,m_2\in\N,m_3\in[0,\gcd(m_1,m_2))\cap\Z\}
\end{eqnarray*}
is a set of representatives of the $GL_3(\Z)$-conjugacy classes
of nilpotent integer $3\times 3$ matrices with a $3\times 3$
Jordan block, one representative for each set.
From one representative the order $\OO(L)$ of the corresponding
$\varepsilon$-class $[L]_\varepsilon$ can be read off. 
It is as in \eqref{10.13}
with $n_2=\gcd(m_1,m_2)$, $n_3=\frac{m_1m_2}{n_2^2}$ and
$n_4=m_3$. Furthermore, $n_1$ and $d_1$ in \eqref{10.14}
are $n_1=\frac{m_1}{n_2}$ and $d_1=\frac{m_2}{n_2}$.
\end{theorem}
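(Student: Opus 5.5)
The plan is to reduce everything to the order normal form (10.4), the invariant $\Delta$, and the class description in Theorem \ref{t10.4} (f) together with Lemma \ref{t10.5}.

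For part (a), take an order $\Lambda\supset\Lambda_a$ with its unique $\Z$-basis as in (10.4). Since $a\in\Lambda$, we have $a=k\alpha_{22}a+(\ldots)a^2$, so $\alpha_{22}=1/n_2$ with $n_2\in\N$. Since $a^2\in\Lambda$, $\alpha_{33}$ divides $1$ in the $\gcd_\Q$ sense. Multiplicative closure gives $(\alpha_{22}a+\alpha_{32}a^2)^2=\alpha_{22}^2a^2\in\Z\alpha_{33}a^2$, so $\alpha_{33}=\alpha_{22}^2/n_3=1/(n_2^2n_3)$ with $n_3\in\N$. This is consistent with $\Delta(\Lambda)=\alpha_{22}^2/\alpha_{33}=n_3\in\N$ from Theorem \ref{t10.4} (e).

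It remains to pin down $\alpha_{32}$. The condition $a\in\Lambda$ reads $n_2(\alpha_{22}a+\alpha_{32}a^2)-a=n_2\alpha_{32}a^2\in\Z\alpha_{33}a^2$, so $\alpha_{32}\in\Z\cdot\frac{1}{n_2^3n_3}$. The normalisation $\alpha_{32}\in(-\frac12\alpha_{33},\frac12\alpha_{33}]$ would then give a range of length $n_2$ in units of $1/(n_2^3n_3)$. Instead I would use a representative in $[0,\alpha_{33})$, i.e.\ $n_4\in[0,n_2)$. Uniqueness of this basis follows from uniqueness of the Hermite-type basis, shifting $\alpha_{32}$ by multiples of $\alpha_{33}$ instead of centring. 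Conversely, every such triple $(n_2,n_3,n_4)$ gives a lattice closed under multiplication and containing $a$ and $a^2$.

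For part (b), I would check directly that the lattice $L$ with the basis (10.14) satisfies three things:
\begin{itemize}
\item its order is $\Lambda$, using the recipe of Theorem \ref{t10.4} (d) generalised, or a direct computation of $L:L$;
\item $\Delta(L)=\gamma_{22}^2/\gamma_{33}=n_3/d_1^2=n_1/d_1$;
\item $\gamma_{32}=n_4/(d_1n_2^3n_3)$ lies in $[0,\gcd_\Q(\gamma_{22}^2,\gamma_{33}))$.
\end{itemize}
For the third point, $\gamma_{22}^2=1/(d_1^2n_2^2)$ and $\gamma_{33}=1/(n_2^2n_1d_1)$. Since $\gcd(n_1,d_1)=1$, their $\gcd_\Q$ is $1/(n_2^2d_1^2n_1)=1/(n_2^2d_1n_3)$, and $n_4<n_2$ gives $\gamma_{32}<1/(n_2^2d_1n_3)$. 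Lemma \ref{t10.5} then shows this is the normal-form representative, and Theorem \ref{t10.4} (f) shows it is the class attached to the decomposition $n_3=n_1d_1$. The order computation is the main routine check: test $x\in A$ with $xb_i\in L$, which forces the $a$-coefficient into $\frac{1}{n_2}\Z$ and the $a^2$-coefficient into the appropriate coset.

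For part (c), compute directly. We have $ab_2=0$ and $ab_1=\gamma_{22}a^2=(\gamma_{22}/\gamma_{33})b_2=n_1n_2\,b_2$. Also $ab_0=a=d_1n_2\,b_1-d_1n_2\gamma_{32}a^2$, and $d_1n_2\gamma_{32}/\gamma_{33}=n_4$, so $ab_0=d_1n_2\,b_1-n_4\,b_2$. This gives the matrix $M$ in (10.15).

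For part (d), map $(n_1,n_2,n_3,n_4)\mapsto(m_1,m_2,m_3)=(n_1n_2,d_1n_2,n_4)$. Then $\gcd(m_1,m_2)=n_2$ because $\gcd(n_1,d_1)=1$, and this map is inverted by the stated formulas. It is therefore a bijection onto the stated set. Combined with Theorem \ref{t6.3}, Lemma \ref{t10.1}, parts (a)–(c) and the disjoint decomposition (6.7), this gives exactly one representative per conjugacy class.

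The obstacle I expect is the bookkeeping in (a) and (b): the uniqueness conventions ($[0,\cdot)$ versus the centred intervals of (10.4)), and verifying $\OO(L)=\Lambda$ exactly rather than merely $\supset$.
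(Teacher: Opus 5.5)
Your proposal is correct and follows the paper's proof essentially step by step. In (a) you derive the same three constraints from $a\in\Lambda$ and $(\alpha_{22}a+\alpha_{32}a^2)^2\in\Z\alpha_{33}a^2$. In (b) you use the same ${\gcd}_\Q$ check against Lemma \ref{t10.5} combined with the uniqueness in Theorem \ref{t10.4} (f), and in (c) you do the same direct computation of $M$. The differences are minor. You verify $\OO(L)=\Lambda$ by computing $L:L$ directly, whereas the paper exhibits $\Lambda\cdot L\subset L$ through three explicit products and then uses $\gcd(n_1,d_1)=1$ to get equality. You also spell out the converse in (a) (every triple $(n_2,n_3,n_4)$ gives an order containing $\Lambda_a$) and the bijection $(n_1,d_1,n_2,n_4)\leftrightarrow(m_1,m_2,m_3)$, which the paper compresses into ``follows from part (c)''.
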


{\bf Proof:}
(a) $a\in\Lambda_a\subset \Lambda$ 
implies $\alpha_{22}=\frac{1}{n_2}$
for some $n_2\in\N$, and 
$$(\alpha_{22}a+\alpha_{32}a^2)^2=\alpha_{22}^2a^2\in 
\Lambda\cap\Q a^2=\Z\alpha_{33}a^2$$
implies $\alpha_{22}^2/\alpha_{33}\in\N$, so
$\alpha_{33}=\frac{1}{n_2^2n_3}$ for some $n_3\in\N$.
Then $\Delta(L)=\alpha_{22}^2/\alpha_{33}=n_3$.
$a\in\Lambda_a\subset\Lambda$ implies also
\begin{eqnarray*}
a\in n_2(\alpha_{22}a+\alpha_{32}a^2)+\Z\alpha_{33}a^2
=a+(n_2\alpha_{32}+\Z\alpha_{33})a^2,
\end{eqnarray*}
so $\alpha_{32}\in\Z\cdot \frac{\alpha_{33}}{n_2}$, so
$\alpha_{32}=\frac{n_4}{n_2^3n_3}$ for some 
$n_4\in\Z.$
$\alpha_{32}$ can be changed by adding multiples of 
$\alpha_{33}$, so $n_4$ can be chosen uniquely in
$[0,n_2)\cap\Z$.

(b) The $\Z$-basis in \eqref{10.14} is in the normal form in
Lemma \ref{t10.5} because 
\begin{eqnarray*}
{\gcd}_\Q(\gamma_{22}^2,\gamma_{33})
={\gcd}_\Q(\frac{1}{d_1^2n_2^2},\frac{1}{n_1d_1n_2^2})
=\frac{1}{n_1d_1^2n_2^2}
=\frac{1}{d_1n_2^2n_3},\\
\gamma_{32}=\frac{n_4}{n_2}\cdot \frac{1}{d_1n_2^2n_3}
\in [0,\frac{1}{d_1n_2^2n_3})\cap\Q
=[0,{\gcd}_\Q(\gamma_{22}^2,\gamma_{33}))\cap\Q.
\end{eqnarray*}
The full lattice $L$ which is generated by this $\Z$-basis
satisfies
$\Delta(L)=\frac{\gamma_{22}^2}{\gamma_{33}}=\frac{n_1}{d_1}$.
The  inclusion $\OO(L)\supset \Lambda$ is obvious from
\begin{eqnarray*}
(\alpha_{22}a+\alpha_{32}a^2)\cdot 1 &=&
d_1(\gamma_{22}a+\gamma_{32}a^2),\\
(\alpha_{22}a+\alpha_{32}a^2)(\gamma_{22}a+\gamma_{32}a^2)
&=& \alpha_{22}\gamma_{22}a^2
=\frac{1}{d_1n_2^2}a^2
=n_1\cdot\gamma_{33}a^2,\\
\alpha_{33}a^2\cdot 1&=& \gamma_{33}a^2.
\end{eqnarray*}
The third equation, the factors $d_1$ and $n_1$ in the first
two equations and ${\gcd}(n_1,d_1)=1$ show also
$\OO(L)=\Lambda$.

(c) The calculation of $M$ is straightforward.

(d) This follows from part (c). \hfill$\Box$

\section{The rank 3 cases with Jordan blocks of sizes
$2$ and $1$}\label{s11}
\setcounter{equation}{0}
\setcounter{table}{0}
\setcounter{figure}{0}

\noindent 
A regular integer $3\times 3$ matrix with a $2\times 2$
Jordan block has two different integer eigenvalues.
By subtracting a suitable multiple of the unit matrix
we can restrict to the case with eigenvalue 
$\alpha\in\Z-\{0\}$ for the $1\times 1$ Jordan block
and the eigenvalue $0$ for the $2\times 2$ Jordan block.
By multiplying the matrix with the sign of $\alpha$ 
we can restrict to the case $\alpha\in\N$. 
The corresponding algebra is
\begin{eqnarray}\label{11.1}
A=\Q e_1+\Q e_2+\Q a\quad\textup{with }e_ie_j=\delta_{ij}e_i,\,
e_1a=0,\, e_2a=a,\, a^2=0.
\end{eqnarray}
It splits into the summands $\Q e_1$ and $\Q e_2+\Q a$.
Its separable part is $F=\Q e_1+\Q e_2$, its radical is
$\NN=\Q a$. The projection 
$$\pr_F:A\to F$$
with respect to the decomposition $A=F\oplus\NN$ is an
algebra homomorphism. 

The conjugacy classes of integer $3\times 3$ matrices with
a $2\times 2$ Jordan block with eigenvalue 0 and a 
$1\times 1$ Jordan block with eigenvalue $\alpha\in\N$ 
correspond by Theorem \ref{t6.3} 
1:1 to the $\varepsilon$-classes of $\Lambda_\alpha$-ideals
$\{[L]_\varepsilon\,|\, L\in\LL(A),\OO(L)\supset\Lambda_\alpha\}$
where $\Lambda_\alpha$ is the cyclic order
\begin{eqnarray*}
\Lambda_\alpha:= \Z[\alpha e_1+a]=\Z(e_1+e_2)+\Z(\alpha e_1+a)
+\Z\alpha^2 e_1\subset A.
\end{eqnarray*}
Here a $\Z$-basis $\BB$ of an arbitrary full lattice $L$ gives rise
to a matrix $M_\BB\in M_{3\times 3}(\Q)$ with eigenvalue
$\alpha$ on the $1\times 1$ Jordan block and eigenvalue $0$
on the $2\times 2$ Jordan block by the formula
\begin{eqnarray}\label{11.2}
(\alpha e_1+a)\BB=\BB\cdot M_\BB.
\end{eqnarray}
The matrix $M_\BB$ has integer entries if and only if
$\OO(L)\supset\Lambda_\alpha$. 

Lemma \ref{t11.1} provides a $\Z$-basis in normal form for each
order $\Lambda$ in $A$, a unique representative in each
$\varepsilon$-class of full lattices and a normal form for each
$GL_3(\Z)$-conjugacy class of integer $3\times 3$ matrices
with a $2\times 2$ Jordan block with eigenvalues 0 
and a $1\times 1$ Jordan block with eigenvalue $\alpha\in\N$.
Though we do not determine the semigroup structure.

\begin{lemma}\label{t11.1}
Let $A$ be as in \eqref{11.1}, and let $\alpha\in\N$.
Denote $\uuuu{e}:=(e_1,e_2,a)$. 

(a) Each order in $A$ has a unique $\Z$-basis of the shape
\begin{eqnarray}\label{11.3}
\uuuu{e}\begin{pmatrix}0&\alpha_1&1\\0&0&1\\\alpha_2&\alpha_3
&0\end{pmatrix}\quad\textup{with}\\
\alpha_1\in\N,\ \alpha_2\in\Q_{>0},\ 
\alpha_3\in\frac{\alpha_2}{\alpha_1}([0,\alpha_1)\cap\Z).
\nonumber
\end{eqnarray}

(b) An order $\Lambda$ with $\Z$-basis as in \eqref{11.3}
contains $\Lambda_\alpha$ if and only if there exist
\begin{eqnarray}\nonumber
n_1&\in& \{\www{n}_1\in\N\,|\, \www{n}_1|\alpha\},\\
n_3&\in& [0,\frac{\alpha}{n_1})\cap\Z,\label{11.4}\\
n_2&\in& \bigl(n_1^2n_3+\Z\alpha\bigr)\cap\N\nonumber
\end{eqnarray}
with
\begin{eqnarray}\label{11.5}
\alpha_1=\frac{\alpha}{n_1}(\in\N),\ 
\alpha_2=\frac{\alpha}{n_2}(\in\Q_{>0}),\ 
\alpha_3=\frac{n_1n_3}{n_2}(=\frac{\alpha_2}{\alpha_1}n_3).
\end{eqnarray}
Then $\Lambda=\Lambda_\alpha$ if and only if 
$n_1=n_2=n_3=1$. 

(c) Each $\varepsilon$-class of full lattices contains a unique
full lattice $L$ with a $\Z$-basis of the following shape,
\begin{eqnarray}\label{11.6}
\uuuu{e}\begin{pmatrix}0&1&\delta_1\\0&0&1\\
\delta_2&\delta_3&0\end{pmatrix}
\quad\textup{with }\delta_1\in[0,\frac{1}{2}]\cap\Q,\ 
\delta_2\in\Q_{>0},\\
\begin{array}{ll}
\delta_3\in(-\frac{1}{2}\delta_2,\frac{1}{2}\delta_2]\cap\Q&
\textup{ if }\delta_1\in(0,\frac{1}{2}),\\
\delta_3\in [0,\frac{1}{2}\delta_2]\cap\Q&
\textup{ if }\delta_1\in\{0,\frac{1}{2}\}.
\end{array}\nonumber
\end{eqnarray}

(d) An order $\Lambda$ with $\Z$-basis in \eqref{11.3}
and a full lattice $L$ with $\Z$-basis in \eqref{11.6}
satisfy $\Lambda\cdot L=L$ if and only if
\begin{eqnarray}\label{11.7}
\frac{\alpha_2}{\delta_2}\in\Z,\ 
\alpha_1\delta_1\in\Z,\ 
\alpha_1\frac{\delta_3}{\delta_2}\in\Z,\ 
\frac{\alpha_3}{\delta_2}-\alpha_1\delta_1
\frac{\delta_3}{\delta_2}\in\Z.
\end{eqnarray}

(e) Let $L$ be a full lattice with $\Z$-basis in \eqref{11.6}.
The $\Z$-basis in \eqref{11.3} of the order $\OO(L)$ satisfies
\begin{eqnarray}\label{11.8}
\alpha_2=\delta_2,\ \alpha_1=\lcm\bigl(\delta_\Q(\delta_1),
\delta_\Q(\frac{\delta_3}{\delta_2})\bigr),\\
\alpha_3\equiv \frac{\alpha_2}{\alpha_1}\Bigl(
\alpha_1\delta_1\cdot\alpha_1\frac{\delta_3}{\delta_2}\Bigr)
\equiv \alpha_1\delta_1\delta_3\mmod\Z\alpha_2.
\nonumber
\end{eqnarray}

(f) (i) Let $L$ be a full lattice with $\Z$-basis $\BB$
in \eqref{11.6}. The matrix $M_\BB\in M_{3\times 3}(\Q)$
with $(\alpha e_1+a)\BB=\BB\cdot M_\BB$ is
\begin{eqnarray}\label{11.9}
M_\BB=\begin{pmatrix}0&-\alpha\frac{\delta_3}{\delta_2}&
\frac{1}{\delta_2}-\alpha\delta_1\frac{\delta_3}{\delta_2}\\
0&\alpha&\alpha\delta_1\\0&0&0\end{pmatrix}.
\end{eqnarray}
It is in $M_{3\times 3}(\Z)$ if and only if
$\OO(L)\supset \Lambda_\alpha$.

(ii) The matrices $M_\BB$ in \eqref{11.9} for
$(\delta_1,\delta_2,\delta_3)$ in \eqref{11.6} with
$M_\BB\in M_{3\times 3}(\Z)$ are representatives of the 
conjugacy classes of integer $3\times 3$ matrices with 
eigenvalue $\alpha$ on the $1\times 1$ Jordan block and
eigenvalue $0$ on the $2\times 2$ Jordan block.
(We do not list the triples $(\delta_1,\delta_2,\delta_3)$
in \eqref{11.6} with $M_\BB\in M_{3\times 3}(\Z)$ in a
more explicit way.)
\end{lemma}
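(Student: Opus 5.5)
The plan is to mirror the treatment of the separable rank~3 case in Lemma \ref{t9.5} and Lemma \ref{t9.6}. Everything is reduced to explicit triangular $\Z$-bases with respect to $\uuuu{e}=(e_1,e_2,a)$, and the multiplication rules $e_ie_j=\delta_{ij}e_i$, $e_1a=0$, $e_2a=a$, $a^2=0$ are used throughout.

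\textbf{Part (a).} Let $\Lambda$ be an order. By Lemma \ref{t5.9} its projection $\pr_F\Lambda$ is an order in $F=\Q e_1+\Q e_2$, so by Theorem \ref{t9.1} (a) it has the $\Z$-basis $\uuuu{e}$-image $(\alpha_1e_1,\,e_1+e_2)$ for a unique $\alpha_1\in\N$. The kernel $\Lambda\cap\Q a$ equals $\Z\alpha_2a$ for a unique $\alpha_2\in\Q_{>0}$. We lift the two basis vectors of the projection to $\Lambda$. The lift of $1_A$ can be taken as $1_A$ itself. The lift of $\alpha_1e_1$ has the form $\alpha_1e_1+\alpha_3a$, where $\alpha_3$ is determined modulo $\Z\alpha_2$.

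Multiplicativity imposes a constraint. From
\[
(\alpha_1e_1+\alpha_3a)\,e_2\text{-part}=0
\]
and the product $(\alpha_1e_1+\alpha_3a)^2=\alpha_1^2e_1$, this element lies in $\Lambda$ if and only if $\alpha_1(\alpha_1e_1+\alpha_3a)-\alpha_1\alpha_3a\in\Lambda$. That holds if and only if $\alpha_1\alpha_3\in\Z\alpha_2$. This gives exactly $\alpha_3\in\frac{\alpha_2}{\alpha_1}\Z$, and the residue class modulo $\alpha_2$ is normalised to $[0,\alpha_1)$. Uniqueness follows from the uniqueness of $\alpha_1$ and $\alpha_2$ and of the residue.

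\textbf{Part (b).} We write down the conditions $1_A$, $\alpha e_1+a$, $\alpha^2e_1\in\Lambda$. The condition $\alpha^2e_1\in\Lambda$ is the consequence of the others, as $(\alpha e_1+a)^2=\alpha^2e_1$.

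Projecting to $F$ gives $\alpha_1\mid\alpha$, so $\alpha_1=\alpha/n_1$. Then
\[
\alpha e_1+a-n_1(\alpha_1e_1+\alpha_3a)=(1-n_1\alpha_3)a
\]
must lie in $\Z\alpha_2 a$. Multiplicativity already forces $\alpha_1\alpha_3\in\Z\alpha_2$. We set $\alpha_2=\alpha/n_2$ and $\alpha_3=\frac{\alpha_2}{\alpha_1}n_3$, and translate both conditions into the congruence $n_2\equiv n_1^2n_3\pmod{\alpha}$. The point $n_1=n_2=n_3=1$ gives back $\Lambda_\alpha$.

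\textbf{Parts (c) and (d).} For (c), the general triangular basis, analogous to \eqref{9.1}, is multiplied by a unit. The unit
\[
u=\beta_{11}^{-1}e_1+(\beta_{22}+\beta_{32}a)^{-1}e_2
\]
normalises the diagonal of the two $F$-directions. After this, the remaining freedom consists of the units $\pm e_1\pm e_2$ together with units $e_2+xa$, $x\in\Q$, which act on the coordinates. This reduces $\delta_1$ to $[0,\frac12]$ and $\delta_3$ modulo $\delta_2$. The boundary cases $\delta_1\in\{0,\frac12\}$ allow an extra sign flip, which is why $\delta_3\ge0$ there. These case checks follow the pattern of the proof of Lemma \ref{t9.6} (a).

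For (d), we multiply the three order generators by the three lattice generators, as in the proof of Lemma \ref{t9.6} (b), and express each of the resulting nine products in the basis \eqref{11.6}. This yields the four integrality conditions \eqref{11.7}.

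\textbf{Parts (e) and (f).} Part (e) follows from (d) by minimising $\alpha_1$ and maximising $\alpha_2$, exactly as \eqref{9.19} follows from \eqref{9.18}. For (f)(i), we compute $(\alpha e_1+a)\BB$ and solve for $M_\BB$ directly. Integrality of $M_\BB$ is equivalent to $\OO(L)\supset\Lambda_\alpha$ by Theorem \ref{t6.3}. Part (f)(ii) then follows from (c) together with Theorem \ref{t6.3}.

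The main obstacle is part (c): the exact normalisation and its uniqueness in the boundary cases, where several units interact. Here I would enumerate the stabiliser of the semi-normal shape explicitly, namely the units of the form $\pm e_1\pm(e_2+xa)$, and check orbit representatives case by case.
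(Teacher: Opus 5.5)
Your proposal is correct and follows the paper's proof essentially step by step: in (a) you project to $F$ via Theorem \ref{t9.1}, lift, and use $(\alpha_1e_1+\alpha_3a)^2=\alpha_1^2e_1$; in (b) you rewrite $(1-n_1\alpha_3)a\in\Z\alpha_2a$ as $n_2\equiv n_1^2n_3 \bmod \alpha$; in (c) you normalise the $F$-part, remove the $a$-component with a unit $1_A+xa$, and use sign flips $\pm e_1\pm e_2$ in the boundary cases; (d) uses the nine products and (f) is the direct computation of $M_\BB$. One small slip: in (e) you must \emph{minimise}, not maximise, $\alpha_2$ subject to $\alpha_2/\delta_2\in\Z$, because $\OO(L)$ is the largest order satisfying \eqref{11.7} and $\Lambda\cap\Q a=\Z\alpha_2a$, which gives $\alpha_2=\delta_2$; likewise $e_2+xa$ is not a unit, and you mean $\pm e_1\pm(e_2+xa)$ as you write at the end.
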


{\bf Proof:}
(a) Let $\Lambda$ be an order in $A$. By Theorem \ref{t9.1} (a)
the order $\pr_F\Lambda$ in $A$ has a unique $\Z$-basis
$(e_1,e_2)\begin{pmatrix}\alpha_1&1\\0&1\end{pmatrix}$
for some $\alpha_1\in\N$.

The unit element $1_A=e_1+e_2$ is in $F$ and in $A$. 
The $\Z$-basis
of $\pr_F\Lambda$ lifts and extends to a $\Z$-basis of the shape
in \eqref{11.3} with $\alpha_1\in\N$, $\alpha_2\in\Q_{>0}$ and 
$\alpha_3\in [0,\alpha_2)\cap\Q$. The additional constraint
$\alpha_3\in \frac{\alpha_2}{\alpha_1}\bigl([0,\alpha_1)\cap\Z
\bigr)$ follows from $\Lambda$ being an order:
\begin{eqnarray*}
\Lambda\owns (\alpha_1e_1+\alpha_3a)^2&=&\alpha_1^2e_1 
=\alpha_1\cdot (\alpha_1e_1+\alpha_3 a)
-\frac{\alpha_1\alpha_3}{\alpha_2}\cdot\alpha_2a,\\
\textup{so}&&\frac{\alpha_1\alpha_3}{\alpha_2}\in\Z.
\end{eqnarray*}

(b) \eqref{11.10} puts the $\Z$-basis in \eqref{11.3} for the
order $\Lambda_\alpha$ and the $\Z$-basis in \eqref{11.3} for
an arbitrary order side by side,
\begin{eqnarray}\label{11.10}
\Lambda_\alpha:\quad
\uuuu{e}\begin{pmatrix}0&\alpha&1\\0&0&1\\ \alpha&1&0
\end{pmatrix},\qquad \Lambda:\quad 
\begin{pmatrix}0&\alpha_1&1\\0&0&1\\ \alpha_2&\alpha_3&0
\end{pmatrix} .
\end{eqnarray}
Write $\alpha_3=\frac{\alpha_2}{\alpha_1}n_3$ for some
$n_3\in[0,\alpha_1)\cap\Z$. Then 
$\Lambda\supset\Lambda_\alpha$ is equivalent to
\begin{eqnarray*}
\alpha_2&=& \frac{\alpha}{n_2}\textup{ for some }n_2\in\N,\\
\alpha_1&=&\frac{\alpha}{n_1}\textup{ for some }n_1\in\N
\textup{ with }n_1|\alpha,\\
1&\in& n_1\alpha_3+\Z\alpha_2 
=\frac{n_1^2n_3}{n_2}+\Z\frac{\alpha}{n_2}
=\frac{1}{n_2}\bigl(n_1^2n_3+\Z\alpha\bigr).
\end{eqnarray*}
Therefore $n_2\in (n_1^2n_3+\Z\alpha )\cap\Z.$

(c) Consider a full lattic $L_1$ in $A$. 
By Theorem \ref{t9.1} (b) the $\varepsilon$-class of 
$\pr_FL_1$ contains a unique full lattice with a $\Z$-basis
of the shape $(e_1,e_2)\begin{pmatrix}1&\delta_1\\0&1
\end{pmatrix}$
for some $\delta_0\in[0,\frac{1}{2}]\cap\Q$.
This full lattice lifts to a full lattice $L_2$ in the
$\varepsilon$-class of $L_1$ with a $\Z$-basis of the shape
\begin{eqnarray*}
\uuuu{e}\begin{pmatrix}0&1&\delta_1\\ 0&0&1\\ 
\delta_2&\delta_3&\delta_4\end{pmatrix}\textup{ with}\\
\delta_2\in\Q_{>0},\ \delta_3,\delta_4\in(-\frac{1}{2}\delta_2,
\frac{1}{2}\delta_2]\cap\Q.
\end{eqnarray*}
The full lattice $L_3= (e_1+e_2-\delta_4 a)L_2$ has the 
$\Z$-basis 
$\uuuu{e}\begin{pmatrix} 0&1&\delta_1\\ 0&0&1\\ 
\delta_2&\delta_3&0\end{pmatrix}$.

If $\delta_1=0$ the full lattice $(-e_1+e_2)L_3$ has the
$\Z$-basis $\uuuu{e}
\begin{pmatrix}0&1&0\\ 0&0&1\\ \delta_2&-\delta_3&0\end{pmatrix}$.

If $\delta_1=\frac{1}{2}$ we have the following full lattices
and $\Z$-bases,
\begin{eqnarray*}
&&(-e_1+e_2)L_3:\ 
\uuuu{e}\begin{pmatrix}0&1&-\frac{1}{2}\\ 0&0&1\\ 
\delta_2&-\delta_3&0\end{pmatrix} \textup{ and }
\uuuu{e}\begin{pmatrix}0&1&\frac{1}{2}\\ 0&0&1\\ 
\delta_2&-\delta_3&-\delta_3\end{pmatrix},\\
&&(e_1+e_2+\delta_3 a)(-e_1+e_2)L_3:\ 
\uuuu{e}\begin{pmatrix}0&1&\frac{1}{2}\\ 0&0&1\\ 
\delta_2&-\delta_3&0\end{pmatrix}.
\end{eqnarray*}
Therefore in the boundary cases $\delta_1\in\{0,\frac{1}{2}\}$
we can choose $\delta_3\in [0,\frac{1}{2}\delta_2]\cap\Q$.
We leave it to the reader to show that no further reductions
are possible.

(d) $\Lambda\cdot L$ is generated by the following nine products
of the three generators of $\Lambda$ in \eqref{11.3} and the
three generators of $L$ in \eqref{11.6},
$$\Lambda\cdot L:\quad \uuuu{e}\begin{pmatrix}
0&0&0&0&\alpha_1&\alpha_1\delta_1&0&1&\delta_1\\
0&0&0&0&0&0&0&0&1\\
0&0&\alpha_2&0&0&\alpha_3&\delta_2&\delta_3&0\end{pmatrix}.$$
So $\Lambda\cdot L\supset L$, and $\Lambda\cdot L=L$ is
equivalent to 
$$\alpha_2a\in L,\ \alpha_1e_1\in L,\ 
\alpha_1\delta_1e_1+\alpha_3a\in L.$$
which is equivalent to \eqref{11.7}

(e) The minimal $\alpha_1\in\N$ and $\alpha_2\in\Q_{>0}$
which satisfy the parts 
$$\frac{\alpha_2}{\delta_2}\in\Z,\ 
\alpha_1\delta_1\in\Z,\ 
\alpha_1\frac{\delta_3}{\delta_2}\in\Z$$
of \eqref{11.7} are $\alpha_2=\delta_2$ and 
$\alpha_1=\lcm(\delta_\Q(\delta_1),
\delta_\Q(\frac{\delta_3}{\delta_2}))$.
Then the last condition 
$\frac{\alpha_3}{\delta_2}-\alpha_1\delta_1
\frac{\delta_3}{\delta_2}\in\Z$ amounts to the last condition
in \eqref{11.8} for $\alpha_3$.

(f) (i) is a straightforward calculation.
(ii) is clear. \hfill$\Box$

\bigskip
Theorem \ref{t11.2} concludes section \ref{s11}.
It states which full lattices in $A$ are invertible.
This allows a short proof in the given case of Theorem 
\ref{t4.4}, namely that $L^2$ is invertible for each full $L$ 
lattice in $A$. But the main point is to prove Theorem \ref{t5.8}
in the given case. On the one hand, the proof is technical.
On the other hand, earlier results play well together in the
proof.

\begin{theorem}\label{t11.2}
Let $A$ be as in \eqref{11.1}.

(a) Consider a full lattice $L$ in $A$ with a $\Z$-basis as
in \eqref{11.6}. The following conditions are equivalent:
\begin{list}{}{}
\item[(i)]
$L$ is invertible.
\item[(ii)]
$\OO(\pr_FL)=\pr_F\OO(L)$.
\item[(iii)]
The $\Z$-basis in \eqref{11.3} of the order $\OO(L)$ satisfies
$\alpha_1=\delta_\Q(\delta_1)$.
\item[(iv)]
$\delta_\Q(\frac{\delta_3}{\delta_2})$ divides 
$\delta_\Q(\delta_1)$.
\end{list}

(b) Consider an order $\Lambda$ as in \eqref{11.3}.
Recall $\frac{\alpha_3\alpha_1}{\alpha_2}\in\Z\cap 
[0,\alpha_1)$ from \eqref{11.3}. Define
\begin{eqnarray*}
\mu&:=& \gcd(\alpha_1,\frac{\alpha_3\alpha_1}{\alpha_2})\in\N,\\
t&:=& |\{p\in\P\,|\, p\textup{ divides }\mu\}|,\\
\tau&:=& 2^t.
\end{eqnarray*}

(i) Then $\tau$ is the number of $w$-classes in the set 
$\{[L]_\varepsilon\,|\, L\in\LL(A),\OO(L)=\Lambda\}$.

(ii) For $L\in\LL(A)$ with $\OO(L)=\Lambda$ and a $\Z$-basis
as in \eqref{11.6} and \eqref{11.8} define
$$\mu_1:=\gcd(\alpha_1,\alpha_1\delta_1)\in\N,\quad
\mu_2:=\gcd(\alpha_1,\alpha_1\frac{\delta_3}{\alpha_2})\in\N.$$
Then
$$\mu=\mu_1\cdot \mu_2,\quad \gcd(\mu_1,\mu_2)=1.$$

(iii) $L$ and $\www{L}\in\LL(A)$ with $\OO(L)=\OO(\www{L})
=\Lambda$ and $\Z$-bases as in \eqref{11.6} and \eqref{11.8} 
satisfy 
$$L\sim_w\www{L}\ \iff\ \mu_1=\www{\mu}_1,\mu_2=\www{\mu}_2.$$

(c) For each full lattice $L$ its square $L^2$ is invertible.
\end{theorem}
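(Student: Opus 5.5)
The plan is to reduce everything to the explicit normal forms of Lemma \ref{t11.1} and to combine them with Theorem \ref{t4.3}, Theorem \ref{t5.5} (d) and Lemma \ref{t5.9}.

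\textbf{Part (a).} For (iii)$\iff$(iv) I use \eqref{11.8}: $\alpha_1=\lcm(\delta_\Q(\delta_1),\delta_\Q(\delta_3/\delta_2))$, and this equals $\delta_\Q(\delta_1)$ exactly when the second denominator divides the first. For (ii)$\iff$(iii), Theorem \ref{t9.1} (b),(d) shows that $\pr_FL$ has $\Z$-basis $(e_1,e_2)\begin{pmatrix}1&\delta_1\\0&1\end{pmatrix}$. Its order therefore has invariant $\delta_\Q(\delta_1)$, while $\pr_F\OO(L)$ has invariant $\alpha_1$.

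(i)$\Rightarrow$(ii) is Lemma \ref{t5.9} (c).

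For (iii)$\Rightarrow$(i) I compute directly. Set $\Lambda=\OO(L)$ and consider $L':=\Lambda:L$. This should be the lattice with basis $\uuuu{e}\begin{pmatrix}0&1&-\delta_1\\0&0&1\\ \delta_2^{-1}\alpha_2'&\ast&0\end{pmatrix}$, suitably scaled. I then check $L\cdot L'=\Lambda$ by multiplying generators, as in the proof of Lemma \ref{t11.1} (d). Alternatively, I can use criterion (iv) of Theorem \ref{t4.3} (b), i.e.\ $\OO(\Lambda:L)=\Lambda$, with \eqref{11.8} applied to $\Lambda:L$ after bringing it into the form \eqref{11.6}. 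The heuristic is that when $\alpha_1=\delta_\Q(\delta_1)$, multiplication by a unit of the form $e_1+e_2+xa$ kills the $\delta_3$-contribution modulo $\Lambda$. Then $L$ is $\varepsilon$-equivalent to a lattice of the split type $\pr_F$-part $\oplus\,\Z\delta_2a$, which is invertible because $\pr_FL$ is (Corollary \ref{t4.15}).

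\textbf{Part (b).} I first verify (ii) arithmetically. Put $k=\alpha_3\alpha_1/\alpha_2$, $x=\alpha_1\delta_1$ and $y=\alpha_1\delta_3/\alpha_2$. By \eqref{11.8}, $k\equiv xy \bmod \alpha_1$. Since $\alpha_1=\lcm$ of the two denominators, $\gcd(\alpha_1,x,y)=1$. Hence $\mu_1=\gcd(\alpha_1,x)$ and $\mu_2=\gcd(\alpha_1,y)$ are coprime, and
\[
\gcd(\alpha_1,xy)=\gcd(\alpha_1,x)\gcd(\alpha_1,y),
\]
using $\gcd(\mu_1,y)=1$.

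For (iii) I show that $(\mu_1,\mu_2)$ is a $w$-invariant. By Theorem \ref{t5.5} (a)(iii), $\tilde L=L_3L$ with $L_3\in G(\Lambda)$. Via Theorem \ref{t5.10}, $G(\Lambda)$ is reached by lattices whose $\pr_F$-image lies in $G(\pr_F\Lambda)$. Here $\mu_1=\alpha_1/\delta_\Q(\delta_1)$ depends only on $\OO(\pr_FL)$, and the latter is preserved under multiplication by invertible lattices. The quantity $\mu_2=\alpha_1/\delta_\Q(\delta_3/\delta_2)$ is then determined by $\mu=\mu_1\mu_2$. Conversely, given equal $(\mu_1,\mu_2)$, I plan to construct $L_3$ explicitly. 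The alternative is a counting argument.

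For (i) I count: the pairs $(\mu_1,\mu_2)$ with $\mu_1\mu_2=\mu$ and $\gcd(\mu_1,\mu_2)=1$ number $2^t$. Each such pair must then be realized by some $\delta_1$, $\delta_3$ with the given $\Lambda$, by choosing $\delta_\Q(\delta_1)=\alpha_1/\mu_1$ and $\delta_\Q(\delta_3/\delta_2)=\alpha_1/\mu_2$ subject to the congruence for $\alpha_3$. I expect this realizability check, together with the converse in (iii), to be the main obstacle. I would handle it prime by prime via CRT.

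\textbf{Part (c).} By part (a), $L^2$ is invertible iff $\OO(\pr_FL^2)=\pr_F\OO(L^2)$. Using (b)(ii), I would show that for $L^2$ the invariant $\mu_2$ becomes $1$. Concretely, the coordinate $\delta_3$ of $L^2$ acquires denominator dividing $\delta_\Q(\delta_1)$, since the $a$-component of products is generated by $\delta_1\delta_3$-type terms. Then criterion (iv) of part (a) holds for $L^2$. As a fallback, part (c) follows directly from Theorem \ref{t4.4} with $n=3$.
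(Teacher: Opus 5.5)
You leave two genuine gaps: the converse of (b)(iii), hence (b)(i), and (a)(iii)$\Rightarrow$(i). The rest is in order. Your arguments for (a) (i)$\Rightarrow$(ii)$\iff$(iii)$\iff$(iv), for (b)(ii), and for (c) via Theorem \ref{t4.4} are fine.

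Your proof that $(\mu_1,\mu_2)$ is a $w$-invariant is correct, and it is slicker than the paper's explicit computation in Claim (II). For $\www{L}=L_3L$ with $L_3\in G(\Lambda)$, Lemma \ref{t5.9} gives $\pr_F\www{L}=\pr_FL_3\cdot\pr_FL$ with $\OO(\pr_FL_3)=\pr_F\Lambda\subset\OO(\pr_FL)$. All lattices in the $2$-dimensional $F$ are invertible, so $\OO(\pr_F\www{L})=\OO(\pr_FL)$. Hence $\mu_1=\alpha_1/\delta_\Q(\delta_1)$ is invariant, and $\mu_2=\mu/\mu_1$ follows.

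\textbf{Gap in (b).} You leave the converse of (b)(iii) as a plan, and (b)(i) needs it: realizing each of the $2^t$ coprime splittings $\mu=\mu_1^0\mu_2^0$ only gives $\tau\geq 2^t$. The missing idea is a size comparison, not a construction of $L_3$.
\begin{itemize}
\item By Theorem \ref{t5.5} (d), every $w$-class in $\{[L]_\varepsilon\,|\,\OO(L)=\Lambda\}$ has exactly $|G([\Lambda]_\varepsilon)|$ elements.
\item By Theorem \ref{t5.10} and Theorem \ref{t9.1} (d), this equals $|G([\pr_F\Lambda]_\varepsilon)|=\varphi(\alpha_1)/2$ for $\alpha_1\geq 3$, and $1$ for $\alpha_1\le 2$.
\item Put $x=\alpha_1\delta_1$, $y=\alpha_1\delta_3/\alpha_2$, $k=\alpha_1\alpha_3/\alpha_2$. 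The normal forms \eqref{11.6} in the fibre over $(\mu_1^0,\mu_2^0)$ are the pairs $(x,y)$ modulo $\alpha_1$ with $\gcd(\alpha_1,x)=\mu_1^0$, $\gcd(\alpha_1,y)=\mu_2^0$ and $xy\equiv k$, up to the free involution $(x,y)\mapsto(-x,-y)$.
\item Products of units are equidistributed modulo $\alpha_1/\mu$, so there are $\varphi(\alpha_1/\mu_1^0)\varphi(\alpha_1/\mu_2^0)/\bigl(2\varphi(\alpha_1/\mu)\bigr)=\varphi(\alpha_1)/2$ of them.
\end{itemize}
By your forward direction each fibre is a union of $w$-classes. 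The counts then force each fibre to be exactly one $w$-class. This gives the converse and $\tau=2^t$ at once, with realizability for free.

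\textbf{Gap in (a).} You do not establish (iii)$\Rightarrow$(i), and your heuristic is false. A unit $u=u_1e_1+u_2e_2+xa$ sends $e_1+\delta_3a$ to $u_1e_1+u_2\delta_3a$. So $uL$ splits as $(uL\cap F)\oplus\Z u_2\delta_2a$ only if $\delta_3\in\Z\delta_2$. For example, $\delta_1=\delta_3=\frac13$, $\delta_2=1$ satisfies (iv), so $L$ is invertible, yet no lattice in $[L]_\varepsilon$ splits.

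Your guessed inverse is also wrong in general. Its $\pr_F$-part $(e_1,e_2)\begin{pmatrix}1&-\delta_1\\0&1\end{pmatrix}$ is $\varepsilon$-equivalent to $\pr_FL$ itself (multiply by $-e_1+e_2$). Writing $\delta_1=n_\delta/\alpha_1$, Theorem \ref{t9.1} (c) says the inverse of $\pr_FL$ corresponds to $l_1/\alpha_1$ with $n_\delta l_1\equiv1\bmod\alpha_1$. So your guess works only if $n_\delta^2\equiv\pm1\bmod\alpha_1$.

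What works is an explicit $\www{L}$ with basis $\alpha_2a$, $\alpha_1^2e_1+\alpha_3n_\delta a$, $\alpha_1l_1e_1+e_2+\alpha_3l_2a$, where $n_\delta l_1=1+\alpha_1l_2$. Checking $L\www{L}=\OO(L)$ uses $\alpha_3\equiv n_\delta\delta_3$, $\alpha_1\delta_3\equiv0$ and $\alpha_1\alpha_3\equiv 0$ modulo $\Z\alpha_2$.

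\textbf{Slip in (c).} Invertibility corresponds to $\mu_1=1$ (equivalently $\mu_2=\mu$), not to $\mu_2=1$. The criterion (iv) that you then invoke is the right one.
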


{\bf Proof:}
(a) Consider the $\Z$-basis in \eqref{11.3} for $\OO(L)$
and the $\Z$-basis in \eqref{11.6} for $L$,
and recall Lemma \ref{t11.1} (e).

(i)$\Rightarrow$(ii) is part of Lemma \ref{t5.9} (c).

(ii)$\iff$(iii): 
$\pr_F\OO(L)$ has the $\Z$-basis
$(e_1,e_2)\begin{pmatrix}\alpha_1&1\\0&1\end{pmatrix}$.
$\pr_FL$ has the $\Z$-basis 
$(e_1,e_2)\begin{pmatrix}1&\delta_1\\0&1\end{pmatrix}$.
By Theorem \ref{t9.1} (d) $\OO(\pr_FL)$ has the $\Z$-basis
$(e_1,e_2)\begin{pmatrix}\delta_\Q(\delta_1)&1\\0&1
\end{pmatrix}$.

(iii)$\iff$(iv): By Lemma \ref{t11.1} (e) 
$\alpha_1=\lcm(\delta_\Q(\delta_1),
\delta_\Q(\frac{\delta_3}{\delta_2}))$.
Therefore (iii)$\iff$(iv).

(iv)$\Rightarrow$(i): 
By \eqref{11.8} and (iv) $\alpha_1=\delta_\Q(\delta_1)$. 
Write $n_\delta:=\nu_\Q(\delta_1)$, so 
$\delta_1=\frac{n_\delta}{\alpha_1}$.
Choose $l_1,l_2\in\Z$ with 
$n_\delta l_1=1+\alpha_1l_2$. 
By \eqref{11.8} $\alpha_2=\delta_2$,
$\alpha_3\equiv\alpha_1\delta_1\delta_3
=\delta_3n_\delta\mmod\Z\alpha_2$. 

We will show that the full lattice
$\www{L}$ with the following $\Z$-basis satisfies 
$L\cdot\www{L}=\OO(L)$, so then $L$ is invertible. 
$$\www{L}:\quad \uuuu{e}\begin{pmatrix}
0&\alpha_1^2&\alpha_1l_1\\ 0&0&1\\ 
\alpha_2&\alpha_3n_\delta&\alpha_3l_2\end{pmatrix}.$$
$L\cdot\www{L}$ is generated by the following elements,
\begin{eqnarray}\label{11.11}
\uuuu{e}\begin{pmatrix}
0&\alpha_1^2&\alpha_1^2\delta_1&\alpha_1l_1&\alpha_1l_1\delta_1\\
0&0&0&0&1\\
\alpha_2&0&\alpha_3n_\delta&\delta_3&\alpha_3l_2\end{pmatrix}
=\uuuu{e}\begin{pmatrix}
0&\alpha_1^2&\alpha_1n_\delta&\alpha_1l_1&n_\delta l_1\\
0&0&0&0&1\\
\alpha_2&0&\alpha_3n_\delta&\delta_3&\alpha_3l_2\end{pmatrix}.
\end{eqnarray}
The product $L\cdot\www{L}$ contains $\alpha_2a$, it contains
\begin{eqnarray*}
\uuuu{e}\begin{pmatrix}\alpha_1l_1n_\delta-\alpha_1^2l_2\\
0-0\\ \delta_3n_\delta-0\end{pmatrix}
=\uuuu{e}\begin{pmatrix}\alpha_1\\0\\ \delta_3n_\delta
\end{pmatrix},\ \textup{ so }
\uuuu{e}\begin{pmatrix}\alpha_1\\0\\ \alpha_3\end{pmatrix},
\end{eqnarray*}
and it contains 
\begin{eqnarray}\label{11.12}
\uuuu{e}\begin{pmatrix}n_\delta l_1 - \alpha_1l_2\\
1-0\\ \alpha_3l_2-\alpha_3l_2\end{pmatrix}
=\uuuu{e}\begin{pmatrix}1\\1\\0\end{pmatrix}.
\end{eqnarray}
So it contains the $\Z$-basis
$\uuuu{e}\begin{pmatrix}0&\alpha_1&1\\0&0&1\\
\alpha_2&\alpha_3&0\end{pmatrix}$ of $\OO(L)$, so it contains
$\OO(L)$.

The fifth element $n_\delta l_1e_1+e_2+\alpha_3l_2a$ 
in \eqref{11.11} is in $\OO(L)$ because of \eqref{11.12}.
The second element $\alpha_1^2e_1$ is in $\OO(L)$ because of
$\alpha_1\alpha_3\in\Z\alpha_2$.
The fourth element $\alpha_1l_1e_1+\delta_3a$ is in $\OO(L)$
because
$$\alpha_3l_1-\delta_3\equiv \delta_3n_\delta l_1-\delta_3
=\delta_3\alpha_1 l_2\equiv 0\mmod\Z\alpha_2,$$
where $\delta_3\alpha_1\in\Z\alpha_2$ follows from \eqref{11.7}
and $\delta_2=\alpha_2$.

(b) (i) follows from (ii) and (iii).

(ii) Recall $\alpha_2=\delta_2$ from \eqref{11.8},
$\alpha_1\delta_1\in\Z\cap [0,\frac{\alpha_1}{2}]$, 
$\alpha_1\frac{\delta_3}{\alpha_2}\in\Z\cap (-\frac{\alpha_1}{2},
\frac{\alpha_1}{2}]$ from \eqref{11.7}, and from \eqref{11.8}
\begin{eqnarray}
\alpha_1 &=& \lcm(\delta_\Q(\delta_1),\delta_\Q(
\frac{\delta_3}{\alpha_2})),\label{11.13}\\
\frac{\alpha_3\alpha_1}{\alpha_2}&\equiv& \alpha_1\delta_1\cdot 
\alpha_1\frac{\delta_3}{\alpha_2}\mmod \Z\alpha_1.
\label{11.14}
\end{eqnarray}
\eqref{11.13} implies $\gcd(\mu_1,\mu_2)=1$. 
Of course
\begin{eqnarray*}
1&=& \gcd(\frac{\alpha_1\delta_1}{\mu_1},\frac{\alpha_1}{\mu_1})
= \gcd(\frac{\alpha_1\delta_3/\alpha_2}{\mu_2},
\frac{\alpha_1}{\mu_2}) \\
&=& \gcd(\frac{\alpha_1\delta_1}{\mu_1}\cdot 
\frac{\alpha_1\delta_3/\alpha_2}{\mu_2},
\frac{\alpha_1}{\mu_1\mu_2}).
\end{eqnarray*}
With \eqref{11.14} this shows $\mu=\mu_1\cdot\mu_2$. 

(iii) The case $\alpha_1=1$: Then
$\mu=\mu_1=\mu_2=\www{\mu}_1=\www{\mu}_2=1$. 
Also, then $\alpha_3=0$ and $\Lambda=\Z[e_1+\alpha_2 a]$
is a cyclic order. By Theorem \ref{t4.12} 
each full lattice $L$ with $\OO(L)=\Lambda$ is invertible,
so there is only one $w$-class, the group 
$G([\Lambda]_\varepsilon)$. This fits to 
$\mu=\mu_1=\mu_2=\www{\mu}_1=\www{\mu}_2=1$.

The case $\alpha=2$: It is similar to the cases $\alpha_1\geq 3$.
Only $\frac{\varphi(\alpha_1)}{2}$ must be replaced by $1$.

The cases $\alpha\geq 3$: We state and will show three claims.

{\bf Claims:}
\begin{list}{}{}
\item[(I)]
Each $w$-class within the set $\{[L]_\varepsilon\,|\, 
L\in\LL(A),\OO(L)=\Lambda\}$ contains 
$\frac{\varphi(\alpha_1)}{2}$ $\varepsilon$-classes.
\item[(II)]
$L$ and $\www{L}\in\LL(A)$ with $\OO(L)=\OO(\www{L})=\Lambda$
and $L\sim_w\www{L}$ satisfy $\mu_1=\www{\mu}_1$ and
$\mu_2=\www{\mu}_2$.
\item[(III)]
For each decomposition $\mu=\mu_1^0\cdot\mu_2^0$ with
$\mu_1^0,\mu_2^0\in\N$ with $\gcd(\mu_1^0,\mu_2^0)=1$,
the number of full lattices $L\in\LL(A)$ with $\OO(L)=\Lambda$,
with $\Z$-bases as in \eqref{11.6} and with 
$\mu_1=\mu_1^0$, $\mu_2=\mu_2^0$ is 
$\frac{\varphi(\alpha_1)}{2}$.
\end{list}

Together (I), (II) and (III) show (iii).

{\bf Proof of the claims:}
{\bf (I)}
By Theorem \ref{t9.1} (d) $[G([\pr_F\Lambda]_\varepsilon)|
=\frac{\varphi(\alpha_1)}{2}$. 
By Theorem \ref{t5.10} 
the homomorphism $\pr_F:G([\Lambda]_\varepsilon)\to 
G([\pr_F\Lambda]_\varepsilon)$ is an isomorphism, so
$|G([\Lambda]_\varepsilon)|=\frac{\varphi(\alpha_1)}{2}$.
By Theorem \ref{t5.5} (d) 
each $w$-class within the set $\{[L]_\varepsilon\,|\, 
L\in\LL(A),\OO(L)=\Lambda\}$ contains
$|G([\Lambda]_\varepsilon)|$ many elements.

{\bf (II)}
Let $L$ and $\www{L}\in\LL(A)$ with $\OO(L)=\OO(\www{L})
=\Lambda$ and $L\sim_w\www{L}$. 
Let $L$ have a $\Z$-basis as in \eqref{11.6}
with invariants $\delta_1,\delta_2=\alpha_2,\delta_3$.
Then $\www{L}:L$ is invertible with $\OO(\www{L}:L)=\Lambda$
by Theorem \ref{t5.5} (b).
Let $\oooo{L}$ be the unique full lattice in the 
$\varepsilon$-class $[\www{L}:L]_\varepsilon$ with a 
$\Z$-basis as in \eqref{11.6} with invariants
$\oooo{\delta}_1,\oooo{\delta}_2,\oooo{\delta}_3$.
Then $\oooo{\delta}_2=\alpha_2$ by \eqref{11.8} and
$\alpha_1=\delta_\Q(\oooo{\delta}_1)$ by Theorem \ref{t11.2} (a).

Write $n_\delta:=\nu_\Q(\oooo{\delta}_1)$, so
$\oooo{\delta}_1=\frac{n_\delta}{\alpha_1}$, and choose
$l_1,l_2\in\Z$ with $n_\delta l_1=1+\alpha_1l_2$.
Now $(\alpha_1e_1+e_2)\oooo{L}$ and $L$ have the following
$\Z$-bases,
\begin{eqnarray*}
(\alpha_1e_1+e_2)\oooo{L}:\quad
\uuuu{e}\begin{pmatrix}0&\alpha_1&n_\delta\\ 0&0&1\\ 
\alpha_2&\oooo{\delta}_3&0\end{pmatrix},\qquad
L:\quad \uuuu{e}\begin{pmatrix}0&1&\delta_1\\ 0&0&1\\ 
\alpha_2&\delta_3&0\end{pmatrix}.
\end{eqnarray*}
Their product $(\alpha_1e_1+e_2)\oooo{L}\cdot L$ is in the
$\varepsilon$-class $[\www{L}]_\varepsilon$. It is generated
by the following elements,
\begin{eqnarray}
(\alpha_1e_1+e_2)\oooo{L}\cdot L:\quad 
\uuuu{e}\begin{pmatrix}0&\alpha_1&\alpha_1\delta_1&n_\delta & 
n_\delta\delta_1\\ 0&0&0&0&1\\ \alpha_2&0&\oooo{\delta}_3
&\delta_3&0\end{pmatrix}.\label{11.15}
\end{eqnarray}
We claim that it has the $\Z$-basis
\begin{eqnarray}
\uuuu{e}\begin{pmatrix}0&1&n_\delta\delta_1\\ 0&0&1\\
\alpha_2 & \delta_3 l_1 & 0\end{pmatrix}.\label{11.16}
\end{eqnarray}
The second element in \eqref{11.16} is obtained from the 
fourth and second element in \eqref{11.15} and
$n_\delta l_1=1+\alpha_1l_2$. 
The second, third and fourth element in \eqref{11.15}
are generated by the first and second element in \eqref{11.16}
because $\frac{\alpha_1\delta_3}{\alpha_2}\in\Z$ and 
$\frac{\alpha_1\oooo{\delta}_3}{\alpha_2}\in\Z$ imply 
\begin{eqnarray*}
\alpha_1\delta_3l_1&\equiv& 0\mmod \Z\alpha_2,\\
\alpha_1\delta_1\delta_3l_1- \oooo{\delta}_3
&\equiv& \alpha_3l_1-\oooo{\delta}_3
\equiv \alpha_1\oooo{\delta}_1\oooo{\delta}_3l_1-\oooo{\delta}_3
\\
&=&n_\delta\oooo{\delta}_3l_1-\oooo{\delta}_3
=\alpha_1\oooo{\delta}_3l_2 \\
&\equiv& 0\mmod \Z\alpha_2,\\
n_\delta\delta_3 l_1-\delta_3=\delta_3\alpha_1l_2&\equiv& 0
\mmod \Z\alpha_2.
\end{eqnarray*}

Then
\begin{eqnarray*}
\www{\mu}_1:=\gcd(\alpha_1,\alpha_1n_\delta \delta_1)
=\gcd(\alpha_1,\alpha_1\delta_1)=\mu_1,
\end{eqnarray*}
so $\www{\mu}_1=\mu_1$, $\www{\mu}_2=\mu_2$.

{\bf (III)} 
The number in question is the number of pairs
$(\delta_1,\delta_3)\in\Q^2$ with 
\begin{eqnarray*}
\alpha_1\delta_1\in\Z\cap [0,\frac{\alpha_1}{2}],\quad 
\alpha_1\frac{\delta_3}{\alpha_2}\in\Z\cap 
(-\frac{\alpha_1}{2},\frac{\alpha_1}{2}],\\
\frac{\alpha_1\alpha_3}{\alpha_2}\equiv \alpha_1\delta_1\cdot 
\alpha_1\frac{\delta_3}{\alpha_2}\mmod\Z\alpha_1,\\
\mu_1^0 = \gcd(\alpha_1,\alpha_1\delta_1),\quad
\mu_2^0=\gcd(\alpha_1,\alpha_1\frac{\delta_3}{\alpha_2}).
\end{eqnarray*}
Write $\alpha_1=\www{\alpha}_1\www{\mu}_1\www{\mu}_2$ with the
following primes $p\in\P$ as factors,
\begin{eqnarray*}
p\,|\, \www{\mu}_1 &\iff& p\,|\, \mu_1^0,\\
p\,|\, \www{\mu}_2 &\iff& p\,|\, \mu_2^0,\\
\textup{so }p\,|\, \www{\alpha}_1&\iff& p \not|\ \mu_1^0
\textup{ and }p\not|\ \mu_2^0.
\end{eqnarray*}
Then $\mu_1^0\,|\, \www{\mu}_1$, $\mu_2^0\,|\, \www{\mu}_2$,
\begin{eqnarray}
1 &=& \gcd(\frac{\alpha_1}{\mu_1^0},
\frac{\alpha_1\delta_1}{\mu_1^0})
=\gcd(\frac{\alpha_1}{\mu_1^0\mu_2^0},
\frac{\alpha_1\delta_1}{\mu_1^0}),\label{11.17}\\
1 &=& \gcd(\frac{\alpha_1}{\mu_2^0},
\frac{\alpha_1\delta_3/\alpha_2}{\mu_2^0})
=\gcd(\frac{\alpha_1}{\mu_1^0\mu_2^0},
\frac{\alpha_1\delta_3/\alpha_2}{\mu_2^0}),\label{11.18}\\
1 &=& \gcd(\frac{\alpha_1}{\mu_1^0\mu_2^0},
\frac{\alpha_1\delta_1}{\mu_1^0}\cdot 
\frac{\alpha_1\delta_3/\alpha_2}{\mu_2^0}),\nonumber\\
\frac{\alpha_1\alpha_3/\alpha_2}{\mu} 
&=&\frac{\alpha_1\alpha_3/\alpha_2}{\mu_1^0\mu_2^0}
\equiv \frac{\alpha_1\delta_1}{\mu_1^0}\cdot 
\frac{\alpha_1\delta_3/\alpha_2}{\mu_2^0}\mmod \Z
\frac{\alpha_1}{\mu_1^0\mu_2^0}.\nonumber
\end{eqnarray}
The number of $\frac{\alpha_1\delta_1}{\mu_1^0}\in\Z\cap 
[0,\frac{\alpha_1}{2\mu_1^0}]$ with \eqref{11.17} is
$\frac{1}{2}\varphi(\frac{\alpha_1}{\mu_1^0})$.
The number of $\frac{\alpha_1\delta_3/\alpha_2}{\mu_2^0}
\in\Z\cap (-\frac{1}{2}\frac{\alpha_1}{\mu_2^0},
\frac{1}{2}\frac{\alpha_1}{\mu_2^0}]$ with \eqref{11.18}
is $\varphi(\frac{\alpha_1}{\mu_2^0})$. 

Their products take modulo $\Z\frac{\alpha_1}{\mu_1^0\mu_2^0}$
each value in $\Z^*_{\alpha_1/(\mu_1^0\mu_2^0)}$ with equal
multiplicity. Therefore the number in question is
\begin{eqnarray*}
\frac{\frac{1}{2}\varphi(\frac{\alpha_1}{\mu_1^0})
\varphi(\frac{\alpha_1}{\mu_2^0})}
{\varphi(\frac{\alpha_1}{\mu_1^0\mu_2^0})}
= \frac{1}{2}\frac{\varphi(\www{\alpha}_1)
\varphi(\frac{\www{\mu}_1}{\mu_1^0}) \varphi(\www{\mu}_2)\cdot
\varphi(\www{\alpha}_1)\varphi(\www{\mu}_1)
\varphi(\frac{\www{\mu}_2}{\mu_2^0})}
{\varphi(\www{\alpha}_1)\varphi(\frac{\www{\mu}_1}{\mu_1^0})
\varphi(\frac{\www{\mu}_2}{\mu_2^0})}
=\frac{1}{2}\varphi(\alpha_1).
\end{eqnarray*}

(c) It is sufficient to consider a lattice $L$ with a $\Z$-basis
as in \eqref{11.6}. Then $L^2$ is generated by the following
vectors,
$$\uuuu{e}\begin{pmatrix}0&1&\delta_1&\delta_1^2\\
0&0&0&1\\ \delta_2&0&\delta_3&0\end{pmatrix}.$$
Write $d_\delta:=\delta_\Q(\delta_1)$ and 
$n_\delta:=\nu_\Q(\delta_1)$, so
$\delta_1=\frac{n_\delta}{d_\delta}$.
Choose $l_1,l_2\in\Z$ with $n_\delta l_1+d_\delta l_2=1$.
Then $L^2$ is generated by the following vectors,
\begin{eqnarray*}
\uuuu{e}\begin{pmatrix}0&\frac{1}{d_\delta}&0&0&\delta_1^2\\
0&0&0&0&1\\ \delta_2&\delta_3l_1 & \delta_3l_1d_\delta&
\delta_3l_2d_\delta&0 
\end{pmatrix},\textup{ so by }
\uuuu{e}\begin{pmatrix}0&\frac{1}{d_\delta}&\delta_1^2\\
0&0&1\\ {\gcd}_\Q(\delta_2,\delta_3d_\delta)& 
\delta_3l_1&0\end{pmatrix}.
\end{eqnarray*}
$(d_\delta e_1+e_2)L^2$ is generated by 
$$\uuuu{e}\begin{pmatrix}0&1&\frac{n_\delta^2}{d_\delta}\\
0&0&1\\ {\gcd}_\Q(\delta_2,\delta_3d_\delta) & 
\delta_3l_1&0\end{pmatrix}.$$
Because of part (a) it remains to show that
$\delta_\Q(\frac{\delta_3l_1}{{\gcd}_\Q(\delta_2,
\delta_3d_\delta)})$ divides
$\delta_\Q(\frac{n_\delta^2}{d_\delta})=d_\delta$. 
But if $\delta_3\neq 0$ then there is a number $l_3\in\N$ with
\begin{eqnarray*}
\delta_\Q(\frac{\delta_3l_1}{{\gcd}_\Q(\delta_2,
\delta_3d_\delta)})
=\delta_\Q(\frac{1}{{\gcd}_\Q(\frac{\delta_2}{\delta_3l_1},
\frac{d_\delta}{l_1})})
=\delta_\Q(\frac{l_3}{d_\delta}),
\end{eqnarray*}
so this divides $d_\delta$. 
If $\delta_3=0$ then $\delta_\Q(0)=1$ which divides $d_\delta$.
\hfill$\Box$

\section{The 3-dimensional algebra which is not cyclic}
\label{s12}
\setcounter{equation}{0}
\setcounter{table}{0}
\setcounter{figure}{0}

\noindent 
All 3-dimensional algebras except one are cyclic and therefore
relevant for the study of $GL_3(\Z)$-conjugacy classes of
regular integer $3\times 3$ matrices. The exception is the
algebra $\Q[x,y]/(x^2,xy,y^2)$.
It is therefore and because of the simplicity of the
structures of its full lattices, orders and the semigroup
$\EE(A)$ the least interesting of all 3-dimensional algebras.
Lemma \ref{t12.1} treats it for completeness sake, and
in order to prove Theorem \ref{t5.8} in this case.

\begin{lemma}\label{t12.1}
Let $A$ by the algebra $A=\Q 1_A\oplus\Q a\oplus \Q b$
with $a^2=ab=b^2=0$ (it is isomorphic to $\Q[x,y]/(x^2,xy,y^2)$).
Then $A=F\oplus R$ with $F=\Q 1_A$ and $R=\Q a\oplus \Q b$.

Each full lattice is invertible.
Each $\varepsilon$-class of full lattices contains an order.
The orders are the full lattices $\Z 1_A\oplus K$ with
$K\in\LL(R)$. Therefore the semigroup $\EE(A)$ is 
\begin{eqnarray*}
\EE(A)=W(\EE(A))\cong (\{\textup{orders}\},\cdot)
\cong (\LL(R),+).
\end{eqnarray*}
\end{lemma}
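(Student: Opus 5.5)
The plan is to describe every full lattice by an explicit $\Z$-basis and then move it by a unit into the standard form $\Z 1_A\oplus K$ with $K\in\LL(R)$. The decisive structural fact is $R^2=0$: every product of two radical elements vanishes.

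\emph{Normal form.} Let $L\in\LL(A)$ and consider the projection $\pr_F:A\to F=\Q 1_A$. Then $\pr_F L=\Z\beta 1_A$ for some $\beta\in\Q_{>0}$, and $K_0:=L\cap R$ is a full lattice in $R$. Choose $l=\beta 1_A+r\in L$ with $r\in R$. This gives $L=\Z l\oplus K_0$.

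\emph{Reduction by a unit.} The element $u:=(\beta 1_A+r)^{-1}=\beta^{-1}1_A-\beta^{-2}r$ is a unit. Using $R^2=0$ we get $u\cdot K_0=\beta^{-1}K_0$. Hence
$$uL=\Z 1_A\oplus \beta^{-1}K_0,$$
which has the shape $\Z 1_A\oplus K$ with $K\in\LL(R)$.

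\emph{Orders.} A full lattice of the form $\Lambda=\Z 1_A\oplus K$ is closed under multiplication: $(m+k)(m'+k')=mm'+mk'+m'k$, because $kk'=0$. It also contains $1_A$, so it is an order. Conversely, let $\Lambda$ be any order. Then $\pr_F\Lambda$ is an order in $F$ by Lemma \ref{t5.9}, so $\pr_F\Lambda=\Z 1_A$. Since $1_A\in\Lambda$, this gives $\Lambda=\Z 1_A\oplus(\Lambda\cap R)$.

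\emph{Consequences.} Every $\varepsilon$-class contains an order, so $L$ is $\varepsilon$-equivalent to an order. Orders are idempotent, hence invertible, and $L$ is $\varepsilon$-equivalent to one. By Lemma \ref{t5.2}(c), $L$ is therefore invertible.

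\emph{Semigroup structure.} The product of two orders is
$$(\Z1_A\oplus K_1)(\Z 1_A\oplus K_2)=\Z1_A\oplus(K_1+K_2),$$
again because products of radical elements vanish. So the map $K\mapsto \Z1_A\oplus K$ is an isomorphism $(\LL(R),+)\to(\{\textup{orders}\},\cdot)$.

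\emph{Each $\varepsilon$-class contains exactly one order.} Suppose $u(\Z 1_A\oplus K_1)=\Z 1_A\oplus K_2$ with $u=c1_A+s$ and $s\in R$. Projecting to $F$ forces $c=\pm1$. Then $uK_1=cK_1=K_1$, so $K_2=uK_1=K_1$: the two orders coincide. This gives $\EE(A)\cong(\{\textup{orders}\},\cdot)$.

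\emph{Passing to $W(\EE(A))$.} By Lemma \ref{t10.1} style reasoning, $G([\Lambda]_\varepsilon)\cong G([\Z1_A]_\varepsilon)$ is trivial by Theorem \ref{t5.10}. Since all elements are invertible, Theorem \ref{t2.4}(b) gives $W(\EE(A))=\EE(A)$.

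The main obstacle is the uniqueness of the order within an $\varepsilon$-class. It needs the small unit computation above, using $R^2=0$ so that $s$ annihilates $K_1$. Everything else is routine.
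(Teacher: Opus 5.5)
Your proposal is correct and is essentially the paper's proof. The paper takes the first vector $u=\beta_{11}1_A+\beta_{21}a+\beta_{31}b$ of the triangular $\Z$-basis of $L$ (your lift $l=\beta 1_A+r$), notes that it is a unit, and observes that $u^{-1}L$ is an order, leaving everything else as ``clear''. You supply those remaining statements correctly, including the uniqueness of the order in each $\varepsilon$-class, which is what gives $\EE(A)\cong(\{\textup{orders}\},\cdot)$. One simplification is available: $W(\EE(A))=\EE(A)$ needs no appeal to Theorem \ref{t5.10}, since every element of $\EE(A)$ is idempotent and hence is alone in its maximal subgroup.
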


{\bf Proof:} Each full lattice $L\in\LL(A)$ has a unique 
$\Z$-basis of the shape
\begin{eqnarray*}
(1_A,a,b)\begin{pmatrix}\beta_{11}&0&0\\ 
\beta_{21}&\beta_{22}&0\\ \beta_{31}&\beta_{32}&\beta_{33}
\end{pmatrix}\quad\textup{with }\beta_{11},\beta_{22},
\beta_{33}\in\Q_{>0}\\
\hspace*{2cm}\beta_{ij}\in(-\frac{1}{2}\beta_{ii},
\frac{1}{2}\beta_{ii}]\cap\Q\textup{ for }i>j.
\end{eqnarray*}
The first basis element $u:=\beta_{11}1_a+\beta_{21}a+\beta_{31}b$
is in $A^{unit}$. The full lattice $L$ is an order if and only if
$u=1_A$. The full lattice $u^{-1}L$ is an order.
The remaining statements are clear, too. \hfill$\Box$


\begin{thebibliography}{AAA9}
\bibitem[BSh73]{BSh73} Z.I. Borevich, I.R. Shafarevich: \quad 
   Number theory. Academic Press Inc. 1973.
\bibitem[BF65]{BF65} Z.I. Borevi\v{c}, D.K. Faddeev: \quad 
   Representations of orders with cyclic index.
   Trudy Mat. Inst. Steklov {\bf 80} (1965), 51--65 (russian).
   Proc. Steklov Inst. Math. {\bf 80} (1965), 56--72 (translation).
\bibitem[Bo90]{Bo90} N. Bourbaki: \quad 
   Algebra II. Springer, 1990. 
\bibitem[Coh78]{Coh78} H. Cohn:\quad
   A classical invitation to algebraic numbers and class fields
   (with two appendices by Olga Taussky). Springer 1978.
\bibitem[Co97]{Co97} J.H. Conway: \quad
   The Sensual (Quadratic) Form.
   The Carus Mathematical Monographs {\bf 26}.
   Mathematical Association of America, 1997.
\bibitem[CR62]{CR62} C.W. Curtis, I. Reiner:\quad 
   Representation theory of finite groups and 
   associative algebras.
   Interscience Publishers 1962.
\bibitem[DTZ62]{DTZ62} E.C. Dade, O. Taussky, H. Zassenhaus:\quad
   On the theory of orders, in particular on the semigroup 
   of ideal classes and genera of an order in an algebraic 
   number field.
   Math. Ann. {\bf 148} (1962), 31--64.
\bibitem[De72]{De72} R. Dedekind: \quad
   \"Uber die Theorie der ganzen algebraischen Zahlen.
   Supplement to the book of P.G. Lejeune-Dirichlet: 
   Vorlesungen \"uber Zahlentheorie. 2nd edition, 
   Braunschweig 1872.
   (Also in Ges. math. Werke III, Vieweg 1932). 
\bibitem[Fa65-1]{Fa65-1} D.K. Faddeev: \quad 
   An introduction to multiplicative theory of modules of
   integral representations.
   Trudy Mat. Inst. Steklov {\bf 80} (1965), 145--182 (russian).
   Proc. Steklov Inst. Math. {\bf 80} (1965), 164--210 
   (translation).
\bibitem[Fa65-2]{Fa65-2} D.K. Faddeev: \quad 
   On the theory of cubic $\Z$-rings. 
   Trudy Mat. Inst. Steklov {\bf 80} (1965), 183--187 (russian).
   Proc. Steklov Inst. Math. {\bf 80} (1965), 211--215 
   (translation).
\bibitem[Fa67]{Fa67} D.K. Faddeev: \quad 
   The number of classes of exact ideals for $\Z$-rings.
   Mat. Zametki {\bf 1} (1967), 625--632 (russian).
   Math. Notes {\bf 1} (1967), 415--419 (translation). 
\bibitem[Fa68]{Fa68} D.K. Faddeev: \quad
   Equivalence of systems of integer matrices.
   Amer. Math. Soc. Transl. (2) {\bf 71} (1966),
   43--48.
\bibitem[Fr65]{Fr65} A. Fr\"ohlich: \quad 
   Invariants for modules over commutative separable orders.
   Quart. J. Math. Oxford Ser. (2) {\bf 16} (1965), 193--232. 
\bibitem[Ga01]{Ga01} C.F. Gau{\ss}: \quad 
   Disquisitiones arithmeticae. G\"ottingen, 1801.
   German translation (and enlargement 
   by further papers of Gau{\ss}):
   Untersuchungen \"uber h\"ohere Arithmetik.
   G\"ottingen, 1889 (2nd edition Chelsea 1965).
\bibitem[Ha21]{Ha21} A. Hatcher: \quad 
   Topology of numbers. Book manuscript, 2021.
\bibitem[HL26]{HL26} C. Hertling, K. Larabi: \quad
   Semigroups from full lattices in commutative
   $\Q$-algebras. Preprint, arXiv:2602.14973v1, 48 pages, February 2026.
\bibitem[Ku12]{Ku12} R. Kurbel:\quad
   Konjugationsklassen ganzzahliger Matrizen.
   PhD thesis, University Mannheim, 2012. 
\bibitem[LMD33]{LMD33} C.G. Latimer, C.C. MacDuffee:\quad
   A correspondence between classes of ideals and classes 
   of matrices.
   Annals of Mathematics {\bf 34} (1933), 313--316.
\bibitem[LMFDB23]{LMFDB23} The LMFDB collaboration: 
   The $L$-functions and modular forms database. 2023.
   Available at https://www.lmfdb.org.
\bibitem[Ne99]{Ne99} J. Neukirch: \quad 
   Algebraic number theory. 
   Grundlehren der mathematischen Wissenschaften 322,    
   Springer 1999.
\bibitem[Re70]{Re70} I. Reiner:\quad
   A survey of integral representation theory.
   Bull. Am. Math. Soc. {\bf 76} (1970), 157--227. 
\bibitem[Re03]{Re03} I. Reiner: \quad 
   Maximal orders. Clarendon Press 2003 or Academic Press 1975.
\bibitem[Se73]{Se73} J.-P. Serre: \quad
   A course in arithmetic.
   Graduate texts in mathematics 7. Springer, 1973. 
\bibitem[Si70]{Si70} M. Singer: \quad 
   Invertible powers of ideals over orders in commutative
   separable algebras.
   Proc. Cambridge Philos. Soc. {\bf 67.2} (1970), 237--242. 
\bibitem[St11]{St11} E. Steinitz: \quad 
   Rechteckige Systeme und Moduln in algebraischen Zahlk\"orpern.
   I, II, Math. Ann. {\bf 71} (1911), 328--354,
   {\bf 72} (1912), 297--345. 
\bibitem[Ta60]{Ta60} O. Taussky:\quad 
   Matrices of rational integers. 
   Bull. Am. Math. Soc. {\bf 66} (1960), 327--345. 
\bibitem[Ta74]{Ta74} O. Taussky: \quad
   A result concerning classes of matrices.
   Journal of Number Theory {\bf 6} (1974), 64--71.
\bibitem[Ta78]{Ta78} O. Taussky:\quad 
   Introduction into connections between algebraic number 
   theory and integral matrices. 
   2nd appendix in \cite{Coh78}, 305--326. 
\bibitem[Tr13]{Tr13} M. Trifkovi\'c: \quad
   Algebraic theory of quadratic numbers. Universitext,
   Springer, 2013. 
\bibitem[We17]{We17} M.H. Weissman: \quad
   An illustrated theory of numbers.
   American Mathematical Society, 2017.
\bibitem[Zag81]{Zag81} D.B. Zagier:\quad 
   Zetafunktionen und quadratische K\"orper.
   Eine Einf\"uhrung in die h\"ohere Zahlentheorie.
   Springer. 1981. 
\bibitem[Za38]{Za38} H. Zassenhaus:\quad
   Neuer Beweis der Endlichkeit der Klassenzahl bei unimodularer
   \"Aquivalenz endlicher ganzzahliger Substitutionsgruppen.
   Abh. Math. Sem. Univ. Hamburg {\bf 12} (1938), 276--288.
\end{thebibliography}
\end{document}